\newtheorem{theorem}{Theorem}[section]
\newtheorem{lemma}[theorem]{Lemma}
\newtheorem{proposition}[theorem]{Proposition}
\newtheorem{definition}[theorem]{Definition}
\theoremstyle{remark}
\newtheorem{remark}[theorem]{\it \bf{Remark}\/}
\numberwithin{equation}{section}
\def\section{\@startsection{section}{1}%
  \z@{1.5\linespacing\@plus\linespacing}{.5\linespacing}%
  {\normalfont\bfseries\large\centering}}
\newcommand{\be}{\begin{equation}}
\newcommand{\ee}{\end{equation}}
\newcommand{\bea}{\begin{eqnarray}}
\newcommand{\eea}{\end{eqnarray}}
\newcommand{\bee}{\begin{eqnarray*}}
\newcommand{\eee}{\end{eqnarray*}}
\def\pa{\partial}
\def\RR{\mathbb{R}}
\def\hw{\hat{w}}
\def\fref#1{{\rm (\ref{#1})}}
\def\supess{\mathop{\operator@font Sup\,ess}}
\def\RR{\mathbb{R}}
\def\g#1{{\bf #1}}
\def\e{\varepsilon}
\def\fref#1{{\rm (\ref{#1})}}
\def\R2+{\RR ^2_+}
\def\tt{\tilde{T}}
\def\lsl{\frac{\lambda_s}{\lambda}}
\def\pa{\partial}
\def\hb{\hat{b}}
\def\Mod{\rm Mod}
\def\lim{\mathop{\rm lim}}
\def\R{\Bbb R}
\def\e{\varepsilon}
\def\l{\lambda}
\def\log{{\rm log}}
\def\et{\tilde{\e}}
\def\lsl{\frac{\lambda_s}{\lambda}}
\def\qbt{P_{B_1}}
\def\S{\Sigma}
\def\T{\Theta}
\def\pa{\partial}
\def\et{\tilde{\e}}
\def\eh{\hat{\e}}
\def\pa{\partial}
\def\Mm{\mathcal M}
\def\pu {\phi_u}
\def\wh{\hat{w}}
\def\bh{\hat{b}}
\title[]{Stable blow-up dynamic for the parabolic-parabolic Patlak-Keller-Segel model}
\author[R. Schweyer]{R\'emi Schweyer}
\address{Laboratoire Analyse, G\'eom\'etrie et Mod\'elisation, Universit\'e de Cergy-Pontoise, France}
\email{remi.schweyer@u-cergy.fr}
\begin{document}
\maketitle

\begin{abstract} We consider the parabolic-parabolic  two-dimensional Patlak-Keller-Segel problem. We prove the existence of stable blow-up dynamics in finite time in the radial case. We extend in this article the result of \cite{raphael2012b} for the parabolic-elliptic case. 
\end{abstract}
\section{Introduction}
\subsection{Setting of the problem}
The Patlak-Keller-Segel model, which is studied in this article, is one of the simplest modelisation of chemotaxis, suggested by Patlak in 1953 \cite{patlak1953} and by Keller and Segel in \cite{keller1970}, \cite{keller1971} and \cite{keller1971b}. More precisely, the cells of living organisms communicate to each other through chemical species. When these chemical species cause a collective movement of the cells, chemotaxis is the term applied to describe this phenomenon. Chemotaxis plays a crucial role in a large number of biological situations, like angiogenesis, embryonic development or formation of colonies of bacteria.  The interested reader can refer to \cite{perthame2007}, \cite{tindall2008}, \cite{tindall2008a} and \cite{hillen2009} for more detailed information. \par
In this paper, we study a particular case of chemotaxis, in which the chemical specy is directly product by the cells.  This implies a strong coupling between the spatio-temporal dynamics of cells and the chemoattractant. The most famous example is the amoeba {\it Dictystelium disco\"ideum}, which has attracted a considerable attention for the past fourty years. A complete review is available in \cite{horstmann2003} and \cite{horstmann2004}.
The following model is suggested by Nanjundiah in \cite{nanjundiah1973} :
\be
\label{kps2}
\left\{ \begin{array}{l} \pa_t u = \nabla.( \kappa \nabla u + \chi u \nabla v),\\
\pa_t v = \eta \Delta v - \beta u + \alpha v, \\
u(0,x) =u_0>0,\\
v(0,x) = v_0>0,
\end{array} \right. \ \ (t,x)\in \mathbb R^+ \times \Omega.
\ee
where $\Omega$ is an open set of $\mathbb R^2$ or the whole plane, $u$ is the density of amoeba, $v$ the concentration of the chemoattractant, $\kappa$ and $\eta$ are respectively diffusion coefficients for the amoeba ant the chemoattractant, $\chi$ the sensitivity of the amoeba to the chemoattractant, $\beta$ the rate of production of chemoattractant per amoeba and $\alpha$ the rate of destruction of the chemoattractant. All above quantities are positive. For biological reasons, the last rate $\alpha$ is very small, and in a first approximation, we can consider $\alpha=0$. Using a suitable rescaling and an adimensionalization, we obtain:
\be
\label{kps}
\left\{ \begin{array}{l} \pa_t u = \nabla.(\nabla u + u \nabla v),\\
c \pa_t v = \Delta v - u, \\
u(0,x) =u_0>0,\\
v(0,x) = v_0>0,
\end{array} \right. \ \ (t,x)\in \mathbb R^+ \times \mathbb R^2.
\ee
where $c=\frac{\kappa}{\eta}$ is the difference of the time scales of the diffusive processes undergone by u and v. \par
\subsection{On the parabolic-elliptic model}
\label{dkiuend}
In this subsection we take $c=0$, and the system \fref{kps} becomes:
\be
\label{kpsee}
(PKSpe)\ \ \left\{\begin{array}{lll}
\pa_tu=\nabla\cdot(\nabla u+u\nabla v),\\
v=\frac{1}{2\pi}\log |x|\star u\\
u_{|t=0}=u_0> 0
\end{array} \right .\ \ (t,x)\in \Bbb R\times \Bbb R^2,\
\ee
In a pioneering work \cite{jager1992} is proved for the first time that the corresponding solution to \fref{kpsee} with small enough mass is global in time, and blow-up can occur for initial data with large mass. A important step was taken in \cite{beckner1993} and \cite{carlen1992} with the establishment of the logarithmic Hardy-Littlewood-Sobolev inequality:
 $\forall u\geq 0$ with $\int u=M$, 
\be
\label{logsobolev}
\int u\log u+\frac{4\pi}{M}\int \phi_u u\geq M\left[1+\log\pi-\log M\right]
\ee
where given $u$ we defined: $$\phi_u=\frac{1}{2\pi}\log |x|\star u.$$
The left term coincides with the free energy for $M=8\pi$, and thus, the free energy is lower bounded for this mass. Furthermore $Q$ is up to symmetry the unique minimizer, where $Q$ is a radial explicit profile defined by:
\be
Q(x) = \frac{8}{(1+|x|^2)^2}.
\ee
This is the key ingredient in the proof that all solutions of \fref{kpsee} such that $M\leq 8\pi$ are global in time. \cite{diaz1998}, \cite{dolbeault2004} and \cite{blanchet2006}. Moreover, for $M<8\pi$, zero is a local universal attractor. The dynamics of these solutions is sharply described in \cite{campos2012}. \par
Now, the solutions with enough good decay, {\it ie} solutions with finite second moment $\int_{\mathbb R^2} |x|^2u_0(x)dx < \infty$, satisfy the virial law: $$\frac{d}{dt}\int |x|^2u(t,x)dx=4\left(1-\frac{M}{8\pi}\right)M.$$ This argument gives two informations. First, if the mass of the solution is $8\pi$ and the second moment is finite, then, this second moment is preserved. Secondly, if the mass is larger than $8\pi$ and the second moment is finite, the solution blows-up in finite time. \par
In the case of $M=8\pi$, there are two conservation laws : the mass and the second moment. The problem is both $L^1$ critical, and energy critical. In \cite{blanchet2008}, the authors prove in the case of finite second moment, the solutions grow-up and converge to a Dirac mass at infinity time. The argument is not constructive and gives no information about the rate of convergence. The article \cite{senba2009} answers this question. In the case of bounded domain, the situation is described in \cite{biler2005} and \cite{kavallaris2009}. \par
In the case of infinite second moment is proved in \cite{blanchet2012} the existence of global solutions converging to the ground state, and the rate of convergence is proved in \cite{carlen2013}. The case of very slow decay with finite mass $8\pi$ is an open question. \par
For the solution with large mass, {\it ie} $M>8\pi$, we have seen that if the second moment is finite, the solution blows-up in finite time. In fact, this argument can be extended in the case of all solutions with large mass. The problem is still poorly understood. In the radial case, a first example of blow-up solution in finite time is described in \cite{herrero1996} using formal matching asymptotic, and completed in \cite{velazquez2002} and \cite{velazquez2006}. Recently, in \cite{raphael2012b} is given a sharp description of the blow-up dynamics, and in particular its stability. 
\subsection{On the parabolic-parabolic model}
\label{dkiuend1}
In \cite{nagai2000} is proved by fixed point arguments the existence and the uniqueness of local non negative smooth solutions of \fref{kps}. Moreover, if we consider initial data with enough fast decay, {\it ie} $u_0 \in L^1(\mathbb R^2)$, there is the conservation of the mass :
\be
\label{consmasse}
\int_{\Bbb R^2}u(t,x)dx=\int_{\Bbb R^2}u(0,x)dx =M
\ee
Now, if $(u,v)$ is solution to the problem \fref{kps}, then the rescaled solution
\bee
\left|\begin{array}{l} u_\l\\ v_\l \end{array} \right. = \left|\begin{array}{l} \l^2 u(\l^2t, \l x) \\ v(\l^2t,\l x) \end{array} \right.
\eee
is also a solution. Furthermore, the $L^1$ norm in unchanged by this scaling :
\bee
\int_{\Bbb R^2}u_\l(t,x)dx=\int_{\Bbb R^2}u(\l^2 t,x)dx
\eee
The problem is thus $L^1$ critical. The second important quantity is the free energy functionnal :
\bee
E(u,v) = \int _{\mathbb R^2} u(x)\log u(x) dx +  \int _{\mathbb R^2} u(x)v(x) dx -  \frac 12\int _{\mathbb R^2} v(x) \Delta v(x) dx
\eee
This functional play a crucial rule in nonlinear diffusion and kinetic models \cite{arnold2004}. This energy is dissipated by the flow. Moreover, the problem is almost energy critical in the following sense:
\bee
E(u_\l,v_\l) = E(u,v) + M \left( 2 - \frac M {4\pi} \right) \log \l.
\eee
For initial data with small mass $M<8\pi$, corresponding solutions of \fref{kps} exist globally in time \cite{calvez2008}. See \cite{nagai1997} and \cite{biler1998} for similar results on a disk. \par
Now, the threshold effect of the mass on the dynamics of the solutions of \fref{kps} is more imprecise than the parabolic-elliptic case. Indeed, self-similar solutions with $M>8\pi$ are exhibited (\cite{naito2006} in the case $c=1$, \cite{biler2011} for all $c>0$). Hence, in opposite of the parabolic-elliptic case, solution with large mass can exist globally in time. \par
A blow-up dynamic in finite time is proved in a bounded domain in \cite{herrero1997}, using formal matching asymptotic as \cite{herrero1996} for the parabolic-elliptic case. 
\subsection{Main result}
Our result concerns the parabolic-parabolic model, in the case $c=1$, {\it ie} :
\be
\label{kps11}
\left\{ \begin{array}{l} \pa_t u = \nabla.(\nabla u + u \nabla v),\\
\pa_t v = \Delta v - u, \\
u(0,x) =u_0>0,\\
v(0,x) = v_0>0,
\end{array} \right. \ \ (t,x)\in \mathbb R^+ \times \mathbb R^2.
\ee
In the continuation of \cite{raphael2012b} for the parabolic-elliptic case, we obtain a sharp description of the blow-up dynamics in finite time, for data with small super critical mass. In particular, we obtain the rate of convergence and the stability of the blow-up, for small perturbation in the energy space. We use a similar approach like in \cite{schweyer2012} for the energy critical semilinear heat flow, and in \cite{raphael2013b} and \cite{raphael2013} for the harmonic heat flow. \par
Let the weighted $H^2$ space:
\be
\label{defhtwow}
\|\e\|_{H^2_Q}=\|(1+r^2)\Delta \e\|_{L^2}+\left\|(1+r)\nabla \e\right\|_{L^2}+\|\e\|_{L^2}
\ee
and the weighted $H^3$ space:
\be
\label{defhtwow2}
\|\eta\|_{N}=\|(1+r)\nabla\Delta \eta\|_{L^2}+\left\|\Delta \eta\right\|_{L^2}.
\ee
We introduce the energy norm
\be
\label{energyspcaee}
\|\boldsymbol \e\|_{\mathcal E}=\|\e\|_{H^2_Q}+ \|\eta\|_N + \|\e\|_{L^1} = \|(\e,\eta)\|_{W_Q} + \|\e\|_{L^1}.
\ee
\begin{theorem}[Stable chemotactic blow up]
\label{thmmain}
There exists a set of initial data of the form $${ \bf u_0} = \left| \begin{array}{l} u_0 \\ v_0 \end{array} \right.={ \bf Q}+ \boldsymbol \e_0, \ \boldsymbol \e_0 = \left| \begin{array}{l} \e_0 \\ \eta_0 \end{array} \right.\in\mathcal E, \ \ u_0> 0, \ \ v_0>0, \ \ \|\boldsymbol \e_0\|_{\mathcal E}\ll 1$$ such that the corresponding solution $u\in \mathcal C ([0,T),\mathcal E)$ to \fref{kps} with $c=1$ satisfies the following:\\
{\em (i) Small super critical mass}: $$ 8\pi<\int u_0<8\pi+ \alpha^*$$ for some $0<\alpha^*\ll1$ which can be chosen arbitrarily small;\\
{\em (ii) Blow up} : the solution blows up in finite time $0<T<+\infty$;\\
{\em (iii) Universality of the blow up bubble}: the solution admits for all times $t\in [0,T)$ a decomposition 
\be
{\bf u}(t,x) = \left| \begin{array}{l} \frac{1}{\l^2(t)}( Q+ \e)\left(t,\frac{x}{\l(t)}\right) \\
  ( \phi_Q+ \eta)\left(t,\frac{x}{\l(t)}\right)
   \end{array} \right. 
\ee with 
\be
\label{boievbebeo}
\| (\e(t),\eta(t))\|_{W_Q}\to 0\ \ \mbox{as}\ \ t\to T
\ee and the universal blow up speed: 
\be
\label{rate}
\lambda(t)=\sqrt{T-t}e^{-\sqrt{\frac{|\log (T-t)|}{2}}+O(1)}\ \ \mbox{as} \ \ t\to T.
\ee
{\em (iv) Stability}: the above blow up dynamics is stable by small perturbation of the data in $\mathcal E$: $${\bf f_0}= \left|\begin{array}{l} f_0\\ g_0\end{array} \right.f_0> 0, \ g_0>0 \ \ \|{\bf f_0}-{\bf u_0}\|_{\mathcal E}<\epsilon({\bf u_0}).$$
\end{theorem}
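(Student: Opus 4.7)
I would follow the modulation–bootstrap strategy of Rapha\"el–Schweyer for the parabolic-elliptic case in \cite{raphael2012b}, with the main novelty being the careful treatment of the additional parabolic equation for $v$ (which is no longer explicitly given by $-\Delta^{-1} u$).

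Step 1 (approximate self-similar profile). Passing to self-similar variables $y=x/\lambda(t)$ and renormalized time $ds/dt=1/\lambda^2$, and writing the solution as
\[
u(t,x)=\frac{1}{\lambda^2(t)}(Q_b+\varepsilon)(s,y),\qquad v(t,x)=(\phi_b+\eta)(s,y),
\]
I would first construct a one–parameter family $(Q_b,\phi_b)$ of approximate blow-up profiles by expanding $Q_b=Q+bT_1+b^2T_2+\cdots$ and similarly for $\phi_b$, where each $T_k$ solves an iterated linear problem $\mathcal{L}T_k=F_k$ around the ground state $Q$. Due to $L^1$-criticality the tails of the $T_k$ are slowly decaying, so these profiles must be localized on the blow-up scale $|y|\sim 1/b$; the localization error is the quantity that will fix the sharp rate \fref{rate}.

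Step 2 (modulation and orthogonality). For initial data close to $\mathbf{Q}$ in $\mathcal{E}$, the implicit function theorem produces time-dependent modulation parameters $\lambda(t)>0$ and $b(t)$ such that $\boldsymbol\varepsilon=(\varepsilon,\eta)$ satisfies orthogonality conditions that freeze the scaling direction $\Lambda Q$ and the unstable direction $\partial_b Q_b$ of the linearized operator. Projecting the evolution of $(\varepsilon,\eta)$ onto these two directions yields modulation equations of the schematic form
\[
\frac{\lambda_s}{\lambda}+b=O\bigl(\|\boldsymbol\varepsilon\|_{W_Q}\bigr),\qquad b_s+cb^2=O\bigl(\|\boldsymbol\varepsilon\|_{W_Q}\bigr)+\text{(profile error)},
\]
possibly with logarithmic corrections characteristic of $L^1$-critical problems. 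Once the remainder is bootstrapped to be suitably small, integrating these ODEs gives $b\sim 1/\sqrt{|\log(T-t)|/2}$, which in turn produces the blow-up speed \fref{rate}.

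Step 3 (monotonicity and coercivity). The core of the argument is a Lyapunov functional $\mathcal{F}(\boldsymbol\varepsilon)$ modeled on the $W_Q$-norm, obtained by testing the $\varepsilon$-equation against weighted powers of the linearized operator and coupling to the evolution of $\Delta\eta$. Applying $\Delta$ to the $u$-equation and exploiting dissipation from the parabolic $\eta$-equation produces a differential inequality $\frac{d}{ds}\mathcal{F}\lesssim b\,\mathcal{F}+\text{small}$, which after renormalization controls $\|\boldsymbol\varepsilon\|_{W_Q}$ at the required order. The coercivity of the quadratic form, after subtracting the modulated directions, is inherited from the spectral analysis of $\mathcal{L}$ already performed in \cite{raphael2012b}; conservation of mass \fref{consmasse} provides the $L^1$-control.

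Step 4 (main obstacle and conclusion). The principal new difficulty relative to the parabolic-elliptic case is that the Poisson relation $\eta\sim-\Delta^{-1}\varepsilon$ no longer holds pointwise: instead, $\eta$ is a genuinely dynamical unknown whose leading balance $\Delta\eta\approx\varepsilon$ is only reached after a parabolic relaxation time. This introduces memory-type corrections that must be absorbed into $\mathcal{F}$ through third-derivative bounds (which is precisely why the $N$-norm appears in \fref{defhtwow2}), and forces one to compute carefully the commutator between $\partial_s$ and the decomposition $v=\phi_b+\eta$. Once the bootstrap closes in the regime $\|\boldsymbol\varepsilon\|_{W_Q}\ll b$, finite-time blow-up follows from integrating $-\lambda\lambda_t\approx b\sim 1/\sqrt{|\log(T-t)|}$, and stability in $\mathcal{E}$ is automatic since the only unstable direction has been modulated out, making the bootstrap regime open with respect to perturbations in $\mathcal{E}$.
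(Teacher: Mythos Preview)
Your outline is correct and follows essentially the same route as the paper: approximate profile construction, modulation with two orthogonality conditions, a Lyapunov/monotonicity argument at the $X_Q$ level, and integration of the modulation ODEs to extract the rate \fref{rate}.

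Two technical points your sketch understates and which, in practice, are where a naive implementation stalls. First, the rough modulation equation you write, $b_s+cb^2=O(\|\boldsymbol\varepsilon\|_{W_Q})$, only yields $|b_s|\lesssim b^{3/2}$ because the elements of $\ker\mathcal L^*$ decay too slowly; this is not sharp enough to close. The paper (as in \cite{raphael2012b}) introduces a \emph{second} decomposition with a lifted parameter $\hat b$ satisfying a different orthogonality condition localized at scale $\hat B_0=1/\sqrt{\hat b}$, and it is $\hat b$ that obeys the sharp law $\hat b_s+\tfrac{2b^2}{|\log b|}=O(b^2/|\log b|^2)$. The $X_Q$ monotonicity is then run on the $\hat{\mathbf E}$ variables, and one must control the gap $\boldsymbol\Xi=\hat{\mathbf E}-\mathbf E$ separately. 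Second, the single inequality $\frac{d}{ds}\mathcal F\lesssim b\mathcal F+\text{small}$ does not close by itself: the paper needs a genuine \emph{hierarchy} of separate energy estimates for $\eta$ at the $\dot H^1$, $\dot H^2$, and weighted $\dot H^3$ levels (the latter being your $N$-norm), each with its own monotonicity formula and its own power of $b$, because the $X_Q$ Lyapunov functional alone does not control $\|\nabla\eta\|_{L^2}$ or $\|(1+y)\nabla\Delta\eta\|_{L^2}$, and these enter the nonlinear and commutator terms. Once you are aware of these two refinements, your plan matches the paper's.
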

{\it Comments on the result}\\ \par
\begin{enumerate}
\item To our knowledge, this is the first proof of the existence of blow-up dynamics in finite time for the parabolic-parabolic Patlak-Keller-Segel in the whole plane. Very recently, Mizogushi and Winkler have obtain the existence of blow-up solution with a virial argument, in the case where $\alpha$, the rate of destruction of the chemoattractant, is non negative. Moreover this kind of obstructive argument does not come with a sharp description of the blow up bubble like \fref{rate}.
\item In this work, for the sake of simplicity, we have chosen $c=1$, where we recall that $c$ is the difference of the time scales of the diffusive processes undergone by u and v. We have seen the crucial rule of this constant in \cite{biler2011} for the existence of self-similar solutions. In fact, the theorem is true for all $c>0$. More precisely, we observe that this quantity doesn't influence on the leading order of the dynamics of the solution. To understand this result, a possible line of reasoning is that, if we fix the constant $c$, we can find initial data satisfying the conditions of the theorem enough concentrated such that the propagation speed of the chemoattractant becomes negligible.
\item Comparing Theorem \ref{thmmain} and the blow up result in \cite{raphael2012b}, we see that the parabolic and elliptic couplings yield {\it in the regime we consider} the same kind of blow up bubble to leading order. Let us stress that this has no reason to hold in general. A celebrated example is the case of the Zakahrov equations of plasma physics: $$\left\{\begin{array}{ll} i\pa_tu+\Delta u-nu=0\\ \frac{1}{c_0^2}\pa_{tt} n-\Delta n=\Delta |u|^2\end{array}\right., \ \ x\in \Bbb R^2$$ which in the limit $c_0\to +\infty$ reduce to the mass critical nonlinear Schr\"odinger equation. Merle proved in \cite{zak} that for all $c_0>+\infty$ (wave coupling), the stable log log regime of the $c_0=+\infty$ case (elliptic coupling) is destroyed, hence showing the importance of the nature of the coupling.
\end{enumerate}
{\bf Aknowledgements.} The author would like to thank A. Blanchet, N. Masmoudi and P. Rapha{\"e}l for their interest and support during the preparation of this work. Part of this work was done while R.S was visiting the UBC Mathematics department  and the Courant Institute which he would like to thank for their kind hospitality. This work is supported by the ERC/ANR grant SWAP, the ERC BLOWDISOL and the junior ERC DISPEQ.\\
\subsection{Notations}
In this problem, we study a couple of solution $(u,v)$. We notice without any difference
\bee
(u,v) = \left| \begin{array}{l}u\\v \end{array}\right. = {\boldsymbol u},
\eee
where the last notation will be used only in the case where there is no possible confusion. In the same way, we notice for an operator $F$:
\bee
F(u,v) = \left| \begin{array}{l}F^{(1)}(u,v)\\F^{(2)}(u,v) \end{array}\right. 
\eee
We use the real $L^2\times\dot H^1$ scalar product :
\bee
\left<\left| \begin{array}{l}u\\v \end{array}\right.,\left| \begin{array}{l}f\\g \end{array}\right.\right> = \int uf + \int \nabla v \nabla g=\int_0^{\infty} u(r)f(r)rdr + \int_0^{\infty} \nabla v(r) \nabla g(r)rdr.
\eee
For a given function $u$, we note its Poisson field $$\phi_u=\frac{1}{2\pi}\log |x|\star u.$$
We let $\chi\in \mathcal C^{\infty}_c(\R)$ be a radially symmetric cut off function with $$\chi(r)=\left\{\begin{array}{ll} 1\ \ \mbox{for}\ \ r\leq 1,\\ 0\ \ \mbox{for}\ \ r\geq 2\end{array}\right., \ \ \chi(r)\geq 0.$$
Moreover, for a given $B>0$, we let 
\bee
\chi_B(r) = \chi\left(\frac{r}{B}\right)
\eee
Given $b>0$, we let 
\be
\label{defB0B1}
B_0=\frac{1}{\sqrt{b}}, \ \ B_1=\frac{|\log b|}{\sqrt{b}}.
\ee
Finally, we use the scaling operator:
\be
\Lambda \left| \begin{array}{l}f\\g \end{array}\right. =\left( \left| \begin{array}{l}\frac{df_\l}{d\l} \\ \frac{d g_\l}{d\l} \end{array}\right.\right)_{\l=1} = \left| \begin{array}{l}2f+y\cdot\nabla f \\ y \cdot \nabla g \end{array}\right.=\left| \begin{array}{l} \nabla\cdot(yf)\\ y \cdot \nabla g \end{array}\right.
\ee \\ \par
The article is organized as follow. In section \ref{jiijij}, we obtain spectral gap estimates for linearized operators close to the ground state. In section \ref{consconscons}, we construt an enough good approximate profiles four our analysis. In section \ref{controlperturbation}, we introduce the energy method based on a bootstrap argument. A keystone of the proof is a monotonicity formula, which we prove in section \ref{lyapounov}. We conclude the bootstrap argument in section \ref{superbootstrap}, and concludes the proof of the Theorem \ref{thmmain} in section \ref{lastsection}.
\section{Spectral gap estimates}
\label{jiijij}
In this section, we obtain spectral gap estimates for linearized operators, which are a keystone of the energy method to control the gap between the solution and the approximate profile, which shall construct in the next section. 
\subsection{On the linearized energy}
In this subsection, we start with the introduction of the operator $\Mm$ coming from the linearization of free energy around the ground state on the suitable space. Then, the structure of this operator is going to be studied, before giving a proposition on its subcoercivity. 
\\ \par
 Let the suitable space $X_Q$ defined by:$$X_Q = \left\{ (u,v) \in L^2_Q \times H^1, \mbox{ s.t. } \int u = 0, \ \mbox{and} \ \int (1+|\log r|)^2|\nabla v|^2<\infty \right\}.$$
 We introduce the suitable norm:
 \be
 \|(u,v)\|_{X_Q} = \| u\|_{L^2_Q} + \| v \|_{\dot H^1}.
 \ee
 From the Lemma \ref{lemmainterpolation}, we know that $\forall (u,v) \in X_Q$, $\nabla \pu \in L^2$. 
Let's prove that the ground state is a local minima of the free energy for deformations in the space $X_Q$.
Let $(u,v) \in X_Q$, and let the function $F$ define for small $\l \in \mathbb R$ such that $ Q + \l u >0$ by:
\bee
F(\l) &=& E(Q+\l u, \phi_Q + \l v) \\
&=& \int (Q+\l u) \log (Q + \l u) + \int (Q + \l u)(\phi_Q + \l v) - \frac 12 \int (\phi_Q + \l v) \Delta (\phi_Q + \l v)
\eee
We compute
\bee
F'(\l) &=& \int u [ \log (Q + \l u) + 1] + \int u(\phi_Q + \l v) +  \int (Q + \l u)v \\&-& \frac 12 \int v \Delta (\phi_Q + \l v) - \frac 12 \int (\phi_Q + \l v) \Delta v \\
&=& \int u [ \log (Q + \l u) - \log 8 + \phi_Q] +  \frac 12 \int (vQ - \Delta v \phi_Q) + \l  \int \left \{ 2uv - v \Delta v \right\}
\eee
Hence, using the explicit expression of $Q$ and $\phi_Q$, we obtain
\bee
F'(0) =  \frac 12 \int (vQ - \Delta v \phi_Q).
\eee
We shall prove that the integration by part is valid for $(u,v) \in X_Q$. In this purpose, let $R_0>0$. We compute
\bee
\int_{R_0}^{R_0+1} \left\{\int_0^R \Delta v\phi_Q \right\}dR = \int_{R_0}^{R_0+1} \left\{\int_0^R v Q \right\}dR+\int_{R_0}^{R_0+1} \left[\phi_Q r v'(r) - \nabla \phi_Q v(r)r \right]^{R}_0 dR.
\eee
Now, as $(u,v) \in X_Q$, $v\in L^2$ and $\int (1+|\log r|)^2|\nabla v|^2<\infty$ yield
\bee
\lim\limits_{R_0 \rightarrow +\infty} \int_{R_0}^{R_0+1} \left[\phi_Q r v'(r) - \nabla \phi_Q v(r)r \right]^{R}_0 dR = 0.
\eee
Thus : $\int vQ = \int \Delta v \phi_Q$, and thus Q is a critical point: $$F'(0)=0.$$
We can compute the Hessian, using that $(\nabla \pu, \nabla v)\in L^2 \times L^2$:
\bee
F''(\l)_{|\l=0} &=& \int \frac {u^2}Q + 2 uv - \int v \Delta v \\
&=&  \int \left(\frac {u^2}Q + u\pu + u(v-\pu) + uv - v \Delta v \right)\\
&=& \int \left(\frac {u^2}Q + u\pu \right) + \int |\nabla v - \nabla \pu|^2 \\
&=& \int \left(\frac {u^2}Q + vu \right) + \int \nabla (v-\phi_u) \nabla v = \left< \mathcal M \   \begin{array} {|c} u \\v \end{array},  \begin{array} {c} u \\v \end{array}  \right> .
\eee 
In \cite{raphael2012b} is proved that for a function $u$ such that $\int u=0$,  $\int \left(\frac {u^2}Q + u\pu \right) \geq 0$. Then, from the third line of the above compute, $F''(\l)_{|\l=0} \geq 0$ and thus $(Q,\phi_Q)$ is a local minima of the free energy for deformations in the space $X_Q$. Furthermore, the operator $\mathcal M$ define by
\be
\label{defoperatorM}
\mathcal M : \left| \begin{array} {c} \frac{u}{Q} + v \\ v - \pu \end{array}\right. = \left| \begin{array} {c} \mathcal M^{(1)}(u,v) \\  \mathcal M^{(2)}(u,v) \end{array}\right.
\ee 
is a positive operator on $X_Q$. The following lemma describes the structure of this operator.
\begin{lemma}[Structure of the linearized energy]
\label{lemmakernel}
Let the operator 
\be
\label{defm}
\mathcal M : \left| \begin{array} {c} \frac{u}{Q} + v \\ v - \pu \end{array}\right.
\ee  Then:
\be
\label{mcontiniuos}
\int Q|\mathcal M^{(1)}(u,v)|^2 + \int |\nabla \mathcal M^{(2)}(u,v) |^2\lesssim \|(u,v)\|_{X_Q}^2 + \int \frac{v^2}{1+y^4},
\ee
Moreover, there holds:\\
{\it (i) Sefl-adjointness}:
\be
\label{efadjointness}
\forall ({\bf u},{\bf v})\in L^2_Q\times H^1, \ \ \left<\mathcal M {\bf u},{\bf v}\right>=\left<{\bf u},\mathcal M{\bf v}\right>.
\ee
{\it (ii) Algebraic identities}:
\be
\label{relationsm}
\mathcal M({\bf \Lambda Q} )= \mathcal M \   \begin{array} {|c} \Lambda Q \\ \phi_{\Lambda Q} \end{array}= \begin{array} {|c} -2 \\0 \end{array}
\ee
{\it (iii) Generalized kernel}: Let $(u,v) \in X_Q$ such that :
\bee \left\{\begin{array} {c}
\nabla \left( \frac uQ + v\right) =0 \\
\Delta v = u
\end{array}
\right.
\eee
Then 
\be
\label{identificationkernel}
{\bf u} \in \mbox{Span}({\bf \Lambda Q})
\ee
\end{lemma}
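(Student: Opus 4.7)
For the continuity estimate \fref{mcontiniuos} I would expand $\mathcal M^{(1)}(u,v)=u/Q+v$ and exploit $Q(y)\lesssim (1+|y|^4)^{-1}$ to get, via Cauchy-Schwarz on the cross term, $\int Q|\mathcal M^{(1)}|^2\lesssim \|u\|_{L^2_Q}^2+\int v^2/(1+y^4)$. For $\mathcal M^{(2)}=v-\phi_u$ one has $\int|\nabla\mathcal M^{(2)}|^2\lesssim \|v\|_{\dot H^1}^2+\|\nabla\phi_u\|_{L^2}^2$, and the Poisson bound $\|\nabla\phi_u\|_{L^2}\lesssim \|(u,v)\|_{X_Q}$ is precisely Lemma~\ref{lemmainterpolation} invoked earlier. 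For self-adjointness \fref{efadjointness}, a single integration by parts using $\Delta\phi_{u_1}=u_1$ replaces $-\int\nabla\phi_{u_1}\cdot\nabla v_2$ by $\int u_1 v_2$, leading to the manifestly symmetric expression $\int u_1 v_1/Q+\int u_2 v_1+\int u_1 v_2+\int\nabla u_2\cdot\nabla v_2$.

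\textbf{Algebraic identities \fref{relationsm}.} The second component is trivial: $\mathcal M^{(2)}(\Lambda Q,\phi_{\Lambda Q})=\phi_{\Lambda Q}-\phi_{\Lambda Q}=0$. For the first, the key input is the log-convolution rescaling identity $\phi_{u_\lambda}(x)=\phi_u(\lambda x)-\frac{M}{2\pi}\log\lambda$, which follows directly from $\phi_u=\frac{1}{2\pi}\log|\cdot|\star u$ and $\int u=M$. Differentiating at $\lambda=1$ with $u=Q$ (mass $8\pi$) yields $\phi_{\Lambda Q}(x)=x\cdot\nabla\phi_Q(x)-4$. Using $\phi_Q=2\log(1+|y|^2)$ (up to an irrelevant additive constant, determined by $\Delta\phi_Q=Q$), one computes $\phi_{\Lambda Q}=-4/(1+|y|^2)$ and $\Lambda Q/Q=2(1-|y|^2)/(1+|y|^2)$, whose sum is indeed $-2$.

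\textbf{Generalized kernel \fref{identificationkernel}.} The constraint $\nabla(u/Q+v)=0$ forces $u/Q+v\equiv c$ for some constant $c$, hence $u=Q(c-v)$; combined with $\Delta v=u$ one obtains the inhomogeneous radial equation $\Delta v+Qv=cQ$. By the algebraic identity just proved, $-\tfrac{c}{2}(\Lambda Q,\phi_{\Lambda Q})$ is a particular solution, and it lies in $X_Q$ since $\int\Lambda Q=0$ and $\phi_{\Lambda Q}=-4/(1+|y|^2)\in H^1$. What remains -- and is the main obstacle -- is to show the radial homogeneous equation $\Delta w+Qw=0$ admits no nontrivial solution with $(-Qw,w)\in X_Q$. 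Via the Emden-Fowler substitution $r=e^s$, this equation reduces to the P\"oschl-Teller form $w_{ss}+2(\cosh s)^{-2}w=0$, whose fundamental system I would write down explicitly: one solution is $\tanh s$, giving $w_1(r)=(|y|^2-1)/(|y|^2+1)$, which tends to $1$ at infinity and hence fails to be in $L^2$; a second solution, constructed by Wronskian reduction, behaves like $\log r$ near the origin, so its gradient has non-integrable $L^2$ norm near zero. Consequently no nontrivial linear combination lies in $H^1$, and we conclude $(u,v)=-\tfrac{c}{2}(\Lambda Q,\phi_{\Lambda Q})\in\mbox{Span}(\Lambda Q,\phi_{\Lambda Q})$. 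The reduction to P\"oschl-Teller is what makes this last step tractable; any more abstract spectral argument would require comparable input from the explicit knowledge of the soliton profile.
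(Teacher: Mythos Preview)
Your proof is correct. The continuity, self-adjointness, and algebraic identity arguments match the paper's essentially line for line; your scaling derivation of $\phi_{\Lambda Q}=x\cdot\nabla\phi_Q-4$ is a pleasant way to obtain what the paper simply calls ``the explicit expression''.

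For the generalized kernel \fref{identificationkernel} you take a genuinely different route. The paper passes to the \emph{partial mass} variable $m_u(r)=\int_0^r u\tau\,d\tau$, so that the condition $\nabla(u/Q+\phi_u)=0$ (after first observing $v=\phi_u$ via Lemma~\ref{phideltav}) becomes the second-order ODE $L_0 m_u=0$ with the explicit rational basis $\psi_0=r^2/(1+r^2)^2$, $\psi_1=(r^4+4r^2\log r-1)/(1+r^2)^2$; since $m_u(0)=0$ while $\psi_1(0)=-1$, only the $\psi_0$ direction survives, and $m_{\Lambda Q}=8\psi_0$ finishes. You instead stay with the second component, reduce to $\Delta w+Qw=0$, and recognise this as P\"oschl--Teller via $r=e^s$; the explicit solution $\tanh s=(r^2-1)/(r^2+1)$ and its reduction-of-order partner $\sim\log r$ are then excluded by $H^1$ integrability. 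The two formulations are related by $m_u=rv'$, so the ODEs are equivalent; the paper's choice has the advantage that $L_0$ and the basis $(\psi_0,\psi_1)$ are precisely the objects reused throughout Section~\ref{consconscons} to build the approximate profile, whereas your Emden--Fowler reduction is self-contained and makes the integrable structure transparent. Both approaches rely equally on the explicit form of $Q$.
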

\begin{proof}
Let $(u,v) \in X_Q$. The continuity of this operator follows from the Lemma \ref{lemmainterpolation}:
\bee
\left|\left< \mathcal M {\bf u}, \mathcal M {\bf u} \right> \right|&=& \left | \int  Q \left( \frac u Q + v \right)^2 + \int (\nabla v - \nabla \phi_u)^2 \right | \\
&\lesssim& \int  \frac {u^2} Q + \int Q v^2 + \int |\nabla v|^2 + \int |\nabla \phi_u|^2 \\
&\lesssim&  \int  \frac {u^2} Q + \int |\nabla v|^2 + \int \frac{v^2}{1+y^4}.
\eee

To prove the self-adjointness of the operator $\mathcal M$, let $\left\{ (u,v) ; (f,g)\right \} \in (L^2_Q\times H^1)^2$, and we compute:
\bee
\left< \mathcal M \   \begin{array} {|c} u \\v \end{array},  \begin{array} {c} f \\g \end{array}  \right> &=& \int \left(\frac uQ + v \right)f + \int |\nabla v - \nabla \phi_u| \nabla g \\
&=& \int \frac{uf}{Q} + \int \nabla v \nabla g + \int vf + \int gu. \\
&=& \left<  \begin{array} {c} u \\v \end{array}, \  \mathcal M \  \begin{array} {|c} f \\g \end{array}  \right>
\eee
The last equality comes from $\int f = \int u = 0$. \fref{relationsm} is a consequence of the explicit expression for $\Lambda Q$ and $\phi_{\Lambda Q}$. 
\par
Let's prove \fref{identificationkernel}. For this purpose, let $(u,v) \in X_Q$ such that:
\bee \left\{\begin{array} {c}
\nabla \left( \frac uQ + v\right) =0 \\
\Delta v = u
\end{array}
\right.
\eee
From the Lemma \ref{phideltav}, $(u,v)\in X_Q$ yields $\phi_{\Delta v} = v$ and thus 
\be
\phi_u = v.
\ee
Prove \fref{identificationkernel} is equivalent to find the functions $u \in L^2_Q$ such that :
\be
\label{dola}
\nabla \left( \frac u Q + \phi_u\right) = 0.
\ee
In the framework of radial functions, let the partial mass be :
\be
m_u(r) = \int_0^r u(\tau) \tau d\tau.
\ee
Remark that $\phi'_u(r) = \frac{m_u(r)}{r}$ and $u(r) = \frac{m'_u(r)}{r}$. Hence,
\bee
 \left( \frac u Q + \phi_u\right)' = \frac{u'(r)}{Q} - \frac{Q'}{Q^2} u(r) + \phi'_u(r) =  \frac{1}{Q}\left( \frac{m_u(r)}{r}\right)' - \frac{Q'}{Q^2}  \frac{m'_u(r)}{r} +  \frac{m_u(r)}{r} = 0
\eee
Thus, the equation \fref{dola} becomes:
\bee
L_0m_u = 0
\eee
where
\bea
\label{defmo}
L_0 m_u & = &  -m_u''+\left(\frac{1}{r}+\frac{Q'}{Q}\right)m_u'-Qm_u\\
\nonumber& = & -m_u''-\frac{3r^2-1}{r(1+r^2)}m_u'-\frac{8}{(1+r^2)^2}.
\eea
The basis of solutions to this homogeneous equation is explicit and given by:
 \be
 \label{basislzero}
 \psi_0(r)=\frac{r^2}{(1+r^2)^2}, \ \ \psi_1(r)=\frac{1}{(1+r^2)^2}\left[r^4+4r^2\log r-1\right].
 \ee
The regularity of $u$ at the origin, and thus the regularity of $m_u$ implies that $m_u \in Span \left( \psi_0 \right)$. To remark that $m_{\Lambda Q} = 8 \psi_0$ conclude the proof of \fref{identificationkernel} and hence the proof of the Lemma \ref{lemmakernel}.
\end{proof}
Before studying the linearized operator $\mathcal L$ close to the ground state of the (PKS) flow, we prove sub-coercivity for the operator $\mathcal M$ which are the key to the proof of coercive estimates for these operators under additional orthogonality conditions.
\begin{proposition}[Sub-coercivity of the operator $\mathcal M$]
\label{propenergy}
There exists a universal constant $\delta_0>0$ such that for all ${\bf u}\in X_Q$,
\be
\label{corec}
\left<\mathcal M {\bf u},{\bf u}\right>\geq \delta_0\left(\int\frac{u^2}{Q} + \int |\nabla v| ^2 \right)-\frac{1}{\delta_0}\left< {\bf u},{\bf \Lambda Q}\right>^2.
\ee
\end{proposition}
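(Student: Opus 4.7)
\emph{Proof plan.} We proceed by compactness and contradiction. Assume the inequality fails; then for each $n \geq 1$ there exists ${\bf u}_n = (u_n, v_n) \in X_Q$ with
\begin{equation*}
\int \frac{u_n^2}{Q} + \int |\nabla v_n|^2 = 1, \qquad \langle \mathcal M {\bf u}_n, {\bf u}_n\rangle \leq \tfrac{1}{n}, \qquad \langle {\bf u}_n, {\bf \Lambda Q}\rangle^2 \leq \tfrac{1}{n}.
\end{equation*}
The normalization yields a uniform bound in the Hilbert space $L^2_Q \times \dot H^1$, so up to extraction $u_n \rightharpoonup u_\infty$ weakly in $L^2_Q$ and $\nabla v_n \rightharpoonup \nabla v_\infty$ weakly in $L^2$. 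The decay of $Q$ ensures that $Q$ and $Q\,\Lambda Q$ belong to $L^2_Q$, and the interpolation lemma gives $\nabla \phi_{\Lambda Q} \in L^2$; pairing the weak convergences against these test functions gives $\int u_\infty = 0$ and $\langle {\bf u}_\infty, {\bf \Lambda Q}\rangle = 0$.

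Next we show ${\bf u}_\infty$ lies in the generalized kernel of $\mathcal M$. From the identity
\begin{equation*}
\langle \mathcal M {\bf u}, {\bf u}\rangle = \int\left(\frac{u^2}{Q} + u \phi_u\right) + \int |\nabla v - \nabla \phi_u|^2
\end{equation*}
established just before the lemma, both summands are nonnegative (the first by the logarithmic Hardy-Littlewood-Sobolev inequality applied to zero-mass functions), hence $\mathcal M$ is a nonnegative quadratic form. Weak lower semicontinuity yields $\langle \mathcal M {\bf u}_\infty, {\bf u}_\infty\rangle \leq \liminf_n \langle \mathcal M {\bf u}_n, {\bf u}_n\rangle = 0$, so ${\bf u}_\infty$ minimizes the form on $\{\int u = 0\}$. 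Testing the associated bilinear form against zero-mass directions $(w, \eta) \in X_Q$ produces the Euler--Lagrange identities $v_\infty = \phi_{u_\infty}$ and $u_\infty/Q + v_\infty \equiv \text{const}$ (the constant being the Lagrange multiplier for the mass constraint). Since $\Delta v_\infty = u_\infty$ follows from $v_\infty = \phi_{u_\infty}$, these are precisely the hypotheses of Lemma~\ref{lemmakernel}(iii), so ${\bf u}_\infty = c\, {\bf \Lambda Q}$; combined with the orthogonality from the first step, we conclude ${\bf u}_\infty = 0$.

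To derive the contradiction we upgrade weak to strong convergence. From $u_n \rightharpoonup 0$ in $L^2_Q$ we deduce $\int u_n \Lambda Q \to 0$. The scalar subcoercivity
\begin{equation*}
\int \frac{u^2}{Q} + \int u \phi_u \geq \delta \int \frac{u^2}{Q} - \frac{1}{\delta}\left(\int u\, \Lambda Q\right)^2 \qquad \text{for } \int u = 0,
\end{equation*}
proved in \cite{raphael2012b}, combined with the bound $\int u_n^2/Q + \int u_n \phi_{u_n} \leq \langle \mathcal M {\bf u}_n, {\bf u}_n\rangle \to 0$, forces $\int u_n^2/Q \to 0$. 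A Hardy-type control $\int |\nabla \phi_u|^2 \lesssim \int u^2/Q$ on zero-mean $u$ then yields $\int |\nabla \phi_{u_n}|^2 \to 0$, whence
\begin{equation*}
\int |\nabla v_n|^2 \leq 2 \int |\nabla (v_n - \phi_{u_n})|^2 + 2 \int |\nabla \phi_{u_n}|^2 \to 0,
\end{equation*}
contradicting the normalization.

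The \emph{main obstacle} is the second step: the nonlocal Poisson term $\phi_u$ inside $\mathcal M$ must be handled carefully under weak convergence, and the vanishing of the quadratic form on the limit must be packaged in exactly the form required by Lemma~\ref{lemmakernel}(iii), including the appearance of a Lagrange multiplier in $\mathcal M^{(1)}({\bf u}_\infty)$. The decay of $Q$ and the weighted structure built into $X_Q$ are what make all the duality pairings and integrations by parts meaningful in this passage.
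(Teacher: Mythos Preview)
Your proof is correct and follows the same compactness--contradiction scheme as the paper, but the endgame differs. The paper first proves coercivity on the orthogonal complement of ${\bf \Lambda Q}$ and then recovers the full inequality by projection; within the contradiction argument it shows the weak limit is \emph{nontrivial} by invoking the strong compactness $\nabla\phi_{u_n}\to\nabla\phi_u$ in $L^2$ (borrowed from \cite{raphael2012b}), which forces $\int u_nv_n\to\int uv$ and hence $\|{\bf u}\|_{X_Q}=1$, after which Lemma~\ref{lemmakernel} and the orthogonality give the contradiction. You instead first show the weak limit is zero via Lemma~\ref{lemmakernel}, and then upgrade weak to strong convergence by importing the \emph{scalar} sub-coercivity of $\mathcal M_0$ from \cite{raphael2012b} as a black box, together with the bound $\|\nabla\phi_u\|_{L^2}\lesssim\|u\|_{L^2_Q}$. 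Both routes ultimately rely on \cite{raphael2012b}; yours is slightly more modular in that it reduces the vector result directly to the scalar one, while the paper's is more self-contained at the level of the compactness mechanism.

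One small simplification: your Euler--Lagrange step with a Lagrange multiplier is unnecessary. The representation $\left<\mathcal M{\bf u},{\bf u}\right>=\int Q\left(\frac{u}{Q}+v\right)^2+\int|\nabla v-\nabla\phi_u|^2$ (recorded in the proof of Lemma~\ref{lemmakernel}) is a sum of two nonnegative squares, so its vanishing on ${\bf u}_\infty$ immediately gives $\frac{u_\infty}{Q}+v_\infty=0$ and $\nabla v_\infty=\nabla\phi_{u_\infty}$, hence the hypotheses of Lemma~\ref{lemmakernel}(iii) without any constant appearing.
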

\begin{proof}
{\bf Step 1 : }Coercivity \\ \\
To begin,  prove that :
\be
\label{cneonoen}
I=\inf\left\{\left<\mathcal M {\bf u},{\bf u}\right>, \ \  \|{\bf u} \|_{X_Q}=1, \ \ \left < {\bf u},{\bf \Lambda Q} \right > =0\right\}>0.
\ee
We argue by contradiction. Let a sequence $\begin{array}{|c}u_n \\v_n \end{array} =  {\bf u}_n \in X_Q$ such that:
$$0\leq \left<\mathcal M {\bf u}_n,{\bf u}_n\right>\leq \frac 1n, \ \ \|{\bf u}_n\|_{X_Q}=1, \ \ \left< {\bf u}_n,{\bf \Lambda Q}\right>=0.$$ 
We recall that:
\bee
\left<\mathcal M {\bf u}_n,{\bf u}_n\right> = (\mathcal M_0 u_n,u_n) + \int |\nabla v_n - \nabla \phi_{u_n}|^2.
\eee
Up to a subsequence, 
\be
\label{weakconv}
{\bf u}_n\rightharpoonup {\bf u} = \begin{array}{|c}u \\v \end{array} \ \ \mbox{in}\ \ X_Q
\ee
Hence,
\be
\label{wmpo}
\int \frac{u^2}{Q} + \int |\nabla v|^2 \leq \liminf_{n\to +\infty} \int \frac{u_n^2}{Q} + \int |\nabla v_n|^2 = 1 \  \mbox{and} \ \left < {\bf u},{\bf \Lambda Q} \right > =0
\ee
From standard argument (see for example the proof of the Proposition 2.3 of \cite{raphael2012b}) :
\bee
\nabla \phi_{u_n}\to \nabla \phi_{u} \ \mbox{in}\ \ L^2.
\eee
As
\bee
\nabla v_n\rightharpoonup \nabla v  \ \mbox{in}\ \ L^2,
\eee
we obtain :
\be
\int u_nv_n = - \int \nabla \phi_{u_n} \nabla v_n \to  - \int \nabla \phi_{u} \nabla v = \int uv
\ee
The positivity of the operator $\mathcal M$ together with \fref{wmpo} yields
\be
\left < \mathcal M {\bf u},{\bf u}\right> = 0, \ \ \int \frac{u^2}{Q} + \int |\nabla v|^2 = - 2 \int uv \leq 1.
\ee 
From the normalization of the sequence we have:
\be
-2\int u_nv_n = \int \frac{u_n^2}{Q} + \int |\nabla v_n|^2 - \left<\mathcal M {\bf u}_n,{\bf u}_n\right> \geq 1 - \frac 1n.
\ee
Thus, the function ${\bf u}$ verifies:
\be
\left<\mathcal M {\bf u},{\bf u}\right> = 0, \ \ \|{\bf u} \|_{X_Q}=1 \  \mbox{and}\ \left < {\bf u},{\bf \Lambda Q} \right > =0 
\ee
From the Lemma \ref{lemmakernel}, the condition $\left<\mathcal M {\bf u},{\bf u}\right> = 0$ yields that ${\bf u} = c {\bf \Lambda Q}$, and the orthogonality condition $ \left < {\bf u},{\bf \Lambda Q} \right > =0$ impose that $c=0$. Hence, $u=0$ which contradicts $ \|{\bf u} \|_{X_Q}=1$. This concludes the proof of \fref{cneonoen}.
\\ \\
{\bf Step 2 : }Conclusion \\ \\
Consider ${\bf u} \in X_Q$. Let 
\be
{\bf v} = {\bf u} - \frac{\left < {\bf u},{\bf \Lambda Q}\right>}{\left <{\bf \Lambda Q},{\bf \Lambda Q}\right>} {\bf \Lambda Q}
\ee
By construction, $\left < {\bf v},{\bf \Lambda Q}\right> = 0$ and thus from \fref{cneonoen}
$$\left<\mathcal M {\bf v},{\bf v}\right>\geq \delta_0 \| {\bf v}\|^2_{X_Q}\geq \delta_0\| {\bf u}\|^2_{X_Q}-\frac{1}{\delta _0}\left < {\bf u},{\bf \Lambda Q}\right>.$$
This concludes the proof of the Proposition \ref{propenergy}.
\end{proof}
\subsection{On the linearized operator $\mathcal L$}
In this section, we begin to the study of the structure of the linearized operator close to $Q$ of the (PKS) flow for perturbations in the energy space $\mathcal E$. Moreover the operator $\mathcal L$ is given by:
\bee
\mathcal L (\e,\eta) = \begin{array}{|c} \nabla. \left\{ Q \nabla \mathcal M^{(1)}(\e,\eta) \right\}  \\ \Delta \mathcal M^{(2)}(\e,\eta) \end{array} = \begin{array}{|c} \nabla . \left\{Q \nabla \left( \frac \e Q +\eta\right)\right\}  \\ \Delta \eta - \e \end{array} 
\eee
We can formally define its adjoint for the $L^2 \times \dot H^1$ scalar product by:
\be
\label{ladjoint}
\mathcal L^* (\e,\eta) = \mathcal M \  \begin{array}{|c} \nabla. \left\{ Q \nabla \e \right\}  \\ \Delta \eta \end{array} = \begin{array}{|c} \frac{ \nabla. \left\{ Q \nabla \e \right\} }{Q} + \Delta \eta  \\ \Delta \eta - \phi_{ \nabla. \left\{ Q \nabla \e \right\} } \end{array} 
\ee
\begin{lemma}[Structure of the operator $\mathcal L$] 
\label{lemmacontiniuty}
{\it (i)} Continuity of $\mathcal L$ on $\mathcal E$: 
\be
\label{upperbound}
\|\mathcal L\boldsymbol{\e}\|_{X_Q}\lesssim \| \boldsymbol{\e}\|_{\mathcal E}.
\ee
{\it (ii)} Adjunction: $\forall (\boldsymbol{\e}, \boldsymbol{\tilde \e})\in \mathcal E^2$,
\be
\label{biebibvei}
 \left<\mathcal L\boldsymbol{\e}, \boldsymbol{\tilde \e}\right>= \left<\boldsymbol{\e},\mathcal L^* \boldsymbol{\tilde \e}\right>.
\ee
 {\it (iii) Algebraic identities}: 
  \be
 \label{cneneovneo}
 \mathcal L({ \bf \Lambda Q} )=0,
 \ee
 \be
 \label{nveovneneon}
\forall c \in \mathbb R, \ \mathcal L^*(1,cr^2)=0, \ \ \mathcal L^*\left(r^2, -4\int_0^r \frac{\log (1+\tau^2)d\tau}{\tau}\right)=\begin{array} {|c} -4\\ 0 \end{array}.
 \ee
 {\em (iv) Vanishing average}: $\forall \boldsymbol{\e} \in\mathcal E$,
 \be
 \label{estvooee}
 \left<\mathcal L\boldsymbol{\e}, \begin{array} {|c} 1\\ c \end{array} \right>=0, \ \ \forall c \in \mathbb R
 \ee
 \end{lemma}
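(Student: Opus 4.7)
The plan exploits the operator factorization underlying both $\mathcal{L}$ and $\mathcal{L}^*$. Writing $\mathcal{T}:(u,v)\mapsto(\nabla\cdot(Q\nabla u),\,\Delta v)$, one checks directly from the formulas that $\mathcal{L}=\mathcal{T}\circ\mathcal{M}$, and from \eqref{ladjoint} that $\mathcal{L}^*=\mathcal{M}\circ\mathcal{T}$. With this structure in hand, items (ii)--(iv) reduce to elementary manipulations combined with the properties of $\mathcal{M}$ already established in Lemma \ref{lemmakernel}.

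For (i), rewrite $\mathcal{L}^{(1)}\boldsymbol{\e}=\Delta\e-\nabla\cdot(\e\nabla\log Q)+\nabla\cdot(Q\nabla\eta)$. The $L^2_Q$ bound follows term by term: the first two pieces are controlled by the $H^2_Q$ part of $\|\boldsymbol{\e}\|_{\mathcal E}$ together with the polynomial decay of $Q$ and of $\nabla\log Q$, while $\nabla\cdot(Q\nabla\eta)=Q\Delta\eta+\nabla Q\cdot\nabla\eta$ is handled using $\|\Delta\eta\|_{L^2}$, $\|(1+r)\nabla\Delta\eta\|_{L^2}$ and an appeal to the interpolation Lemma \ref{lemmainterpolation} to recover $\nabla\eta$ from these pieces. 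The $\dot H^1$ bound on $\mathcal{L}^{(2)}\boldsymbol{\e}=\Delta\eta-\e$ is then immediate from $\|(1+r)\nabla\e\|_{L^2}$ and $\|(1+r)\nabla\Delta\eta\|_{L^2}$. Finally, the divergence structure of $\mathcal{L}^{(1)}\boldsymbol{\e}$ together with this decay ensures $\int\mathcal{L}^{(1)}\boldsymbol{\e}=0$, so $\mathcal{L}\boldsymbol{\e}\in X_Q$.

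Item (ii) follows from the chain $\left<\mathcal{T}\mathcal{M}\boldsymbol{\e},\boldsymbol{\tilde\e}\right>=\left<\mathcal{M}\boldsymbol{\e},\mathcal{T}\boldsymbol{\tilde\e}\right>=\left<\boldsymbol{\e},\mathcal{M}\mathcal{T}\boldsymbol{\tilde\e}\right>$, where the first equality is self-adjointness of $\mathcal{T}$ for the $L^2\times\dot H^1$ pairing (two integrations by parts; the boundary terms vanish thanks to the decay of elements of $\mathcal{E}$) and the second is \eqref{efadjointness}. For (iii), $\mathcal{L}({\bf \Lambda Q})=\mathcal{T}(\mathcal{M}({\bf \Lambda Q}))=\mathcal{T}(-2,0)=(0,0)$ via \eqref{relationsm}; the identities for $\mathcal{L}^*$ reduce to explicit computations using $\Delta(r^2)=4$, $\nabla\cdot(Q\nabla(r^2))=32(1-r^2)/(1+r^2)^3$ and $\Delta\bigl(-4\int_0^r\tau^{-1}\log(1+\tau^2)\,d\tau\bigr)=-8/(1+r^2)$, combined with the observation that the radial partial mass of $\nabla\cdot(Q\nabla(r^2))$ equals $16r^2/(1+r^2)^2$, so the associated Poisson field differs from the above Laplacian by a constant; applying $\mathcal{M}$ then yields $(-4,\mathrm{const})$ in the second case and a pair of constants in the first, which act trivially on $X_Q$ (since $\int u=0$ and gradients of constants vanish).

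Item (iv) is then obtained from (ii) applied to $\boldsymbol{\tilde\e}=(1,c)$: since $\mathcal{T}(1,c)=(0,0)$, one gets $\mathcal{L}^*(1,c)=0$ and hence the claim. Alternatively, one can argue directly from the divergence form of $\mathcal{L}^{(1)}\boldsymbol{\e}$ (giving $\int\mathcal{L}^{(1)}\boldsymbol{\e}=0$) together with $\nabla c=0$. The main technical difficulty is concentrated in items (i) and (ii): the weighted continuity estimates require several regime splits (near the origin, intermediate $r$, and at infinity) and the invocation of the interpolation inequality, and one must check in the integration by parts of (ii) that every boundary term actually decays thanks to the quantitative control built into $\|\cdot\|_{\mathcal E}$.
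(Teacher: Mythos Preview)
Your approach is essentially correct and close in spirit to the paper's, with the factorization $\mathcal L=\mathcal T\circ\mathcal M$, $\mathcal L^*=\mathcal M\circ\mathcal T$ providing a clean organizing principle for (ii)--(iv); the paper instead carries out the integrations by parts for (ii) directly in radial coordinates, tracking the boundary terms explicitly along a sequence $R_n\to\infty$, and for (iii) invokes $\mathcal M(\Lambda{\bf Q})=(-2,0)$ after recognizing $\nabla\cdot(Q\nabla r^2)=2\Lambda Q$ and $\Delta\bigl(-4\int_0^r\tau^{-1}\log(1+\tau^2)\,d\tau\bigr)=2\phi_{\Lambda Q}$, which is the same computation you do but packaged differently.

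Two small corrections. First, in (i) your appeal to Lemma~\ref{lemmainterpolation} to control $\nabla\eta$ is not the right tool: that lemma bounds $\nabla\phi_u$ in terms of $\|u\|_{L^2_Q}$ and says nothing about $\nabla\eta$. What you actually need is a weighted Hardy inequality (Lemma~\ref{weightedhardy}, specifically \eqref{hardylevel2} combined with \eqref{harfylog}) to pass from $\|\Delta\eta\|_{L^2}+\|(1+r)\nabla\Delta\eta\|_{L^2}$ to $\int\frac{|\nabla\eta|^2}{1+r^6}$, which is how the paper bounds the $\nabla Q\cdot\nabla\eta$ contribution. Second, your primary route to (iv) via (ii) with $\boldsymbol{\tilde\e}=(1,c)$ does not go through as stated, since $(1,c)\notin\mathcal E$ (the constant $1$ fails $\|\e\|_{L^2}<\infty$), so the adjunction identity is not available for this choice. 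Your alternative---arguing directly from the divergence form of $\mathcal L^{(1)}\boldsymbol\e$ and the decay of $Q\partial_r\mathcal M^{(1)}(\e,\eta)$---is exactly what the paper does and is the correct argument.
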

\begin{proof}
{\bf Step 1 :} Continuity :
\\ \\
First, we rewrite the operator $\mathcal L$  with an explicit formula:
\bee
\mathcal L (\e,\eta) = \begin{array}{|c} \Delta \e + \e Q + \nabla \e \nabla \phi_Q + Q \Delta \eta + \nabla Q \nabla \eta \\ \Delta \eta - \e \end{array}.
\eee
Hence,
\bee
&&\| \mathcal L (\e,\eta)\|_{X_Q}^2 \\
& \lesssim & \int\frac{|\Delta \e|^2}{Q}+\int Q|\e|^2+\int\frac{|\nabla \phi_Q|^2}{Q}|\nabla \e|^2+\int \frac{|\nabla Q|^2}{Q}|\nabla \eta|^2+ \int Q (\Delta \eta)^2 + \int |\nabla (\Delta \eta)|^2\\
& \lesssim & \|\e\|_{H^2_Q}^2+ \int |\nabla(\Delta \eta)|^2 + \int \frac{|\Delta \eta|^2}{1+r^4} + \int \frac{|\nabla \eta|^2}{1+r^6} \lesssim \|(\e,\eta)\|_{\mathcal E}^2,
\eee
and the continuity is proved.\\ \\
{\bf Step 2 :} Adjunction :
\\ \\
To prove rigorously the formal adjoint \fref{ladjoint}, and thus \fref{biebibvei}, we must justify now the integration by parts. For this purpose, let's begin to prove that both integrals are absolutely convergent. First :
\bee
\left|  \left<\mathcal L\boldsymbol{\e}, \boldsymbol{\tilde \e}\right> \right| &\lesssim& \int |\mathcal L^{(1)}(\e,\eta)|| \et| + \int |\pa_r (\Delta \eta - \e)||\pa_r \tilde \eta| \\
&\lesssim& \|\mathcal L^{(1)}(\e,\eta)\|_{L^2_Q}  \|\et\|_{L^2} +  \|\mathcal L^{(2)}(\e,\eta)\|_{\dot H^1}  \|\tilde \eta\|_{\dot H^1} \\
&\lesssim& \|(\e,\eta)\|_{\mathcal E} \|(\tilde \e, \tilde \eta)\|_{L^2\times \dot H^1}
\eee
Now, using that $\nabla \phi_{\nabla . u} = u$, we obtain using Cauchy-Schwarz
\bee
\left|  \left<\boldsymbol{\e},\mathcal L^* \boldsymbol{\tilde \e}\right> \right| &=& \left| \int \e \left( \frac{ \nabla. \left\{ Q \nabla \et \right\} }{Q} + \Delta \tilde \eta\right) + \int \nabla \eta \nabla \left( \Delta \tilde \eta - \phi_{ \nabla. \left\{ Q \nabla \et \right\} }\right) \right| \\
&\lesssim& \|\e\|_{L^2}\|\Delta \tilde \eta \|_{L^2} + \int |\e| \left[ |\Delta \et| + \frac{|\nabla Q|}{Q}|\nabla \et|\right] + \int \nabla \eta (\nabla \Delta \tilde \eta - \tilde \e)\\
&\lesssim&  \|(\e,\eta)\|_{\mathcal E} \|(\et,\tilde\eta)\|_{\mathcal E}
\eee
The integrals being absolutely convergent, we have thus :
\be
\left< \mathcal L \boldsymbol{\e},  \boldsymbol{\tilde \e}\right > = \lim\limits_{R \rightarrow +\infty} \int_0^R\left\{ \mathcal L^{(1)}(\e,\eta) \et +\nabla \mathcal L^{(2)}(\e,\eta) \nabla \tilde \eta  \right\}
\ee
Using the radial coordinates, we can rewrite the last integral by:
\bea 
 \nonumber&&\int_0^R\left\{ \mathcal L^{(1)}(\e,\eta) \et +\nabla \mathcal L^{(2)}(\e,\eta) \nabla \tilde \eta  \right\} \\
 \nonumber&=& \int_0^R \left\{ \pa_r \left(r Q \pa_r \mathcal M^{(1)}(\e,\eta) \right)\et+ r \pa_r \left[\frac 1r \pa_r \left( r \pa_r \mathcal M^{(2)}(\e,\eta)   \right) \right]\pa_r\tilde\eta   \right\}dr \\
 \nonumber&=& - \int_0^R \left\{ r Q \pa_r \mathcal M^{(1)}(\e,\eta) \pa_r \et+ \pa_r \left( r \pa_r \mathcal M^{(2)}(\e,\eta)   \right)\frac 1r \pa_r \left(r \pa_r\tilde\eta\right)   \right\}dr \\ 
 \nonumber&+& \left[ r Q \et\pa_r \mathcal M^{(1)}(\e,\eta) + \pa_r \left( r \pa_r \mathcal M^{(2)}(\e,\eta)   \right) \pa_r\tilde\eta\right]_0^R \\
\label{pppllkkjjuu1}&=&  \int_0^R \left\{   \mathcal M^{(1)}(\e,\eta)\frac 1r\pa_r \left(rQ \pa_r \et \right)+   \pa_r \mathcal M^{(2)}(\e,\eta)  \pa_r \left [ \frac 1r \pa_r \left(r \pa_r\tilde\eta\right)  \right] \right\}rdr \\ 
\label{pppllkkjjuu2}&+& \left[ r Q \et\pa_r \mathcal M^{(1)}(\e,\eta) + \pa_r \left( r \pa_r \mathcal M^{(2)}(\e,\eta)   \right) \pa_r\tilde\eta\right]_0^R\\
\label{pppllkkjjuu3}&-&  \left[ r Q \mathcal M^{(1)}(\e,\eta) \pa_r \et + \pa_r \mathcal M^{(2)}(\e,\eta) \pa_r \left( r \pa_r\tilde\eta  \right) \right]_0^R
\eea
Using the smoothness of $(\e,\eta)$ and of $(\et, \tilde \eta)$, the terms \fref{pppllkkjjuu2} and \fref{pppllkkjjuu3} cancel at the origin. Now, there exists a sequence $R_n \rightarrow + \infty$ such that:
\label{nhytgb}\bea&& \left[ r Q \et\pa_r \mathcal M^{(1)}(\e,\eta) + \pa_r \left( r \pa_r \mathcal M^{(2)}(\e,\eta)   \right) \pa_r\tilde\eta\right]_0^{R_n}\\
\nonumber&-&  \left[ r Q \mathcal M^{(1)}(\e,\eta) \pa_r \et + \pa_r \mathcal M^{(2)}(\e,\eta) \pa_r \left( r \pa_r\tilde\eta  \right) \right]_0^{R_n} \underset{n \rightarrow +\infty}{\rightarrow} 0.
\eea
Indeed, we estimate from Cauchy-Schwarz and the Hardy bound \fref{harfylog} :
\bee
\left|\int Q \pa_r \et \left( \frac \e Q + \eta \right)\right| &\lesssim& \left|\int \e \pa_r \et\right| +  \left|\int Q \eta \pa_r \et\right| \\
&\lesssim& \|\e\|_{L^2}\|\pa_r \et\|_{L^2} +  \left|\int Q \eta^2\right |^{\frac 12} \left |\int Q| \pa_r \et|^2\right|^{\frac 12} \\
&\lesssim&  \|\e\|_{L^2}\|\pa_r \et\|_{L^2} + \left\|\pa_r \eta \right\|_{L^2} \left\|\pa_r \et  \right\|_{L^2} < + \infty
\eee
Moreover :
\bee
\left|\int Q \et \pa_r \left( \frac \e Q + \eta \right)\right| &\lesssim& \left|\int \et \left(\pa_r \e + \e \phi_Q + Q\pa_r \eta \right)\right| \\
&\lesssim&  \|\et\|_{L^2} \left( \left\|\frac{\pa_r \e}{1+r}  \right\|_{L^2_Q} +  \|\e\|_{L^2} +\left \| \frac{\pa_r \eta}{1+y^4} \right\|_{L^2}  \right)
\eee
Now, with Cauchy-Schwarz
\bee
\left|\int \pa_r \tilde \eta \left(\Delta \eta - \e \right)\right| &\lesssim& \| \pa_r \tilde \eta \|_{L^2} \left( \|\e \|_{L^{2}} +  \|\Delta \eta \|_{L^{2}} \right)
\eee
Finally,
\bee
\left|\int \Delta \tilde \eta \left(\pa_r \eta -\pa_r \phi_\e \right)\right| &\lesssim& (\| \pa_r \eta \|_{L^{\infty}} + \| \pa_r \phi_\e \|_{L^{\infty}} ) \|\Delta \tilde \eta \|_{L^{1}} \lesssim (\| \Delta \eta \|_{L^{2}} + \| \e \|_{L^{2}} ) \|(\et, \tilde \eta) \|_{\mathcal E} 
\eee
This concludes the proof of \fref{nhytgb}. We have proved for the moment that:
\bee
 \left<\mathcal L \ \begin{array}{|c} \e\\ \eta  \end{array}, \begin{array}{|c} \et\\ \tilde \eta  \end{array}\right> =  \left<\mathcal M \ \begin{array}{|c} \e\\ \eta  \end{array}, \begin{array}{|c} \nabla. \left\{ Q \nabla \et \right\}  \\ \Delta \tilde \eta \end{array} \right> 
\eee
With the same method, the proof of the self-adjointness of $\mathcal M$ here takes any difficulty, and is left to the reader. This yields \fref{biebibvei}. 
\\ \\
{\bf Step 3 :} Algebraic identities.
\\ \\
The identity \fref{relationsm} yields directly \fref{cneneovneo}. Now, $\forall c \in \mathbb R$
\be
\mathcal L^* (1,c) = \mathcal M \  \begin{array}{|c} \nabla. \left\{ Q \nabla 1 \right\}  \\ \Delta c \end{array} =  \begin{array}{|c} 0\\0 \end{array}.
\ee
For the last algebraic identity, we use:
\bee
\frac1r \pa_r \left( r \pa_r \left[  -2\int_0^r \frac{\log (1+\tau^2)d\tau}{\tau}\right]\right) = \phi_{\Lambda Q}.
\eee
Thus,
\bee
 \mathcal L^*\left(r^2, -4\int_0^r \frac{\log (1+\tau^2)d\tau}{\tau}\right) =  \mathcal M \  \begin{array}{|c} \nabla. \left\{ Q \nabla r^2 \right\}  \\ \Delta \left( -2\int_0^r \frac{\log (1+\tau^2)d\tau}{\tau} \right)  \end{array} = \mathcal M \  \begin{array}{|c} 2 \Lambda Q  \\ 2 \phi_{\Lambda Q}  \end{array} =   \begin{array}{|c} -4\\0  \end{array}.
\eee
\\ \\
{\bf Step 4 :} Vanishing average
\\ \\
Remark that the integral \fref{estvooee} is absolutely convergent. Indeed, let $c \in \mathbb R$.
\bee
\left| \left<\mathcal L\boldsymbol{\e}, \begin{array} {|c} 1\\ c \end{array} \right>\right| \lesssim \left|\int \mathcal L^{(1)}(\e,\eta)\right| \lesssim  \int \frac{\mathcal L^{(1)}(\e,\eta)^2}{Q} \lesssim \|(\e,  \eta) \|^2_{\mathcal E}
 \eee
Hence,
\bee
\int \mathcal L^{(1)}(\e,\eta) =\lim\limits_{R \rightarrow +\infty} \int_0^R \mathcal L^{(1)}(\e,\eta)
\eee
But:
\bee
\int_0^R \mathcal L^{(1)}(\e,\eta) = \left[ r Q \pa_r \left( \frac{\e}{Q} + \eta \right)\right]_0^R
\eee
The last term cancels at the origin. Now, using that:
\bee
\int \left|Q \pa_r \left( \frac{\e}{Q} + \eta \right)\right|^2 \lesssim \int \left[ |\pa_r \e| + |\e|^2\phi_Q^2 + Q |\pa_r \eta|^2 \right] < +\infty,
 \eee
 there exists a sequence $R_n \rightarrow +\infty$ such that:
 \bee
\left\{ Q \pa_r \left( \frac{\e}{Q} + \eta \right) \right\}(R_n) = o \left( \frac{1}{R^2_n}\right)
 \eee
Thus,
 \bee
\int \mathcal L^{(1)}(\e,\eta) =\lim\limits_{R_n \rightarrow +\infty} \int_0^{R_n} \mathcal L^{(1)}(\e,\eta) = R_n \left\{ Q \pa_r \left( \frac{\e}{Q} + \eta \right) \right\}(R_n) \rightarrow 0.
\eee
This concludes the proof of the Lemma \ref{lemmacontiniuty}.
 \end{proof}
In the last Lemma, we have exhibited the kernel of the operator $\mathcal L^*$, and we have seen that the elements of this kernel have irrevelant growth. Therefore, we shall introduce in the following Lemma an enough good approximation of this kernel, defining both directions $\boldsymbol{\Phi}_M$ and $\mathcal L^* \boldsymbol{\Phi}_M$. It is a enough good approximation in the sense that we prove in the Proposition \ref{interpolationhtwo} that if $\boldsymbol \e$ is orthogonal to this directions, then the operator $\mathcal L$ is coercive. \par
Moreover, we must anticipate the construction of the approximate profile, and we define ${\bf T}_1= \left| \begin{array}{l}T_1\\S_1 \end{array}\right.$ such that $\mathcal L {\bf T}_1 = {\bf \Lambda Q}$. Thus, we have the following bounds:
\bee
T_1(r) \lesssim \frac{1}{1+r^2}, \ \ \pa_r S_1(r) \lesssim \frac{r}{1+r^2}.
\eee
 \begin{lemma}[On the direction $\boldsymbol{\Phi}_M$]
 \label{direcitonroth}
 Given $M\geq M_0>1$ large enough, we define the directions: $\Pi$
 \be
 \label{defphimzero}
{\bf \Phi}_{0,M} = \begin{array} {|c} \Phi_{0,M} \\  \Pi_{0,M} \end{array}=\begin{array} {|c}\chi_Mr^2 \\  -4 \int_0^r \frac{\log{(1+\tau^2)}}{\tau} \chi_M d\tau \end{array}, 
 \ee
 \be
\label{defphim}
{\bf \Phi}_M(y)={\bf \Phi}_{0,M} +c_M \mathcal L^*({\bf \Phi}_{0,M}), \ \ c_M=-\frac{\left<{\bf \Phi}_{0,M},{\bf T_1}\right>}{\left< {\bf \Phi}_{0,M},{\bf \Lambda Q}\right>},
\ee
Then:\\
{\it (i) Estimate on ${\bf \Phi}_{M}$}:
\be
\label{estphim}
{\bf \Phi_M(r)}={\bf \Phi}_{0,M} -\chi_M \begin{array}{|c}4c_M\\ 0 \end{array}+ \frac{M^2}{\log M}O\left( \ \begin{array}{|c} {\bf 1}_{M\leq r\leq 2M} \\\frac{1}{1+r^2}{\bf 1}_{r\leq 2M}+  \frac{\log M}{M^2}{\bf 1}_{M\leq r\leq 2M}\end{array}\right) ,
\ee
\be
\label{orthophim}
\left<{\bf \Phi}_{M},{\bf T_1}\right>=0, \ \ \left< {\bf \Phi}_{M},{\bf \Lambda Q}\right>=-(32\pi) \log M+O_{M\to +\infty(1)},
\ee
{\it (ii) Estimate on scalar products}: $\forall(\e,\eta)$ such that $(\e, \nabla \eta) \in L^1\times L^1\left(\frac{dr}{1+r^3} \right)$, 
\be
\label{estimationorthobis}
|\left<\boldsymbol \e,\bf \Phi_M \right>| \lesssim  \int_{r\leq 2M} \left\{(1+r^2)|\e|+ \frac{|\pa_r \eta| \log(1+r^2)}{r} \right\}+\frac{M^2}{\log M}\left[|(\e,1)|+\int_{r\geq M}|\e|\right],
\ee
\be
\label{bjebbeibei}
|\left<\boldsymbol \e,\mathcal L^*\bf\Phi_{0,M}\right>|\lesssim \int_{r\leq 2M}|\e| + \log M \int_{M\leq r\leq 2M}\frac{|\pa_r \eta|}{1+r^3} ,
\ee
\be
\label{newestimate}
\left|(\e,\mathcal L^*\Phi_M)\right|\lesssim  \int_{r\leq 2M}|\e|+ \log M \int_{M\leq r\leq 2M}\frac{|\pa_r \eta|}{1+r^3} + \frac{M^2}{\log M}\int_{r\geq M} \frac{|\e|}{1+r^2},
\ee
\be
\label{estfonamentalebus}
 \left|(\e,\mathcal L^*\Phi_M)+4(\e,1)\right|+\left|(\e,\mathcal L^*\Phi_{0,M})+4 (\e,1)\right|\lesssim \int_{r\geq M}|\e|+ \log M \int_{M\leq r\leq 2M}\frac{|\pa_r \eta|}{1+r^3}
\ee
{\it (ii) Rough bounds}: if moreover ${\boldsymbol \e} \in X_Q$:
\be
\label{firstroughbound}
|\left<\boldsymbol \e,\bf \Phi_M \right>|  + |\left<\boldsymbol \e,\bf \Phi_{0,M} \right>| \lesssim M \|\boldsymbol \e\|_{X_Q}^2 + \frac{M^2}{\log M} \left| \int \e \right| 
\ee
\be
\label{secondroughbound}
\left|(\e,\mathcal L^*\Phi_M)\right| \lesssim M \left( \|\e\|_{L^2} +\left \| \frac{\pa_r \eta}{1+r^3} \right\|_{L^2} \right).
\ee
\end{lemma}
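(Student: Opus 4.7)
The strategy is to view ${\bf \Phi}_{0,M}$ as a cutoff version of the exact kernel element of $\mathcal{L}^*$ provided by \fref{nveovneneon}. On $\{r\le M\}$ where $\chi_M\equiv 1$, $\mathcal{L}^*{\bf \Phi}_{0,M}$ equals $(-4,0)^T$ pointwise, while on $\{r\ge 2M\}$ it vanishes; the errors are localized on the transition annulus $\{M\le r\le 2M\}$, plus a non-local tail coming from the Poisson operator in the second component of $\mathcal M$. The coefficient $c_M$ is then tuned so as to enforce $\left<{\bf \Phi}_M,{\bf T_1}\right>=0$, while $\left<{\bf \Phi}_M,{\bf \Lambda Q}\right>$ remains unchanged thanks to $\mathcal{L}{\bf \Lambda Q}=0$.

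Concretely, I would first establish a pointwise expansion of $\mathcal{L}^*{\bf \Phi}_{0,M}$ by Leibniz. Writing $\nabla\cdot(Q\nabla(\chi_M r^2))=2\chi_M\Lambda Q+E_1$ and $\Delta\Pi_{0,M}=-8\chi_M/(1+r^2)+E_2$ with $E_1,E_2$ supported on $\{M\le r\le 2M\}$, and using the identities $2\Lambda Q/Q-8/(1+r^2)=-4$ and $-8/(1+r^2)=2\phi_{\Lambda Q}$ (which encode \fref{nveovneneon} via the explicit form $\phi_{\Lambda Q}=-4/(1+r^2)$), one obtains $\mathcal{L}^*{\bf \Phi}_{0,M}=-4\chi_M(1,0)^T+R$, where $R$ is an explicit error involving $E_1$, $E_2$, and the non-local remainders $\phi_{\chi_M\Lambda Q}-\chi_M\phi_{\Lambda Q}$ and $\phi_{E_1}$. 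Substituting into ${\bf \Phi}_M={\bf \Phi}_{0,M}+c_M\mathcal{L}^*{\bf \Phi}_{0,M}$ yields the representation \fref{estphim}. For \fref{orthophim}, adjunction \fref{biebibvei} combined with $\mathcal{L}{\bf \Lambda Q}=0$ gives $\left<{\bf \Phi}_M,{\bf \Lambda Q}\right>=\left<{\bf \Phi}_{0,M},{\bf \Lambda Q}\right>$, and combined with $\mathcal{L}{\bf T}_1={\bf \Lambda Q}$ gives $\left<{\bf \Phi}_M,{\bf T_1}\right>=\left<{\bf \Phi}_{0,M},{\bf T_1}\right>+c_M\left<{\bf \Phi}_{0,M},{\bf \Lambda Q}\right>=0$ by definition of $c_M$. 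The $-32\pi\log M+O(1)$ asymptotic follows by direct computation of $\left<{\bf \Phi}_{0,M},{\bf \Lambda Q}\right>$ using the explicit formulas for $\Lambda Q$ and $\phi_{\Lambda Q}$, the logarithmic divergence coming from $r^2\Lambda Q\cdot r\sim-16/r$ integrated up to the cutoff scale $M$.

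The scalar product estimates in (ii) then follow by inserting these representations into $\left<{\boldsymbol \e},{\bf \Phi}_M\right>$ and $\left<{\boldsymbol \e},\mathcal{L}^*{\bf \Phi}_M\right>$ and bounding each piece. The leading contribution $-4\chi_M(1,0)^T$ gives $-4(\e,\chi_M)$, equal to $-4(\e,1)$ up to the tail $\int_{r\ge M}|\e|$, which produces \fref{estfonamentalebus}; the annular cutoff pieces $E_1/Q$, $E_2$, and the Poisson remainders give the $\int_{M\le r\le 2M}|\partial_r\eta|/(1+r^3)$ and $(M^2/\log M)\int_{r\ge M}|\e|/(1+r^2)$ contributions in \fref{bjebbeibei}--\fref{newestimate}; and the growth piece of ${\bf \Phi}_{0,M}$ itself, together with $|\partial_r\Pi_{0,M}|\lesssim\log(1+r^2)/r\cdot{\bf 1}_{r\le 2M}$, provides the weights in \fref{estimationorthobis}. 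The rough bounds \fref{firstroughbound}--\fref{secondroughbound} follow from Cauchy--Schwarz in $X_Q$ against ${\bf \Phi}_{0,M}$, weighted by $Q$ and $|\nabla\cdot|^2$ respectively, combined with the vanishing average \fref{estvooee} to extract the $\int\e$ correction needed to reduce to a zero-mass test function.

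The main technical obstacle is the non-local Poisson contribution $\phi_{E_1}$ in the second component of $\mathcal{L}^*{\bf \Phi}_{0,M}$, since the Poisson operator does not preserve the annular support. Obtaining a pointwise bound on $\phi_{E_1}$ compatible with the weights in \fref{estphim} requires exploiting the near-cancellation $\int E_1=-2\int\chi_M\Lambda Q$, which together with $\int\Lambda Q=0$ and the $1/r^4$ decay of $\Lambda Q$ forces $|\int E_1|\lesssim 1/M^2$, and similarly for the commutator $\phi_{\chi_M\Lambda Q}-\chi_M\phi_{\Lambda Q}$. Once these weighted pointwise estimates on the non-local errors are established, the remainder of the proof is a careful but routine bookkeeping of scalar products following the structure \fref{estphim}.
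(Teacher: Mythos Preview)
Your proposal is correct and follows essentially the same approach as the paper. The one place where the paper is more direct is the Poisson term you flag as the main obstacle: since $r_{1,M}:=\nabla\cdot(Q\nabla(\chi_M r^2))$ is an exact divergence with $\int r_{1,M}=0$, one has $\partial_r\phi_{r_{1,M}}=Q\,\partial_r(\chi_M r^2)$ immediately, giving $|\phi_{r_{1,M}}(r)|=\left|\int_r^\infty Q\,\partial_\tau(\chi_M\tau^2)\,d\tau\right|\lesssim(1+r^2)^{-1}{\bf 1}_{r\le 2M}$ and the key derivative cancellation $|\partial_r((\mathcal L^*)^{(2)}{\bf \Phi}_{0,M})|\lesssim(\log M)M^{-3}{\bf 1}_{M\le r\le 2M}$ (via $\partial_r\phi_{\Lambda Q}=rQ$) in one stroke, with no need to split off and estimate $\phi_{E_1}$ or the commutator $\phi_{\chi_M\Lambda Q}-\chi_M\phi_{\Lambda Q}$ separately.
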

\begin{proof}
{\bf Step 1 :} Proof of estimate on ${\bf \Phi}_{M}$.
\\ \\
We begin to compute $\mathcal L^*({\bf \Phi}_{0,M})$. We let:
\be
\mathcal L^* \  \begin{array} {|c} \Phi_{0,M} \\  \Pi_{0,M} \end{array} = \mathcal M \  \begin{array} {|c} r_{1,M} \\  r_{2,M} \end{array}, \ \mbox{with} Ê\ \begin{array} {|c} r_{1,M} \\  r_{2,M} \end{array} = \begin{array} {|c} \nabla.(Q \nabla (\Phi_{0,M})) \\  \Delta \Pi_{0,M} \end{array}.
\ee
Hence,
\be
\label{calculr1r2}
 \begin{array} {|c} r_{1,M} \\  r_{2,M} \end{array} = 2{\bf  \Lambda Q} \chi_M +  \begin{array} {|c} (5rQ + r^2Q')\chi'_M + r^2Q\chi_M'' \\  -4\frac{\log{(1+r^2)}}{r}\chi'_M \end{array}
\ee
Moreover, as $\int r_{1,M} = \int \nabla.(Q \nabla (\Phi_{0,M})) = 0$, we have the following expression for the Poisson field of $r_{1,M}$:
\be
\label{champr1}
\left|\phi_{r_{1,M}}(r)\right|=\left|\int_{r}^{+\infty}Q\pa_r(\chi_Mr^2)dr\right|\lesssim\frac{1}{1+r^2} {\bf 1}_{r\leq 2M}.
\ee
By definition
\bee
\mathcal L^* \  \begin{array} {|c} \Phi_{0,M} \\  \Pi_{0,M} \end{array} =  \begin{array} {|c} \frac{r_{1,M}}{Q} + r_{2,M} \\  r_{2,M} - \phi_{r_{1,M}} \end{array}, 
\eee
and thus, the decay of $Q(r) \lesssim \frac{1}{1+r^4}$ together \fref{calculr1r2} and \fref{champr1} yield:
\be
\label{lamande}
\mathcal L^* \  \begin{array} {|c} \Phi_{0,M} \\  \Pi_{0,M} \end{array} =  \begin{array} {|c} -4\chi_M + O({\bf 1}_{M\leq r \leq 2M}) \\  \frac{1}{1+r^2}O({\bf 1}_{r \leq 2M}) + \frac{\log M}{M^2}O({\bf 1}_{M\leq r \leq 2M})   \end{array}.
\ee
Moreover we have the following cancellation:
\be
\label{nolwenn}
\left| \pa_r \left(( \mathcal L^*)^{(2)} (\Phi_{0,M} , \Pi_{0,M})\right)\right| = \left| \pa_r (r_{2,M}) - \pa_r(\phi_{r_{1,M}}) \right| \lesssim \frac{\log M}{M^3}{\bf 1}_{M \leq y \leq 2M}
\ee
Indeed:
\bee
\pa_r( r_{2,M}) = 2 \pa_r (\phi_{\Lambda Q}) \chi_M + O \left( \frac{\log M}{M^3}{\bf 1}_{M \leq y \leq 2M}\right)
\eee
Moreover
\bee
\pa_r(\phi_{r_{1,M}}) = Q \pa_r (r^2 \chi_M) = 2 rQ \chi_M + O \left( \frac{1}{M^3}{\bf 1}_{M \leq y \leq 2M}\right)
\eee
The identity $ \pa_r (\phi_{\Lambda Q}) = rQ$ concludes the proof of \fref{nolwenn}.
Now, remark that ${\bf \Lambda Q} \in \mathcal E$, and using the Lemma \fref{lemmacontiniuty}, we obtain
\bee
\left< \mathcal L^*{\bf \Phi_{0,M}}, {\bf \Lambda Q}\right> = \left<{\bf \Phi_{0,M}},  \mathcal L {\bf \Lambda Q}\right> =0
\eee
and so
\bee
\left< \bf \Phi_{0,M}, {\bf \Lambda Q}\right> = \left< \bf \Phi_{M}, {\bf \Lambda Q}\right>.
\eee
Now:
\bee
&&\int \chi_M r^2 \Lambda Q(r) = 2\pi \int \chi_M r^2 \pa_r \left( r^2 Q \right) dr = - 2\pi \int r^2Q \pa_r \left(  \chi_M r^2 \right)dr \\
&=& -2\pi \int Qr^3 \chi_M dr + O\left(\frac{2\pi}{M} \int_M^{2M} Qr^4 dr\right) = -32\pi \log M + O(1).
\eee
Moreover
\bee
\int \pa_r {\Pi_{0,M}} \pa_r {\phi_{\Lambda Q}} = \int \frac{16r \log{(1+r^2)}}{(1+r^2)^2} dr < + \infty.
\eee
Finally, we obtain:
\be
\label{calcul32pi}
\left< \bf \Phi_{0,M}, {\bf \Lambda Q}\right> = \left< \bf \Phi_{M}, {\bf \Lambda Q}\right> = -32\pi \log M + O(1).
\ee

The compact support of $\Phi_{0,M}$ and $\pa_r \Pi_{0,M}$ together the decay of $T_1$ and $\pa_r S_1$ easily justify that:
\bee
\left< \mathcal L^*{\bf \Phi_{0,M}}, {\bf T_1}\right> = \left<{\bf \Phi_{0,M}},  \mathcal L {\bf T_1}\right> =\left<{\bf \Phi_{0,M}}, {\bf \Lambda Q}\right>
\eee
This yields \fref{orthophim}. Using $|T_1(r)| \lesssim \frac{1}{1+r^2}$ and $|\pa_r S_1(r) | \lesssim \frac{r}{1+r^2}$, we can compute
 \bee
|\left< {\bf \Phi_{0,M}}, {\bf T_1}\right>| \lesssim \int \frac{r^2}{1+r^2} \chi_M + \int \frac{\log(1+r^2)}{1+r^2} \chi_M \lesssim M^2.
\eee
With \fref{calcul32pi}, we have the upper bound:
\be
\label{jjyy}
|c_M| \lesssim \frac{M^2}{\log M}
\ee
which concludes the proof of (i).
\\ \\
{\bf Step 2 : } Proof of the estimate on the scalar product.
\\ \\
First, using the definition \fref{defphimzero} of the direction ${\bf \Phi_{0,M}}$ :
\be
\label{rrrr}
|\left< \boldsymbol \e , \bf \Phi_{0,M} \right> | \lesssim \int_{y \leq 2M} r^2|\e| + \int_{y \leq 2M} \frac{|\pa_r\eta|(\log(1+r^2))}{r}. 
\ee
Now, using \fref{lamande} and the cancellation \fref{nolwenn} :
\bea
\nonumber \left< \boldsymbol \e , \mathcal L^*\bf \Phi_{0,M} \right> &=& \int \e \left(  -4\chi_M + O({\bf 1}_{M\leq r \leq 2M})\right) + O \left(\int \nabla \eta \frac{\log M}{M^3}{\bf 1}_{M \leq y \leq 2M} \right)\\
\label{rrrrr}&=& -4 \int \e + O \left(\int_{y \geq M} |\e| + \log M \int_M^{2M}  \frac{|\nabla \eta|}{1+y^3} \right)
\eea
This result together \fref{rrrr}, the definition \fref{defphim} of the direction  ${\bf \Phi_{M}}$, and the bound \fref{jjyy} yields \fref{estimationorthobis} and \fref{bjebbeibei}.
To finish the proof of the Lemma \ref{direcitonroth}, we must estimate $(\mathcal L^*)^2 ({\bf \Phi_M})$. Let
\bee
(\mathcal L^*)^2 ({\bf \Phi_M}) = \mathcal M  \ \begin{array}{|c} r_3 \\ r_4 \end{array} = \mathcal M \  \begin{array}{|c} \nabla.(Q \nabla (\mathcal M^{(1)}(r_1,r_2))) \\ \Delta(\mathcal M^{(2)}(r_1,r_2))  \end{array}
\eee
With \fref{lamande} and \fref{nolwenn}, we obtain the following bound :
\bee
|r_3| &\lesssim&  \frac{1}{1+y^6}{\bf 1}_{M \leq y \leq 2M} \\
|r_4| &\lesssim& \frac{\log M}{M^4} {\bf 1}_{M \leq y \leq 2M}
\eee
and thus, using that $\pa_r \left(\phi_{\nabla.(Q \nabla (\mathcal M^{(1)}(r_1,r_2)))}\right) = Q \pa_r (\mathcal M^{(1)}(r_1,r_2))$
\bee
|\mathcal M^{(1)}(r_3,r_4)| &\lesssim& \frac{1}{1+y^2}{\bf 1}_{M \leq y \leq 2M} \\
\left|\pa_r \left(\mathcal M^{(2)}(r_3,r_4)\right)\right| &\lesssim& \frac{\log M}{M^5} {\bf 1}_{M \leq y \leq 2M}
\eee
Hence,
\bee
\left|\left< \boldsymbol \e, (\mathcal L^*)^2 ({\bf \Phi_M})\right>\right| \lesssim \int_{M \leq y \leq 2M} \frac{|\e|}{1+r^2} + \frac{\log M}{M^2}\int_{M \leq y \leq 2M} \frac{|\pa_r \eta|}{1+y^3}
\eee
With this bound, we obtain any difficulty \fref{newestimate} and \fref{estfonamentalebus}. \fref{firstroughbound} and \fref{secondroughbound} respectively come from \fref{estimationorthobis} and \fref{newestimate} using Cauchy-Schwarz. This concludes the proof of the Lemma \ref{direcitonroth}
\end{proof}
We are now in position to derive the fundamental coercivity property described in the following Proposition at the heart of our analysis. Moreover, we track the M dependence of constants which is crucial for the derivation of the blow-up speed. 
\begin{proposition}[Coercivity of $\mathcal L $]
\label{interpolationhtwo}
There exist universal constants $\delta_0,M_0>0$ such that $\forall M\geq M_0$, there exists $\delta(M)>0$ such that the following holds. Let $\boldsymbol \e\in \mathcal E$ with 
satisfying the following orthogonality conditions:
\be
\label{orthowappendix}
\left<\boldsymbol \e,\boldsymbol \Phi_M \right> = \left<\boldsymbol \e, \mathcal L^*\boldsymbol\Phi_M \right> =0.
\ee
Then there hold the bounds:\\
(i) Control of $\mathcal L(\e, \eta)$: 
\be
\label{coerclwotht}
\left<\mathcal M\mathcal L (\e,\eta),\mathcal L(\e,\eta) \right> \geq \frac{\delta_0(\log M)^2}{M^2} \| \mathcal L(\e,\eta)\|_{X_Q}^2,
\ee
(ii) Coercivity of $\mathcal L$: 
\bea
\label{contorlcoerc}
\nonumber \frac{1}{\delta(M)} \|\mathcal L(\e,\eta)\|_{X_Q}^2& \geq & \int (1+r^4)|\Delta \e|^2+\int(1+r^2)|\nabla \e|^2+\int \e^2\\
 \nonumber& + & \int\frac{|\nabla \phi_\e|^2}{r^2(1+|\log r|)^2} + \int |\nabla(\Delta \eta)|^2 \\
&+& \int\frac{|\Delta \eta|^2}{r^2(1+|\log r|)^2}  +\int\frac{|\nabla \eta|^2}{r^2(1+r^2)(1+|\log r|)^2} 
\eea
\end{proposition}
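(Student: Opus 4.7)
The proposition splits into a quantitative coercivity (i) of $\mathcal L$ with the degenerate constant $(\log M)^2/M^2$, and an elliptic regularity estimate (ii) that converts $\|\mathcal L\boldsymbol \e\|_{X_Q}^2$ into full weighted Sobolev control of $(\e,\eta)$. The plan is to prove (i) by transplanting the subcoercivity of $\mathcal M$ up to $\mathbf u=\mathcal L\boldsymbol \e$ and absorbing the single kernel direction $\boldsymbol \Lambda Q$ through the orthogonality conditions, then to deduce (ii) by an elliptic bootstrap.

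\textit{Coercivity (i).} First check admissibility: $\mathcal L\boldsymbol \e\in X_Q$ by Lemma \ref{lemmacontiniuty} (i), and the vanishing average $\int\mathcal L^{(1)}\boldsymbol \e=0$ is exactly \eqref{estvooee}. Proposition \ref{propenergy} applied to $\mathbf u=\mathcal L\boldsymbol \e$ then yields
$$\langle\mathcal M\mathcal L\boldsymbol \e,\mathcal L\boldsymbol \e\rangle\geq\delta_0\|\mathcal L\boldsymbol \e\|_{X_Q}^2-\frac{1}{\delta_0}\langle\mathcal L\boldsymbol \e,\boldsymbol \Lambda Q\rangle^2.$$
Adjunction \eqref{biebibvei} combined with the second orthogonality condition gives $\langle\mathcal L\boldsymbol \e,\boldsymbol \Phi_M\rangle=\langle\boldsymbol \e,\mathcal L^*\boldsymbol \Phi_M\rangle=0$, so one may freely subtract any multiple $\alpha_M\boldsymbol \Phi_M$ from $\boldsymbol \Lambda Q$ when estimating the bad term. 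Using $\langle\boldsymbol \Phi_M,\boldsymbol \Lambda Q\rangle=-32\pi\log M+O(1)$ from \eqref{orthophim} and the pointwise representation \eqref{estphim}, pick $\alpha_M\sim(\log M)^{-1}$ optimally so that the residual $\mathbf R_M=\boldsymbol \Lambda Q-\alpha_M\boldsymbol \Phi_M$ satisfies the sharp Cauchy--Schwarz bound $|\langle\mathcal L\boldsymbol \e,\mathbf R_M\rangle|^2\leq\delta_0^2\bigl(1-\delta_0(\log M)^2/M^2\bigr)\|\mathcal L\boldsymbol \e\|_{X_Q}^2$. Substituting this into the previous display proves \eqref{coerclwotht}.

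\textit{Interpolation (ii).} Expanding the two components of $\mathcal L$ one has
$$\mathcal L^{(1)}\boldsymbol \e=\Delta\e+\nabla\cdot(\e\nabla\phi_Q)+\nabla\cdot(Q\nabla\eta),\qquad\mathcal L^{(2)}\boldsymbol \e=\Delta\eta-\e.$$
Control of $\|\nabla\mathcal L^{(2)}\boldsymbol \e\|_{L^2}$ and a bootstrap on $\e$ yield the $\|\eta\|_N$ bounds; the logarithmic weights on $\Delta\eta$ and $\nabla\eta$ arise from the slow decay of the Poisson field $\phi_\e$ and are recovered by weighted Hardy inequalities of the type collected in Lemma \ref{lemmainterpolation}. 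Substituting the $\eta$ bounds back into $\mathcal L^{(1)}\boldsymbol \e$, the coefficients $\nabla\phi_Q,Q,\nabla Q$ decay fast enough that a weighted elliptic argument in $L^2_{1/Q}$ recovers the $(1+r^2)\Delta\e$, $(1+r)\nabla\e$ and $\e$ pieces; the $L^2$ bound on $\e$ needs the zero-mode condition $\int\e\simeq 0$, which follows from $\langle\boldsymbol \e,\mathcal L^*\boldsymbol \Phi_M\rangle=0$ together with \eqref{estfonamentalebus}.

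\textit{Main obstacle.} The technical heart is the quantitative subtraction in (i): pinning down $\alpha_M$ and estimating $\mathbf R_M$ in the correct dual pairing with a constant strictly less than $\delta_0$ and with the sharp $M$-dependence $\log M/M$. This degenerate coercivity constant is precisely what will drive the universal blow-up speed \eqref{rate}, and explains the tight tracking of $M$ throughout.
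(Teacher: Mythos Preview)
There is a genuine gap in your argument for part (i). Inserting Proposition~\ref{propenergy} at $\mathbf u=\mathcal L\boldsymbol\e$ and using $\langle\mathcal L\boldsymbol\e,\boldsymbol\Phi_M\rangle=0$ to replace $\boldsymbol\Lambda Q$ by $\mathbf R_M=\boldsymbol\Lambda Q-\alpha_M\boldsymbol\Phi_M$ is fine, but the claimed ``sharp Cauchy--Schwarz bound''
\[
|\langle\mathcal L\boldsymbol\e,\mathbf R_M\rangle|^2\le\delta_0^2\bigl(1-\delta_0(\log M)^2/M^2\bigr)\|\mathcal L\boldsymbol\e\|_{X_Q}^2
\]
cannot be obtained by this route. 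The constant $\delta_0$ in Proposition~\ref{propenergy} is an abstract number from a compactness argument; it bears no a priori relation to the $X_Q$-dual norm of $\boldsymbol\Lambda Q$ or of any $\mathbf R_M$. Concretely, $\int Q|\Lambda Q|^2$ is a fixed positive constant, and subtracting multiples of $\boldsymbol\Phi_M$ (whose first component satisfies $\int Q|\Phi_M^{(1)}|^2\sim M^4/(\log M)^2$) does not drive the dual norm below $\delta_0$. No choice of $\alpha_M$ rescues this.

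The paper proceeds differently: project off $\boldsymbol\Lambda Q$ first, writing $\tilde{\boldsymbol\e}_2=\boldsymbol\e_2-a_1\boldsymbol\Lambda Q$ with $\langle\tilde{\boldsymbol\e}_2,\boldsymbol\Lambda Q\rangle=0$. Because $\mathcal M\boldsymbol\Lambda Q=(-2,0)^T$ and $\int\e_2=0$, one has the \emph{exact} identity $\langle\mathcal M\boldsymbol\e_2,\boldsymbol\e_2\rangle=\langle\mathcal M\tilde{\boldsymbol\e}_2,\tilde{\boldsymbol\e}_2\rangle$ (no loss at all here). Then the strict coercivity of Proposition~\ref{propenergy} applies to $\tilde{\boldsymbol\e}_2$, giving $\langle\mathcal M\tilde{\boldsymbol\e}_2,\tilde{\boldsymbol\e}_2\rangle\ge\delta_0\|\tilde{\boldsymbol\e}_2\|_{X_Q}^2$. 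The $M$-dependence enters only in the final step: from $\langle\boldsymbol\e_2,\boldsymbol\Phi_M\rangle=0$ one gets $|a_1|=|\langle\tilde{\boldsymbol\e}_2,\boldsymbol\Phi_M\rangle|/|\langle\boldsymbol\Lambda Q,\boldsymbol\Phi_M\rangle|\lesssim (M/\log M)\|\tilde{\boldsymbol\e}_2\|_{X_Q}$ via \eqref{firstroughbound} and \eqref{orthophim}, hence $\|\boldsymbol\e_2\|_{X_Q}\lesssim (M/\log M)\|\tilde{\boldsymbol\e}_2\|_{X_Q}$. The sharp constant $(\log M)^2/M^2$ is a norm comparison, not an absorption of the bad term.

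For part (ii) your ``elliptic bootstrap'' is too optimistic. The paper does derive a subcoercivity lower bound for $\|\mathcal L\boldsymbol\e\|_{X_Q}^2$ via a Pohozaev integration by parts on the first component (this is where the precise decay of $\phi_Q$ and $Q$ is used) together with the Hardy inequalities of Lemma~\ref{weightedhardy}; but this bound contains negative lower-order terms that cannot be absorbed directly. These are removed by a contradiction/compactness argument: normalize a violating sequence, extract a weak $H^2_{\rm loc}\times H^3_{\rm loc}$ limit, identify it in the kernel via Lemma~\ref{lemmakernel}, and contradict the orthogonality~\eqref{orthowappendix}. Your claim that the $L^2$ bound on $\e$ is driven by ``$\int\e\simeq 0$ via \eqref{estfonamentalebus}'' is not how this works; the $\int\e^2$ control falls out of the compactness step, not from a mass condition.
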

\begin{proof}
{\bf Step 1 : } Control of $\mathcal L\boldsymbol \e$.
\\ \\
Let $\begin{array} {|c} \e \\ \eta \end{array} \in \mathcal E$. Let
$$\begin{array} {|c} \e_2 \\ \eta_2 \end{array} = \mathcal L \ Ê\begin{array} {|c} \e \\ \eta \end{array} = \begin{array} {|c} \nabla.(\nabla \e + \nabla \phi_Q \e + \nabla \eta) \\ \Delta \eta - \e \end{array}. $$
The definition of the energy space $\mathcal E$, and the vanishing average \fref{estvooee} of the Lemma \ref{lemmacontiniuty} assure that $\boldsymbol \e_2 \in X_Q$. Moreover, from the choice of orthogonality conditions \fref{coerclwotht}, and the adjunction \fref{biebibvei} :
\bee
\left<\boldsymbol \e_2,\Phi_M \right> = \left<\mathcal L\boldsymbol \e,\Phi_M \right> = \left<\boldsymbol \e, \mathcal L^*\Phi_M \right> =0.
\eee
In order to know precisely the M-dependence of the constants, we use the following suitable function:
\be
\begin{array} {|c} \et_2 \\ \tilde \eta_2 \end{array} = \begin{array} {|c} \e_2 \\ \eta_2 \end{array} - a_1 {\bf \Lambda Q} 
\ee
with
\be
 a_1=\frac{\left< \boldsymbol \e_2, \bf \Lambda Q \right>}{\left< \bf \Lambda Q, \bf \Lambda Q \right> }
\ee
which yields :
\be
\left< \boldsymbol \et_2, {\bf \Lambda Q}\right> = 0.
\ee
Furthermore, the cancellation
\bee
\int \Lambda Q = 0
\eee
together the definition of $ \boldsymbol \et_2$ yield that $ \boldsymbol \et_2 \in X_Q$. Thus, we can apply the Lemma \ref{propenergy}, and obtain the bound:
\be
\label{ppppaaaa}
\left< \mathcal M \boldsymbol \et_2, \boldsymbol \et_2 \right> \geq \delta_0  \|\boldsymbol \et_2\|_{X_Q}^2
\ee
Next 
\be
\label{gtgtgtgtgt}
\left< \mathcal M \boldsymbol \e_2, {\bf \Lambda Q} \right> = \left<\boldsymbol \e_2, \mathcal M {\bf \Lambda Q} \right> = 0
\ee
and
\be
\label{gtgtgtgtgtgt}
\left< \mathcal M {\bf \Lambda Q}, {\bf \Lambda Q} \right> = 0.
\ee
Thus
\bea
\nonumber\left< \mathcal M \boldsymbol \et_2, \boldsymbol \et_2 \right> &=& \left< \mathcal M \boldsymbol \e_2 -a_1 \mathcal M {\bf \Lambda Q}, \boldsymbol \e_2 -a_1{\bf \Lambda Q} \right> \\
\nonumber &=& \left< \mathcal M \boldsymbol \e_2, \boldsymbol \e_2 \right> - 2a_1 \left< \mathcal M \boldsymbol \e_2, {\bf \Lambda Q} \right> + a_1^2 \left< \mathcal M {\bf \Lambda Q},{\bf \Lambda Q} \right>\\
\label{pppppaaaaa}  &=& \left< \mathcal M \boldsymbol \e_2, \boldsymbol \e_2 \right>
\eea
Finally, we use the orthogonality condition on $\boldsymbol \e_2$, and the bound \fref{firstroughbound} to estimate:
\bee
|a_1|&=& \left|\frac{\left<\boldsymbol \et_2,\Phi_M \right> }{\left<{\bf \Lambda Q},\Phi_M \right> }\right| \lesssim \frac{1}{|\log M|}\left[M \| \boldsymbol \et_2\|_{X_Q}+\frac{M^2}{\log M}\left|\int \et_2\right| \right] \\
& \lesssim & \frac{M}{\log M}\| \boldsymbol \et_2\|_{X_Q},
\eee
and thus 
\be
\label{cenneoneoen}
\| \boldsymbol \e_2\|_{X_Q}\lesssim \frac{M}{\log M}\| \boldsymbol \et_2\|_{X_Q}
\ee
which together \fref{ppppaaaa} and \fref{pppppaaaaa} concludes the proof of \fref{coerclwotht}.
\\ \\
{\bf Step 2 :} Subcoercivity of $\mathcal L$
\\ \\
In order to prove \fref{contorlcoerc}, we begin to prove the subcoercivity estimate, which is describe in the following inequality, based on two dimensional Hardy inequalities and a very good knowledge of the structure of the operator. We will be then in position to prove the coercivity of $\mathcal L$ under orthogonality conditions \fref{orthowappendix} from a compactness argument. Let $\boldsymbol \e \in X_Q$. Then:
\bea
\label{subcerovitybis}
\nonumber&&  \int\frac{\left(\mathcal L^{(1)}(\e,\eta)\right)^2}{Q} + \int \left| \nabla \mathcal L^{(2)}(\e,\eta)\right|^2  \\
 \nonumber & \gtrsim & \int (1+r^4)(\Delta \e)^2+\int\left[\frac{1}{r^2(1+|\log r|)^2}+r^2\right]|\nabla \e|^2 + \int \e^2 + \int |\nabla (\Delta \eta)|^2 \\
\nonumber &+& \int\frac{|\Delta \eta|^2}{r^2(1+|\log r|)^2} 
   +\int\frac{|\nabla \eta|^2}{r^2(1+r^2)(1+|\log r|)^2} + \int\frac{ \eta^2}{(1+r^6)(1+|\log r|)^2} \\
&  - &\int\frac{\e^2}{1+r^2}-\int |\nabla\e|^2 -  \int\frac{|\Delta \eta|^2}{1+r^4} - \int\frac{|\nabla \eta|^2}{1+r^6}.
\eea
{\it Proof of \fref{subcerovitybis} :} \\
Using the explicit expression of $\mathcal L^{(1)}$ :
\bea
\label{cnoneoneoc}
\nonumber \int \frac{\left|\mathcal L^{(1)}(\e,\eta)\right|^2}{Q}&\gtrsim& \int\frac{(\Delta \e+\nabla \phi_Q\cdot\nabla \e)^2 }{Q}-\int\frac{|\nabla Q|^2}{Q}|\nabla \eta|^2-\int Q(\e^2+|\Delta \eta|^2) \\
& \gtrsim & \int\frac{(\Delta \e+\nabla \phi_Q\cdot\nabla \e)^2 }{Q}-\int \frac{\e^2 + |\Delta\eta|^2}{1+r^4} - \int\frac{|\nabla \eta|^2}{1+r^6}.
\eea
The first term of RHS requires a carefully compute. Let's develop it:
\bee
\int\frac{(\Delta \e+\nabla \phi_Q\cdot\nabla \e)^2 }{Q}=\int\frac{(\Delta \e)^2}{Q}+\frac{(\pa_r\phi_Q\pa_r\e)^2 }{Q}+\int\frac{2}{Q}\Delta \e\nabla\phi_Q\cdot \nabla \e.
\eee
Now we observe that : $$\frac{2\nabla\phi_Q}{Q}=-2\frac{\nabla Q}{Q^2}=2\nabla \psi, \ \ \psi=\frac{1}{Q}.$$ With the classical Pohozaev integration by parts formula:
\bee
\nonumber &&2\int \Delta \e\pa_r\psi\pa_r\e=    2 \int \pa_r(r\pa_r\e) \pa_r\psi\pa_r\e dr\\
\nonumber & = & -\int (r\pa_r\e)^2\pa_r\left(\frac{\pa_r\psi}{r}\right)dr =  -\int(\pa_r\e)^2 \left[\pa^2_r\psi-\frac{\pa_r\psi}{r}\right].
\eee
Moreover, we have the following Taylor series for $r\gg 1$: $$\psi(r)=\frac{1}{Q}=\frac{r^4}{8}+O(r^2), \ \ \pa^2_r\psi-\frac{\pa_r\psi}{r}=r^2+O(1),$$ $$\phi_Q'(r)=\frac{1}{r}\int_0^rQ(\tau)\tau d\tau=\frac{4}{r}+O\left(\frac{1}{r^3}\right),\ \ \frac{(\pa_r\phi_Q)^2}{Q}=2r^2+O(1).$$ Thus:
\bee
&&\int\frac{(\Delta \e+\nabla \phi_Q\cdot\nabla \e)^2 }{Q}  \gtrsim \int(1+r^4)(\Delta \e)^2+\int (2r^2-r^2)(\pa_r\e)^2-\int|\nabla\e|^2.
\eee
The above inequality together \fref{cnoneoneoc} yields the lower bound:
\be
\label{HRP}
\int \frac{\left|\mathcal L^{(1)}(\e,\eta)\right|^2}{Q} \gtrsim  \int(1+r^4)(\Delta \e)^2+\int r^2|\nabla \e|^2-\int \frac{\e^2+ |\Delta\eta|^2}{1+r^4}-\int|\nabla\e|^2- \int\frac{|\nabla \eta|^2}{1+r^6}
\ee
Now,
\be
\label{HRP2}
\int \left| \nabla \mathcal L^{(2)}(\e,\eta)\right|^2 \gtrsim  \int |\nabla (\Delta \eta)|^2 -\int|\nabla\e|^2.
\ee
Injecting the Hardy bounds of the Lemma \fref{weightedhardy} in \fref{HRP} and \fref{HRP2} conclude the proof of \fref{subcerovitybis}.
\\ \\
{\bf Step 3 :} Coercivity of $\mathcal L$:
\\ \\
Let's prove now \fref{contorlcoerc}. In fact, we prove it for $(\e,\eta) \in \mathcal C^{\infty}_c(\mathbb R)^2$, such that $\e(0) = \Delta \eta(0)$. Using the continuity of $\mathcal L$  and a standard argument of density, it is enough to obtain \fref{contorlcoerc}. \par
In the same way as the proof of the coercivity of $\mathcal M$, we argue by contradiction. Let a sequence  $\boldsymbol \e_p = (\e_p,\eta_p) \in \mathcal C^{\infty}_c(\mathbb R)^2$ such that :
\begin{itemize}
\item Condition at the origin : $\e_p(0) = \Delta \eta_p(0).$
\item Orthogonality condition : $\left<\boldsymbol \e_p,\Phi_M \right> = \left<\boldsymbol \e_p,L^*\Phi_M \right> =0.$
\item Normalization :
\bee
&&\int (1+r^4)(\Delta \e_p)^2+\int\left[\frac{1}{r^2(1+|\log r|)^2}+r^2\right]|\nabla \e_p|^2+\int \e_p^2 + \int |\nabla (\Delta \eta_p)|^2 \\
&+& \int\frac{|\Delta \eta_p|^2}{r^2(1+|\log r|)^2} + \int\frac{|\nabla \eta_p|^2}{r^2(1+r^2)(1+|\log r|)^2} + \int\frac{ \eta_p^2}{(1+r^6)(1+|\log r|)^2}  =1
\eee
\item Control of the $X_Q$-norm : $\|\boldsymbol \e_p \|^2 \leq \frac 1p.$
\end{itemize}
The normalization condition implies that the sequence $\boldsymbol \e_p$ is uniformly bound in $H^2_{loc} \times H^3_{loc}$. Hence, we may extract up to a subsequence such that it weakly converges in $H^2_{loc} \times H^3_{loc}$ to $(\e_{\infty},\eta_{\infty})$ with the orthogonality conditions :
\be
\label{ttrreezzaaa}
\left<\boldsymbol \e_{\infty},\Phi_M \right> = \left<\boldsymbol \e_{\infty},L^*\Phi_M \right> =0.
\ee
Moreover, the control of the $X_Q$-norm implies that:
\be
\label{hrphrp}
\mathcal L (\e_{\infty},\eta_{\infty}) = \begin{array} {|c} \frac{1}{r} \pa_r \left( r Q \pa_r \left( \frac{\e_{\infty}}{Q} + \eta_{\infty} \right) \right) \\ \Delta \eta_{\infty} - \e_{\infty} \end{array} = \begin{array} {|c} 0 \\ 0 \end{array}.
\ee
As we have for all $ p$, $\phi_{\Delta \eta_p} = \eta_p$, it's clear that  $\phi_{\Delta \eta_{\infty}} = \eta_{\infty}$. Now, \fref{hrphrp} implies that
\be
\label{goretex}
r Q \pa_r \left( \frac{\e_{\infty}}{Q} + \eta_{\infty} \right) = c, \ \ c \in \mathbb R.
\ee
But
\bee
\left| r Q \pa_r \left( \frac{\e_{\infty}}{Q} + \eta_{\infty} \right)\right| \lesssim r |\pa_r \e| + |\e| + \frac{|\pa_r \eta|}{1+r^3}.
\eee
With the normalization condition, and the weakly convergence, we obtain the upper bound :
\bee
&&\int (1+r^4)(\Delta \e_{\infty})^2+\int\left[\frac{1}{r^2(1+|\log r|)^2}+r^2\right]|\nabla \e_{\infty}|^2+\int \e_{\infty}^2 + \int |\nabla (\Delta \eta_{\infty})|^2 \\
&+& \int\frac{|\Delta \eta_{\infty}|^2}{r^2(1+|\log r|)^2} + \int\frac{|\nabla \eta_{\infty}|^2}{r^2(1+r^2)(1+|\log r|)^2} + \int\frac{ \eta_{\infty}^2}{(1+r^6)(1+|\log r|)^2}  \leq 1
\eee
Hence, the constant $c$ of \fref{goretex} equals 0. The generalized kernel \fref{identificationkernel} of the operator $\mathcal M$ of the Lemma \ref{lemmakernel} ensures that $\boldsymbol \e_{\infty} \in Span ({\bf \Lambda Q})$, and the orthogonality conditions \fref{ttrreezzaaa} implies that $\boldsymbol \e_{\infty} =0$.
\par
Now, the control of the $X_Q$-norm together the subcoercivity lower bound \fref{subcerovitybis} yield the non degeneracy :
\bee
\int\frac{\e_{\infty}^2}{1+r^2}+\int |\nabla\e_{\infty}|^2 +  \int\frac{|\Delta \eta_{\infty}|^2}{1+r^4} + \int\frac{|\nabla \eta_{\infty}|^2}{1+r^6} >0.
\eee
which contradicts $\boldsymbol \e_{\infty} =0$. This concludes the proof of the Proposition \ref{interpolationhtwo}.
\end{proof}
\section{Construction of approximate solution}
\label{consconscons}
The purpose of this section is to obtain an approximate blow-up solution of \fref{kps}. More precisely, the constructed solution will contain the main qualitative informations on the dynamics of the singularity formation. We will measure the made error by suitable quantities which will allow us to prove thereafter the smallness of the gap between this approximate solution and the exact solution, in the sense that this gap doesn't perturb the blow-up dynamic found in this section. 
\subsection{On the rescaled variables}
First, we can remark that if $(u,v)$ is solution of \fref{kps}, then $(u,v+c)$ with $c \in \mathbb R$ too. So, before introducing the rescaled variables, the second equation of the system \fref{kps} are being gone through the operator gradient. Hence, \fref{kps} becomes:
\be
\label{PKSP}
(PKS') \left\{ \begin{array} {c}
\pa_t u = \nabla.(\nabla u + u\nabla v), \\
\pa_t \nabla v = \nabla \Delta v - \nabla u, \\
u_{|t=0} = u_0 >0, \\
\nabla v_{|t=0} = \nabla v_0
\end{array} \right. \ \ (t,x )\in (\Bbb R \times \Bbb R^2) 
\ee
Moreover, we have the following scaling invariance: if $(u(t,r), \nabla v (t,r))$ solves \fref{PKSP}, then so does $(u_\l,\nabla v_\l)=\left(\frac 1 {\l^2}u(\frac t {\l^2},\frac r {\l}),\frac 1 {\l} \nabla v (\frac t {\l^2},\frac r {\l})\right)$ for $\l >0$. \par
Now, let $\l(t)$ a regular non negative function. Let the rescaled variables:
\be
s = \int_0^t \frac{d \tau}{\l^2(\tau)}, \ \ y = \frac{r}{\l(t)}
\ee
As we look for a slower blow up than the self similar regime, ie $\l(t) < \sqrt{T-t}$, noticing $T$ the blow-up time. We can remark that $s(t)$ is a bijection between $[0,T[$ and $\mathbb R^+$. With this new variables \fref{PKSP} becomes
 \be \pa_s \ \ \begin{array}{|c}
u\\
\nabla v
\end{array} - \frac{\l_s}{\l} \Lambda \ \ \begin{array}{|c}
u\\
\nabla v
\end{array}
 = \begin{array}{|c}
\nabla.(\nabla u + u\nabla v) \\
\nabla (\Delta v - u). \\
\end{array}
\ee
where we recall that $\Lambda$ is the scaling operator define by:
\be
\label{defLambda}
\Lambda \ \ \begin{array}{|c}
f\\ \nabla g
\end{array} =  \begin{array}{|c}
\Lambda ^{(1)}f\\ \Lambda ^{(2)} \nabla g
\end{array} = \begin{array}{|c}
2f + y.\nabla f\\ \nabla g + y. \nabla^2 g
\end{array}= \begin{array}{|c}
\nabla.(y f)\\ y.\Delta g
\end{array}
\ee
\subsection{Introduction of the main tools}
First, we let 
\be
\label{introb}
b(s) = - \frac{\l_s}{\l}.
\ee
In the blow-up regime described in the Theorem \ref{thmmain}, we can see that $b$ is a very small non negative function. In fact, in the next sections, we relax this constraint to obtain two independent parameters of modulation $b$ and $\l$, which we allow us to fix two orthogonality conditions for the gap between the constructed approximate solution and the exact solution. We saw in the last section the importance of this to have coercivity properties for the operators $\mathcal L$ and $\mathcal M$.\par
Now, we look for an approximate solution ${\bf Q}_b$ of \fref{PKSP} close to $\bf Q$ in the form :
\be
\label{formeQb}
{\bf Q}_b =
  \begin{array}{| c}
     Q_b(r) \\
     P_b(r)
  \end{array} =   {\bf Q} + b \ \ 
    \begin{array}{| c}
     T_1(r) \\
     S_1(r)
  \end{array} + b^2 \ \  
   \begin{array}{ |c}
     T_2(b,r) \\
     S_2(b,r)
  \end{array}
  ={ \bf Q} + {\bf \Upsilon}_b = {\bf Q} +  
    \begin{array}{| c}
     \alpha_b(r) \\
     \gamma_b(r)
  \end{array}
\ee
where ${\bf T}_1$ and ${\bf T}_2$ are profiles independent on rescaled time $s$, which we will determine. The error ${\bf \Psi}_b$ associated to ${\bf Q}_b$ is defined according to the
formula :
 \be
 \label{eqerreurpsib}
 \begin{array}{| c}
    \Psi_b^{(1)} \\
     \nabla  \Psi_b^{(2)}
  \end{array}  =  \begin{array}{ |c}
   \nabla . \left( \nabla Q_b + Q_b\nabla P_b\right) \\
   \nabla (\Delta P_b - Q_b) 
  \end{array} 
- b \Lambda 
  \ \ \begin{array}{ |c}
     Q_b(r) \\
      \nabla P_b(r)
  \end{array} 
    +c_bb^2
\chi_{\frac{B_0}{4}}\begin{array}{|c}T_1 \\  \nabla S_1 \end{array}   \ee
  with $c_b$ given by \fref{defcb}. Remark that $\Psi_b^{(2)}$ is defined to within a function of s, which we don't have to determine. The last term of the RHS comes from the radiation which we will use in the construction to improve the size of ${\bf \Psi}_b$. This term has to be extracted from the error. Indeed, whereas we will can use the inequality of Cauchy-Schwartz for the terms depending on ${\bf \Psi}_b$ in the control of the gap between ${\bf Q}_b$ and the exact solution, we will have to be careful with this last term, and use its particular structure. \par
  Moreover, indicate that the constructed profiles will have pathological growth outside the parabolic zone. Hence, after determining this profiles, we must localize them. We will see in particular that the choice $B_1 = \frac{|\log b|}{\sqrt b}$ comes from terms of error due to the localization, which mustn't be bigger than terms of error due to the construction.
\par
  Let's introduce the partial mass associated to $Q_b$
  \be
  m_b(r) = \int_0^r Q_b(\tau) \tau d\tau.
  \ee
  and the mass partial associated to $\Delta P_b$
  \be
  n_b(r) = \int_0^r \Delta P_b(\tau) \tau d\tau.
  \ee 
  Hence, remarking that
  \be
  \nabla \phi_{Q_b}(r) = \frac{m_b}{r}, \ \ Q_b(r) = \frac{m_b'}{r}, \ \  \nabla P_b(r) = \frac{n_b}{r}, \  \mbox{and} \ \ \Delta P_b(r) = \frac{n_b'}{r},
  \ee
 we can rewrite \fref{eqerreurpsib} by
   \be
 \label{eqerreurpsib}
 \begin{array}{| c}
    \Psi_b^{(1)} \\
     \nabla  \Psi_b^{(2)}
  \end{array}  =  
 \frac 1r\begin{array}{| c}
    \Phi_b' \\
  \Omega_b
  \end{array}  
      +c_bb^2
\chi_{\frac{B_0}{4}}\begin{array}{|c}T_1 \\  \nabla S_1 \end{array}   \ee
where
\bea
\label{gepx}
 \begin{array}{| c}
  \Phi_b \\
   \Omega_b
  \end{array} &=& 
  \begin{array}{ |c}
   m''_b - \frac{m'_b}r + \frac{m'_bn_b}{r^2} - brm'_b \\
   (n_b-m_b)'' - \frac{(n_b - m_b)'}{r} - brn'_b
  \end{array} \\
  \nonumber
   &=&  \begin{array}{ |c}
   m''_b - \frac{m'_b}r + \frac{m'_bm_b}{r} + \frac{m'_bd_b}{r} - brm'_b \\
d''_b - \frac{d'_b}r - brn'_b
  \end{array} 
\eea
with
\be
d_b = n_b - m_b.
\ee
We proceed to an expansion on the form :
\be
\label{expansion}
m_b = m_0 + bm_1 + b^2m_2, \ \ n_b = n_0 + bn_1 + b^2n_2, \ \  d_b = d_0 + bd_1 + b^2d_2
\ee
with
\be
 m_0(r) = n_0(r) =  \int_0^r Q(y) y dy = \frac{4r^2}{1+r^2}, \ \ d_0(r) = 0,
\ee
$m_1$ and $m_2$ the partial mass of respectively $T_1$ and $T_2$, $n_1$ and $n_2$ the partial mass of respectively $\Delta S_1$ and $\Delta S_2$. The subject of this section is to determine $m_1$, $m_2$, $n_1$ and $n_2$, in order to minimize in a suitable sense the size of ${\bf \Psi}_b$. So, it will need to find a function $u$ solution of 
\be
L_0 u = -f,
\ee
where $f$ is a given function, and $L_0$ the linearized operator close to $m_0$ of  \fref{gepx} given by:
\bee
L_0 m & = &  -m''+\left(\frac{1}{r}+\frac{Q'}{Q}\right)m'-Qm.\\
\eee
We recall that the basis of solutions to this homogeneous equation is explicit and given by:
 \bee
 \psi_0(r)=\frac{r^2}{(1+r^2)^2}, \ \ \psi_1(r)=\frac{1}{(1+r^2)^2}\left[r^4+4r^2\log r-1\right].
 \eee
 with the Wronskian :
 \be
 \label{defW}
 W=\psi_1'\psi_0-\psi_1\psi_0'=\frac{rQ}{4}=\frac{2r}{(1+r^2)^2}.
 \ee
Hence a solution to $$L_0m=-f$$ can be found by the method of variation of constants:
 \be
 \label{fromulau}
 m=A\psi_0+B\psi_1\ \ \mbox{with}\ \ \left\{\begin{array}{ll}A'\psi_0+B'\psi_1=0,\\ A'\psi'_0+B'\psi_1'=f, \end{array}\right ..
 \ee 
Hence, we obtain
 $$B'=\frac{f\psi_0}{W}=\frac{r}{2}f, \ \ A'=-\frac{f\psi_1}{W}=-\frac{r^4+4r^2\log r-1}{2r}f$$ and a solution is given by:
  \be
 \label{inversionknot}
 m(r)=-\frac12\psi_0(r)\int_0^r\frac{\tau^4+4\tau^2\log \tau-1}{\tau}f(\tau)d\tau+\frac12\psi_1(r)\int_0^r\tau f(\tau)d\tau.
 \ee
 Moreover, we compute
 \be
 \label{estderivatibves}
 \frac{\psi_0'}{r}=\frac{2(1-r^2)}{(1+r^2)^3} = \frac{\Lambda Q}{8}, \ \  \frac{\psi_1'}{r}=\frac{8(1+r^2-(r^2-1)\log r)}{(1+r^2)^3}.
 \ee
and then \fref{fromulau} yields 
\bea
\label{formulamprime}
\frac{m'}{r} & = & A\frac{\psi'_0}{r}+B\frac{\psi'_1}{r}\\
\nonumber & = & -\frac{1-r^2}{(1+r^2)^3}\int_0^r\frac{\tau^4+4\tau^2\log \tau-1}{\tau}f(\tau)d\tau+\frac{4(1+r^2-(r^2-1)\log r)}{(1+r^2)^3}\int_0^r\tau f(\tau)d\tau.
\eea
To conclude this part, we will need inverse an other operator, coming from the second equation :
\be
\label{defL1}
L_1 d = d'' - \frac{d'}{r} = f
\ee
where $f$ is given. $d_0(r) = r^2$  and $d_1(r) = 1$ are a basis of the homogeneous problem. Hence, a solution of \fref{defL1} is given by
\be
\label{solutiond}
d = \frac{1}{2} \left[  - \int_0^r f(\tau) \tau d\tau + r^2 \int_0^r \frac{f(\tau)}{\tau} d\tau \right] + cr^2, \ \ c \in \mathbb R.
\ee
According to the definition of d, we have the constraint $d(0)=0$. Hence, we consider only the solutions of the above form. In fact, $d_1(r)$ corresponds to the singular solution at the origin of $\Delta u=0$. \par
We are now in position to determine the approximate solution :
\begin{proposition}[Construction of the approximate profile]
\label{construction}
Let $M >0$ enough large. Then, there exists a small enough universal constant $b^*(M)$ such that the following holds. Let $b \in ]0,b^*(M)[$, and $B_0$ et $B_1$ given by  \fref{defB0B1}. There exists radial profiles $T_1$, $T_2$, $S_1$ et $S_2$, such that
\be
{\bf Q}_b =
  \begin{array}{| c}
     Q_b(r) \\
     P_b(r)
  \end{array} =   {\bf Q} + b \ \ 
    \begin{array}{| c}
     T_1(r) \\
     S_1(r)
  \end{array} + b^2 \ \  
   \begin{array}{ |c}
     T_2(b,r) \\
     S_2(b,r)
  \end{array}
  ={ \bf Q} + {\bf \Upsilon}_b = {\bf Q} +  
    \begin{array}{| c}
     \alpha_b(r) \\
     \gamma_b(r)
  \end{array}
\ee
is an approximate solution of \fref{kps} in the following sense. Let the error:
 \be
 \label{eqerreurpsib}
{\bf \Psi}_b =  \begin{array}{| c}
    \Psi_b^{(1)} \\
      \Psi_b^{(2)}
  \end{array}  =  \begin{array}{ |c}
   \nabla . \left( \nabla Q_b + Q_b\nabla P_b\right) \\
  \Delta P_b - Q_b 
  \end{array} 
- b \Lambda 
  \ \ \begin{array}{ |c}
     Q_b(r) \\
     P_b(r)
  \end{array} 
    +c_bb^2 \breve {\bf T}
 \ee
  with $c_b$ given by \fref{defcb}, and $\breve {\bf T}$ such that
 \be
 \label{breveT1}
\begin{array}{|c}\breve T^{(1)} \\ \nabla \breve T^{(2)}  \end{array} = \chi_{\frac{B_0}{4}}\begin{array}{|c}T_1 \\ \nabla S_1 \end{array}
 \ee
  Then there holds:\\
{\em (i) Control of the tails}: $ \forall r\geq 0$, $ \forall i\geq 0$:
\bea
\label{esttoneprop}
|r^i\pa^i_rT_1|&\lesssim& \frac{r^2}{1+r^4}, \\
\label{nablaS1taille}
|r^i\pa^i_r\nabla S_1|&\lesssim& \frac{r}{1+r^2},
\eea
and $ \forall r\leq 2B_1$, $ \forall i\geq 0$:
\bea
 \label{esttwo}
 |r^i\pa_r^iT_2|&\lesssim& r^2{\bf 1}_{r\le1} +\frac{1 + |\log (r\sqrt b)|}{|\log b|}{\bf 1}_{1\leq r\leq 6B_0}+ \frac{1}{b^2r^4|\log b|}{\bf 1}_{r\geq 6B_0},\\
\label{estttwodtdb}
|b\pa_br^i\pa_r^iT_2|&\lesssim&\frac{1}{|\log b|}\left[ r^2{\bf 1}_{r\le1} +\frac{ 1+|\log r|}{|\log b|}{\bf 1}_{1\leq r\leq 6B_0}+ \frac{1}{b^2r^4}{\bf 1}_{r\geq 6B_0}\right],\\
 \label{nablaS2par}
 |r^i\pa_r^i\nabla S_2|&\lesssim&  r{\bf 1}_{r\leq1} + r (1+ |\log r|) {\bf 1}_{r\geq1},\\
 \label{pabnablaS2bis}
 |b\pa_b \nabla S_2|&\lesssim& \frac {1}{|\log b|^2} \left\{  r{\bf 1}_{r\leq1} + r (1+ |\log r|^2) {\bf 1}_{r\geq1} \right\}.
 \eea
{\em (ii) Control of the error in weighted norms: for} $i\geq 0$,
\be
\label{roughboundltaow}
\int_{r\leq 2B_1} |r^i\pa_r^i\Psi^{(1)}_b|^2+ \int_{r\leq 2B_1}\frac{|\mathcal L^{(1)}(\Psi^{(1)}_b,\Psi^{(2)}_b)|^2}{Q}\lesssim \frac{b^5}{|\log b|^2},
\ee
\be
\label{roughboundltaow2}
\nonumber  \int_{r\leq 2B_1} \frac{|\nabla {\Psi^{(2)}_b}|^2}{1+\tau^2}+ \int_{r\leq 2B_1} |\mathcal L^{(2)}(\Psi^{(1)}_b,\Psi^{(2)}_b)|^2\lesssim b^5|\log b|^2,
\ee
\be
\label{cneneoneonoenoe}
\int_{r\leq 2B_1} Q|\nabla \mathcal M^{(1)}(\Psi^{(1)}_b,\Psi^{(2)}_b)|^2\lesssim \frac{b^4}{|\log b|^2}.
\ee
\be
\label{roughboundltaow3}
\int_{r\leq 2B_1} |\nabla \Psi_b^{(2)}|^2 \lesssim b^4 |\log b|^6.
\ee
\end{proposition}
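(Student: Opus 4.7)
The plan is to construct the profiles by formal expansion in $b$, solving the resulting linear ODEs order by order using the explicit inversion formulas \fref{inversionknot} and \fref{solutiond} for $L_0$ and $L_1$, then localizing at scale $B_1$ and finally estimating the error by direct substitution.

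\emph{Matching powers of $b$.} Substituting the Ansatz \fref{expansion} for $m_b$, $n_b$, $d_b$ into the formulas \fref{gepx} for $\Phi_b, \Omega_b$ and collecting coefficients, the $b^0$ relation is automatically satisfied because $m_0$ is the partial mass of the stationary ground state and $d_0 = 0$. At order $b^1$, the second component yields an equation of the form $L_1 d_1 = f_1$ with $f_1$ explicitly built from $m_0, m_0'$, and the first component becomes $L_0 m_1 = F_1$ where $F_1$ depends on $m_0$, $d_1$ and the scaling term $r m_0'$. Applying \fref{solutiond} to get $d_1$ (hence $\nabla S_1$), then \fref{inversionknot} to get $m_1$ (hence $T_1 = m_1'/r$), and choosing the integration constants so that the profiles are admissible at the origin, the asymptotic bounds \fref{esttoneprop}-\fref{nablaS1taille} follow from the explicit expressions for $\psi_0, \psi_1$ given in \fref{basislzero} and from the decay of the source $f_1, F_1$.

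\emph{Order $b^2$ and role of $c_b$.} At order $b^2$ the same procedure produces $L_0 m_2 = F_2$ and $L_1 d_2 = f_2$, but now $F_2$ contains contributions of slow decay. Since $\psi_1 \sim r^2 \log r$ is unbounded, applying \fref{inversionknot} naively would produce a profile $m_2$ with logarithmic growth on very large scales, incompatible with \fref{esttwo}. The radiation term $c_b b^2 \breve{\mathbf{T}}$ appearing in the error equation \fref{eqerreurpsib} is designed precisely to absorb this obstruction: $c_b$ (of size $1/|\log b|$, from \fref{defcb}) is adjusted so that the projection of the total forcing onto the $\psi_1$-direction cancels on the region $\{r \le B_0/4\}$. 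The resulting $m_2$ is then controlled on $\{r \le 2B_1\}$ by direct estimation of \fref{inversionknot} using the weights in $F_2$, giving the $|\log b|^{-1}$ and $|\log b|^{-2}$ prefactors in \fref{esttwo}-\fref{pabnablaS2bis}. The $b$-derivative bounds follow by differentiating the explicit formulas with respect to $b$, noting that the only $b$-dependence enters through $B_0 = b^{-1/2}$ in the localization that we apply below.

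\emph{Localization and error estimates.} The formally constructed $T_2, S_2$ grow outside the parabolic zone, so they must be cut off at $B_1 = |\log b|/\sqrt{b}$. The choice of $B_1$ is dictated by the requirement that the localization error (boundary terms supported in the annulus $B_1 \le r \le 2B_1$) match the size of the construction error on $\{r \le B_0\}$: the former carries an extra $b^{-1/2}$ from the cutoff derivative but gains decay from the bound on $T_2$, the latter is $O(b^3)$ weighted. With the pointwise bounds (i) in hand, the weighted error estimates \fref{roughboundltaow}-\fref{roughboundltaow3} reduce to computing $L^2$ norms of explicit functions with known size and support, which can be carried out using \fref{HRP} and the Hardy bounds.

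\emph{Main obstacle.} The technically most delicate step will be the second order construction: tracking the exact $|\log b|$ weights in the error bounds is essential since the universal blow-up speed \fref{rate} is governed precisely by these logarithmic corrections, and any loss in the power of $|\log b|$ in \fref{roughboundltaow} would destroy the subsequent modulation analysis. Closely related is the verification that the constant $c_b$ defined in \fref{defcb} kills the $\psi_1$ resonance without introducing spurious terms that would spoil the adjoint structure needed for the coercivity arguments of Proposition \ref{interpolationhtwo}.
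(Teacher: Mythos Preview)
Your overall strategy (expand in $b$, solve the resulting $L_0$ and $L_1$ equations via \fref{inversionknot} and \fref{solutiond}, then estimate the error by direct substitution) matches the paper, and your treatment of the order-$b$ step is correct. However, your account of the order-$b^2$ step has a genuine gap in the radiation mechanism.

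You write that $c_b$ is ``adjusted so that the projection of the total forcing onto the $\psi_1$-direction cancels on the region $\{r\le B_0/4\}$''. This is not what happens, and simply adding $c_b b^2\breve{\mathbf T}$ to the error would not by itself tame the growth of $m_2$. The paper inserts an entire intermediate object: a \emph{radiation} $\boldsymbol\Sigma_b=(\Sigma_{1,b},\nabla\Sigma_{2,b})$ with partial masses $(m_{\Sigma_b},d_{\Sigma_b})$ solving a tailored equation (see \fref{defradiation}). This radiation is built so that it equals $c_b(T_1,\nabla S_1)$ for $r\le B_0/4$ and equals the growing mode $(4\psi_1'/r,\,4\psi_1/r)$ for $r\ge 6B_0$; the constant $c_b$ is fixed by the \emph{matching} condition \fref{contraintecb} that forces the $\psi_1$-coefficient of $m_{\Sigma_b}$ to be exactly $4$ at infinity (an implicit equation for $c_b$, not a projection on a compact set). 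Only after this does one solve for $(m_2,d_2)$ with source equal to the naive order-$b^2$ forcing \emph{minus} $(m_{\Sigma_b},d_{\Sigma_b})$. It is this subtraction that kills the $\psi_1$-growth and produces the $|\log b|^{-1}$ factors in \fref{esttwo}; without the explicit interpolating radiation, those bounds cannot be obtained.

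This also invalidates your claim that ``the only $b$-dependence enters through $B_0$ in the localization'': $T_2$ depends on $b$ through $\Sigma_b$, which in turn depends on $c_b$ and on cutoffs at scales $B_0/4$ and $3B_0$. The $b$-derivative bounds \fref{estttwodtdb}, \fref{pabnablaS2bis} require differentiating the radiation, not a localization. Finally, the localization at $B_1$ that you describe is the content of Proposition~\ref{localization}, not of the present proposition; the bounds here are stated only for $r\le 2B_1$ precisely because the unlocalized profiles are allowed to grow beyond that scale.
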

\begin{proof}
{\bf Step 1 : } Computation of $(\Phi_b,\Omega_b)$
\\
We recall that :
\bee
 \begin{array}{| c}
  \Phi_b \\
   \Omega_b
  \end{array}
   &=&  \begin{array}{ |c}
   m''_b - \frac{m'_b}r + \frac{m'_bm_b}{r} + \frac{m'_bd_b}{r} - brm'_b \\
d''_b - \frac{d'_b}r - brn'_b
  \end{array} 
\eee
Injecting the expansions of $m_b$ and $d_b$ \fref{expansion}, and the definition of the operators $L_0$ and $L_1$, we obtain
\bea
\nonumber \begin{array}{| c}
  \Phi_b \\
   \Omega_b
  \end{array} &=& b \left\{{\bf L} \ \begin{array} {|c} m_1 \\ d_1 \end{array}- \begin{array}{| c}
   rm'_0 \\
  rn'_0
  \end{array} \right\}+ b^2 \left\{ {\bf L} \ \begin{array} {|c}m_2 \\d_2 \end{array}-\begin{array}{| c}
rm'_1- \frac{m'_1m_1}{r} - \frac{m'_1d_1}{r}\\
rn'_1
  \end{array} \right\} \\
\label{defphib}   &+& b^3 \ \  \begin{array}{| c}
-rm'_2+ \frac{(m_1m_2)'}{r} + \frac{m'_1d_2 + m'_2d_1}{r}\\
-rn'_2
  \end{array} 
+
b^4 \ \  \begin{array}{| c}
\frac{m_2m_2'}{r} + \frac{m'_2d_2}{r}\\
0
  \end{array} 
\eea
where
\be
\label{L}
 {\bf L} \  \begin{array} {|c} m \\ d \end{array} = \begin{array} {|c}- L_0 m + \frac{m'_0}{r} d \\ L_1 d \end{array}
\ee
{\bf Step 1 : } Level $b$
\\ \\
We look for $m_1$ and $d_1$ such that
\bee
{\bf L} \ \begin{array} {|c} m_1 \\ d_1 \end{array} = \begin{array}{| c}
   rm'_0 \\
  rn'_0
  \end{array}
\eee
First with \fref{defL1}, a solution of $L_1d_1 = rn'_0$ is given for $ c \in \mathbb R$ by:
\bee
d_{1,c}(r) &=& \frac 12 \left[ - \int_0^r \frac{8\tau^3d\tau}{(1+\tau^2)^2} + r^2 \int_0^r \frac{8\tau d\tau}{(1+\tau^2)^2}\right] + cr^2 \\
&=& \frac 12 \left[ - \int_0^r \frac{8(\tau^3+ \tau) d\tau}{(1+\tau^2)^2} + (r^2+1) \int_0^r \frac{8\tau d\tau}{(1+\tau^2)^2}\right] + cr^2 \\
&=& - 2 \log (1+r^2) + (2+c)r^2.
\eee
Hence, we select as solution :
\be
\label{defd1}
d_1(r) = d_{1,-2} (r) =  - 2 \log (1+r^2).
\ee
We are in position to determine $m_1$ be the solution to :
\be
L_0m_1 =  - rm'_0 \left(1 - \frac{d_1}{r^2}\right)
\ee
given by
\bee
m_1(r)&=&-4\psi_0(r)\int_0^r\frac{\tau(\tau^4+4\tau^2\log \tau-1)}{(1+\tau^2)^2} \left(1 - \frac{d_1}{\tau^2}\right)d\tau\\
&+&4\psi_1(r)\int_0^r\frac{\tau^3}{(1+\tau^2)^2} \left(1 - \frac{d_1}{\tau^2}\right)d\tau.
\eee
Using the explicit formula :
\be
\int_0^r \frac{\tau^3 d\tau}{(1+\tau^2)^2} = \frac{\log (1+r^2)}{2} + \frac 12 \left( \frac{1}{1+r^2} - 1\right),
\ee
we obtain :
\be
\label{comportementm1}
m_1 = \left \{ \begin{array} {c}
O(r^4) \ \ \mbox{at the origin} \\
2 \log (1+r^2) - 4 + O \left( \frac{|\log r|^2}{r^2}\right) = 4(\log r-1) + O \left( \frac{|\log r|^2}{r^2}\right) \ \ \mbox{as} \ \ r \rightarrow + \infty
\end{array} \right.
\ee
 Hence, using that  $T_1 = \frac{m_1'}{r}$, there holds the behavior at the origin
 \be
 T_1 = O(r^2)
 \ee
and, for r large :
\be
T_1(r) = \frac{4}{r^2} + O \left( \frac{|\log r|^2}{r^4} \right)
\ee
In particular, this yields the bound for $i \geq 0$:
\be
\label{boundT_1}
|r^i\pa_r^i T_1| \lesssim \frac{r^2}{1+r^4}.
\ee
Now, by definition
\be
n_1 = d_1 + m_1
\ee
Hence, with \fref{defd1} and \fref{comportementm1}:
\be
\label{comportementn1}
n_1 = \left \{ \begin{array} {c}
O(r^4) \ \ \mbox{at the origin} \\
-4 + O \left( \frac{|\log r|^2}{r^2}\right)  \ \ \mbox{as} \ \ r \rightarrow + \infty
\end{array} \right.
\ee
and
\be
\label{comportementn1'}
|rn'_1| \lesssim r^4 {\bf 1}_{r \leq 1} + \frac{1+|\log r|^2}{r^2} {\bf 1}_{r \geq 1}.
\ee
Thus, we obtain :
\be
\label{comportementS1}
\nabla S_1 = \frac{n_1}{r}= \left \{ \begin{array} {c}
O(r) \ \ \mbox{at the origin} \\
-\frac{4}{r} + O \left( \frac{|\log r|^2}{r^3}\right) \ \ \mbox{as} \ \ r \rightarrow + \infty.
\end{array} \right.
\ee
{\bf Step 2 :} Construction of the radiation
\\
\\
In this step, we construct the term of radiation which allow us to reduce the growth of the profile $T_2$, outside the parabolic zone ($y \geq B_0$). We will see that the choice of the radiation impose the law of $b$, and thus, the dynamics of the blow-up regime.
\par
We let  the radiation ${\bf \Sigma_b} = \begin{array} {|c} \Sigma_{1,b} \\ \nabla \Sigma_{2,b}  \end{array}$ defined by $(m_{{\bf \Sigma_b}},d_{{\bf \Sigma_b}})$ be the solution of
\be
\label{defradiation}
{\bf L} \ \begin{array} {|c} m_{{\bf \Sigma_b}} \\ d_{{\bf \Sigma_b}} \end{array} = c_b\chi_{\frac{B_0}{4}} \begin{array}{| c}
   rm'_0 \\
  rn'_0
  \end{array} + (1 - \chi_{3B_0}) {\bf L} \ \begin{array} {|c} \beta_{1,b} rm'_0 \\ \beta_{2,b} r^2 + \beta_{3,b} \end{array} 
\ee
with $c_b$ must be determined. We will find $c_b$ such that:
\be
 \label{defcb}
 c_b=\frac{2}{|\log b|}\left[ 1 + O \left( \frac{1}{|\log b|}\right)\right].
 \ee
Let
 \bea
 \label{defdb}
 \beta_{2,b} &=&   \int_0^{+\infty} \frac{\psi_0(\tau)}{\tau} \left (1-\chi_{\frac{B_0}{4}}\right)d\tau = O (b),\\
 \beta_{3,b} &=&  \int_0^{+\infty} \tau \psi_0(\tau) \chi_{\frac{B_0}{4}}d\tau = O(|\log b|).
 \eea
A solution of \fref{defradiation} is given by
\be
d_{{\bf \Sigma_b}} = 4 c_b \left[  - \int_0^r \tau \psi_0(\tau) \chi_{\frac{B_0}{4}}d\tau + r^2 \int_0^r \frac{\psi_0(\tau)}{\tau} \chi_{\frac{B_0}{4}}d\tau - \frac12r^2 + (1-\chi_{3B_0}) \left\{ \beta_{2,b} r^2 + \beta_{3,b} \right\} \right]
\ee
and
\bea
m_{{\bf \Sigma_b}}   & = &-4c_b\psi_0(r)\int_0^r\frac{\tau(\tau^4+4\tau^2\log \tau-1)}{(1+\tau^2)^2}\left(\chi_{\frac{B_0}{4}} - \frac{d_{{\bf \Sigma_b}}}{\tau^2}\right)d\tau \\
\nonumber &+&4c_b\psi_1(r)\int_0^r\frac{\tau^3}{(1+\tau^2)^2}\left(\chi_{\frac{B_0}{4}} - \frac{d_{{\bf \Sigma_b}}}{\tau^2}\right)d\tau + \beta_{1,b}(1-\chi_{3B_0})\psi_0(r).
\eea
with
\bea
 \beta_{1,b} &=& 4c_b\int_0^{+\infty}\frac{\tau(\tau^4+4\tau^2\log \tau-1)}{(1+\tau^2)^2}\left(\chi_{\frac{B_0}{4}} - \frac{d_{{\bf \Sigma_b}}}{\tau^2}\right)d\tau =  O \left( \frac{1}{b|\log b|}\right).
\eea
Look for $c_b$ such that:
\be
\label{contraintecb}
c_b\int_0^{+\infty} \frac{\tau^3}{(1+\tau^2)^2}\left(\chi_{\frac{B_0}{4}} - \frac{d_{{\bf \Sigma_b}}}{\tau^2}\right)d\tau = 1.
\ee
Let
\bee
c_{1,b} &=& \int_0^{+\infty} \frac{\tau^3}{(1+\tau^2)^2}\chi_{\frac{B_0}{4}} = \frac{|\log b|}{2} + O(1), \\
c_{2,b} &=& \int_0^{+\infty} \frac{\tau^3}{(1+\tau^2)^2} \frac{d_{{\bf \Sigma_b}}}{c_b \tau^2}d\tau = O(1).
\eee
Then, the following $c_b$ satisfy the constraint \fref{contraintecb}:
\be
c_b = \frac{c_{1,b} - \sqrt {c_{1,b}^2 - 4 c_{2,b}}}{2c_{2,b}} = \frac{2}{|\log b|} + O \left( \frac{1}{|\log b|^2} \right).
\ee
Observe that by definition:
\be
\label{propradiaitonorigine}
\begin{array} {|c} m_{{\bf \Sigma_b}} \\ d_{{\bf \Sigma_b}} \end{array} = c_b \ \begin{array} {|c} m_{1} \\ d_{1} \end{array} \ \ \mbox{for} \ \ r \leq \frac{B_0}{4}
\ee
and
\be
\label{propradiaitoninfty}
\begin{array} {|c} m_{{\bf \Sigma_b}} \\ d_{{\bf \Sigma_b}} \end{array} = \begin{array} {|c} 4 \psi_1 \\ 0 \end{array} \ \ \mbox{for} \ \ r \geq 6B_0.
\ee
Now, we can estimate for $\frac{B_0}{4} \leq r \leq 6 B_0$
\be
\label{propradiaitonmilieu}
\begin{array} {|c} m_{{\bf \Sigma_b}} \\ d_{{\bf \Sigma_b}} \end{array} = \begin{array} {|c} 4 + O\left( \frac1 {|\log b|^2}\right) \\ O(1) \end{array} 
\ee
Now, we are in position to estimate ${\bf \Sigma_b}$ and its derivates. First, by construction, we have
\be
\label{busb}
\begin{array} {|c} \Sigma_{1,b} \\ \nabla \Sigma_{2,b}  \end{array} = c_b \  \begin{array} {|c} T_1 \\ \nabla S_1  \end{array} \ \ \mbox{for} \ \ r \leq \frac{B_0}{4}.
\ee
Moreover, we recall that :
\bee
\Sigma_{1,b}(r) = \frac{m_{{\bf \Sigma_b}}'(r)}{r} \ \ \mbox{and} \ \ \nabla \Sigma_{2,b} = \frac{d_{{\bf \Sigma_b}}(r) + m_{{\bf \Sigma_b}}(r)}{r} 
\eee
Hence, with \fref{propradiaitoninfty} and the above formula, we obtain for $r \geq 6B_0$ :
\be
\label{busbu}
\begin{array} {|c} \Sigma_{1,b} \\ \nabla \Sigma_{2,b}  \end{array} = \begin{array} {|c} 4 \frac{\psi_1'}{r} \\ \frac{4 \phi_1}{r}  \end{array} = \begin{array} {|c} O \left( \frac{\log r}{r^4} \right) \\  \frac 4r +O\left( \frac{\log r} {r^3}\right)  \end{array}
\ee
To obtain a precise bound in the transition zone, we use the improved formula \fref{formulamprime}:
\bea
\nonumber \frac{m'_{\bf \Sigma_b}}{r}&=& -4c_b\frac{\psi'_0(r)}{r}\int_0^r\frac{\tau(\tau^4+4\tau^2\log \tau-1)}{(1+\tau^2)^2}\left(\chi_{\frac{B_0}{4}} - \frac{d_{{\bf \Sigma_b}}}{\tau^2}\right)d\tau-\beta_{1,b}\chi'_{3B_0}\frac{\psi_0}{r} \\
\label{ubc}&+& 4c_b\frac{\psi'_1(r)}{r}\int_0^r\frac{\tau^3}{(1+\tau^2)^2}\left(\chi_{\frac{B_0}{4}} - \frac{d_{{\bf \Sigma_b}}}{\tau^2}\right)d\tau +  \beta_{1,b}(1-\chi_{3B_0})\frac{\psi'_0(r)}{r}
\eea
Hence, for $\frac{B_0}{4} \leq y \leq 6B_0$, \fref{ubc} together with \fref{propradiaitonmilieu} give :
\be
\label{busbus}
|\Sigma_{1,b}(r) | \lesssim \frac{1}{|\log b| r^2} \ \ \mbox{and} \ \ | \nabla \Sigma_{2,b}(r) | \lesssim \frac{1}{r}
\ee
The above estimation with \fref{busb} and \fref{busbu} imply the following rough bounds, for $i \geq 0$ and $r \leq 2B_1$ :
\bea
\label{aaqqzz} |r^i\pa_r^i\Sigma_{1,b}(r) | &\lesssim& \frac{1}{|\log b|}\left[r^2{\bf 1}_{r\leq 1}+\frac{1}{r^2}{\bf 1}_{1\leq r\leq 6B_0}+\frac{1}{br^4}{\bf 1}_{r\geq 6B_0}\right] \\
\label{aazzqq} |r^i\pa_r^i\nabla \Sigma_{2,b}(r) | &\lesssim& \frac{1}{|\log b|}\left[r{\bf 1}_{r\leq 1}+\frac{1}{r}{\bf 1}_{1\leq r\leq \frac{B_0}4}+\frac{|\log b|}{r}{\bf 1}_{r\geq \frac{B_0}4}\right] 
\eea
Before determining $m_2$ and $d_2$, compute the $b$ dependence of ${\bf \Sigma_b}$. First we have :
 $$\frac{\pa c_b}{\pa b}=O\left(\frac{1}{b|\log b|^2}\right),\ \ \frac{\pa \beta_{1,b}}{\pa b}=O\left(\frac{1}{b^2|\log b|}\right),\ \ \frac{\pa \beta_{2,b}}{\pa b} = O(1), \ \ \frac{\pa \beta_{3,b}}{\pa b} = O\left(\frac1b\right). $$
By definition,
\be
\label{busb1}
\frac{\pa}{\pa b} \ \begin{array} {|c} m_{\bf \Sigma_b} \\ d_{\bf \Sigma_b}  \end{array} = \pa_b c_b \  \begin{array} {|c} m_1 \\ d_1  \end{array} =O\left(\frac{1}{b|\log b|^2}\right) \  \begin{array} {|c} m_1 \\ d_1  \end{array} \ \ \mbox{for} \ \ r \leq \frac{B_0}{4}.
\ee
Now
\be
\label{busb2}
\frac{\pa}{\pa b} \ \begin{array} {|c}m_{\bf \Sigma_b} \\ d_{\bf \Sigma_b}  \end{array} = \begin{array} {|c} 0 \\ 0  \end{array} \ \ \mbox{for} \ \ r \geq 6B_0.
\ee
To conclude, in the transition zone $\frac {B_0}4 \leq r \leq 6B_0$
 \bee
\left|\frac{ \pa d_{{\bf \Sigma_b}}}{\pa b} \right| &\lesssim& \frac{1}{b|\log b|} +  |c_b|  \int_0^r \frac 1 \tau \left| \pa_b\chi_{\frac{B_0}{4}} \right| d\tau + |c_b| r^2 \int_0^r \frac 1 {\tau^3} \left| \pa_b\chi_{\frac{B_0}{4}} \right| d\tau \\
&+& |c_b| \left| \frac{\pa \left\{\chi_{3B_0}(\beta_{2,b} r^2 + \beta_{1,b})\right\}}{\pa b} \right|\\
&\lesssim&  \frac{1}{b}.
\eee
Using the same way, we prove that in the transition zone,
 \be
\left|\frac{\pa m_{{\bf \Sigma_b}}}{\pa b} \right| \lesssim  \frac{1}{b|\log b|}.
\ee
This yields the bound for $r \leq 2B_1$ : 
\bea
\label{pabmsigmab} \left|\frac{\pa m_{\bf \Sigma_b}}{\pa b}(r) \right| &\lesssim& \frac{1}{b|\log b|^2}\left[r^4{\bf 1}_{r\leq 1}+(1+\log r) {\bf 1}_{1\leq r\leq 6B_0}  \right] \\
\label{pabdsigmab} \left|\frac{\pa d_{\bf \Sigma_b}}{\pa b} (r)\right | &\lesssim& \frac{1}{b|\log b|^2}\left[r^2{\bf 1}_{r\leq 1}+(1+\log r)^2{\bf 1}_{1\leq r\leq 6B_0} \right] 
\eea
and the following bound for $i \geq 1$ and $r \leq 2B_1$ using the explicit formula of $\psi_1'$, $m_1'$ and $d_1'$: 
\bea
\label{pabmsigmabb} \left|  r^i\pa^i_r \frac{\pa m_{\bf \Sigma_b}}{\pa b}(r) \right| &\lesssim& \frac{1}{b|\log b|^2}\left[r^4{\bf 1}_{r\leq 1}+ {\bf 1}_{1\leq r\leq 6B_0} \right] \\
\label{pabdsigmabb} \left| r^i\pa^i_r \frac{\pa d_{\bf \Sigma_b}}{\pa b} (r)\right | &\lesssim& \frac{1}{b|\log b|^2}\left[r^2{\bf 1}_{r\leq 1}+ (1+\log r)^3{\bf 1}_{1\leq r\leq 6B_0} \right] 
\eea
{\bf Step 3 : } Level $b^2$ \\ \\
We are now in position to determine $(m_2,d_2)$ be solution to
\be
{\bf L} \ \begin{array} {|c}m_2 \\d_2 \end{array} = \begin{array}{| c}
rm'_1- \frac{m'_1m_1}{r} - \frac{m'_1d_1}{r}\\
rn'_1
  \end{array} - \begin{array} {|c} m_{{\bf \Sigma_b}} \\ d_{{\bf \Sigma_b}} \end{array}=  \begin{array} {|c} m_{{\bf \Sigma_2}} \\ d_{{\bf \Sigma_2}} \end{array}
\ee
Hence, from \fref{solutiond} , $d_2$ is given by:
\be
d_{2} = \frac{1}{2} \left[  - \int_0^r d_{{\bf \Sigma_2}}(\tau) \tau d\tau + r^2 \int_0^r \frac{d_{{\bf \Sigma_2}}(\tau)}{\tau} d\tau \right]
\ee
With the estimations \fref{comportementn1'}, \fref{propradiaitonorigine} and \fref{propradiaitonmilieu}, we obtain the rough bound:
\be
\label{dtwo}
|d_2(r)| \lesssim r^2{\bf 1}_{r \leq 1} + r^2(1+\log r) {\bf 1}_{r \geq 1}.
\ee
Moreover, we have the following bound using \fref{pabdsigmab} and \fref{defdeltab}:
\bea
\label{d2pab}
\nonumber | b \pa_b d_{2}(r)| &\lesssim& \frac{1}{2} \left[  \int_0^r \left|b \pa_b d_{{\bf \Sigma_b}}(\tau)\right| \tau d\tau + r^2 \int_0^r \left| \frac{b \pa_b d_{{\bf \Sigma_b}}(\tau)}{\tau} \right| d\tau \right] + |b \pa_b\delta_b| r^2 \\
&\lesssim& \frac{r^2}{|\log b|^2} {\bf 1}_{r \leq 1} + r^2\frac{1+|\log r|^2}{|\log b|^2} {\bf 1}_{1 \leq r \leq 6B_0} + r^2 {\bf 1}_{r \geq 6B_0}  .
\eea
Now $m_2$ is given by:
\bee
m_2(r)&=&-\frac12\psi_0(r)\int_0^r\frac{\tau^4+4\tau^2\log \tau-1}{\tau}\left[Q(\tau)d_2(\tau) - m_{{\bf \Sigma_2}}(\tau)\right]d\tau \\
&+&\frac12\psi_1(r)\int_0^r\tau \left[Q(\tau)d_2(\tau) - m_{{\bf \Sigma_2}}(\tau)\right]d\tau.
\eee
Let
\be
g_b(\tau) = Q(\tau)d_2(\tau) - m_{{\bf \Sigma_2}}(\tau).
\ee
Using the definition of $m_{{\bf \Sigma_2}}$ and the above estimations :
\bee
\left|g_b(r) \right| \lesssim  r^2{\bf 1}_{ r \leq 1} + \frac{1 + |\log (r\sqrt b)|}{|\log b|} {\bf 1}_{1 \leq r \leq 6B_0} + \frac{|\log r|^2}{1+r^2} {\bf 1}_{r \geq 6B_0}.
\eee 
and
\bea
\label{dsigmatwodb}
\left|b\pa_bg_b(r) \right| &\lesssim& \frac{1}{|\log b|^2}\left[r^4{\bf 1}_{r\leq 1}+ (1+ \log r) {\bf 1}_{1\leq r\leq 6B_0} \right] + \frac{1}{r^2} {\bf 1}_{r \geq 6B_0}.
\eea
 Hence, near the origin, we have $$m_2=O(r^4).$$ For $1\leq r\leq 6B_0$, 
 \bee
 |m_2(r)|& \lesssim &\frac{1}{1+r^2}\int_0^r \tau^3\frac{1 + |\log (\tau\sqrt b)|}{|\log b|} d\tau+\int_0^r\tau\frac{1 + |\log (\tau\sqrt b)|}{|\log b|}d\tau\\
 & \lesssim & r^2\frac{1 + |\log (r\sqrt b)|}{|\log b|}.
 \eee
 For $r\geq 6B_0$, 
 \bee
 |m_2(r)|& \lesssim&  \frac{1}{b|\log b|}+\frac{1}{r^2}\int_{6B_0}^r\tau^3\frac{|\log \tau|^2}{1+\tau^2}d\tau+\int_{6B_0}^r\tau\frac{|\log \tau|^2}{1+\tau^2}d\tau\\
 & \lesssim &  \frac{1}{b|\log b|}+|\log r|^3.
 \eee
 Now, outside the parabolic zone, for $ 6B_0\leq r\leq 2B_1$,
 \bea
 \label{imprvedmoneout}
 \nonumber 
\left| \frac{\pa_r m_2}{r} \right|& = &\left|\frac12\frac{\psi'_0(r)}r\int_0^r\frac{\tau^4+4\tau^2\log \tau-1}{\tau}g_b(\tau)d\tau-\frac12\frac{\psi'_1(r)}r\int_0^r\tau g_b(\tau)d\tau\right|\\
\nonumber  & \lesssim & \frac{1}{r^4}\int_0^{r}\frac{\tau^4+4\tau^2\log \tau-1}{\tau}\left[ r^2{\bf 1}_{ r \leq 1} +\frac{1 + |\log (\tau\sqrt b)|}{|\log b|} {\bf 1}_{1 \leq \tau\leq 6B_0}+ \frac{|\log \tau|^2}{1+\tau^2} {\bf 1}_{\tau\geq 6B_0}\right]d\tau\\
\nonumber & + & \frac{|\log r|}{r^4}\int_0^{r}\tau\left[ r^2{\bf 1}_{ r \leq 1} +\frac{1 + |\log (\tau\sqrt b)|}{|\log b|} {\bf 1}_{1 \leq \tau \leq 6B_0}+ \frac{|\log \tau|^2}{1+\tau^2} {\bf 1}_{\tau \geq 6B_0}\right]d\tau\\
\nonumber& \lesssim & \frac{1}{r^4}\left[\frac{1}{b^2|\log b|}+r^2(\log r)^2\right]+\frac{\log r}{r^4}\left[\frac{1}{b|\log b|}+|\log r|^3\right]\\
& \lesssim & \frac{1}{r^4b^2|\log b|}.
 \eea
 The collection of above bounds yields the control: $\forall r\leq 2B_1$
 \be
 \label{estmtwo}
 |m_2|\lesssim r^4{\bf 1}_{r\le1} +r^2\frac{1 + |\log (r\sqrt b)|}{|\log b|}{\bf 1}_{1\leq r\leq 6B_0}+\frac{1}{b|\log b|}{\bf 1}_{r\geq 6B_0},
 \ee
  \be
 \label{estmtwoderivaitve}
 |r^i\pa_r^im_2|\lesssim r^4{\bf 1}_{r\le1} +r^2\frac{1 + |\log (r\sqrt b)|}{|\log b|}{\bf 1}_{1\leq r\leq 6B_0}+ \frac{1}{r^2b^2|\log b|}{\bf 1}_{r\geq 6B_0}, \ \ i\geq 1.
 \ee
 The $b$ dependance is estimated using \fref{dsigmatwodb}:
  \bea
 \label{estmtwoddb}
 \nonumber  |b\pa_bm_2|&\lesssim & \frac{r^2}{1+r^4}\int_0^r\frac{1}{|\log b|^2}\left[\tau^7{\bf 1}_{\tau\leq 1}+ \tau^3(1+ \log \tau) {\bf 1}_{1\leq \tau \leq 6B_0} \right] + \tau {\bf 1}_{\tau \geq 6B_0}d\tau\\
 \nonumber & + & \int_0^r\frac{1}{|\log b|^2}\left[\tau^5{\bf 1}_{\tau\leq 1}+ \tau(1+ \log \tau) {\bf 1}_{1\leq \tau \leq 6B_0} \right] + \frac{1}{\tau} {\bf 1}_{\tau \geq 6B_0} d\tau\\
  & \lesssim & \frac{1}{|\log b|^2}\left[r^6{\bf 1}_{r\leq 1}+r^2(1+ \log r){\bf 1}_{1\leq r\leq 6B_0}+\frac{\log b}b {\bf 1}_{r\geq 6B_0}\right]
  \eea
    and for higher derivatives:
  \bea
 \label{estmtwoddbhd}
  \nonumber \left| \frac{b\pa_b\pa_rm_2}{r} \right|& \lesssim  & \frac{1}{1+r^4}\int_0^r\frac{1}{|\log b|^2}\left[\tau^7{\bf 1}_{\tau\leq 1}+ \tau^3(1+ \log \tau) {\bf 1}_{1\leq \tau \leq 6B_0} \right] + \tau {\bf 1}_{\tau \geq 6B_0}d\tau\\
 \nonumber & + & \frac{1+ |\log r|}{1+r^4} \int_0^r\frac{1}{|\log b|^2}\left[\tau^5{\bf 1}_{\tau\leq 1}+ \tau(1+ \log \tau) {\bf 1}_{1\leq \tau \leq 6B_0} \right] + \frac{1}{\tau} {\bf 1}_{\tau \geq 6B_0}\\
  & \lesssim & \frac{1}{|\log b|^2}\left[r^4{\bf 1}_{r\leq 1}+(1+ \log r){\bf 1}_{1\leq r\leq 6B_0}\right]+\frac{1}{b^2r^4|\log b|} {\bf 1}_{r\geq 6B_0}
\eea
and hence the bounds for $r\leq 2B_1$:
\be
 \label{estmtwodtdb}
 |b\pa_bm_2|\lesssim \frac{1}{|\log b|}\left[r^4{\bf 1}_{r\le1} +r^2\frac{1+|\log r|}{|\log b|}{\bf 1}_{1\leq r\leq 6B_0}+ \frac{1}{b}{\bf 1}_{r\geq 6B_0}\right],
 \ee
 and for $i\geq 1$:
  \be
 \label{estmtwoderivaitvedtdb}
 |b\pa_br^i\pa_r^im_2|\lesssim \frac{1}{|\log b|}\left[r^4{\bf 1}_{r\le1} +r^2\frac{1+|\log r|}{|\log b|}{\bf 1}_{1\leq r\leq 6B_0}+ \frac{1}{r^2b^2}{\bf 1}_{r\geq 6B_0}\right].
 \ee
This yields the estimate on $T_2=\frac{m_2'}{r}$: for $i\geq 0$, $r\leq 2B_1$:
$$
 |r^i\pa_r^iT_2|\lesssim r^2{\bf 1}_{r\le1} +\frac{1 + |\log (r\sqrt b)|}{|\log b|}{\bf 1}_{1\leq r\leq 6B_0}+ \frac{1}{b^2r^4|\log b|}{\bf 1}_{r\geq 6B_0},
$$
$$|b\pa_br^i\pa_r^iT_2|\lesssim\frac{1}{|\log b|}\left[ r^2{\bf 1}_{r\le1} +\frac{ 1+|\log r|}{|\log b|}{\bf 1}_{1\leq r\leq 6B_0}+ \frac{1}{b^2r^4}{\bf 1}_{r\geq 6B_0}\right].
$$
Using the fact that $\nabla S_2 = \frac{d_2(r) + m_2(r)}{r}$, we have the following rough bounds:
 \be
 \label{estnablaS2}
 |r^i \pa_r^i\nabla S_2|\lesssim   r{\bf 1}_{r\leq1} + r (1+ |\log r|) {\bf 1}_{r\geq1}
 \ee
 and 
  \be
 \label{estnablaS2b}
 |r^i \pa_r^i b\pa_b\nabla S_2|\lesssim \frac {1}{|\log b|^2} \left\{  r{\bf 1}_{r\leq1} + r (1+ |\log r|^2) {\bf 1}_{r\geq1} \right\}
 \ee
 {\bf Step 4 : } Estimate on the error: \\ \\
According to the construction of the profiles $T_i$ and $S_i$, $\left |\begin{array}{l} \Phi_b \\ \Omega_b \end{array} \right.$ satisfies the following equation :
\be
\left | \begin{array}{l} \Phi_b \\ \Omega_b \end{array} \right. = - b^2 \left | \begin{array}{l} m_{{\bf \Sigma}_b} \\ d_{{\bf \Sigma}_b}  \end{array} \right. +\left|\begin{array}{l} R^{(1)}(r) \\ R^{(2)}(r)  \end{array} \right.
\ee
where
\be
\left|\begin{array}{l} R^{(1)}(r) \\ R^{(2)}(r)  \end{array} \right. =  b^3 \ \  \begin{array}{| c}
-rm'_2+ \frac{(m_1m_2)'}{r} + \frac{m'_1d_2 + m'_2d_1}{r}\\
-rn'_2
  \end{array} 
+
b^4 \ \  \begin{array}{| c}
\frac{m_2m_2'}{r} + \frac{m'_2d_2}{r}\\
0
  \end{array} .
\ee
Hence, from the definition \fref{eqerreurpsib} of the error, we obtain :
\be
 \begin{array}{| c}
    \Psi_b^{(1)} \\
     \nabla  \Psi_b^{(2)}
  \end{array}  =  
 \frac 1r\begin{array}{| c}
    \Phi_b' \\
  \Omega_b
  \end{array}  
      +c_bb^2
\chi_{\frac{B_0}{4}}\begin{array}{|c}T_1 \\  \nabla S_1 \end{array} = \frac 1r \left|\begin{array}{l} \left(R^{(1)}(r)\right)' \\ R^{(2)}(r)  \end{array} \right. + b^2 \begin{array}{|c}c_bT_1\chi_{\frac{B_0}{4}} - \Sigma_{1,b}  \\  c_b\nabla S_1\chi_{\frac{B_0}{4}} - \nabla \Sigma_{2,b} \end{array}.
\ee
Using \fref{defd1}, \fref{comportementm1}, \fref{dtwo}, \fref{estmtwo} and \fref{estmtwoderivaitve}, we prove the following bound for $i \geq 0$, remarking that the worst term is $-rm'_2$:
\be
\label{estr}
|r^i\pa_r^iR^{(1)}(r)| \lesssim b^3 \left[ r^4{\bf 1}_{r\le1} +r^2\frac{1 + |\log (r\sqrt b)|}{|\log b|}{\bf 1}_{1\leq r\leq 6B_0}+ \frac{1}{r^2b^2|\log b|}{\bf 1}_{r\geq 6B_0}\right].
\ee
Now, the bound \fref{estnablaS2} gives :
\be
|r^i\pa_r^iR^{(2)}(r)| \lesssim   b^3 \left\{r^2{\bf 1}_{r\leq1} + r^2 (1+ |\log r|) {\bf 1}_{r\geq1} \right\}.
\ee
According to the conception of the radiation ${\bf \Sigma}_b$, and the estimations \fref{aaqqzz} and \fref{aazzqq}, this yields the bounds for $r \leq 2B_1$ and $i \geq 0$:
\bea
\nonumber
 |r^i\pa_r^i\Psi^{(1)}_b|& \lesssim & b^3 \left[ r^2{\bf 1}_{r\le1} +\frac{1 + |\log (r\sqrt b)|}{|\log b|}{\bf 1}_{1\leq r\leq 6B_0}+ \frac{1}{r^4b^2|\log b|}{\bf 1}_{r\geq 6B_0}\right]\\
\label{esibgeogogeo} & + &  \frac{b^2}{|\log b|}\left[\frac{1}{r^2}{\bf 1}_{\frac{B_0}{4}\leq r\leq 6B_0}+\frac{1}{br^4}{\bf 1}_{r\geq 6B_0}\right]\\
\nonumber
 |r^i\pa_r^i\nabla \Psi^{(2)}_b|& \lesssim &  b^3 \left[ r{\bf 1}_{r\leq1} + r (1+ |\log r|) {\bf 1}_{r\geq1} \right] +\frac{b^2}r {\bf 1}_{r\geq \frac{B_0}{4}} \\
\label{esibgeogogeo2}
 & \lesssim &  b^3 \left[ r{\bf 1}_{r\leq1} + r (1+ |\log r|) {\bf 1}_{r\geq1} \right]
\eea
We therefore estimate:
\bee
\int_{r\leq 2B_1} |\Psi_b^{(1)}|^2& \lesssim & b^6\int_{r\leq 2B_1}\left[r^4{\bf 1}_{r\leq 1} +\left(\frac{ 1+|\log (r\sqrt b)|}{|\log b|}{\bf 1}_{1\leq r\leq 6B_0}\right)^2+ \frac{1}{r^8b^4|\log b|^2}{\bf 1}_{r\geq 6B_0}\right]\\
& + & \frac{b^4}{|\log b|^2}\int_{r\leq 2B_1}\left[\frac{1}{r^4}{\bf 1}_{\frac{B_0}{4}\leq r\leq 6B_0}+\frac{1}{b^2r^8}{\bf 1}_{r\geq 6B_0}\right]\\
& \lesssim & \frac{b^5}{|\log b|^2}
\eee
and similarily for higher derivatives:
$$\int _{r\leq 2B_1} |r^i\pa^i_r\Psi^{(1)}_b|^2\lesssim  \frac{b^5}{|\log b|^2}.$$
Moreover, using the explicit formula of $\mathcal L^{(1)}$, and the bounds \fref{esibgeogogeo} and \fref{esibgeogogeo2}, we obtain :
\bee
\int_{r \leq 2 B_1} \frac{\left| \mathcal L^{(1)}(\Psi^{(1)}_b, \Psi^{(2)}_b)\right|^2}{Q} &\lesssim& \sum_{i=0}^2 \int_{r \leq 2 B_1} (1+r^{2i}) |\pa_r^i\Psi^{(1)}_b|^2 + \sum_{i=0}^1 \int_{r \leq 2 B_1} \frac{|\pa_r^i\nabla \Psi^{(2)}_b|^2}{1+r^{6-2i}} \\&\lesssim& \frac{b^5}{|\log b|^2}.
\eee
Now, with \fref{esibgeogogeo2} :
\bea
\label{presqueenfin}
\nonumber \int_{r\leq 2B_1} \frac{|\nabla \Psi_b^{(2)}|^2}{1+r^2} + \int_{r\leq 2B_1} |\Delta \Psi_b^{(2)}|^2& \lesssim & b^6\int_{r\leq 2B_1}\left[r^2{\bf 1}_{r\leq 1} + (1+ \log r)^2{\bf 1}_{r\geq 1} \right] \\
 & \lesssim & b^5 |\log b|^2.
\eea
Moreover,
\be
\label{presqueenfin2}
\int_{r\leq 2B_1} |\nabla \Psi_b^{(2)}|^2 \lesssim  b^6\int_{r\leq 2B_1}\left[r^2{\bf 1}_{r\leq 1} + r^2(1+ \log r)^2{\bf 1}_{r\geq 1} \right] \lesssim b^4 |\log b|^6.
\ee
Finally, we estimate the following norm:
\bee
&&\int_{r\leq 2B_1} Q|\nabla \mathcal M^{(1)}(\Psi^{(1)}_b, \Psi^{(2)}_b)|^2\\
&\lesssim & \int_{r\leq 2B_1} (1+r^2)\left[|r\pa_r\Psi^{(1)}_b|^2+|\Psi^{(1)}_b|^2\right]+\frac{|\nabla \Psi^{(2)}_b|^2}{1+r^4}\\
& \lesssim & b^6\int_{r\le 2B_1}  \left[ 1+\frac{(1 + |\log (r\sqrt b)|^2)(1+r^2)}{|\log b|^2}{\bf 1}_{1\leq r\leq 6B_0}+ \frac{1}{r^6b^4|\log b|}{\bf 1}_{r\geq 6B_0}\right]\\
\nonumber & + &  \frac{b^4}{|\log b|^2}\int_{r\le 2B_1}\left[\frac{1}{r^2}{\bf 1}_{\frac{B_0}{4}\leq r\leq 6B_0}+\frac{1}{b^2r^6}{\bf 1}_{r\geq 6B_0}\right]+\frac{b^5}{|\log b|^2}\\
& \lesssim & \frac{b^4}{|\log b|^2}.
\eee
This concludes the proof of the Proposition \ref{construction}.
 \end{proof}
 \subsection{Localization}
 We can see that, outside the parabolic zone $r \geq 2B_1$, the profiles $T_i$ and $S_i$ have a pathological growth. Therefore, we must localize this profiles, as described in the following Proposition.
 \begin{proposition}[Localization]
\label{localization}
Given a small parameter
\be
0 < b \ll 1.
\ee
Let the localized profiles
\be
{\bf \tilde Q}_b =
  \begin{array}{| c}
     \tilde Q_b(r) \\
    \tilde P_b(r)
  \end{array} =   {\bf Q} + b \ \ 
    \begin{array}{| c}
     \tilde T_1(r) \\
     \tilde S_1(r)
  \end{array} + b^2 \ \  
   \begin{array}{ |c}
     \tilde T_2(b,r) \\
    \tilde S_2(b,r)
  \end{array}
  ={ \bf Q} + {\bf\tilde \Upsilon}_b = {\bf Q} +  
    \begin{array}{| c}
    \tilde \alpha_b(r) \\
   \tilde  \gamma_b(r)
  \end{array}
\ee
with
\be
\label{deftildeT}
\tilde T_i = \chi_{B_1} T_i \ \ \mbox{for} \ \ i = 1,2.
\ee
and
\be
\label{deftildeS}
\tilde S_i(r) = \int_0^r \chi_{B_1} \nabla S_i dr \ \ \mbox{for} \ \ i = 1,2.
\ee
Let the error:
 \be
 \label{eqerreurpsibtilde}
{\bf \tilde\Psi}_b =  \begin{array}{| c}
    \tilde\Psi_b^{(1)} \\
     \tilde \Psi_b^{(2)}
  \end{array}  =  \begin{array}{ |c}
   \nabla . \left( \nabla \tilde Q_b + \tilde Q_b\nabla \tilde P_b\right) \\
  \Delta \tilde P_b - \tilde Q_b 
  \end{array} 
- b \Lambda 
  \ \ \begin{array}{ |c}
     \tilde Q_b(r) \\
    \tilde P_b(r)
  \end{array} 
    +c_bb^2\breve{\bf T}  
  \ee
  with $c_b$ given by \fref{defcb}, and $\breve{\bf T}$ satisfied the condition \fref{breveT1} then there holds:\\
{\em (i) Control of the tails}: $ \forall r\geq 0$, $ \forall i\geq 0$:
\bea
\label{esttonepropt}
|r^i\pa^i_r\tilde T_1|&\lesssim& \frac{r^2}{1+r^4}, \\
\label{esttpemfmp}
|b\pa_b r^i\pa^i_r \tilde T_1|&\lesssim& \frac{1}{r^2}{\bf 1}_{B_1\leq r\le 2B_1},\\
\label{nablaS1taillet}
|r^i\pa^i_r\nabla \tilde S_1|&\lesssim& \frac{r}{1+r^2},\\
\label{esttpemfmptt}
|b\pa_b r^i\pa^i_r \tilde S_1|&\lesssim& \frac{1}{r}{\bf 1}_{B_1\leq r\le 2B_1},
\eea
and $ \forall r\leq 2B_1$, $ \forall i\geq 0$:
\bea
 \label{esttwot}
 |r^i\pa_r^i\tilde T_2|&\lesssim& r^4{\bf 1}_{r\le1} +\frac{1 + |\log (r\sqrt b)|}{|\log b|}{\bf 1}_{1\leq r\leq 6B_0}+ \frac{1}{b^2r^4|\log b|}{\bf 1}_{r\geq 6B_0},
\\
\label{estttwodtdbt}
|b\pa_br^i\pa_r^i\tilde T_2|&\lesssim&\frac{1}{|\log b|}\left[ r^4{\bf 1}_{r\le1} +\frac{ |\log r|}{|\log b|}{\bf 1}_{1\leq r\leq 6B_0}+ \frac{1}{b^2r^4}{\bf 1}_{r\geq 6B_0}\right].
\\
 \label{nablaS2part}
 |r^i\pa_r^i\nabla \tilde S_2|&\lesssim& r{\bf 1}_{r\le1} +r(1+ \log r){\bf 1}_{r \geq 1},
\\
\label{nablaS2pabt}
|b\pa_br^i\pa_r^i \nabla\tilde S_2|&\lesssim&\frac{1}{|\log b|^2}\left[  r{\bf 1}_{r\le1} +r(1+ \log r)^2{\bf 1}_{r \geq 1}\right].
\eea
{\em (ii) Control of the error in weighted norms: for} $i\geq 0$,
\bea
\label{roughboundltaowt}
\int |r^i\pa_r^i\tilde \Psi^{(1)}_b|^2+ \int \frac{|\mathcal L^{(1)}(\tilde \Psi^{(1)}_b,\tilde \Psi^{(2)}_b)|^2}{Q}&\lesssim& \frac{b^5}{|\log b|^2}, \\
\label{roughboundltaowt2}
 \int \frac{|\nabla {\tilde \Psi^{(2)}_b}|^2}{1+\tau^2}+ \int |\mathcal L^{(2)}(\tilde \Psi^{(1)}_b,\tilde \Psi^{(2)}_b)|^2&\lesssim& b^5|\log b|^4, \\
\label{cneneoneonoenoet}
\int Q|\nabla \mathcal M^{(1)}(\tilde \Psi^{(1)}_b,\tilde \Psi^{(2)}_b)|^2&\lesssim& \frac{b^4}{|\log b|^2},\\
\label{roughboundltaowt3}
\int  |\nabla \tilde\Psi_b^{(2)}|^2 &\lesssim& b^4 |\log b|^6.
\eea
{ \em{(iii)} Degenarate flux:} Let $\frac{B_0}{20} \leq B \leq 20B_0$ and ${\bf \Phi}_{0,B} = \begin{array}{|l}r^2 \chi_B \\ -4 \int_0^r\frac{\log (1+\tau^2)}{\tau} \chi_B d\tau \end{array}$, then
\be
\left|\left<\mathcal L \bf {\tilde \Psi}_b, {\bf \Phi}_{0,B} \right>\right| \lesssim \frac{b^2}{|\log b|}.
\ee

\end{proposition}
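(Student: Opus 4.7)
The plan is to exploit the fact that the localized profiles agree with the non-localized profiles on $r \leq B_1$, so that the new contributions come only from the transition region $B_1 \leq r \leq 2B_1$, where $B_1 = |\log b|/\sqrt{b}$ is chosen precisely to balance construction errors against localization errors. For part (i), I would observe that $\tilde T_i = \chi_{B_1} T_i$ and that $|r^i \partial_r^i \chi_{B_1}| \lesssim 1$, so the estimates \fref{esttonepropt}--\fref{nablaS1taillet} and \fref{esttwot}, \fref{nablaS2part} follow directly from Proposition \ref{construction}. For the $b$-derivatives I would use Leibniz: $b\partial_b(\chi_{B_1} T_i) = \chi_{B_1} b\partial_b T_i + T_i b\partial_b \chi_{B_1}$. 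The first piece is bounded by \fref{estttwodtdb}, \fref{pabnablaS2bis}, while the second is supported on $B_1 \leq r \leq 2B_1$ (since $b\partial_b B_1 \sim B_1$); there one just plugs in the pointwise bounds of $T_i, \nabla S_i$ on the transition annulus. The same works for $\nabla \tilde S_i$, after noting that $\nabla \tilde S_i = \chi_{B_1} \nabla S_i$.

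For part (ii), I would first decompose $\tilde \Psi_b$ using the same formula \fref{eqerreurpsib} as for $\Psi_b$ but with $\tilde Q_b, \tilde P_b$ in place of $Q_b, P_b$. On $r \leq B_1$, the cutoff is trivial so $\tilde \Psi_b = \Psi_b$ and the estimates \fref{roughboundltaowt}--\fref{roughboundltaowt3} on this region follow from the corresponding estimates \fref{roughboundltaow}--\fref{roughboundltaow3} of Proposition \ref{construction}. On $r \geq B_1$ the radiation $c_b b^2 \breve{\bf T}$ vanishes, and a direct computation shows
\begin{align*}
\tilde \Psi_b^{(1)} &= \text{commutators of } \chi_{B_1} \text{ with } \nabla\cdot(\nabla \cdot + \tilde Q_b \nabla \tilde P_b) \text{ and } b\Lambda,\\
\tilde \Psi_b^{(2)} &= \text{analogous commutators from } \Delta \tilde P_b.
\end{align*}
These commutators produce terms supported on $B_1 \leq r \leq 2B_1$ of the schematic form $b(\partial^j \chi_{B_1})(\partial^k T_i)$ or $b^2(\partial^j \chi_{B_1})(\partial^k S_i)$. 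Evaluating the tails of $T_i, \nabla S_i$ at $r \sim B_1 = |\log b|/\sqrt b$ using \fref{esttoneprop}--\fref{pabnablaS2bis}, one finds these boundary contributions weigh at most $b^5/|\log b|^2$ in the $L^2_r$ norm for $\tilde \Psi_b^{(1)}$ and $b^5 |\log b|^4$ in the weighted $\dot H^1$ norm for $\tilde \Psi_b^{(2)}$. The choice $B_1 = |\log b|/\sqrt b$ is exactly what makes these tail contributions no worse than the interior error. The coercive norms in \fref{roughboundltaowt}, \fref{cneneoneonoenoet} are then handled analogously using the explicit formula for $\mathcal L^{(1)}, \mathcal M^{(1)}$ and the fact that the weights $Q, 1/(1+r^4)$ decay rapidly beyond $B_1$.

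For part (iii) the degenerate flux bound, I would invoke the adjunction \fref{biebibvei} to transfer $\mathcal L$ from $\tilde \Psi_b$ to $\Phi_{0,B}$:
\begin{equation*}
\langle \mathcal L \tilde \Psi_b, \Phi_{0,B}\rangle = \langle \tilde \Psi_b, \mathcal L^* \Phi_{0,B}\rangle.
\end{equation*}
Then I would use the computation from the proof of Lemma \ref{direcitonroth} (identity \fref{lamande}) which gives $\mathcal L^*\Phi_{0,B} = (-4\chi_B, 0) + $ small terms supported on $B \leq r \leq 2B$ with the fine cancellation \fref{nolwenn}. The main contribution is therefore $-4 \int \tilde \Psi_b^{(1)} \chi_B$. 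This is precisely the quantity that the construction of the radiation was designed to control: the equation \fref{contraintecb} defining $c_b$ ensures the vanishing of the leading $b^2$ contribution to this integral, leaving only lower-order terms of size $b^2/|\log b|$. The remaining boundary-type terms $\int \nabla \tilde \Psi_b^{(2)} \cdot \nabla \Pi_{0,B}$ on $B \leq r \leq 2B$ are controlled by \fref{roughboundltaowt2} and the explicit form of $\Pi_{0,B}$.

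The main obstacle is the degenerate flux estimate (iii), since it is the only point where a genuine cancellation beyond the raw size of the error is required: the naive estimate based on $L^2$ control would only give $O(b^2)$, not $O(b^2/|\log b|)$. Tracking this cancellation requires carefully exploiting the defining relation of $c_b$ in \fref{contraintecb} and the structure of the radiation ${\bf \Sigma}_b$ in the region $r \sim B_0$.
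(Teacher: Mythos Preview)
Your treatment of parts (i) and (ii) is essentially the paper's argument: localize, observe agreement on $r\leq B_1$, and estimate the commutator terms on the transition annulus using the tail bounds from Proposition~\ref{construction}. That is correct.

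For part (iii) you take a different and unnecessarily complicated route, and your diagnosis of the difficulty is off. The paper does \emph{not} use adjunction or any special cancellation from the radiation at this step. It simply applies the rough bound \fref{firstroughbound} directly to $\boldsymbol{\e}=\mathcal L\tilde{\boldsymbol\Psi}_b$, using the automatic vanishing average $\int \mathcal L^{(1)}\tilde{\boldsymbol\Psi}_b=0$ from \fref{estvooee} to kill the $\frac{B^2}{\log B}|\int\e|$ term. What remains is
\[
\left|\left\langle \mathcal L\tilde{\boldsymbol\Psi}_b,\Phi_{0,B}\right\rangle\right|\lesssim B\,\|\mathcal L\tilde{\boldsymbol\Psi}_b\|_{X_Q}\lesssim \frac{1}{\sqrt b}\cdot\frac{b^{5/2}}{|\log b|}=\frac{b^2}{|\log b|},
\]
where the $X_Q$ bound on $\mathcal L\tilde{\boldsymbol\Psi}_b$ is precisely \fref{roughboundltaowt} from part (ii). So your claim that ``the naive estimate based on $L^2$ control would only give $O(b^2)$'' is incorrect: the $L^2_Q$ control already delivers $O(b^2/|\log b|)$.

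Your adjunction approach would also work, but for a simpler reason than you state. After adjunction the main term is $-4\int\tilde\Psi_b^{(1)}\chi_B$ with $B\sim B_0$, and Cauchy--Schwarz with $\|\tilde\Psi_b^{(1)}\|_{L^2}\lesssim b^{5/2}/|\log b|$ and $\|\chi_B\|_{L^2}\sim B_0$ already gives $b^2/|\log b|$; no appeal to \fref{contraintecb} is needed. The radiation cancellation is what made the $L^2$ bound in part (ii) hold in the first place---it is already encoded there and does not need to be re-invoked. Be careful also: $\int\tilde\Psi_b^{(1)}$ over the \emph{whole} plane is of size $b^2$ (the contribution comes from $r\sim B_1$, not $r\sim B_0$), so if you had tried to reduce to the full integral via \fref{estfonamentalebus} you would have lost the $|\log b|$.
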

\begin{remark}
We have fixed $\tilde S_i(0) = 0$. In fact, it isn't a necessity for our analysis, but there is an advantage. We have the equality $\phi_{\Delta \tilde P_b} = \tilde P_b$.
\end{remark}
\begin{proof}
{\bf Step 1} Terms induced by the localization : \\
From \fref{deftildeT} and \fref{deftildeS}, we have :
$$|\pa_r \tt_i|=\left|\chi_{B_1}\pa_r T_i + \frac{1}{B_1} \chi' \left(\frac{y}{B_1}\right) T_i\right|\lesssim | \pa_rT_i|{\bf 1}_{r\leq 2B_1}+\left|\frac{T_i}{y}\right|{\bf 1}_{B_1\leq r\leq 2B_1},$$
$$|\pa_r\nabla \tilde S_i|=\left|\chi_{B_1}\pa_r \nabla S_i + \frac{1}{B_1} \chi' \left(\frac{y}{B_1}\right) \nabla S_i\right|\lesssim | \pa_r \nabla S_i|{\bf 1}_{r\leq 2B_1}+\left|\frac{\nabla S_i}{y}\right|{\bf 1}_{B_1\leq r\leq 2B_1},$$
and
$$|b\pa_b\tt_i|=\left|b\chi_{B_1}\pa_bT_i-b\frac{\partial_b B_1}{B_1}(y\chi')\left(\frac{y}{B_1}\right)T_i\right|\lesssim |b\pa_bT_i|{\bf 1}_{r\leq 2B_1}+|T_i|{\bf 1}_{B_1\leq r\leq 2B_1},$$
$$|b\pa_b\nabla \tilde S_i|=\left|b\chi_{B_1}\pa_b\nabla S_i-b\frac{\partial_b B_1}{B_1}(y\chi')\left(\frac{y}{B_1}\right)\nabla S_i\right|\lesssim |b\pa_b\nabla S_i|{\bf 1}_{r\leq 2B_1}+|\nabla S_i|{\bf 1}_{B_1\leq r\leq 2B_1}.$$

Using with the bounds of the Proposition \ref{construction} yields \fref{esttonepropt}, \fref{esttpemfmp}, \fref{nablaS1taillet}, \fref{esttpemfmptt}, \fref{esttwot}, \fref{estttwodtdbt}, \fref{nablaS2part} and \fref{nablaS2pabt}. \par
Now, let's focus on the control of the error $\bf {\tilde \Psi}_b$. In this purpose, we recall the definition of the partial mass of respectively $Q_b$ and $\Delta P_b$ :
\bee
m_b(r) = \int_0^r Q_b(\tau) \tau d\tau, \ \ n_b(r) = \int_0^r \Delta P_b(\tau) \tau d\tau.
\eee
Similarly, we define now the partial mass of respectively $\tilde Q_b$ and $\Delta \tilde P_b$ by:
\bee
\tilde m_b(r) &=& m_0 + m_{\tilde \alpha_b} =  \int_0^r \tilde Q_b(\tau) \tau d\tau, \\ 
\tilde n_b(r) &=&  n_0 + n_{\tilde \gamma_b} = \int_0^r \Delta \tilde P_b(\tau) \tau d\tau.
\eee
Hence, we have :
\bee
m'_{\tilde \alpha_b} = \chi_{B_1} m'_{\alpha_b} \ \ \mbox{and} \ \ n_{\tilde \gamma_b} = \chi_{B_1} n_{\gamma_b}.
\eee
In the same way as follows, we can rewrite \fref{eqerreurpsibtilde} by 
  \be
 \label{eqerreurpsibtilde}
{\bf \tilde \Psi}_b   =\begin{array}{| c}
     \tilde\Psi_b^{(1)} \\
     \nabla   \tilde\Psi_b^{(2)}
  \end{array}  =  
 \frac 1r\begin{array}{| c}
     \tilde\Phi_b' \\
   \tilde\Omega_b
  \end{array}  
      +c_bb^2 \breve{\bf T}
         \ee
where
\bea
\nonumber
 \begin{array}{| c}
   \tilde\Phi_b \\
    \tilde\Omega_b
  \end{array} &=& 
  \begin{array}{ |c}
    \tilde m''_b - \frac{ \tilde m'_b}r + \frac{ \tilde m'_b \tilde n_b}{r^2} - br \tilde m'_b \\
   ( \tilde n_b- \tilde m_b)'' - \frac{( \tilde n_b -  \tilde m_b)'}{r} - br \tilde n'_b
  \end{array}\\
  \nonumber 
  &=& 
  \begin{array}{ |c}
      m''_{\tilde \alpha_b} - \frac{   m'_{\tilde \alpha_b}}r + \frac{m'_0   n_{\tilde \gamma_b} +    m'_{\tilde \alpha_b} n_0 +    m'_{\tilde \alpha_b}   n_{\tilde \gamma_b}}{r^2} - br (m'_0 +   m'_{\tilde \alpha_b}) \\
      n''_{\tilde \gamma_b}-   m''_{\tilde \alpha_b} - \frac{  n'_{\tilde \gamma_b} -    m'_{\tilde \alpha_b}}{r} - br (n'_0 +   n'_{\tilde \gamma_b})
  \end{array}
 \\
 \label{gepxxx}
&=&
\chi_{B_1} 
\begin{array}{| c}
   \Phi_b \\
    \Omega_b
  \end{array}
+
\begin{array}{| c}
   J_{1,b} \\
   J_{2,b}
  \end{array}
\eea
where
\bee
\begin{array}{| c}
   J_{1,b} \\
   J_{2,b}
  \end{array}
  = -br (1-\chi_{B_1})
  \begin{array}{| c}
   m'_0 \\
   n'_0
  \end{array}
+ \chi'_{B_1}
\begin{array}{| c}
   m'_{\alpha_b} \\
   -m'_{\alpha_b} + 2 n'_{\gamma_b} - \left( br + \frac{1}{r}\right)n_{\gamma_b} 
  \end{array}
  + 
  \begin{array}{| c}
  (\chi^2_{B_1} -  \chi_{B_1} ) \frac{m'_{\alpha_b}n_{\gamma_b}}{r^2} \\
   \chi''_{B_1} n_{\gamma_b}
  \end{array}
  \eee
We recall that :
\bee
rm'_0 = rn'_0 = \frac{8r^2}{(1+r^2)^2}.
\eee
Now, with the bounds \fref{boundT_1}, and \fref{estmtwoderivaitve} :
\bee
\left| m'_{\alpha_b} \right| \lesssim \frac br {\bf 1}_{B_1 \leq r \leq 2B_1} + b^2 \frac{1}{r^3b^2|\log b|} {\bf 1}_{B_1 \leq r \leq 2B_1} \lesssim \frac br {\bf 1}_{B_1 \leq r \leq 2B_1},
\eee
and with the estimate \fref{comportementn1}, \fref{comportementn1'} and \fref{estnablaS2}, for $i \geq 0$ :
\bee
\left|r^i \pa_r^i n_{\gamma_b} \right| \lesssim  b {\bf 1}_{B_1 \leq r \leq 2B_1} + b^2 r^2 \log r {\bf 1}_{B_1 \leq r \leq 2B_1} \lesssim b^2 r^2 \log r {\bf 1}_{B_1 \leq r \leq 2B_1}.
\eee
Using the following bounds, we obtain, for $i \geq 0$ :
\bea
\label{wapiti} \left | r^i \pa_r^i J_{1,b} \right | &\lesssim& \frac b{r^2} {\bf 1}_{r \geq B_1}\\
\label{wapitii} \left | r^i \pa_r^i J_{2,b} \right | &\lesssim& \frac b{r^2}\left\{ |\log b|^5 {\bf 1}_{B_1 \leq r \leq 2B_1}  + {\bf 1}_{r \geq 2B_1} \right\} 
\eea
Finally, from the definition  \fref{eqerreurpsibtilde} and \fref{gepxxx} :
  \bee
{\bf \tilde \Psi}_b   &=&  
 \frac 1r\begin{array}{| c}
     \chi_{B_1} \Phi_b' +  \chi'_{B_1} \Phi_b + J'_{1,b} \\
   \chi_{B_1} \Omega_b + J_{2,b}
  \end{array}  
      +c_bb^2
\chi_{\frac{B_0}{4}}\begin{array}{|c}T_1 \\  \nabla S_1 \end{array} \\
&=& 
\chi_{B_1}{\bf \Psi}_b +   \frac 1r\begin{array}{| c}
 \chi'_{B_1} \Phi_b + J'_{1,b} \\
   J_{2,b}
  \end{array}  
  \eee
  We now estimate from \fref{esibgeogogeo}, \fref{esibgeogogeo2}, \fref{wapiti} and \fref{wapitii} : 
\bee
\left | \tilde \Psi^{(1)}_b - \chi_{B_1} \Psi^{(1)}_b\right| &\lesssim&  \frac {b^2}{r^2}{\bf 1}_{B_1 \leq r \leq 2B_1} + \frac b{r^4}{\bf 1}_{r \geq B_1} \\
\left | \nabla \tilde \Psi^{(2)}_b - \chi_{B_1} \nabla \Psi^{(2)}_b\right| &\lesssim& \frac b{r^3}\left\{ |\log b|^5 {\bf 1}_{B_1 \leq r \leq 2B_1}  + {\bf 1}_{r \geq 2B_1} \right\} 
\eee
 and hence using \fref{roughboundltaow}:
\bee
\int |r^i\pa_r^i\tilde \Psi_{b}^{(1)}|^2\lesssim \frac{b^5}{|\log b|^2}+\int_{r\geq B_1}\left[\frac{b^4}{r^4}+\frac{b^2}{r^8}\right]\lesssim \frac{b^5}{|\log b|^2}.
\eee
Moreover :
\bee
\int \frac{\left| \mathcal L^{(1)}(\tilde \Psi^{(1)}_b, \tilde \Psi^{(2)}_b)\right|^2}{Q} &\lesssim& \sum_{i=0}^2 \int (1+r^{2i}) |\pa_r^i \tilde \Psi^{(1)}_b|^2 + \sum_{i=0}^1 \int \frac{|\pa_r^i\nabla \tilde \Psi^{(2)}_b|^2}{1+r^{6-2i}} \\&\lesssim& \frac{b^5}{|\log b|^2} + \int_{r\geq B_1}\left[\frac{b^4}{r^4}+\frac{b^2}{r^8}\right] +\int_{r\geq B_1}\frac{b^2|\log b|^{10}}{r^{12}} \lesssim \frac{b^5}{|\log b|^2}.
\eee
We estimate from \fref{presqueenfin} 
\bee
\int \frac{|\nabla\tilde \Psi_b^{(2)}|^2}{1+r^2} + \int |\Delta \tilde \Psi_b^{(2)}|^2& \lesssim & b^5 |\log b|^2 + \int_{r \geq B_1} \frac{b^2 |\log b|^{10}}{r^8}\\
& \lesssim & b^5 |\log b|^4
\eee
and from \fref{presqueenfin2} :
\bee
\int |\nabla\tilde \Psi_b^{(2)}|^2& \lesssim & b^4 |\log b|^6 + \int_{r \geq B_1} \frac{b^2 |\log b|^{10}}{r^6}\\
& \lesssim & b^4 |\log b|^6.
\eee
To conclude this step of the proof :
\bee
\int Q|\nabla \mathcal M^{(1)}(\tilde \Psi^{(1)}_b, \tilde \Psi^{(2)}_b)|^2
&\lesssim & \int (1+r^2)\left[|r\pa_r \tilde \Psi^{(1)}_b|^2+|\tilde \Psi^{(1)}_b|^2\right]+\frac{|\nabla \tilde \Psi^{(2)}_b|^2}{1+r^4}\\
& \lesssim & \frac{b^4}{|\log b|^2} + \int_{r\geq B_1}\left[\frac{b^4}{r^2}+\frac{b^2}{r^6}\right] +b^5 |\log b|^4\\
&\lesssim& \frac{b^4}{|\log b|^2}.
\eee
{\bf Step 2 :} Degenerate flux.
\\
\par
To prove the estimation of the degenarate flux, we use the rough bound \fref{firstroughbound} together the cancellation \fref{estvooee} to obtain :
\bee
\left|\left<\mathcal L \bf {\tilde \Psi}_b, {\bf \Phi}_{0,B} \right>\right| &\lesssim& B \|\mathcal L \tilde {\bf \Psi}_b\|_{X_Q}^2\\
&\lesssim& \frac{1}{\sqrt b} \left\{ \int \frac{\left| \mathcal L^{(1)}(\tilde \Psi^{(1)}_b, \tilde \Psi^{(2)}_b)\right|^2}{Q} +\int \frac{|\Delta \tilde \Psi_b^{(2)}|^2}{1+r^2} + \int |\tilde \Psi_{b}^{(1)}|^2 \right\} \\
&\lesssim& \frac{b^{\frac 52}}{\sqrt b|\log b|} \lesssim \frac{b^2}{|\log b|}.
\eee
This concludes the proof of the Proposition \ref{localization}.
\end{proof}
\section{Control of the perturbation of the approximate profile}
\label{controlperturbation}
After the construction of approximate profile in the above section, the goal of the rest of this paper is to prove the existence of an open set $\mathcal O$ of initial data, such that we can split the corresponding solution to the problem \fref{kps}  in two parts. The first is the approximate profile. The second is an error term, which doesn't perturb the blow-up dynamics whose we have formally predicted by the construction. In this purpose, we use a bootstrap argument implementing an energy method. 
\subsection{On the open set of initial data}
In this subsection, we describe the open set $\mathcal O$ of initial data, whose corresponding strong solution to \fref{kps} satisfy the Theorem \ref{thmmain}. \par
To begin, the following lemma gives the uniqueness of the decomposition of the solution, under orthogonality conditions.
\begin{lemma}{$L^1\times \dot H^1$ modulation}
\label{uniqueness}
Let $M>M^*$ large enough, then there exists a universal constant $\delta^*(M)>0$ such that $\forall (v,w) \in L^1 \times \dot H^1$ with
\bee
\|v-Q\|_{L^1} + \| \nabla w-\nabla \phi_Q\|_{L^2} < \delta^*(M),
\eee
there exists a unique decomposition
\bee
\left| \begin{array}{l} v \\\nabla w \end{array}\right. = \left| \begin{array}{l} \frac{1}{\l_1^2}\left(\tilde Q_b + \e_1\right) \left( \frac{r}{\l_1}\right) \\ \frac 1{\l_1}\left(\nabla \tilde P_b + \nabla \eta_1\right) \left( \frac{r}{\l_1}\right)  \end{array}\right.
\eee
such that
\bee
\left<\boldsymbol \e_1,\boldsymbol \Phi_M \right> = \left<\boldsymbol \e_1,\mathcal L^* \boldsymbol \Phi_M \right> =0.
\eee
Moreover,
\bee
\| \e_1\|_{L^1} + \| \nabla \eta_1\|_{L^2} + |\l_1 - 1| + |b| \lesssim c(M)\delta^*.
\eee
\end{lemma}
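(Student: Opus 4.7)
The strategy is a standard modulation argument based on the implicit function theorem applied to the two orthogonality conditions. For $(v,w) \in L^1 \times \dot H^1$ near $(Q,\phi_Q)$ and parameters $(\l_1,b)$ near $(1,0)$, introduce the rescaled error
\[
\e_1(y) := \l_1^2 v(\l_1 y) - \tilde Q_b(y), \qquad \nabla \eta_1(y) := \l_1 \nabla w(\l_1 y) - \nabla \tilde P_b(y),
\]
and consider the map
\[
G(v,w,\l_1,b) = \bigl( \langle \boldsymbol{\e}_1, \boldsymbol{\Phi}_M\rangle,\ \langle \boldsymbol{\e}_1, \mathcal L^* \boldsymbol{\Phi}_M\rangle \bigr) \in \mathbb R^2.
\]
The two scalar products are well defined on $L^1\times \dot H^1$ thanks to the compact support of $\boldsymbol{\Phi}_M$ and $\mathcal L^*\boldsymbol{\Phi}_M$ in $\{r \leq 2M\}$ together with the explicit bounds \fref{estimationorthobis} and \fref{newestimate}. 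At the base point $(Q,\phi_Q,1,0)$ one has $\tilde Q_0=Q$ and $\nabla \tilde P_0 = \nabla\phi_Q$, so $\boldsymbol{\e}_1 = 0$ and $G=0$.

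The heart of the argument is the nondegeneracy of the Jacobian $J := D_{(\l_1,b)} G$ at the base point. Differentiating the rescaling at $\l_1=1$ yields $\partial_{\l_1} \boldsymbol{\e}_1 = \boldsymbol{\Lambda Q}$, while at $b = 0$ the expansion ${\bf Q}_b = {\bf Q} + b {\bf T}_1 + O(b^2)$ gives $\partial_b \boldsymbol{\e}_1 = -{\bf T}_1$. Hence
\[
J = \begin{pmatrix} \langle \boldsymbol{\Lambda Q}, \boldsymbol{\Phi}_M\rangle & -\langle {\bf T}_1, \boldsymbol{\Phi}_M\rangle \\ \langle \boldsymbol{\Lambda Q}, \mathcal L^*\boldsymbol{\Phi}_M\rangle & -\langle {\bf T}_1, \mathcal L^*\boldsymbol{\Phi}_M\rangle \end{pmatrix}.
\]
Using the adjunction \fref{biebibvei} together with the algebraic identities $\mathcal L \boldsymbol{\Lambda Q}=0$ \fref{cneneovneo} and $\mathcal L {\bf T}_1 = \boldsymbol{\Lambda Q}$, the off--diagonal entries vanish: the upper--right one by the very choice of $c_M$ in \fref{defphim}, and the lower--left equals $\langle \mathcal L \boldsymbol{\Lambda Q}, \boldsymbol{\Phi}_M\rangle = 0$. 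The remaining diagonal term satisfies $\langle {\bf T}_1, \mathcal L^*\boldsymbol{\Phi}_M\rangle = \langle \boldsymbol{\Lambda Q}, \boldsymbol{\Phi}_M\rangle = -32\pi \log M + O(1)$ by \fref{orthophim}. Thus $J$ is diagonal with $\det J \sim (\log M)^2$, so $J$ is invertible for $M\geq M^*$ large with $\|J^{-1}\| \lesssim (\log M)^{-2}$.

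The implicit function theorem then produces unique $C^1$ functions $\l_1(v,w)$ and $b(v,w)$, defined on a ball $\|v - Q\|_{L^1} + \|\nabla w - \nabla\phi_Q\|_{L^2} < \delta^*(M)$ and vanishing at the base point, such that $G(v,w,\l_1,b) = 0$. Expanding $G(v,w,1,0) = -J(\l_1-1,b) + o(|\l_1-1|+|b|)$, and estimating $|G(v,w,1,0)| \lesssim M^2\delta^*$ directly from \fref{estimationorthobis} with $\boldsymbol{\e}_1 = (v-Q, \nabla w - \nabla \phi_Q)$, one obtains $|\l_1-1| + |b| \lesssim c(M)\delta^*$. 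The control on $\|\e_1\|_{L^1} + \|\nabla \eta_1\|_{L^2}$ then follows by the triangle inequality combined with the smooth dependence of $(\tilde Q_b, \nabla \tilde P_b)$ on $b$ guaranteed by Proposition \ref{localization}. The only mildly technical point is to verify $C^1$--regularity of $G$ in a neighborhood of uniform size in $M$; this is routine using the rough bounds \fref{firstroughbound}, \fref{secondroughbound} to control all difference quotients of $G$ by the $L^1\times \dot H^1$ deviation.
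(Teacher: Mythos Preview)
Your proof is correct and follows essentially the same route as the paper: an implicit function theorem argument applied to the two orthogonality functionals, with the Jacobian at $(Q,\phi_Q,1,0)$ shown to be diagonal via $\mathcal L\boldsymbol{\Lambda Q}=0$, $\langle {\bf T}_1,\boldsymbol{\Phi}_M\rangle=0$, and $\langle {\bf T}_1,\mathcal L^*\boldsymbol{\Phi}_M\rangle=\langle \boldsymbol{\Lambda Q},\boldsymbol{\Phi}_M\rangle\sim -32\pi\log M$. Your version is in fact slightly more detailed than the paper's, which stops at the Jacobian computation and omits the derivation of the final smallness bounds that you spell out via $\|J^{-1}\|$ and the triangle inequality.
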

\begin{proof}
To prove this lemma, we use the implicit function theorem. Indeed, consider the $\mathcal C^1$ functional
\bee
F(v,\l_1,b) = \left[ \left<\boldsymbol v_{Mod},\boldsymbol \Phi_M \right>,\left<\boldsymbol v_{Mod}, \mathcal L^* \boldsymbol \Phi_M \right>\right],
\eee
where
\bee
\boldsymbol v_{Mod} = \left| \begin{array}{l} v_{Mod}  \\ \nabla w_{Mod} \end{array}\right.= \left| \begin{array}{l} \l_1^2v(\l_1x) - \tilde Q_b \\ \l_1\nabla w(\l_1x) - \nabla \tilde P_b \end{array}\right.
\eee
and
\bee
Mod = (\l_1,b).
\eee
By definition, we have $F({\bf Q},1,0) = 0$. Now, we compute the Jacobian at this point, using \fref{orthophim}:
\bea
\nonumber
&&\left|\begin{array}{ll}\left<\frac{\pa}{\pa\l_1}(\boldsymbol v_{Mod}),\Phi_M\right>& \left<\frac{\pa}{\pa b}(\boldsymbol v_{Mod}),\Phi_M\right> \\
\left<\frac{\pa}{\pa\l_1}(\boldsymbol v_{Mod}),\mathcal L^*\Phi_M\right>& \left<\frac{\pa}{\pa b}(\boldsymbol v_{Mod}),\mathcal L^*\Phi_M\right> \end{array}\right |_{(\boldsymbol v,Mod) = (Q,1,0)}\\
& = & \left|\begin{array}{ll} \left<-{\bf \Lambda Q},\boldsymbol \Phi_M \right>& 0 \\ 0 & \left<\boldsymbol T_1,H\boldsymbol \Phi_M \right>\end{array}\right |\\
\label{resjacobian}& = & -\left< {\bf \Lambda Q} ,\boldsymbol \Phi_M \right>^2 =  \left( -32 \pi \log M + O(1)\right)^2 > 0,
\eea 
for M large enough. We can apply the implicit function theorem and this concludes the proof of the lemma \ref{uniqueness}.
\end{proof}
Now, we are in position to describe a open set  $\mathcal O$ of initial data $u_0$  whose the corresponding strong solution to \fref{kps} satisfy the dynamics describes in the Theorem \ref{thmmain}.  
\begin{definition}[Description of the open set $\mathcal O$ of initial data]
Let $M>M^*$ large enough. Let $\alpha^*(M)$ small enough. Pick $\alpha^*$ such that $0< \alpha^*<\alpha^*(M)$. Then, we let $\mathcal O$ the set of initial data of the form
\be
\label{forminitialdata}
\left|\begin{array}{l}u_0 \\v_0\end{array}\right. = \left|\begin{array}{l}\frac{1}{\l_0^2} \{\tilde Q_{b_0} + \e_0 \}\left(\frac r{\l_0}\right) \\  \{\tilde P_{b_0} + \eta_0 \}\left(\frac r{\l_0}\right) \end{array}\right.
\ee
where the perturbation $(\e_0,\eta_0)$ satisfies :
\begin{itemize}
 \item Orthogonality conditions
\be
 \label{initicondit}
\left< \boldsymbol \e_0,\boldsymbol \Phi_M \right> = \left< \boldsymbol \e_0, \mathcal L^* \boldsymbol \Phi_M \right> = 0.
\ee
 \item Positivity: $$u_0>0, \ \ v_0>0.$$
  \item Small super critical mass: 
 \be
 \label{intiialmass}
 \int Q<\int u_0<\int Q+\alpha^*.
 \ee
 \item Positivity and smallness of $b_0$: 
 \be
 \label{positivitybzero} 0<b_0<\delta(\alpha^*).
 \ee
 \item Initial smallness:
 \bea
 \label{initialsmalneesb}
\|(1+y)\nabla \Delta \eta_0\|_{L^2}+ \|\Delta \eta_0\|_{L^2}+ \|\eta_0\|_{\dot H^1} &<&b_0^{10}\\
 \|\boldsymbol \e_0\|_{X_Q}&<&b_0^{10}.
 \eea
\end{itemize}
\end{definition}
\begin{remark}
We choose $\alpha^*(M)$ small enough in order to have the uniqueness of the decomposition \fref{forminitialdata}.
\end{remark}
\subsection{The bootstrap argument}
\label{apzoeirutymm}
Let $\boldsymbol u_0\in \mathcal O$, and $\boldsymbol u\in \mathcal C([0,T[,\mathcal E)$ be the corresponding strong solution to \fref{kps}. As $\boldsymbol u_0$ is a very small perturbation in $L^1\times \dot H^1$ of the soliton, we can apply the lemma \fref{uniqueness}, and thus, the solution $\boldsymbol u$ admits a unique decomposition on some small time interval $[0,T^*)$
\be
 \label{decompe}
\left|\begin{array}{l}u(t,r) \\v(t,r)\end{array}\right. = \left|\begin{array}{l}\frac{1}{\l(t)^2} \{\tilde Q_{b(t)} + \e(t)\}\left(\frac r{\l(t)}\right) \\  \{\tilde P_{b(t)} + \eta(t) \}\left(\frac r{\l(t)}\right) \end{array}\right.
\ee
where the error term $\boldsymbol \e(t)$ satisfies the orthogonality conditions
\be
 \label{orthoe}
\left< \boldsymbol \e(t),\boldsymbol \Phi_M \right> = \left< \boldsymbol \e(t), \mathcal L^* \boldsymbol \Phi_M \right> = 0.
\ee
Moreover, using a similar argument that Martel and Merle in \cite{martel2001}, we prove that the geometrical parameters $(\l(t),b(t))$ are nonnegative continuous function.
\par
The goal of the rest of the paper is to prove that the error term $\boldsymbol \e(t)$ doesn't perturb the blow-up dynamics formally predicted by the construction of the approximate profile ${\bf \tilde Q}_b$. In this purpose, we shall use a bootstrap argument to prove that the error term remains very small in suitable norms with respect to $b(t)$. Thus, we shall obtain bounds on the error made on the equation of modulation parameters $(\l,b)$. We shall be in position to conclude the proof of the Theorem \ref{thmmain}.
Using the continuity of geometrical parameters together with the initial smallness assumption, we may assume on $[0,T^*)$ the bootstrap bounds : 

 \begin{itemize}
 \item Positivity and smallness of $b$: 
 \be
 \label{positiv}
0<b(t)<10b_0.
 \ee
 \item $L^1$ bound:
\be
\label{bootsmalllone}
\int |\e(t)|<(\delta^*)^{\frac 14}.
\ee
\item Control of $\boldsymbol \e$ in smoother norms: 
\bea
\label{bootsmallh1}
\|\eta(t)\|^2_{\dot H^1} &\leq& K^* b^2(t) |\log b(t)|^6,\\
\label{bootsmallh2}
 \|\Delta \eta(t)\|^2_{L^2}&\leq& K^*\frac{b^2(t)}{|\log b(t)|},\\
\label{bootsmallh3}
\|(1+y)\nabla \Delta \eta(t)\|^2_{L^2}&\leq&K^* \frac{b(t)}{\sqrt{|\log b(t)|}},\\
\label{bootsmallh2q}
\| {\bf E}_2(t)\|^2_{X_Q}&\leq& K^*\frac{b^3(t)}{|\log b(t)|^2},
\eea
\end{itemize}
where ${\bf E}_2 = \mathcal L (\e,\eta)$, $K^*=K^*(M)$ is a large enough constant and $\delta^* = \delta(\alpha^*)$ is a small enough constant such that:
\bee
\delta(\alpha^*) \rightarrow 0 \ \ \mbox{as} \ \ \alpha^* \rightarrow 0.
\eee
The following proposition prove that the regime is trapped, and thus $T^*=T$. 
\begin{proposition}[Trapped regime]
\label{propboot}
Assume that $K^*$ in \fref{bootsmallh2q}  has been chosen large enough, then $\forall t\in [0,T^*)$:
\bea
 \label{positivboot}
0<b(t)&<&2b_0, \\
\label{bootsmallloneboot}
\int |\e(t)|&<&\frac12(\delta^*)^{\frac 14}, \\
\label{bootsmallh1boot}
\|\eta(t)\|^2_{\dot H^1} &\leq& \frac{K^*}2 b^2(t) |\log b(t)|^6,\\
\label{bootsmallH2}
\|\Delta \eta(t)\|^2_{L^2}&\leq& \frac{K^*}{2}\frac{b^2(t)}{|\log b(t)|},\\
\label{bootsmallH3}
\|(1+y)\nabla \Delta \eta(t)\|^2_{L^2}&\leq&\frac{K^*}{2} \frac{b(t)}{\sqrt{|\log b(t)|}},\\
\label{bootsmallh2qboot}
\|{\bf E}_2(t)\|^2_{X_Q}&\leq& \frac{K^*}{2}\frac{b^3(t)}{|\log b(t)|^2}.
\eea
\end{proposition}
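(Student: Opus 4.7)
The engine of the bootstrap is a Lyapunov functional $\mathcal F(\boldsymbol\varepsilon)\simeq\|{\bf E}_2\|_{X_Q}^2=\langle\mathcal M{\bf E}_2,{\bf E}_2\rangle$ for the linearized energy, whose monotonicity will be established in Section \ref{lyapounov}. Once the central estimate \fref{bootsmallh2qboot} is closed, the coercivity Proposition \ref{interpolationhtwo} will propagate the improvement to \fref{bootsmallh1boot} and \fref{bootsmallH2}, while the top-order bound \fref{bootsmallH3}, the $L^1$ bound \fref{bootsmallloneboot} and the parameter bound \fref{positivboot} demand separate treatments. The first step is the derivation of the modulation equations: inserting \fref{decompe} into \fref{kps11} in rescaled variables $(s,y)$ yields
\begin{equation*}
\partial_s\boldsymbol\varepsilon - b\Lambda\boldsymbol\varepsilon - \mathcal L\boldsymbol\varepsilon = \boldsymbol{\tilde\Psi}_b - \left(\tfrac{\lambda_s}{\lambda}+b\right)\Lambda{\bf \tilde Q}_b - b_s\,\partial_b{\bf\tilde Q}_b + \mathrm{NL}(\boldsymbol\varepsilon),
\end{equation*}
where $\mathrm{NL}$ collects the quadratic terms. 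Differentiating the orthogonality relations \fref{orthoe} in $s$ and pairing the equation with $\boldsymbol\Phi_M$ and $\mathcal L^*\boldsymbol\Phi_M$ produces a $2\times 2$ linear system for $(b_s,\tfrac{\lambda_s}{\lambda}+b)$, whose determinant is $\simeq(\log M)^2$ by \fref{orthophim} (cf.\ \fref{resjacobian}). Inverting, using Proposition \ref{localization} and the bootstrap bounds to control the source, one obtains
\begin{equation*}
\Big|\tfrac{\lambda_s}{\lambda}+b\Big|+\Big|b_s+c_b b^2\Big|\lesssim \frac{b^{5/2}}{|\log b|}.
\end{equation*}

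\textbf{Closing \fref{bootsmallh2qboot} via the monotonicity formula.} Section \ref{lyapounov} will construct an integer $k\ge 1$ and a functional $\mathcal F$ with $\mathcal F\simeq\|{\bf E}_2\|_{X_Q}^2$, the equivalence constants absorbing the $\log M$-losses of Proposition \ref{interpolationhtwo}, such that
\begin{equation*}
\frac{d}{ds}\!\left\{\frac{\mathcal F(\boldsymbol\varepsilon(s))}{\lambda(s)^{2k}}\right\}\le \frac{C}{\lambda^{2k}}\cdot\frac{b^{5}}{|\log b|^2}.
\end{equation*}
Integrating from $0$ to $s$, multiplying by $\lambda(s)^{2k}$ and converting the time integral via $ds=-d\lambda/(b\lambda)$ from Step 1, the driving term contributes $\lesssim b^3/|\log b|^2$, while the initial contribution is bounded by $\mathcal F(0)\lesssim b_0^{20}\ll b^3/|\log b|^2$ using \fref{initialsmalneesb} and \fref{positiv}. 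Hence $\|{\bf E}_2(s)\|_{X_Q}^2\le C_0\,b^3/|\log b|^2$ with $C_0$ independent of $K^*$, and taking $K^*\gg C_0$ closes \fref{bootsmallh2qboot}.

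\textbf{Subsidiary bounds, and the main obstacle.} Applying Proposition \ref{interpolationhtwo} to $\boldsymbol\varepsilon$ (which satisfies \fref{orthoe}), the improved \fref{bootsmallh2qboot} gives $\|\eta\|_{\dot H^1}^2+\|\Delta\eta\|_{L^2}^2\lesssim \delta(M)\,b^3/|\log b|^2$, which beats \fref{bootsmallh1boot} and \fref{bootsmallH2} by an arbitrarily large factor. The top norm \fref{bootsmallH3} is not captured by $\|{\bf E}_2\|_{X_Q}$ and is obtained by a direct energy identity for $(1+y)\nabla\Delta\eta$ on the rescaled second equation, using the forcing bound \fref{roughboundltaowt3} and Gronwall along the law $-\lambda_s/\lambda\simeq b$ of Step 1. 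For \fref{bootsmallloneboot}, I split $\int|\varepsilon|$ at $r=2B_1$: the inner piece is $\lesssim B_1\|\varepsilon\|_{L^2}\lesssim \sqrt{b}$ by Cauchy--Schwarz and the just-closed \fref{bootsmallh2qboot}, while the outer piece is controlled by exact mass conservation $\int\varepsilon=\int u_0-\int\tilde Q_{b}=O(\alpha^*)$ together with a monotonicity of outer mass transported by the divergence structure of the first equation in \fref{kps11}. Finally, \fref{positivboot} follows from the scalar ODE $b_s=-c_bb^2(1+o(1))$ of Step 1, whose solutions are monotone decreasing, positive for all time, and trapped in $(0,b_0]$ for $s>0$. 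The genuine difficulty is the construction of $\mathcal F$ in Section \ref{lyapounov}: the naive choice $\langle\mathcal M{\bf E}_2,{\bf E}_2\rangle$ produces resonant contributions along ${\bf \Lambda Q}$ that would only yield a $b^4$ driving; these are exactly what the radiation correction $c_bb^2\breve{\bf T}$ in Proposition \ref{construction} is engineered to cancel, but upgrading this algebraic cancellation to the $b^5/|\log b|^2$ Lyapunov inequality above, while tracking every $|\log M|$ and $|\log b|$ factor generated by Proposition \ref{interpolationhtwo}, is the technical heart of the proof.
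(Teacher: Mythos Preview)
Your outline has two genuine gaps.

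\textbf{The sharp modulation law requires a second decomposition.} You claim directly that $\big|b_s+c_b b^2\big|\lesssim b^{5/2}/|\log b|$. This is not what pairing with $\mathcal L^*\boldsymbol\Phi_M$ yields: the paper's Lemma~\ref{lemmeparam} gives only $|b_s|\lesssim b^{3/2}$, because the elements of $\ker\mathcal L^*$ decay too slowly for the projection to produce the sharp $b^2/|\log b|$ rate. The paper circumvents this by introducing a \emph{lifted} parameter $\hat b$ via a second decomposition (Lemma~\ref{introductionbchapeau}) enforcing $\langle\hat{\bf E},\mathcal L^*\boldsymbol\Phi_{0,\hat B_0}\rangle=0$ with the moving scale $\hat B_0=1/\sqrt{\hat b}$. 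It is $\hat b$, not $b$, that satisfies $\big|\hat b_s+2b^2/|\log b|\big|\lesssim b^2/|\log b|^2$ (Lemma~\ref{lemmasharpmod}), and the Lyapunov functional of Proposition~\ref{htwoqmonton} is built on $\hat{\bf E}$, $\hat b$ throughout. Your monotonicity step and your proof of \fref{positivboot} both rest on the unattainable direct bound for $b_s$; without $\hat b$ the resonant $\Lambda{\bf Q}$ contributions you mention at the end cannot be handled, and the scheme does not close.

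\textbf{The coercivity of Proposition~\ref{interpolationhtwo} does not control the unweighted norms of $\eta$.} You assert that the improved \fref{bootsmallh2qboot} together with Proposition~\ref{interpolationhtwo} yields $\|\eta\|_{\dot H^1}^2+\|\Delta\eta\|_{L^2}^2\lesssim b^3/|\log b|^2$. But \fref{contorlcoerc} only controls the \emph{weighted} quantities $\int\frac{|\nabla\eta|^2}{r^2(1+r^2)(1+|\log r|)^2}$ and $\int\frac{|\Delta\eta|^2}{r^2(1+|\log r|)^2}$; the unweighted $\dot H^1$ and $\dot H^2$ norms of $\eta$ are \emph{not} bounded by $\|{\bf E}_2\|_{X_Q}$. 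Indeed $\nabla\tilde S_1\sim 1/r$ shows the profile itself has $\dot H^1$ norm of order $|\log b|$, which is why the target in \fref{bootsmallh1boot} is $b^2|\log b|^6$ and not $b^3/|\log b|^2$. The paper closes \fref{bootsmallh1boot}, \fref{bootsmallH2}, \fref{bootsmallH3} by three separate energy identities on the \emph{second} equation $\partial_s\hat\eta=\Delta\hat\eta-\hat\varepsilon+\cdots$ (Sections~6.2--6.4), each integrated against the appropriate $\lambda$-weight. Your $L^1$ argument is also different from the paper's (which splits $\tilde\varepsilon$ according to $|\tilde\varepsilon|\lessgtr Q$ and uses $\|\tilde\varepsilon\|_{L^\infty}\lesssim\delta(\alpha^*)$ together with positivity $\tilde\varepsilon_>\ge0$ and mass conservation), and the ``monotonicity of outer mass'' you invoke is not substantiated.
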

The rest of the paper is organize as follows : In the rest of this section, we shall set up the equation verify by the error term, and we compute the rough modulation equation, coming from our choice of orthogonality conditions \fref{orthoe}. We shall introduce a second decomposition for technical reason,  and we shall are in position to obtain sharp modulation equation. The section \ref{lyapounov} is devoted to the proof of the suitable Lyapounov functionnal at the $X_Q$ level, which is the heart of the energy method to prove \fref{bootsmallh2qboot}. Finally,  we shall prove the Proposition \ref{propboot} in the section \ref{proofpropboot}. We shall have therefore all tools to prove the Theorem \ref{thmmain} in the last section \ref{lastsection}.
\subsection {Equation of the error term}
We recall the space time renormalization :
\be
 \label{defst}
 s(t)=\int_0^t\frac{d\tau}{\l^2(\tau)}, \ \ y=\frac{r}{\lambda(t)}.
 \ee
and the notation
 \be
 \label{resacling}
{\bf f}_{\lambda}(t,r)= \left| \begin{array}{l} f_\l (t,r)\\ g_\l(t,r) \end{array} \right. =  \left| \begin{array}{l} \frac{1}{\l^2}f(s,y) \\ g(s,y)\end{array} \right.
 \ee
 which leads to: 
\be\label{neneoneonoeo}
\pa_t  \left| \begin{array}{l} f_\l \\ g_\l \end{array} \right.=  \frac{1}{\l^2} \left| \begin{array}{l}\left[\pa_sf-\lsl \Lambda f\right]_\l\\ \left[\pa_sg-\lsl \Lambda g\right]_\l. \end{array} \right.
\ee
Let the renormalized flow :
 \bee
 \left| \begin{array}{l} u_\l (t,r)\\ v_\l(t,r) \end{array} \right. =  \left| \begin{array}{l} f_\l (s,y)\\ g_\l(s,y) \end{array} \right..
\eee
Then if $(u,v)$ is a solution of the problem \fref{kps}, $(f,g)$ satisfy the following system :
\bee
\left\{ \begin{array}{l}\pa_s f - \frac{\l_s}{\l} \Lambda f = \nabla.(\nabla f + f \nabla g) \\ \pa_s g - \frac{\l_s}{\l} \Lambda g = \Delta g - f \end{array} \right.
\eee
Using the unique decomposition \fref{decompe} for the solutions whose initial data are in $\mathcal O$, we decompose $(f,g)$ as following
\be
\label{defE}
\begin{array}{| c} f(s,y) \\ g(s,y) \end{array} = \tilde {\bf Q}_{b(s)}  + {\bf E}(s,y) =  \begin{array}{| l} \tilde{Q}_{b(s)} + \e(s,y) \\ \tilde{P}_{b(s)} + \eta(s,y) \end{array}.
\ee
Hence ${\bf E}$ is solution of the following equation:
\be
\label{equationE}
\pa_s {\bf E} - \frac{\l_s}{\l} \Lambda {\bf E} = \mathcal L {\bf E} + {\bf F}  +  \widetilde{ \bf Mod} + {\bf G} =  \mathcal L {\bf E} +  \boldsymbol{\mathcal F},
\ee
where
\be
\label{defF}
{\bf F} = \tilde{\bf \Psi}_b +  {\bf \Theta}_b(\e,\eta) + {\bf N}(\e,\eta)
\ee
with $\tilde{\bf \Psi}_b$  is defined by \fref{eqerreurpsibtilde} and
\bea
\label{defTheta}
 {\bf \Theta}_b(\e,\eta) &=&  \begin{array}{| c} \nabla. \left( \e \nabla \tilde{\gamma}_b + \tilde{\alpha}_b \nabla \eta \right) \\0 \end{array}, \\
\label{defN}
 {\bf N}(\e,\eta) &=&  \begin{array}{| c} \nabla. \left( \e  \nabla \eta \right) \\0 \end{array}, \\
\label{defMod}
\widetilde{\bf Mod} (y,s) &=&  \left( b + \frac{\l_s}{\l}\right) \Lambda \tilde {\bf Q}_b - \pa_s \tilde {\bf Q}_b, \\
\label{defG}
{\bf G} &=& -c_bb^2 \breve{\bf T}.
\eea
In the same way, using the original variables, we can rewrite the decomposition by
\be
\label{defW}
{\bf U} = \left( \tilde {\bf Q}_b \right)_{\l} + {\bf W} = \left(  \ \  \begin{array}{| c} Q + \tilde{\alpha}_b \\ \phi_Q + \tilde{\gamma}_b \end{array} \right)_{\l} + \begin{array}{| c} w \\z \end{array} .
\ee
Hence ${\bf W}$ verifies :
\be
\label{equationW}
\pa_t {\bf W} = \mathcal L_{\l} {\bf W} + \frac{1}{\l^2}  \boldsymbol{\mathcal F}_{\l}
\ee
with
\be
 \mathcal L_{\l} {\bf W} = \begin{array}{| c} \nabla .(Q_\l \nabla \mathcal M^{(1)}_\l (w,z)) \\ \Delta \mathcal M^{(2)}_\l (w,z)  \end{array}, \ \ {\mathcal M}_\l ({\bf W}) = \begin{array} {| c} \frac{w}{Q_{\l}} + z \\ w - \phi_z \end{array}
\ee
\subsection{First modulation equations}
We are in position to compute the two modulation equation of $(b,\l)$, projecting the equation \fref{equationE} respectively onto $\Phi_M$ and $\mathcal L \Phi_M$. 
\begin{lemma}[Rough control of the modulation parameters]
\label{lemmeparam}
There exist an universal constant $C(M)$ large enough, independent of the bootstrap constant $K^*(M)$ such that: 
\be
\label{estlambda}
\left|\lsl+b\right|\lesssim C(M)\frac{b^2}{|\log b|},
\ee
\be
\label{estb}
|b_s|\lesssim b^{\frac 32}.
\ee
\end{lemma}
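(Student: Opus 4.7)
The plan is to derive evolution equations for $b$ and $\lsl$ by differentiating in $s$ the two orthogonality conditions \fref{orthoe} and solving the resulting $2\times 2$ linear system. Starting from $\langle \boldsymbol\e,\boldsymbol\Phi_M\rangle=0$ and $\langle \boldsymbol\e,\mathcal L^*\boldsymbol\Phi_M\rangle=0$, and noting that $\boldsymbol\Phi_M$ and $\mathcal L^*\boldsymbol\Phi_M$ are time-independent, I substitute the equation \fref{equationE} to get
\begin{equation*}
\lsl\langle \Lambda\boldsymbol\e,\boldsymbol\Phi_M\rangle + \langle \mathcal L\boldsymbol\e,\boldsymbol\Phi_M\rangle + \langle \widetilde{\bf Mod},\boldsymbol\Phi_M\rangle + \langle \tilde{\boldsymbol\Psi}_b + \boldsymbol\Theta_b + \mathbf N + \mathbf G,\boldsymbol\Phi_M\rangle = 0,
\end{equation*}
and the analogous identity with $\mathcal L^*\boldsymbol\Phi_M$ in place of $\boldsymbol\Phi_M$. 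The adjunction \fref{biebibvei} together with the orthogonality \fref{orthoe} kills $\langle \mathcal L\boldsymbol\e,\boldsymbol\Phi_M\rangle$ and produces $\langle \mathcal L\boldsymbol\e,\mathcal L^*\boldsymbol\Phi_M\rangle = \langle \boldsymbol\e,(\mathcal L^*)^2\boldsymbol\Phi_M\rangle$, which has good decay by the estimates in Lemma \ref{direcitonroth}.

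The key algebraic computation is the expansion of $\widetilde{\bf Mod} = (b+\lsl)\Lambda\tilde{\bf Q}_b - b_s\pa_b\tilde{\bf Q}_b$ against the two test directions. Writing $\tilde{\bf Q}_b = \mathbf Q + b\tilde{\bf T}_1 + b^2\tilde{\bf T}_2$ and using \fref{orthophim} gives $\langle \Lambda\tilde{\bf Q}_b,\boldsymbol\Phi_M\rangle = \langle \Lambda\mathbf Q,\boldsymbol\Phi_M\rangle + O(b) = -32\pi\log M + O(1)$, while the leading part of $\pa_b\tilde{\bf Q}_b$ is $\tilde{\bf T}_1$, whose pairing against $\boldsymbol\Phi_M$ vanishes modulo localization tail errors of order $b|\log b|$. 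A parallel calculation against $\mathcal L^*\boldsymbol\Phi_M$ using \fref{estfonamentalebus} shows that $\langle \Lambda\mathbf Q,\mathcal L^*\boldsymbol\Phi_M\rangle = O(1)$ and $\langle \tilde{\bf T}_1,\mathcal L^*\boldsymbol\Phi_M\rangle = -4\langle \tilde T_1,1\rangle + O(1) \sim -16\pi + O(1)$. This produces a $2\times 2$ system in the unknowns $(b+\lsl,b_s)$ whose matrix is diagonally dominant with determinant $\sim \log M$, exactly as in the Jacobian computation \fref{resjacobian}.

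Inverting the system reduces the lemma to estimating the remainder terms. The error $\tilde{\boldsymbol\Psi}_b$ contributes $|\langle \tilde{\boldsymbol\Psi}_b,\boldsymbol\Phi_M\rangle| \lesssim b^2/|\log b|$ via \fref{roughboundltaowt} combined with \fref{firstroughbound} (using $\|\boldsymbol\Phi_M\|$ of size $M$). The forcing $\mathbf G = -c_b b^2\breve{\bf T}$ yields an explicit $O(b^2/|\log b|)$ contribution which is essentially what drives the leading balance for $b+\lsl$. The quadratic interaction $\boldsymbol\Theta_b(\e,\eta)$ is bounded by the bootstrap \fref{bootsmallh1}--\fref{bootsmallh2q} and integration by parts (moving $\nabla\cdot$ onto the smooth, compactly supported test direction), giving a gain of at least $b^{3/2}$. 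The nonlinearity $\mathbf N(\e,\eta) = \nabla\cdot(\e\nabla\eta)$ and the drift term $\lsl\langle \Lambda\boldsymbol\e,\boldsymbol\Phi_M\rangle$ are of order $b^{3/2}$ and can be reabsorbed into the left-hand side since $|\lsl|\lesssim b$ by the bootstrap. Collecting these into $R_1,R_2$ and inverting the non-degenerate matrix yields the bound on $b+\lsl$; for $|b_s|$ one obtains only $O(b^{3/2})$ because the projection onto $\mathcal L^*\boldsymbol\Phi_M$ picks up $-4(\e,1)$ by \fref{estfonamentalebus}, and this is controlled only by the rough $L^1$ bootstrap \fref{bootsmalllone} rather than by a sharp weighted norm.

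The main obstacle is tracking that the constant $C(M)$ remains independent of the bootstrap constant $K^*$. Every pairing must therefore be estimated using only the $M$-dependent norms of $\boldsymbol\Phi_M$ and $\mathcal L^*\boldsymbol\Phi_M$ (controlled by Lemma \ref{direcitonroth}) together with the error bounds \fref{roughboundltaowt}--\fref{roughboundltaowt3}, while any term involving $\boldsymbol\e$ in a coercive norm must appear with an extra power of $b$ so that the $K^*$-dependence is defeated by $b\ll 1$; this forces the rougher exponent $b^{3/2}$ in \fref{estb} rather than the sharper $b^2/|\log b|$ that will only be recovered later using the full Lyapunov/monotonicity machinery of Section \ref{lyapounov}.
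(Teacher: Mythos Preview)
Your overall strategy matches the paper's: project the $\boldsymbol\e$-equation \fref{equationE} onto $\boldsymbol\Phi_M$ and $\mathcal L^*\boldsymbol\Phi_M$, exploit the orthogonality \fref{orthoe}, and solve the resulting $2\times 2$ system for $(b+\lsl,b_s)$. However, several of your key computations are wrong.

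Your evaluation of $\langle\tilde{\bf T}_1,\mathcal L^*\boldsymbol\Phi_M\rangle$ via \fref{estfonamentalebus} fails: the remainder in that estimate is not small when applied to $\tilde T_1\sim 4/r^2$, since $\int_{r\ge M}|\tilde T_1|\sim|\log b|$, not $O(1)$. The paper instead uses adjunction: because $\boldsymbol\Phi_M$ is supported in $r\le 2M\ll B_1$ where $\tilde{\bf T}_1={\bf T}_1$, one has $\langle\tilde{\bf T}_1,\mathcal L^*\boldsymbol\Phi_M\rangle=\langle\mathcal L{\bf T}_1,\boldsymbol\Phi_M\rangle=\langle\Lambda{\bf Q},\boldsymbol\Phi_M\rangle=-32\pi\log M+O(1)$, not $-16\pi+O(1)$; accordingly the determinant is $\sim(\log M)^2$ as in \fref{resjacobian}, not $\sim\log M$. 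More seriously, your explanation of why $|b_s|$ is only $O(b^{3/2})$ is incorrect: you attribute it to $-4(\e,1)$ controlled by the $L^1$ bootstrap \fref{bootsmalllone}, but that would yield $|b_s|\lesssim(\delta^*)^{1/4}$, a bound that does not decay with $b$ at all. The actual source is the linear term $\langle{\bf E}_2,\mathcal L^*\boldsymbol\Phi_M\rangle$: since $\int\e_2=0$ by \fref{estvooee}, the $-4(\e_2,1)$ contribution vanishes, and \fref{secondroughbound} together with the $X_Q$ bootstrap \fref{bootsmallh2q} gives $|\langle{\bf E}_2,\mathcal L^*\boldsymbol\Phi_M\rangle|\lesssim M\|{\bf E}_2\|_{X_Q}\lesssim b^{3/2}$.

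Finally, there is no bootstrap hypothesis $|\lsl|\lesssim b$; this is precisely what the lemma establishes. The paper closes the loop by setting $V=|b+\lsl|+|b_s|$, deriving from the two projections the inequalities $|b+\lsl|\lesssim b^{3/4}V+C(M)b^2/|\log b|$ and $|b_s|\lesssim b^{3/2}+C(M)bV$, summing to obtain $V\lesssim b^{3/2}$, and then reinjecting into each to conclude \fref{estlambda} and \fref{estb}.
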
 
\begin{remark} Note that \fref{estlambda}, \fref{estb} imply the bootstrap bound: 
\be
\label{cnkonenoeoeoi3poi}
\left|\lsl+b\right|+|b_s|\lesssim b^{\frac32}.
\ee
\end{remark}
\begin{proof}
Let 
\be
V = \left| \frac{\l_s}{\l} + b \right| + \left|b_s\right|
\ee
{\bf Step 1 :} Projection onto $\Phi_M$ \\
We project the equation \fref{equationE} onto $\Phi_M$. Using the orthogonality conditions \fref{orthoe}, we obtain:
\be
\label{projectontophim}
-\lsl \left< \Lambda {\bf E},\boldsymbol\Phi_M\right> = \left< {\bf F},\boldsymbol\Phi_M\right> + \left< \widetilde{\bf Mod},\boldsymbol\Phi_M\right> + \left< {\bf G},\boldsymbol\Phi_M\right>.
\ee
From \fref{defphim}, the function $\Phi^{(1)}_M$ and $\nabla \phi^{(2)}_M$ are compactly supported in $r \leq 2M$. Using the interpolation bound \fref{coercbase} and the bootstrap bound \fref{bootsmallh2q} , we obtain:
\bee
\left| -\lsl \left< \Lambda {\bf E},\boldsymbol\Phi_M\right>  \right| &\lesssim& \left( b + V\right) \left\{ \int \nabla.(y\e)\Phi_M^{(1)} + \int \nabla(y\nabla\eta)\nabla \Phi_M^{(2)} \right\} \\
&\lesssim& \left( b + V\right) C(M) \left\{ \left(\int \e^2\right)^{\frac12} + \left(\int \frac{|\nabla\eta|^2}{1+y^4}\right)^{\frac12} \right\} \\
&\lesssim&  \left( b + V\right) C(M) \frac{b^{\frac32}}{|\log b|}.
\eee
Now from the bound \fref{roughboundltaowt} and \fref{roughboundltaowt2},
\bee
\left| \left< \Psi_b,\boldsymbol \Phi_M \right> \right| &=& \left| \int \tilde \Psi^{(1)}_b \Phi_M^{(1)} + \int \nabla \tilde \Psi^{(2)}_b \nabla\Phi_M^{(2)} \right| \\
&\lesssim& C(M) \left[ \left(\int \left|\tilde \Psi^{(1)}_b\right|^2\right)^\frac{1}{2} + \left(\int \frac{|\nabla\tilde \Psi^{(2)}_b|^2}{1+y^2}\right)^\frac{1}{2}\right] \\
&\lesssim& C(M) b^{\frac52} |\log b|^2
\eee
The bootstrap bound \fref{bootsmallh2q} yields :
\bee
\left| \left< {\bf N},\boldsymbol \Phi_M \right> \right| &=& \left| \int \nabla.(\e\nabla \eta) \Phi_M^{(1)}\right| \lesssim C(M) \left(\int \e^2\int \frac{|\nabla\eta|^2}{1+y^4}\right)^{\frac12} \lesssim C(M)\frac{b^{\frac32}}{|\log b|}.
\eee
In the same way, we prove that : $\left|\left<\boldsymbol \Theta_b, \boldsymbol \Phi_M \right>\right| \lesssim C(M)\frac{b^{\frac32}}{|\log b|}$.
Now, using the definition of $\boldsymbol\Phi_M$ \fref{defphim}, and the decay of $T_1$ \fref{esttonepropt}, we obtain :
\bee
\left| \left< {\bf G},\boldsymbol \Phi_M \right> \right| &=& \left|c_bb^2\left\{ \int T_1 \chi_{\frac{B_0}{4}} \Phi_M^{(1)} +\int \nabla S_1 \chi_{\frac{B_0}{4}} \nabla\Phi_M^{(2)}  \right\} \right| \lesssim C(M)\frac{b^2}{|\log b|}. 
\eee
Finally, using the estimate on $\boldsymbol \Phi_M$ \fref{orthophim}, we obtain :
\bee
\left( b + \lsl\right) \left< \Lambda \tilde {\bf Q}_b,\boldsymbol \Phi_M\right> &=& \left( b + \lsl\right) \left< \Lambda {\bf Q}, \boldsymbol \Phi_M\right > + O(C(M)bV) \\
&=& \left( b + \lsl\right)\left( -32 \pi \log M + O(1)\right) + O(C(M)bV),
\eee
and
\bee
\left| \left< \pa_s \tilde {\bf Q}_b,\boldsymbol \Phi_M\right>\right| = \left| b_s\left< \tilde{\bf T}_1,\boldsymbol \Phi_M\right> + 2bb_s\left< \tilde{\bf T}_2,\boldsymbol \Phi_M\right>\right| \lesssim C(M)bV.
\eee
Hence, injecting all above bounds in \fref{projectontophim}, we obtain the bound
\be
\label{estiboundone}
\left|\lsl+b\right|\lesssim b^{\frac 34}|V|+C(M)\frac{b^2}{|\log b|}.
\ee
{\bf Step 2 :} Projection onto $ \mathcal L^* \Phi_M$ \\
We project the equation \fref{equationE} onto $ \mathcal L^* \Phi_M$. Using the orthogonality conditions \fref{orthoe}, we obtain:
\be
\label{projectontoLstarphim}
-\lsl \left< \Lambda {\bf E}, \mathcal L^*\boldsymbol\Phi_M\right> = \left< {\bf E}_2, \mathcal L^*\boldsymbol\Phi_M\right> + \left< {\bf F}, \mathcal L^*\boldsymbol\Phi_M\right> + \left< \widetilde{\bf Mod}, \mathcal L^*\boldsymbol\Phi_M\right> + \left< {\bf G}, \mathcal L^*\boldsymbol\Phi_M\right>.
\ee
In the same way as the last step, we prove without difficulties that :
\bee
\left|  \left< - \lsl \Lambda {\bf E} - {\bf F} - {\bf G}, \mathcal L^*\boldsymbol\Phi_M\right>\right| \lesssim C(M) \left(bV + \frac{b^2}{|\log b|} \right).
\eee
The bound \fref{secondroughbound} and the bootstrap bound \fref{bootsmallh2q} imply:
\bee
\left| \left< {\bf E}_2, \mathcal L^*\boldsymbol\Phi_M\right> \right| \lesssim \frac{\| {\bf E}_2\|_{X_Q}}{M} \lesssim b^{\frac32}.
\eee
We recall that $\mathcal L \Lambda Q = 0$. Hence :
\bee
\left| \left( b + \lsl\right) \left< \Lambda \tilde {\bf Q}_b, \mathcal L^*\boldsymbol \Phi_M\right> \right| \lesssim Vb \left| \left< \mathcal L\Lambda \tilde {\bf T}_1 + b\mathcal L\Lambda \tilde {\bf T}_2 , \boldsymbol \Phi_M\right> \right| \lesssim C(M)bV.
\eee
To conclude, 
\bee
 \left< \pa_s \tilde{\bf Q}_b, \mathcal L^*\boldsymbol \Phi_M\right> &=& b_s \left< \mathcal L \tilde{\bf T}_1,\Phi_M \right> + O(C(M)bV) \\
 &=& b_s(-32\pi \log M + O(1)) + O(C(M)bV).
\eee
Injecting the collection of above estimates in \fref{projectontoLstarphim} yields :
\be
\label{estiboundtwo}
\left|b_s\right|\lesssim b^{\frac 32}+C(M)b|V|.
\ee
{\bf Step 3 :} Conclusion \\
Summing the bound \fref{estiboundone} and \fref{estiboundtwo} yields for b enough small 
\be
V \lesssim b^{\frac32}.
\ee
Injecting this bound in \fref{estiboundone} and \fref{estiboundtwo} concludes the proof of the bounds \fref{estlambda} and \fref{estb} and thus the proof of the Lemma \ref{lemmeparam}.
\end{proof}
\subsection{Second decomposition of the flow}
By the construction, we have formally predicted that the law of $b$ verify $b_s = - 2\frac{b^2}{|\log b|} \left( 1+O(1)\right)$. The bound \fref{estb} isn't enough good to know precisely the gap between $b_s$ and $-2\frac{b^2}{|\log b|}$. This is a consequance of the too slow decay of the elements of the kernel of $\mathcal L^*$. Thus, as in \cite{raphael2012b}, we introduce a second decomposition of the flow, which will lift the parameter $b$.
\begin{lemma}[Second decomposition]
\label{introductionbchapeau}
There exists a unique decomposition 
\be
\label{defehat}
\tilde {\bf Q}_b + {\bf E} = \tilde {\bf Q}_{\hat b} + \hat {\bf E} = \tilde {\bf Q}_{\hat b} + \begin{array} {| c} \hat \e \\ \hat \eta\end {array}
\ee
such that $\hat b \lesssim b \lesssim \hat b$ and $\hat \e$ satisfying the orthogonality condition
\be
\label{superortho}
\left < \hat {\bf E}, \mathcal L^* {\bf \Phi}_{0,\hat B_0} \right> = 0, \ \ \hat B_0 = \frac{1}{\sqrt{\hat b}}.
\ee
Moreover, there holds the bound:
\be
\label{deformation}
|b - \hat b| \lesssim \frac{b}{|\log b|}.
\ee
\end{lemma}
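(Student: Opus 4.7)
The plan is to set up a one-parameter modulation and solve it by the implicit function theorem, analogous to the proof of Lemma~\ref{uniqueness} but adjusting only the scale of the profile rather than the spatial rescaling. Concretely, given $b$ and $\boldsymbol E$ fixed, I would introduce
\be
\label{defFhatb}
F(\hat b) = \left\langle \tilde{\bf Q}_b - \tilde{\bf Q}_{\hat b} + {\bf E}, \ \mathcal L^* {\bf \Phi}_{0,\hat B_0} \right\rangle, \qquad \hat B_0 = \frac{1}{\sqrt{\hat b}},
\ee
whose vanishing is the desired orthogonality condition \fref{superortho}.

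The next step is to compute the Jacobian at $\hat b = b$. Differentiating \fref{defFhatb} in $\hat b$, the boundary term $\tilde{\bf Q}_b - \tilde{\bf Q}_{\hat b}$ drops, and I am left with
\bee
\partial_{\hat b} F\bigl|_{\hat b=b} = -\left\langle \partial_{\hat b} \tilde{\bf Q}_{\hat b}\bigl|_{\hat b=b},\mathcal L^*{\bf \Phi}_{0,B_0}\right\rangle + \left\langle {\bf E},\partial_{\hat b}\mathcal L^*{\bf \Phi}_{0,\hat B_0}\bigl|_{\hat b=b}\right\rangle.
\eee
The first bracket is the main term: the expansion \fref{formeQb} gives $\partial_{\hat b} \tilde{\bf Q}_{\hat b} = \tilde{\bf T}_1 + O(b)$, so by the adjunction \fref{biebibvei}, the algebraic identity $\mathcal L \tilde{\bf T}_1 = {\bf \Lambda Q}$ modulo localization errors controlled in Proposition~\ref{localization}, and the asymptotic \fref{orthophim} applied with $M = B_0 = 1/\sqrt b$, this quantity equals $-\langle \Lambda {\bf Q},{\bf \Phi}_{0,B_0}\rangle + O(1) = 16\pi|\log b| + O(1)$. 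The second bracket comes from the $\hat b$-dependence of the cutoff $\chi_{\hat B_0}$ and is localized on $r\sim B_0$; using Cauchy--Schwarz with the bootstrap bound \fref{bootsmallh2q} together with the coercivity \fref{contorlcoerc}, it is of order $o(|\log b|)$. Hence
\be
\label{jacobianlogb}
\partial_{\hat b}F\bigl|_{\hat b=b} = 16\pi|\log b|\left(1 + o_{b\to 0}(1)\right) \neq 0,
\ee
so the implicit function theorem yields a unique $\hat b$ near $b$ with $F(\hat b)=0$.

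It remains to quantify $|b - \hat b|$. Here the key is to bound $|F(b)| = |\langle {\bf E},\mathcal L^*{\bf \Phi}_{0,B_0}\rangle|$. I would use \fref{estfonamentalebus} at the scale $M=B_0$ to write
\bee
\left|\left\langle {\bf E},\mathcal L^*{\bf \Phi}_{0,B_0}\right\rangle + 4(\e,1)\right| \lesssim \int_{r\geq B_0}|\e| + \log B_0 \int_{B_0\leq r\leq 2B_0} \frac{|\pa_r\eta|}{1+r^3},
\eee
and then apply the same inequality at the fixed scale $M$ together with the first orthogonality condition $\langle {\bf E},\mathcal L^*{\bf \Phi}_M\rangle=0$ to control $(\e,1)$. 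Combining with the bootstrap bounds \fref{bootsmallh1}--\fref{bootsmallh2q} and the coercivity of $\mathcal L$ to Cauchy--Schwarz out the $L^1$ tails produces $|F(b)| \lesssim b$. Inverting \fref{jacobianlogb} then yields $|\hat b - b| \lesssim b/|\log b|$, which is \fref{deformation} and, in particular, implies $\hat b \sim b$.

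The main obstacle is really this last tail estimate on $(\e,1)$ and on $\int_{r\geq B_0}|\e|$: the bootstrap controls $\e$ in weighted $L^2$ but $L^1$ only with the weak bound \fref{bootsmalllone}, so one must leverage both the first orthogonality condition and the precise radial weights in Proposition~\ref{interpolationhtwo} to extract a bound on $(\e,1)$ of size $b$ rather than just $b^{3/2}/|\log b|$ (which would be too weak after dividing by $|\log b|$). Once this is done, existence, uniqueness, and the bound \fref{deformation} follow cleanly from the implicit function theorem.
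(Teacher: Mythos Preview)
Your setup via the implicit function theorem and the computation of the Jacobian $\partial_{\hat b}F|_{\hat b=b}\sim |\log b|$ are exactly what the paper does. The difference is in how you extract the quantitative bound \fref{deformation}, and the detour you take there is the source of the obstacle you flag.

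The paper does not go through $(\e,1)$ at all. Instead, it uses the adjunction \fref{biebibvei} directly:
\[
\langle {\bf E},\mathcal L^*{\bf \Phi}_{0,\hat B_0}\rangle=\langle \mathcal L{\bf E},{\bf \Phi}_{0,\hat B_0}\rangle=\langle {\bf E}_2,{\bf \Phi}_{0,\hat B_0}\rangle,
\]
and then applies the rough bound \fref{firstroughbound} to ${\bf E}_2$. The crucial point is that $\int \e_2=0$ by \fref{estvooee}, so the $M^2/\log M$ term in \fref{firstroughbound} drops and one is left with
\[
|\langle {\bf E}_2,{\bf \Phi}_{0,\hat B_0}\rangle|\lesssim \hat B_0\,\|{\bf E}_2\|_{X_Q}\lesssim \frac{1}{\sqrt b}\cdot\frac{b^{3/2}}{|\log b|}=\frac{b}{|\log b|}.
\]
Combined with the identity $\langle \tilde{\bf Q}_b-\tilde{\bf Q}_{\hat b},\mathcal L^*{\bf \Phi}_{0,\hat B_0}\rangle=32\pi\log\hat B_0\,(b-\hat b)+O(b)$ (which the paper isolates as a separate claim), this gives $|\log b|\,|b-\hat b|\lesssim b$ immediately.

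Your route via \fref{estfonamentalebus} trades this one-line argument for the task of controlling $(\e,1)$ and the $L^1$ tail $\int_{r\geq B_0}|\e|$, neither of which follows cleanly from the bootstrap norms (the $L^2$ bound on $\e$ does not control $L^1$ at infinity, and the $L^1$ bootstrap \fref{bootsmalllone} is far too weak). The obstacle you identify is real for your route but artificial: passing to ${\bf E}_2$ by adjunction replaces $\e$ by $\e_2$, which both lives in $L^2_Q$ and has vanishing average, and these two facts are precisely what make \fref{firstroughbound} effective.
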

\begin{proof}
Let the map $F({\bf V}, \hat b) = \left< {\bf V} - \tilde{\bf Q}_{\hat b}, \mathcal L^* {\bf \Phi}_{0,\hat B_0} \right>$, for $|b-\hat b| \lesssim b$. First, remark that:
\be
F( \tilde{\bf Q}_{ b}, b)=0.
\ee
Moreover, 
\be
\left. \frac{\pa F}{\pa \hat b} \right |_{(\hat b=b, {\bf V} =  \tilde {\bf Q}_{\hat b})} = - \left< \frac{ \pa  \tilde {\bf Q}_{\hat b}}{\pa b}, \mathcal L^* {\bf \Phi}_{0,B_0} \right>.
\ee
Using the bounds of the Proposition \ref{localization}, we obtain:
\bee
\frac{\partial \qbt}{\partial b}&=&\chi_{B_1}T_1+O\left(\frac{1}{1+r^2}{\bf 1}_{B_1\leq r\leq 2B_1}+b\left[\frac{1+|\log( r\sqrt{b})|}{|\log b|}{\bf 1}_{r\leq 6B_0}+\frac{1}{b^2r^4|\log b|}{\bf 1}_{6B_0\leq r\leq 2B_1}\right]\right), \\
\frac{\partial \nabla \tilde P_b}{\partial b}&=& \chi_{B_1} \nabla S_1 + O\left(\frac{1}{1+r}{\bf 1}_{B_1\leq r\leq 2B_1} + b r(1+ \log r)^2{\bf 1}_{1\leq r \leq 2B_1} \right).
\eee
Hence, these bounds together \fref{bjebbeibei} yield
\bee&&\left(\frac{\partial \qbt}{\partial b}, \left(\mathcal L^*\right)^{(1)}\left(\Phi^{(1)}_{0,B_0},\Phi^{(2)}_{0,B_0}\right)\right) \\
&= & \left(\chi_{B_1}T_1,\left(\mathcal L^*\right)^{(1)}\left(\Phi^{(1)}_{0,B_0},\Phi^{(2)}_{0,B_0}\right)\right)+bO\left(\int \left[\frac{1 + |\log (r\sqrt b)|}{|\log b|}{\bf 1}_{r\leq 6B_0}+\frac{1}{b^2r^4|\log b|}{\bf 1}_{ 6B_0\leq r\leq 2B_1}\right]\right)
\eee
and
\bee
&&\left(\frac{\partial \nabla \tilde P_b}{\partial b}, \nabla \left(\mathcal L^*\right)^{(2)}\left(\Phi^{(1)}_{0,B_0},\Phi^{(2)}_{0,B_0}\right)\right) \\
&=& O \left( \int  \frac{|\log b|}{1+r^4} {\bf 1}_{r \leq 2B_1}\right) + b|\log b|O\left(\int_{r \leq 2B_1} \frac{r(1+ \log r)^2}{1+r^2} \right)= O(1).
\eee
Thus with \fref{orthophim} :
\bee
&&\left< \frac{ \pa  \tilde {\bf Q}_{\hat b}}{\pa b}, \mathcal L^* {\bf \Phi}_{0,B_0} \right> = \left(\frac{\partial \qbt}{\partial b}, \left(\mathcal L^*\right)^{(1)}\left(\Phi^{(1)}_{0,B_0},\Phi^{(2)}_{0,B_0}\right)\right) + \left(\frac{\partial \nabla \tilde P_b}{\partial b}, \nabla \left(\mathcal L^*\right)^{(2)}\left(\Phi^{(1)}_{0,B_0},\Phi^{(2)}_{0,B_0}\right)\right) \\
& = & -32\pi \log B_0+O(1) <0.
\eee
From the implicit function theorem, this concludes the proof of the existence and the uniqueness of the decomposition \fref{defehat} with $b\sim \hat{b}$.\\
Now, we let prove \fref{deformation}. We claim the following bound:
\be
\label{fgfgfg}
\left< {\bf \tilde Q}_b - {\bf Q} , {\bf \mathcal L}^* {\bf \Phi}_{0,B}\right> = -32 \pi b \log B + O(b), \ \ \mbox{for} \ \ \frac 1 {\sqrt b} \lesssim B \lesssim \frac 1 {\sqrt b}.
\ee
Let take the scalar product of \fref{defehat} with ${\bf \mathcal L}^* {\bf \Phi}_{0,\hat B_0}$.
\bee
\left< \tilde {\bf Q}_b - \tilde {\bf Q}_{\hat b}, {\bf \mathcal L}^* {\bf \Phi}_{0,\hat B_0}\right> = \left< \hat{\bf E} - {\bf E}, {\bf \mathcal L}^* {\bf \Phi}_{0,\hat B_0}\right> 
\eee
With the bound \fref{fgfgfg}, we have:
\bee
\left< \tilde {\bf Q}_b - \tilde {\bf Q}_{\hat b}, {\bf \mathcal L}^* {\bf \Phi}_{0,\hat B_0}\right> = 32\pi\log \hat B_0 (b-\hat b) + O(b+\hat b).
\eee
The orthogonality condition \fref{superortho} yields:
 $$|\log b||b-\hat{b}|\lesssim |b|+|\hat{b}|+
|\left<{\bf E}_2,{\bf\Phi}_{0,\hat{B}_0}\right>|\lesssim |b|+\frac{\|{\bf E}_2\|_{L^2_Q}}{\sqrt{b}}$$
The last inequality together with the bootstrap bound \fref{bootsmallh2q}, which we recall here:
\bee
\| {\bf E}_2\|_{X_Q} \lesssim \frac{b^3}{|\log b|^2}.
\eee conclude the proof of the lemma \ref{introductionbchapeau}. \\
{\it Proof of \fref{fgfgfg}:} \\
To begin :
\bee
\left< {\bf \tilde Q}_b - {\bf Q} , {\bf \mathcal L}^* {\bf \Phi}_{0,B}\right> &=& \left< {\bf \tilde \Upsilon}_b, {\bf \mathcal L}^* {\bf \Phi}_{0,B}\right> \\
&=&b\left< \mathcal L {\bf T}_1, {\bf \Phi}_{0,B}\right> + \left< b  (\tilde {\bf T}_1 - {\bf T}_1) + b^2  \tilde {\bf T}_2 , {\bf \mathcal L}^* {\bf \Phi}_{0,B}\right> 
\eee
By contruction, and using \fref{orthophim}, we have:
$$b\left< \mathcal L {\bf T}_1, {\bf \Phi}_{0,B}\right> = b\left< \Lambda {\bf Q}, {\bf \Phi}_{0,B}\right > = -32\pi b \log B + O(b).$$
Now,
Now, from \fref{bjebbeibei}:
\bee
\left|\left< b^2  \tilde {\bf T}_2 , {\bf \mathcal L}^* {\bf \Phi}_{0,B}\right>\right| &\lesssim& b^2\int _{ r\leq 2B_1}\left[r^4{\bf 1}_{r\leq 1}+\frac{1 + |\log (r\sqrt b)|}{|\log b|}{\bf 1}_{1\leq r\leq 6B_0}+\frac{1}{b^2r^4|\log b|}{\bf 1}_{ 6B_0\leq r\leq 2B_1}\right]\\
&+& b^2 |\log b|\int_{1 \leq r \leq 2B_1} \frac{r(1+ \log r)}{1+r^3}\\
&\lesssim & b 
\eee
This concludes the proof of \fref{fgfgfg}.
\end{proof}
Let 
\be
\label{defXi}
{\bf \Xi} =\begin{array} {| c} \zeta \\ \xi \end{array} =  \widehat {\bf E} - {\bf E} = \tilde{\bf Q}_b - \tilde{\bf Q}_{\hat b} = \begin{array} {| c} \tilde\alpha_b - \tilde\alpha_{\hat b} \\\tilde \gamma_b - \tilde\gamma_{\hat b} \end{array} 
\ee
the gap between the two decompositions which we split in two parts :
\be
{\bf \Xi} = {\bf \Xi}_{big} + {\bf \Xi}_{sm},
\ee
with
\bea
\label{xibig}{\bf \Xi}_{big} &=& \begin{array} {|c} \zeta_{big} \\ \xi_{big} \end{array}=(b-\hat b) \tilde{\bf T}_1, \\
\label{xism}{\bf \Xi}_{sm} &=& \begin{array} {|c} \zeta_{sm} \\ \xi_{sm} \end{array}= \hat b \  \begin{array} {|c}(\chi_{B_1} - \chi_{\hat B_1})T_1 \\\int_0^r (\chi_{B_1} - \chi_{\hat B_1}) \nabla S_1 \end{array}+\int_{\hat{b}}^b\left[2b  \tilde{\bf T}_2+b^2\pa_b \tilde{\bf T}_2\right]db
\eea
It's important to remark that ${\bf \Xi}_{big}$ is supported along $T_1$. Hence, we will use this particular structure to improve some degenerate norms, in order to close the bootstrap. Let:
\be
{\bf \Xi}_2 = \mathcal L {\bf \Xi}.
\ee
The following lemma describes adapted bounds on the gap betwenn the two decomposition :
\begin{lemma}[Control of the gap]
\label{lemmaradiation}
There holds the pointwise bounds:
\begin{itemize}
\item Estimates on $\boldsymbol \Xi_{big}:$
\bea
\label{estimatonebis} 
\nonumber  \int |\tau^i\pa_{\tau}^i \zeta_{big}|^2+\int \frac{|\nabla \phi_{\zeta_{big}}|^2}{1+\tau^2} +\int \frac{|\tau^i\pa_{\tau}^i\nabla \xi_{big}|^2}{1+\tau^2}&&\\
+ \left\| \mathcal L (\boldsymbol \Xi_{big})\right\|_{X_Q}^2 + \left\| \mathcal L ( \Lambda \boldsymbol \Xi_{big})\right\|_{X_Q}^2 &\lesssim& \frac{b^2}{|\log b|^2}.
\eea
\item Estimates on $\boldsymbol \Xi_{sm}:$
\bea
\label{estimatone} 
\int |\tau^i\nabla^i\zeta_{sm}|^2+\int \frac{|\nabla \phi_{\zeta_{sm}}|^2}{1+\tau^2} +\int \frac{|\tau^i\pa_{\tau}^i \xi_{sm}|^2}{1+\tau^2}&&\\
\nonumber + \left\| \mathcal L (\boldsymbol \Xi_{sm})\right\|_{X_Q}^2 + \left\| \mathcal L ( \Lambda \boldsymbol \Xi_{sm})\right\|_{X_Q}^2 &\lesssim& \frac{b^3}{|\log b|^2},\\
\label{estimatonebisbis} 
\int\left| \mathcal L^{(1)}\left(\zeta_{sm},\xi_{sm}\right)\right|^2+\int\frac{\left| \mathcal L^{(2)}\left(\zeta_{sm},\xi_{sm}\right)\right|^2}{1+\tau^2}&\lesssim& \frac{b^4}{|\log b|^2}.
\eea
\item Estimates on $\boldsymbol \Xi:$
\bea
\label{vivementlafin}
\int \left|\nabla \xi \right|^2 &\lesssim& b^2 |\log b|^2\\
\label{degenrateoneter}
\int Q|\nabla \mathcal M ^{(1)} \Lambda \boldsymbol \Xi |^2 + \int  |\Delta \mathcal M ^{(2)} \Lambda \boldsymbol \Xi|^2&\lesssim& \frac{b^2}{|\log b|^2}.
\eea
\item Estimates on $\boldsymbol \Xi_{2}:$
Let ${\bf v}=(v_1, v_2) \in L^2_Q \times \dot H^1$ such that $\int v_1=0$, then:
\bea
\label{degenrateoneone2}
\left|\int \mathcal M^{(1)}(\zeta_2,\xi_2) v_1\right|&\lesssim& \frac{b^{\frac 32}}{|\log b|}\| v_1\|_{L^2_Q},\\
\label{degenrateoneone}
|\left<\mathcal M {\bf \Xi}_2,{\bf v}\right>|&\lesssim& \frac{b^{\frac 32}}{|\log b|}\|{\bf v}\|_{X_Q},
\eea
Moreover,
\bea
\label{noenoeno2}
\left|\int \mathcal M^{(1)}(\zeta_2,\xi_2) \zeta_2\right|&\lesssim& \frac{b^3}{|\log b|^2},\\
\label{noenoeno}
\left|\left<\mathcal M {\bf \Xi}_2,{\bf \Xi}_2\right>\right|&\lesssim& \frac{b^3}{|\log b|^2},\\
\label{degenrateonebis}
\int Q|\nabla \mathcal M ^{(1)} \boldsymbol \Xi_{2}|^2 + \int  |\Delta \mathcal M ^{(2)} \boldsymbol \Xi_{2}|^2&\lesssim& \frac{b^4}{|\log b|^2}.
\eea
\end{itemize}
\end{lemma}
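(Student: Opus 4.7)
The plan is to treat $\boldsymbol{\Xi}_{big}$ and $\boldsymbol{\Xi}_{sm}$ separately, using the sharp estimate $|b-\hat b|\lesssim b/|\log b|$ from \fref{deformation} for the first, while exploiting that $\boldsymbol{\Xi}_{sm}$ carries additional smallness from being an integral over a thin $b$-interval. For $\boldsymbol{\Xi}_{big}=(b-\hat b)\tilde{\bf T}_1$, the pointwise bounds \fref{esttonepropt}, \fref{nablaS1taillet} combined with $(b-\hat b)^2\lesssim b^2/|\log b|^2$ immediately give \fref{estimatonebis}, since all the weighted integrals of $\tilde T_1$, $\nabla\tilde S_1$ and their derivatives up to the scaling $\Lambda$ are uniformly bounded. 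For the $\mathcal{L}$-level estimates in \fref{estimatonebis}, the key algebraic point is that by construction of $T_1,S_1$ one has $\mathcal{L}\,{\bf T}_1=\boldsymbol{\Lambda}{\bf Q}$, hence $\mathcal{L}\tilde{\bf T}_1$ equals $\boldsymbol{\Lambda}{\bf Q}$ away from the annulus $B_1\le r\le 2B_1$, where only a localization commutator of order $1/B_1^2$ in $L^2_Q$ survives; multiplying by $(b-\hat b)^2$ yields the claim, and the $\Lambda\boldsymbol{\Xi}_{big}$ bound is identical up to rescaling.

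For $\boldsymbol{\Xi}_{sm}$, the cutoff piece $\hat b\bigl[(\chi_{B_1}-\chi_{\hat B_1})T_1,\int(\chi_{B_1}-\chi_{\hat B_1})\nabla S_1\bigr]$ is supported in the thin region between $\hat B_1$ and $B_1$ whose measure scales like $|b-\hat b|/b\cdot B_1^2\lesssim B_1^2/|\log b|$, and the $T_1,S_1$ pointwise estimates there produce the gain of one extra power of $b$. The second piece $\int_{\hat b}^b[2b\tilde{\bf T}_2+b^2\partial_b\tilde{\bf T}_2]\,db$ is estimated through \fref{esttwot}--\fref{nablaS2pabt}: each integrand is of size $b$ in the relevant weighted norms, and the interval of integration has length $|b-\hat b|\lesssim b/|\log b|$, so the squared contribution is $b^4/|\log b|^2$ for the $L^2$-type weights and $b^3/|\log b|^2$ for the $X_Q$-norm after using the full coercive structure of $\mathcal{L}$ together with the cancellation $\mathcal{L}{\bf T}_1=\boldsymbol{\Lambda}{\bf Q}$ at each value of $b$.

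For the $\boldsymbol{\Xi}_2=\mathcal{L}\boldsymbol{\Xi}$ estimates, I would decompose $\boldsymbol{\Xi}_2=(b-\hat b)\boldsymbol{\Lambda}{\bf Q}+\mathcal{L}\boldsymbol{\Xi}_{sm}+\text{(cutoff commutator)}$. The key algebraic identities \fref{gtgtgtgtgt}--\fref{gtgtgtgtgtgt} used in the coercivity proof give $\mathcal{M}\boldsymbol{\Lambda}{\bf Q}$ an explicit form, so that pairings $\langle\mathcal{M}\boldsymbol{\Xi}_2,{\bf v}\rangle$ reduce (up to $\boldsymbol{\Xi}_{sm}$ contributions) to $(b-\hat b)\langle \mathcal M\boldsymbol{\Lambda}{\bf Q},{\bf v}\rangle$, which is bounded by $b|\log b|^{-1}\|{\bf v}\|_{X_Q}$; since $\langle \mathcal{M}\boldsymbol{\Lambda}{\bf Q},\boldsymbol{\Lambda}{\bf Q}\rangle=0$ by \fref{gtgtgtgtgtgt}, the diagonal pairing $\langle\mathcal{M}\boldsymbol{\Xi}_2,\boldsymbol{\Xi}_2\rangle$ has no $(b-\hat b)^2$ term and is controlled by cross-terms of the form $(b-\hat b)\cdot\|\mathcal{L}\boldsymbol{\Xi}_{sm}\|_{X_Q}$ plus $\|\mathcal{L}\boldsymbol{\Xi}_{sm}\|_{X_Q}^2$, yielding $b^3/|\log b|^2$. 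The bound \fref{degenrateonebis} on $\nabla\mathcal{M}^{(1)}\boldsymbol{\Xi}_2$ and $\Delta\mathcal{M}^{(2)}\boldsymbol{\Xi}_2$ follows similarly using the explicit form $\mathcal{M}\boldsymbol{\Lambda}{\bf Q}=(-2,0)$, which has \emph{vanishing gradient}, so only the $\boldsymbol{\Xi}_{sm}$ contribution remains.

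The main obstacle is securing the gain from $b^2/|\log b|^2$ to $b^3/|\log b|^2$ in \fref{noenoeno} and \fref{degenrateonebis}. This requires combining two cancellations: $\mathcal{M}\boldsymbol{\Lambda}{\bf Q}=(-2,0)$ (so it is constant, hence killed by $\nabla$ and $\Delta$) and $\langle\mathcal{M}\boldsymbol{\Lambda}{\bf Q},\boldsymbol{\Lambda}{\bf Q}\rangle=0$, together with the careful $b^{3/2}/|\log b|$-level control of $\mathcal{L}\boldsymbol{\Xi}_{sm}$ from Step 2. Without the structural role of $\boldsymbol{\Xi}_{big}$ being supported exactly along ${\bf T}_1$ (which is why this splitting was introduced), one would only obtain the weaker $b^2/|\log b|^2$ bound, insufficient to close the bootstrap through \fref{bootsmallh2qboot}.
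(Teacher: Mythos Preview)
Your overall architecture matches the paper's: split $\boldsymbol{\Xi}=\boldsymbol{\Xi}_{big}+\boldsymbol{\Xi}_{sm}$, write $\mathcal L\boldsymbol{\Xi}_{big}=(b-\hat b)\boldsymbol{\Lambda Q}+\boldsymbol{\mathcal R}$ with $\boldsymbol{\mathcal R}$ a cutoff remainder supported in $r\ge B_1$, and exploit the constancy of $\mathcal M\boldsymbol{\Lambda Q}=(-2,0)$ for \fref{degenrateonebis}. Two points, however, are not handled correctly.

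\textbf{The bound for \fref{degenrateoneone}, \fref{degenrateoneone2}.} You write that $(b-\hat b)\langle\mathcal M\boldsymbol{\Lambda Q},{\bf v}\rangle$ is ``bounded by $b|\log b|^{-1}\|{\bf v}\|_{X_Q}$''. This is too weak: the lemma requires $b^{3/2}/|\log b|$, and $b/|\log b|$ does not imply it. The mechanism in the paper is that this leading term is \emph{exactly zero}: since $\mathcal M\boldsymbol{\Lambda Q}=(-2,0)$, one has $\langle\mathcal M\boldsymbol{\Lambda Q},{\bf v}\rangle=-2\int v_1=0$ by the hypothesis $\int v_1=0$. Only then do the surviving pieces $\langle\mathcal M\boldsymbol{\mathcal R},{\bf v}\rangle$ and $\langle\mathcal M\mathcal L\boldsymbol{\Xi}_{sm},{\bf v}\rangle$ give the claimed $b^{3/2}/|\log b|$. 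You do invoke the analogous cancellation $\langle\mathcal M\boldsymbol{\Lambda Q},\boldsymbol{\Lambda Q}\rangle=0$ for the diagonal estimate \fref{noenoeno}, but you miss that the mean-zero assumption on $v_1$ plays the same role for general ${\bf v}$. Without this the bootstrap would not close at the $b^3/|\log b|^2$ level.

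\textbf{The estimate \fref{degenrateoneter}.} You do not address it, and it is not ``identical up to rescaling''. The quantity $\int Q|\nabla\mathcal M^{(1)}\Lambda\boldsymbol{\Xi}|^2$ involves $\int(1+r^2)|\Lambda\zeta|^2$; for $\zeta_{big}=(b-\hat b)\tilde T_1$ a naive bound using $|T_1|\lesssim(1+r^2)^{-1}$ gives $(b-\hat b)^2\int_{r\le 2B_1}(1+r^2)^{-1}\sim (b-\hat b)^2|\log b|\sim b^2/|\log b|$, which is off by one logarithm. The paper uses the structural degeneracy $\Lambda T_1=O\!\left((\log r)/(1+r^4)\right)$, i.e.\ the scaling derivative of $T_1$ decays \emph{faster} than $T_1$ itself, to make $\int(1+r^2)|\Lambda T_1|^2$ uniformly bounded and recover $b^2/|\log b|^2$. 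This cancellation is specific to the construction of $T_1$ and should be flagged explicitly.
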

\begin{proof}
First, using the Lemma \ref{introductionbchapeau} and the construction, we obtain the following bounds:
 \be
 \label{aaaaaaa}
 |r^i \pa_r^i \zeta_{big}| \lesssim |\hat b - b| \frac{{\bf 1}_{r \leq 2B_1}}{1+r^2}, \ \  |r^i\pa_r^i \nabla\xi_{big}| \lesssim |\hat b - b| \frac{r}{1+r^2}{\bf 1}_{r \leq 2B_1}.
\ee
In the same way, using the Poisson field of the radial profile $T_1$, we have:
 \be
 \label{aaaaaaaa}
 |r^i\pa_r^i \nabla \phi_{\zeta_{big}}| \lesssim |\hat b - b| \frac{r (1+\log r)}{1+r^2}{\bf 1}_{r \leq 2B_1}.
\ee
The bounds \fref{aaaaaaa} and \fref{aaaaaaaa} yield the first line of \fref{estimatonebis}. Now, we use the formula:
\bee
\mathcal L^{(1)} \begin{array} {|c} \e\\ \eta\end{array} = \nabla.(Q \nabla \mathcal M^{(1)}(\e,\eta)) = \Delta \e + Q \e + Q \Delta \eta + \nabla Q \nabla \eta + \nabla \e \nabla \phi_Q
\eee																																		to estimate
\bee 
 \mathcal L^{(1)}\left(\tilde T_1,\tilde S_1\right) &=& \Delta \tilde T_1 + Q \tilde T_1 + Q \Delta  \tilde S_1 + \nabla Q \nabla  \tilde S_1 + \nabla \tilde  T_1 \nabla \phi_Q \\
 &=& \chi_{B_1} \Lambda Q + O\left(\frac{{\bf 1}_{B_1 \leq r \leq 2B_1}}{1+r^4}\right) = \Lambda Q + O\left(\frac{{\bf 1}_{r \geq B_1}}{1+r^4}\right)
\eee	
Moreover,
\bee
 \mathcal L^{(2)}\left(\tilde T_1,\tilde S_1\right) = \Delta \tilde S_1 - \tilde T_1 = \chi_{B_1} \Lambda \phi_Q +  O\left(\frac{{\bf 1}_{B_1 \leq r \leq 2B_1}}{1+r^2}\right) = \Lambda \phi_Q + O\left(\frac{{\bf 1}_{r \geq B_1}}{1+r^2}\right)
\eee												
Thus,
\be
\label{pppp}
 \mathcal L {\bf \Xi}_{big} = (b-\hat b)\Lambda {\bf Q} + \boldsymbol{\mathcal R},
\ee
where
\be
\label{mathcalR}
\boldsymbol{\mathcal R} = (b-\hat b) \begin{array} {|c} O\left(\frac{{\bf 1}_{r \geq B_1}}{1+r^4}\right) \\
O\left(\frac{{\bf 1}_{r \geq B_1}}{1+r^2}\right) 
\end{array}
\ee
So,
\bee
 \int \frac{\left| \mathcal L^{(1)}\left(\zeta_{big},\xi_{big}\right)\right|^2}{Q} + \int \left| \nabla\mathcal L^{(2)}\left(\zeta_{big},\xi_{big}\right) \right|^2 \lesssim \frac{b^2}{|\log b|^2}.
\eee
To conclude the proof of the bound \fref{estimatonebis}, we use the following equality, for function ${\bf f} = (f,g)$ well localized:
\be
\label{mmm}
\mathcal L \Lambda {\bf f} = 2 \mathcal L {\bf f} + \Lambda\left( \mathcal L {\bf f} \right) - \begin{array} {|c} \Lambda^{(1)} {\bf Q} (f+\Delta g) + \nabla g \nabla (\Lambda^{(1)} {\bf Q}) + \nabla f \nabla \phi_{\Lambda {\bf Q}}\\0 \end{array}.
\ee
We compute $\mathcal L {\bf f}_\l$ and we differentiate this relation at $\l =1$ to obtain \fref{mmm}, which use together with \fref{pppp} and \fref{mathcalR} yield the following bound:
\be
\label{mathcalR1}
 \mathcal L \Lambda {\bf \Xi}_{big} =(b-\hat b) \begin{array} {|c} O\left(\frac{{\bf 1}_{r \geq B_1}}{1+r^4}\right) \\
O\left(\frac{{\bf 1}_{r \geq B_1}}{1+r^2}\right) 
\end{array}
\ee
The bound \fref{deformation} of the Lemma \ref{introductionbchapeau} concludes the proof of \fref{estimatonebis}.\par
To prove \fref{estimatone} and \fref{estimatonebisbis}, we use the same strategy, using the following bounds, coming from the Proposition \ref{localization}:
\bea
\label{estammleon}
|r^i\nabla^i\zeta_{sm}| & \lesssim & \hat{b}\frac{{\bf 1}_{\frac{B_1}{2}\leq r\leq 3B_1}}{r^2}\\
\nonumber & + & b|b-\hb|\left[r^2{\bf 1}_{r\le1} +\frac{1 + |\log (r\sqrt b)|}{|\log b|}{\bf 1}_{1\leq r\leq 6B_0}+ \frac{1}{b^2r^4|\log b|}{\bf 1}_{ 6B_0\leq r\leq 2B_1}\right],
\eea
\be
\label{estammleonbis}
|\nabla \phi_{\zeta_{sm}}|\lesssim \frac{\hat b}{r}{\bf 1}_{r\geq \frac{B_1}{2}}+b|b-\hb|\left[ r^5{\bf 1}_{r\le1} +\frac{1 + r|\log (r\sqrt b)|}{|\log b|}{\bf 1}_{1\leq r\leq 6B_0}+\frac{1}{rb|\log b|}{\bf 1}_{r\geq 2B_0}\right],
\ee
and
\bea
\label{estammleonter}
 |r^i\pa_r^i \nabla\xi_{sm}| &\lesssim&  \hat{b}\frac{r}{1+r^2}{\bf 1}_{\frac{B_1}{2}\leq r\leq 3B_1} \\
 \nonumber & + & b|b-\hb|\left[r^5{\bf 1}_{r\le1} +r\frac{1 + |\log (r\sqrt b)|}{|\log b|}{\bf 1}_{1\leq r\leq 6B_0}+ \frac{1}{rb|\log b|}{\bf 1}_{ 6B_0\leq r\leq 2B_1}\right].
\eea
The whole proof is available in \cite{raphael2012b}. 
\par
We prove now \fref{degenrateoneter}. The second part is a simple consequence of the above estimates. The first part is more technical, and we must use the structure of $\zeta$. Indeed, 
\bee
\zeta = (b-\hb)\tilde T_1 + \hat b (\chi_{B_1} - \chi_{\hat B_1}){T}_1 +\int_{\hat{b}}^b\left[2b  \tilde{T}_2+b^2\pa_b \tilde{T}_2\right]db
\eee
The crucial point here is the degeneracy of $\Lambda T_1$ :
\bee
\Lambda T_1 = O \left( \frac{\log r}{1+r^4} \right).
\eee
Hence, using the bound \fref{estammleon}:
\bee
 &&\int (1+\tau^2) |\Lambda \zeta|^2 \\ 
 &\lesssim& |b-\hat b|^2 \int _{\tau \leq 2B_1} \frac{|\log r|^2}{(1+\tau^6)} +  |b-\hat b|^2 \int_{B_1 \leq \tau \leq 2B_1} \frac{1}{1+\tau^2} + |\hat b|^2 \int \frac{|(\chi_{B_1} - \chi_{\hat B_1})|^2}{1+\tau^2}\\
&+&|\hat b|^2 |b-\hat b|^2 \left\{ \int_{\tau \leq B_0} \tau^2 + \frac{1}{b^4|\log b|^2}\int_{B_0 \leq \tau \leq 2B_1}\frac{1}{1+ \tau^4} \right\}\\
&\lesssim& \frac{b^2}{|\log b|^2}+ |\hat b|^2 \left| \log (\hat B_1) - \log (B_1)\right| \lesssim \frac{b^2}{|\log b|^2}.
\eee
Using this bound together the following estimation
\bee
\left| \int Q|\nabla \mathcal M ^{(1)} \Lambda \boldsymbol \Xi |^2 \right| \lesssim \int (1+\tau^2) |\Lambda \zeta|^2 + \int \frac {|\nabla \xi|^2}{1+\tau^4},
\eee
and \fref{estimatonebis} and \fref{estimatone} concludes the proof of  \fref{degenrateoneter}.
\par Now, let $ {\bf v}=(v, v_1) \in L^2_Q \times L^2\ \ \mbox{with}\ \  \int v=0$. 
\bee
|\left<\mathcal M {\bf \Xi}_2,{\bf v}\right>| &\lesssim& |\left<\mathcal M \mathcal L {\bf \Xi}_{big},{\bf v}\right>| + |\left<\mathcal M \mathcal L {\bf \Xi}_{sm},{\bf v}\right>| \\
&\lesssim& |(b-\hb)\left<\mathcal M \Lambda {\bf Q},{\bf v}\right>| +|\left<\mathcal M \mathcal R,{\bf v}\right>| +  |\left<\mathcal M \mathcal L {\bf \Xi}_{sm},{\bf v}\right>| 
\eee
Using \fref{relationsm} and $\int v = 0$, we have :
\bee
\left<\mathcal M \Lambda {\bf Q},{\bf v}\right> = \int2v = 0.
\eee
Now, from the estimate \fref{mathcalR} :
\bee
\left| \nabla \phi_{\mathcal R^{(1)}} \right|  = \left|\frac1r \int_0^r \mathcal R^{(1)}(\tau) \tau d\tau \right| \lesssim \frac{|b-\hat b|}{r} \int_0^r \frac{{\bf 1}_{\tau \geq B_1} d\tau}{1+\tau^3} \lesssim \frac{b^2}{r|\log b|^3}.
\eee
We estimate using the definition of the operator $\mathcal M$ \fref{defoperatorM} :
\bee
&&|\left<\mathcal M \mathcal R,{\bf v}\right>| \lesssim \left| \int v_1\left( \frac{\mathcal R^{(1)}}{Q} + \mathcal R^{(2)} \right) \right| + \int \nabla v_2 \nabla \left( \mathcal R^{(2)} - \phi_{\mathcal R^{(1)}} \right) \\
&\lesssim& \left[ \int \frac{v_1^2}{Q}\int \frac{|b-\hb|^2{\bf 1}_{\tau \geq B_1}}{Q(\tau) (1+\tau^8)} \right]^{\frac 12} + \left[ \int|\nabla v_2|^2\left\{ \int |\nabla \phi_{\mathcal R^{(1)}}|^2 + \int | \nabla \mathcal R^{(2)} |^2 \right\}\right|^{\frac12}\\
&\lesssim& \frac{b^{\frac32}}{|\log b|} \|{\bf v}\|_{X_Q}
\eee
To conclude, we estimate the term depending on $\boldsymbol \Xi_{sm}$.
\bee
 |\left<\mathcal M \mathcal L {\bf \Xi}_{sm},{\bf v}\right>| &=& \left| \int \mathcal M^{(1)} \mathcal L \boldsymbol \Xi_{sm}v_1 +\int \nabla \mathcal M^{(2)} \mathcal L \boldsymbol \Xi_{sm} \nabla v_2  \right| \\
 &\lesssim& \left[ \int Q \left| \mathcal M^{(1)} \mathcal L \boldsymbol \Xi_{sm}\right|^2 \int \frac{v_1^2}{Q} \right]^{\frac12} +  \left[ \int |\nabla v_2|^2 \left|  \nabla \mathcal M^{(2)} \mathcal L \boldsymbol \Xi_{sm}\right|^2\right]^{\frac12}\\
&\lesssim& \|\mathcal L \boldsymbol \Xi_{sm}\|_{X_Q}\|\boldsymbol v\|_{X_Q}.
\eee
The last inequality comes from the continuity of the operator $\mathcal M$ \fref{mcontiniuos}. The bound \fref{estimatone} and the collection of above estimates yield \fref{degenrateoneone}. Using the same strategy, we prove \fref{degenrateoneone2}. The proof is left to the reader.\par
To prove \fref{noenoeno}, we use the decomposition \fref{pppp} and the knowledge of the kernel of $\mathcal L$. 
\bee
\left< \mathcal M \boldsymbol \Xi_2,\boldsymbol \Xi_2\right> = \left< \mathcal M \boldsymbol \Xi_2,(b-\hb)\Lambda {\bf Q} + \boldsymbol{\mathcal R} + \mathcal L \boldsymbol \Xi_{sm}\right>.
\eee
From the effect of the operator $\mathcal M$ on the direction $\Lambda Q$, and that $\int \zeta_2 = 0$,
\bee
\left< \mathcal M \boldsymbol \Xi_2,(b-\hb)\Lambda {\bf Q} \right> = 0.
\eee
Now from \fref{degenrateoneone}, \fref{estimatone} and \fref{mathcalR} 
\bee
\left|\left< \mathcal M \boldsymbol \Xi_2, \boldsymbol{\mathcal R} + \mathcal L \boldsymbol \Xi_{sm}\right> \right| &\lesssim& \frac{b^{\frac32}}{|\log b|}\left[ \|\mathcal R\|_{X_Q}+\|\mathcal L\boldsymbol \Xi_{sm}\|_{X_Q}\right] \\
& \lesssim &  \frac{b^{\frac 32}}{|\log b|}\left[\left(\frac{b^2}{|\log b|^2}\int \frac{{\bf 1}_{\tau\geq B_1}}{1+\tau^4}\right)^{\frac 12}+\frac{b^{\frac 32}}{|\log b|}\right]\lesssim \frac{b^3}{|\log b|^2}.
\eee
This concludes the proof of \fref{noenoeno}. We prove now the last inequality \fref{degenrateonebis}. In this purpose, we can remark that :
\bee
\left| \begin{array}{l} \nabla \mathcal M^{(1)} \\ \nabla \mathcal M^{(2)} \end{array} \right. \left( {\boldsymbol \Xi}_2 \right)&=& \left| \begin{array}{l} \nabla \mathcal M^{(1)} \\ \nabla \mathcal M^{(2)} \end{array} \right. \left( (b-\hb)\Lambda {\bf Q} + \boldsymbol{\mathcal R} + \mathcal L {\boldsymbol \Xi}_{sm} \right) \\
&=&\left| \begin{array}{l} \nabla \mathcal M^{(1)} \\ \nabla \mathcal M^{(2)} \end{array} \right. \left(\boldsymbol{\mathcal R} + \mathcal L {\boldsymbol \Xi}_{sm} \right) \\
\eee
Using the bounds \fref{mathcalR}, \fref{estammleon}, \fref{estammleonbis} and \fref{estammleonter}, we obtain:
\bee
\int Q \left| \nabla \mathcal M^{(1)} \mathcal R \right|^2 &\lesssim& \frac{b^4}{|\log b|^6}\\
\int \left| \mathcal L^{(2)} \mathcal R \right|^2 &\lesssim& \frac{b^5}{|\log b|^8}\\
\int Q \left| \nabla \mathcal M^{(1)} \mathcal L \boldsymbol {\Xi}_{sm} \right|^2 + \int \left| \mathcal L^{(2)}  \mathcal L \boldsymbol {\Xi}_{sm}\right|^2  &\lesssim& \frac{b^4}{|\log b|^2}
\eee
The last inequality comes from:
\bee
\int Q \left| \nabla \mathcal M^{(1)} \mathcal L \tilde{\bf T}_2 \right|^2 + \int \left| \mathcal L^{(2)}  \mathcal L \tilde{\bf T}_2\right|^2  &\lesssim& 1
\eee
and, $\forall i \geq 0$
\bee
\left|r^i\pa_r^i \mathcal L \boldsymbol {\Xi}_{sm} \right| \lesssim b|b-\hb| |\tilde{\bf T}_2| \lesssim \frac{b^2}{|\log b|} |\tilde{\bf T}_2|
\eee
\fref{degenrateonebis} is proved and the Lemma \ref{lemmaradiation} too.
\end{proof}
\subsection{Sharp modulation equation}
We are in position to compute the sharp modulation equation. The lifted parameter $\hb$ plays here a crucial rule.

\begin{lemma}[Sharp modulation equations for $b$]
\label{lemmasharpmod}
 There exist $C(M)$ an universal enough large constant, independant of $K^*(M)$, such that:
\be
\label{poitnzeiboud}
\left|\hat{b}_s+\frac{2b^2}{|\log b|}\right|\lesssim C(M)\frac{b^2}{|\log b|^2}.
\ee
\end{lemma}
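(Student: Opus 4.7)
The plan is to differentiate in $s$ the refined orthogonality condition \fref{superortho}, $\langle \hat{\bf E}, \mathcal L^*\boldsymbol{\Phi}_{0,\hat B_0}\rangle=0$, and read off the law of $\hat b$ from the dominant balance. First I would derive the evolution equation for $\hat{\bf E}$ by rerunning the computation that gave \fref{equationE}, but with respect to the second decomposition ${\bf f}=\tilde{\bf Q}_{\hat b}+\hat{\bf E}$. Using the definition \fref{eqerreurpsibtilde} of $\tilde{\boldsymbol{\Psi}}_{\hat b}$, this gives
\begin{equation*}
\partial_s\hat{\bf E}-\frac{\lambda_s}{\lambda}\Lambda\hat{\bf E}=\mathcal L\hat{\bf E}+\tilde{\boldsymbol{\Psi}}_{\hat b}+\Big(\hat b+\frac{\lambda_s}{\lambda}\Big)\Lambda\tilde{\bf Q}_{\hat b}-\hat b_s\,\partial_{\hat b}\tilde{\bf Q}_{\hat b}-c_{\hat b}\hat b^2\breve{\bf T}+\boldsymbol{\Theta}_{\hat b}(\hat{\bf E})+{\bf N}(\hat{\bf E}).
\end{equation*}
Differentiating \fref{superortho} in time (noting that $\boldsymbol\Phi_{0,\hat B_0}$ depends on $s$ only through $\hat b$) and substituting, I obtain an identity whose leading two terms are $-\hat b_s\langle\partial_{\hat b}\tilde{\bf Q}_{\hat b},\mathcal L^*\boldsymbol\Phi_{0,\hat B_0}\rangle$ and $-c_{\hat b}\hat b^2\langle\breve{\bf T},\mathcal L^*\boldsymbol\Phi_{0,\hat B_0}\rangle$.

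Both leading coefficients produce a factor $|\log b|$. The first scalar product was already computed in Lemma \ref{introductionbchapeau}: it equals $-32\pi\log\hat B_0+O(1)=-16\pi|\log b|+O(1)$. For the second, I would use adjointness and the construction identity $\mathcal L{\bf T}_1={\bf\Lambda Q}$, together with the localization bound controlling $\mathcal L(\chi_{B_0/4}{\bf T}_1)-\chi_{B_0/4}{\bf\Lambda Q}$, to reduce it to $\langle\chi_{B_0/4}{\bf\Lambda Q},\boldsymbol\Phi_{0,\hat B_0}\rangle+O(1)$, which by the very calculation done in the proof of Lemma \ref{lemmakernel} and Lemma \ref{direcitonroth} equals $-16\pi|\log b|+O(1)$. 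Combining with $c_{\hat b}=2/|\log\hat b|(1+O(1/|\log b|))$ \fref{defcb} produces the nontrivial contribution $+32\pi\hat b^2+O(\hat b^2/|\log b|)$.

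The error contributions must be shown to be $O(C(M)b^2/|\log b|)$ in absolute value. The crucial one is $\langle\tilde{\boldsymbol\Psi}_{\hat b},\mathcal L^*\boldsymbol\Phi_{0,\hat B_0}\rangle=\langle\mathcal L\tilde{\boldsymbol\Psi}_{\hat b},\boldsymbol\Phi_{0,\hat B_0}\rangle$, which is handled precisely by the degenerate flux estimate \fref{localization}(iii), providing the essential gain of the second decomposition. The linear term $\langle\mathcal L\hat{\bf E},\mathcal L^*\boldsymbol\Phi_{0,\hat B_0}\rangle=\langle\hat{\bf E},(\mathcal L^*)^2\boldsymbol\Phi_{0,\hat B_0}\rangle$ is controlled by the kind of pointwise bound on $(\mathcal L^*)^2\boldsymbol\Phi_M$ obtained in the proof of Lemma \ref{direcitonroth} combined with the $X_Q$-bootstrap bound \fref{bootsmallh2q}; the transport terms $\langle\frac{\lambda_s}{\lambda}\Lambda\hat{\bf E}+(\hat b+\frac{\lambda_s}{\lambda})\Lambda\tilde{\bf Q}_{\hat b},\mathcal L^*\boldsymbol\Phi_{0,\hat B_0}\rangle$ are controlled using the rough modulation bound \fref{cnkonenoeoeoi3poi}, the vanishing $\mathcal L\Lambda{\bf Q}=0$ and adjointness; the quadratic terms $\boldsymbol\Theta_{\hat b}(\hat{\bf E})$ and ${\bf N}(\hat{\bf E})$ are handled by Cauchy--Schwarz plus the bootstrap; finally $\hat b_s\langle\hat{\bf E},\mathcal L^*\partial_{\hat b}\boldsymbol\Phi_{0,\hat B_0}\rangle$ is absorbed using $|\hat b_s|\lesssim b^{3/2}$ from Lemma \ref{lemmeparam} together with a direct bound on $\partial_{\hat b}\boldsymbol\Phi_{0,\hat B_0}$ (which is supported in the annulus $\{r\sim\hat B_0\}$).

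Gathering everything yields $16\pi|\log b|\,\hat b_s+32\pi\hat b^2=O\!\bigl(C(M)b^2/|\log b|\bigr)$, and dividing by $16\pi|\log b|$ together with $\hat b^2=b^2+O(b^2/|\log b|)$ from \fref{deformation} gives the announced sharp law. The principal obstacle is the bookkeeping of the many error terms while tracking the $M$-dependence: in particular, the degenerate flux bound \fref{localization}(iii) is exactly tight, so one has to verify that no other contribution degrades this estimate by a factor of $|\log b|$ or more; this is the reason for having introduced the lifted parameter $\hat b$ together with the scale-matched cutoff $\hat B_0$, which precisely calibrates $\boldsymbol\Phi_{0,\hat B_0}$ to the profile $\tilde{\boldsymbol\Psi}_{\hat b}$.
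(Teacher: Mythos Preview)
Your strategy is the right one and matches the paper's in spirit: differentiate the refined orthogonality \fref{superortho} and read off the law of $\hat b$ from the balance between $\hat b_s\langle\partial_{\hat b}\tilde{\bf Q}_{\hat b},\mathcal L^*\boldsymbol\Phi_{0,\hat B_0}\rangle$ and the radiation term, with the degenerate flux estimate of Proposition~\ref{localization}(iii) killing the $\tilde{\boldsymbol\Psi}$ contribution. The difference is organizational. You write the evolution equation for $\hat{\bf E}$ in terms of the \emph{hatted} quantities $\tilde{\boldsymbol\Psi}_{\hat b}$, $c_{\hat b}\hat b^2$, $\boldsymbol\Theta_{\hat b}(\hat{\bf E})$ and the modulation factor $(\hat b+\lambda_s/\lambda)$. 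The paper instead keeps the ${\bf E}$--equation \fref{equationE} with the \emph{unhatted} forcing ($\tilde{\boldsymbol\Psi}_b$, $c_bb^2$, $\boldsymbol\Theta_b(\e,\eta)$, $(b+\lambda_s/\lambda)$) and only invokes the second decomposition at the level of the time derivative, via $\tilde{\bf Q}_b-{\bf Q}+{\bf E}=\tilde{\bf Q}_{\hat b}-{\bf Q}+\hat{\bf E}$ and \fref{superortho}. This is not merely cosmetic: it buys the much smaller coefficient $|b+\lambda_s/\lambda|\lesssim C(M)b^2/|\log b|$ from \fref{estlambda}, whereas your $(\hat b+\lambda_s/\lambda)$ is only $O(b/|\log b|)$ because of \fref{deformation}. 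As a consequence the paper's modulation--transport term is trivially subcritical, while in your version one has to work harder (in particular exploit the degeneracy $\Lambda T_1=O(\log^2 r/(1+r^4))$ used in Lemma~\ref{lemmaradiation}) to bring it down to $O(b^2/|\log b|)$.

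Two references in your sketch need adjusting. First, \fref{cnkonenoeoeoi3poi} controls $|b+\lambda_s/\lambda|$, not $|\hat b+\lambda_s/\lambda|$; for the latter you must combine \fref{estlambda} with \fref{deformation}, and you only get $O(b/|\log b|)$. Second, Lemma~\ref{lemmeparam} bounds $b_s$, not $\hat b_s$; there is no a~priori estimate on $\hat b_s$ before this lemma. The paper handles this exactly as you should: all error terms proportional to $|\hat b_s|$ are kept on the right and then absorbed into the left, since the coefficient of $\hat b_s$ is $-32\pi\log\hat B_0+O(1)$ while the stray $|\hat b_s|$ terms come with $O(1)$ factors.
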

\begin{proof}
{\bf Step 1} Projection of the equation \fref{equationE} satisfied by {\bf E} onto $\mathcal L^* \boldsymbol \Phi_{0,\hat B_0}$.\\
We take the scaler product of \fref{equationE} with $\mathcal L^* \boldsymbol \Phi_{0,\hat B_0}$ and we reorganize the terms :
\bea
\nonumber \pa_s \left\{ \left< \tilde {\bf Q}_b - {\bf Q} + {\bf E}, \mathcal L^* \boldsymbol \Phi_{0,\hat B_0}\right> \right\} &=& \left< \tilde {\bf Q}_b - {\bf Q} + {\bf E}, \pa_s\mathcal L^* \boldsymbol \Phi_{0,\hat B_0}\right> +  \left< {\bf E}_2, \mathcal L^* \boldsymbol \Phi_{0,\hat B_0}\right> \\
\label{vivelafoire}&+&  \left< -\lsl \Lambda {\bf E}  + {\bf F} + {\bf G}, \mathcal L^* \boldsymbol \Phi_{0,\hat B_0}\right> \\
\nonumber &+& \left( b + \lsl \right)  \left< \Lambda \tilde {\bf Q}_b , \mathcal L^* \boldsymbol \Phi_{0,\hat B_0}\right>,
\eea
where we recall that
\bee
{\bf F} = \tilde{\bf \Psi}_b +  {\bf \Theta}_b(\e,\eta) + {\bf N}(\e,\eta)
\eee
with $\tilde{\bf \Psi}_b$  is defined by \fref{eqerreurpsibtilde} and
\bee
 {\bf \Theta}_b(\e,\eta) &=&  \begin{array}{| c} \nabla. \left( \e \nabla \tilde{\gamma}_b + \tilde{\alpha}_b \nabla \eta \right) \\0 \end{array}, \\
 {\bf N}(\e,\eta) &=&  \begin{array}{| c} \nabla. \left( \e  \nabla \eta \right) \\0 \end{array} \\
\eee
and
\bee
{\bf G} &=&-c_bb^2 \breve{\bf T}.
\eee
{\bf Step 2} Crucial rule of the second decomposition. \\
We use here the second decomposition to execute the error term. Indeed,
\bee
\left< \tilde {\bf Q}_b - {\bf Q} + {\bf E}, \mathcal L^* \boldsymbol \Phi_{0,\hat B_0}\right> = \left< \tilde {\bf Q}_{\hb} - {\bf Q} + \hat {\bf E}, \mathcal L^* \boldsymbol \Phi_{0,\hat B_0}\right> =  \left< \tilde {\bf Q}_{\hb} - {\bf Q}, \mathcal L^* \boldsymbol \Phi_{0,\hat B_0}\right>,
\eee
the last cancellation coming from the orthogonality condition \fref{superortho}. Hence, we can split the left term  of \fref{vivelafoire} in two parts:
\be
\label{nvk}
\pa_s \left\{ \left< \tilde {\bf Q}_b - {\bf Q} + {\bf E}, \mathcal L^* \boldsymbol \Phi_{0,\hat B_0}\right> \right\} = \left<\pa_s  \tilde {\bf Q}_{\hb} , \mathcal L^* \boldsymbol \Phi_{0,\hat B_0}\right> + \left< \tilde {\bf Q}_{\hb} - {\bf Q}, \pa_s \mathcal L^* \boldsymbol \Phi_{0,\hat B_0}\right>
\ee
We estimate both terms separately. First,
\bea
\label{cnrs}&&\left<\pa_s  \tilde {\bf Q}_{\hb} , \mathcal L^* \boldsymbol \Phi_{0,\hat B_0}\right>=\hat b_s \left<\tilde {\bf T}_1 + \hb \pa_b\tilde {\bf T}_1 + 2\hb \tilde {\bf T}_2 + \hb^2 \pa_b \tilde {\bf T}_2,\mathcal L^* \boldsymbol \Phi_{0,\hat B_0}\right> \\
\nonumber &=&\hat b_s \left<{\bf T}_1, \mathcal L^* \boldsymbol \Phi_{0,\hat B_0}\right> + \hb_s\left<\left(\tilde {\bf T}_1- {\bf T}_1 \right) + \hb \pa_b\tilde {\bf T}_1 + 2\hb \tilde {\bf T}_2 + \hb^2 \pa_b \tilde {\bf T}_2,\mathcal L^* \boldsymbol \Phi_{0,\hat B_0}\right>
\eea
Using the bound \fref{bjebbeibei} and the bounds of the Proposition \ref{localization}, we obtain :
\bee
\label{cnrs2}&&\left| \left<\left(\tilde {\bf T}_1- {\bf T}_1 \right) + \hb \pa_b\tilde {\bf T}_1 + 2\hb \tilde {\bf T}_2 + \hb^2 \pa_b \tilde {\bf T}_2,\mathcal L^* \boldsymbol \Phi_{0,\hat B_0}\right>\right| \\
\nonumber&\lesssim& \int _{\tau \leq \hat B_0} \left|\tilde {T}_1- { T}_1 \right| + \hb | \pa_b\tilde {T}_1| + 2\hb |\tilde {T}_2| + \hb^2 |\pa_b \tilde {T}_2| \\
\nonumber&+&  \int _{\hat B_0 \leq \tau \leq 2\hat B_0}\frac{ \left|\nabla\tilde {S}_1- \nabla{ S}_1 \right| + \hb | \pa_b\nabla\tilde {S}_1| + 2\hb |\nabla\tilde {S}_2| + \hb^2 |\pa_b \nabla\tilde {S}_2|}{1+\tau^3} \\
\nonumber&\lesssim& \int_{B_1\leq \tau \leq 2B_1}\frac{1}{\tau^2} + b\int \left[\frac{1 + |\log (\tau\sqrt b)|}{|\log b|}{\bf 1}_{1\leq \tau\leq 6B_0}+\frac{1}{b^2\tau^4|\log b|}{\bf 1}_{ 6B_0\leq \tau\leq 2B_1}\right]\\
\nonumber&+& \int_{B_1\leq \tau \leq 2B_1} \frac{1}{1+\tau^4} + b \int_{1 \tau 2B_1} \frac{\tau \log \tau}{ 1+\tau^3} \\
\nonumber&\lesssim& 1
\eee
Now, the estimate \fref{orthophim} yields
\be
\label{cnrs3}
\left<{\bf T}_1, \mathcal L^* \boldsymbol \Phi_{0,\hat B_0}\right> = \left<\Lambda {\bf Q}, \boldsymbol \Phi_{0,\hat B_0}\right> = \left[ - 32 \pi \log \hat B_0 + O(1) \right]
\ee
Hence, this estimate together \fref{cnrs} and \fref{cnrs2} yields
\be
\label{premierterme}
\left<\pa_s  \tilde {\bf Q}_{\hb} , \mathcal L^* \boldsymbol \Phi_{0,\hat B_0}\right> = \hb_s\left[ - 32 \pi \log \hat B_0 + O(1) \right]
\ee
For the second term of \fref{nvk}, we use the definition of $\Phi_{0,\hat B_0}$ \fref{defphim}, and the bounds of the Proposition \ref{localization}:
\bea
&&\nonumber \left| \left< \tilde {\bf Q}_{\hb} - {\bf Q}, \pa_s \mathcal L^* \boldsymbol \Phi_{0,\hat B_0}\right> \right|=\left| \left<\mathcal L\left( \tilde {\bf Q}_{\hb} - {\bf Q}\right), \pa_s \boldsymbol \Phi_{0,\hat B_0}\right>\right| \\
 \nonumber &\lesssim& |\hat b_s| \left[ \int_{\hat B_0 \leq \tau\leq 2\hat B_0} \tau^2  \mathcal L^{(1)}\left( \tilde {\bf Q}_{\hb} - {\bf Q}\right) + \int_{\hat B_0 \leq \tau \leq 2\hat B_0} \frac{\log \tau}{\tau}  \nabla \mathcal L^{(2)}\left( \tilde {\bf Q}_{\hb} - {\bf Q}\right) \right]\\
\label {ftgre}&\lesssim& |\hat b_s| \left[ \int_{\frac{B_0}4 \leq \tau\leq  4 B_0} \frac{\tau^2}{\tau^4} + \int_{\frac{B_0}4 \leq \tau\leq  4 B_0} \frac{\log \tau}{\tau^4} \right]\lesssim |\hat b_s|
\eea
Hence, using the above bound with \fref{premierterme}, we obtain :
\be
\pa_s \left\{ \left< \tilde {\bf Q}_b - {\bf Q} + {\bf E}, \mathcal L^* \boldsymbol \Phi_{0,\hat B_0}\right> \right\} = \hb_s\left[ - 32 \pi \log \hat B_0 + O(1) \right]
\ee
{\bf Step 3} Estimations of the RHS of \fref{vivelafoire} \\
The leading term of the RHS of \fref{vivelafoire} is ${\bf G}$ defined by
 \bee
{\bf G} &=& -c_bb^2\breve{\bf T}.
\eee
By definition of $\breve{\bf T}$ \fref{breveT1}, we have
\bee
\left<{\bf G}, \mathcal L^* \boldsymbol \Phi_{0,\hat B_0}\right>=  -c_bb^2\left[\left<\mathcal L{\bf T}_1,\Phi_{0,\hat{B}_0}\right>+\left<(\chi_{\frac{B_0}{4}}-1){\bf T}_1,\mathcal L^*\Phi_{0,\hat{B}_0}\right>\right]
\eee
Now, from \fref{bjebbeibei}, we have
\bee
\left| \left<(\chi_{\frac{B_0}{4}}-1){\bf T}_1,\mathcal L^*\Phi_{0,\hat{B}_0}\right> \right| \lesssim \left| \int_{r\leq 2\hat{B}_0}\frac{{\bf 1}_{r\geq \frac{B_0}{4}}}{1+r^2} + \log \hat B_0 \int _{\hat B_0 \leq \tau \leq 2\hat B_0} \frac{1}{1+\tau^4}\right| \lesssim 1
\eee
The estimates \fref{defphim} and the definition of $c_b$ \fref{defcb} yields: 
\bee
\left<{\bf G}, \mathcal L^* \boldsymbol \Phi_{0,\hat B_0}\right>&=&  \frac{2b^2}{|\log b|}\left[1+O\left(\frac{1}{|\log b|}\right)\right]\left[32\pi\log B_0+O(1)\right]\\
& = & 32\pi b^2\left[1+O\left(\frac{1}{|\log b|}\right)\right].
\eee
We estimate like for the proof of \fref{ftgre} :
\bee
\left| \left< \tilde {\bf Q}_{b} - {\bf Q}, \pa_s \mathcal L^* \boldsymbol \Phi_{0,\hat B_0}\right> \right|\lesssim |\hat b_s|,
\eee
and the linear term in ${\bf E}$ with the bootstrap bound \fref{bootsmallh2q} :
\bee
\left| \left< {\bf E}, \pa_s \mathcal L^* \boldsymbol \Phi_{0,\hat B_0}\right> \right|&=& \left| \left< {\bf E_2}, \pa_s \boldsymbol \Phi_{0,\hat B_0}\right> \right| \\
&\lesssim& \left| \frac{(\hat B_0)_s}{\hat B_0}\right |  \left| \int \e_2 \left( r\chi' \frac{\tau}{\hat B_0}\right) \tau^2 + \int \nabla \eta_2 \left( r\chi' \frac{\tau}{\hat B_0}\right) \frac{\log (1+\tau^2)}{1+\tau} \right| \\
&\lesssim& \frac{\hb_s}{b} \| {\bf E}_2\|_{X_Q} \left( \int_{\hat B_0 \leq \tau \leq 2\hat B_0} \frac{\tau^4}{\tau^4} + \int_{\hat B_0 \leq \tau \leq 2\hat B_0} \frac{|\log \tau|^2}{\tau^2} \right)^{\frac{1}{2}}\\
&\lesssim& \frac{\hb_s}{b\sqrt b}  \| {\bf E}_2\|_{X_Q} \lesssim |\hat b_s|.
\eee
Now, we focus on the main liner term, using the bound \fref{estfonamentalebus} :
\bee
\left| \left< {\bf E}_2, \mathcal L^* \boldsymbol \Phi_{0,\hat B_0}\right>\right| \lesssim \int_{\tau \geq \hat B_0} |\e_2| + \log \hat B_0 \int_{\hat B_0 \leq \tau \leq 2\hat B_0} \frac{|\nabla \eta_2|}{1+\tau^3} \lesssim \sqrt b \| {\bf E}_2\|_{X_Q} \lesssim \frac{b^2}{|\log b|}.
\eee
Next, with the bound \fref{bjebbeibei}, the bootstrap bound \fref{bootsmallh2q}, the interpolation bound \fref{coercbase} and the bound \fref{cnkonenoeoeoi3poi} coming from the rough modulation equation :
\bee
\left| \left< \lsl \Lambda {\bf E}, \mathcal L^* \boldsymbol \Phi_{0,\hat B_0}\right> \right| &\lesssim& b\left\{ \int_{\tau \leq 2 \hat B_0} (|\e| + |\tau.\nabla \e|) + \int_{\hat B_0 \leq \tau \leq 2 \hat B_0} \frac{|\nabla \eta| + |\tau. \nabla^2 \eta|}{1+\tau^3}\right\} \\
&\lesssim& \sqrt b (\|\e\|_{L^2} + \|y.\nabla \e\|_{L^2} + \left\|\frac{\nabla \eta }{1+y^3}\right\|_{L^2} + \left\|\frac{\nabla^2 \eta}{1+y^2}\right\|_{L^2})\\
 &\lesssim& C(M)\sqrt b \|{\bf E}_2\|_{X_Q} \lesssim \frac{b^2}{|\log b|}
\eee
We treat the ${\bf F}$ terms separately. To begin, from the Proposition \ref{localization}, we know already the degenerate flux :
\bee
\left| \left< \tilde{\boldsymbol \Psi}_b, \mathcal L^* \boldsymbol \Phi_{0,\hat B_0}\right>\right| \lesssim \frac{b^2}{|\log b|}.
\eee 
Let's focus on the small linear term $\boldsymbol \T_b(\e,\eta)$. We recall that
\bee
 {\bf \Theta}_b(\e,\eta) &=&  \begin{array}{| c} \nabla. \left( \e \nabla \tilde{\gamma}_b + \tilde{\alpha}_b \nabla \eta \right) \\0 \end{array}
\eee
and from the Proposition \ref{localization}, we have the rough bounds, $\forall i \geq 0$
\bee
|r^i\pa_r^i \tilde \alpha_b|&\lesssim& \frac{b}{1+r^2} {\bf 1}_{r \leq 2B_1}\\
|r^i\pa_r^i \nabla \tilde \gamma_b| &\lesssim&  \frac{br(1+|\log r|)}{1+r^2} {\bf 1}_{r \leq 2B_1} 
\eee
Hence, we compute the following rough bound :
\bee
\left| \T^{(1)}_b(\e,\eta) \right| \lesssim b \left[ \frac{1+ |\log r|}{1+r} |\nabla \e| + \frac{1+\log r}{1+r^2} |\e| + \frac{|\Delta \eta|}{1+r^2}+ \frac{|\nabla \eta|}{1+r^3} \right]
\eee
The estimate \fref{estfonamentalebus} and the cancellation $\int \T_b^{(1)} = 0$ yields:
\bee
\left| \left< \boldsymbol \T_b(\e,\eta), \mathcal L^* \boldsymbol \Phi_{0,\hat B_0}\right> \right| &\lesssim& b\int _{r \geq \hat B_0} \left[ \frac{1+ |\log r|}{1+r} |\nabla \e| + \frac{1+\log r}{1+r^2} |\e| + \frac{|\Delta \eta|}{1+r^2}+ \frac{|\nabla \eta|}{1+r^3} \right] \\
&\lesssim& C(M) \sqrt b \|{\bf E}_2\|_{X_Q} \lesssim \frac{b^2}{|\log b|}.
\eee
To conclude, we must treat the nonlinear term defined by
\bee
 {\bf N}(\e,\eta) &=&  \begin{array}{| c} \nabla. \left( \e  \nabla \eta \right) \\0 \end{array}
 \eee
As $\int N^{(1)}(\e,\eta) = 0$, we can use the same strategy like the small linear term :
\bee
\left| \left< \boldsymbol N(\e,\eta), \mathcal L^* \boldsymbol \Phi_{0,\hat B_0}\right> \right| &\lesssim& b\int _{r \geq \hat B_0} |\nabla.(\e\nabla\eta)|\\
&\lesssim& \left(\int |\e|^2 \int |\Delta \eta| ^2 \right)^{\frac{1}{2}} +\left(\int |\nabla \e|^2 \int |\nabla \eta| ^2 \right)^{\frac{1}{2}} 
 \eee
Now, we have
\bee
\int |\Delta \eta| ^2 = \left|\int \nabla \eta \nabla \Delta \eta \right|\lesssim \left(\int |\nabla \Delta \eta|^2 \int |\nabla \eta| ^2 \right)^{\frac{1}{2}} 
\eee
The bootstrap bounds \fref{bootsmallh1}, \fref{bootsmallh2q} and the interpolation bound \fref{coercbase} yield
\bee
\left| \left< \boldsymbol N(\e,\eta), \mathcal L^* \boldsymbol \Phi_{0,\hat B_0}\right> \right| &\lesssim& \frac{b^{\frac32}}{|\log b|} b |\log b|^C \lesssim \frac{b^2}{|\log b|}
\eee
The collection of above estimates yield
\bee
\left| \left< \boldsymbol F, \mathcal L^* \boldsymbol \Phi_{0,\hat B_0}\right> \right| \lesssim \frac{b^2}{|\log b|}
\eee
For the last term induced by the modulation, we use the same strategy because of $\int \Lambda \tilde Q_b^{(1)} = \int \Lambda \tilde \alpha_b = 0$. Thus, from the bound \fref{cnkonenoeoeoi3poi} coming from the rough modulation equations,
\bee
\left| b + \lsl \right|  \left< \Lambda \tilde {\bf Q}_b , \mathcal L^* \boldsymbol \Phi_{0,\hat B_0}\right> &=&\left| b + \lsl \right|  \left< \Lambda \tilde \alpha_b , \mathcal L^* \boldsymbol \Phi_{0,\hat B_0}\right>\\
&\lesssim& b^{\frac32}\int_{\frac{B_0}{4} \leq \tau \leq 6B_0} \frac{b}{1+r^2} \lesssim \frac{b^2}{|\log b|}.
\eee
Injecting all above estimates into \fref{vivelafoire} yields :
\bee
\hb_s[-32\pi \log \hat B_0 + O(1)] = 32\pi b^2 + O\left(\frac{b^2}{|\log b|} + |\hb_s|\right).
\eee
Now, by assumption
 \be
\label{estnieneoifn}
\left|\frac{\log \hat B_0}{|\log b|}-\frac 12\right|=\left|\frac{\log (\hat B_0 \sqrt{b})}{|\log b|}\right|\lesssim \frac{1}{|\log b|},
\ee
Hence,
\bee
\left|\hat{b}_s+\frac{2b^2}{|\log b|}\right|\lesssim C(M)\frac{b^2}{|\log b|^2}.
\eee
This is precisely \fref{poitnzeiboud}, and the Lemma \ref{lemmasharpmod} is proved.
\end{proof} 
\section{$X_Q$ monotonicity}
\label{lyapounov}
The goal of this section is to prove the following monotonicity formula at the $X_Q$ level, which is the keystone of the proof of \fref{bootsmallh2qboot}.
\begin{proposition}[$X_Q$ monotonicity]
\label{htwoqmonton}
There holds:
\bea
\label{eetfonafmetea}
&&\frac{d}{dt}\left\{\frac{\left<\mathcal M {\bf E}_2,{\bf E}_2 \right>}{\l^2}+O\left(\frac{C(M)b^{\frac32}}{|\log b|}\|{\bf E}_2\|_{X_Q}+\frac{C(M)b^3}{\l^2|\log b|^2}\right)\right\}\\
\nonumber &  \leq &  \frac{bC(M)}{\l^4}\left[\frac{b^{\frac32}}{|\log b|}\|{\bf E}_2\|_{X_Q}+\frac{b^3}{|\log b|^2}\right].
\eea
\end{proposition}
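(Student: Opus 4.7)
The strategy is a Lyapunov/energy method at the level of $\mathbf{E}_2 := \mathcal{L}\mathbf{E}$. Applying the $s$-independent operator $\mathcal{L}$ to \eqref{equationE} gives
\begin{equation*}
\partial_s \mathbf{E}_2 - \tfrac{\lambda_s}{\lambda}\Lambda \mathbf{E}_2 \;=\; \mathcal{L}\mathbf{E}_2 + \mathcal{L}\boldsymbol{\mathcal{F}} + \tfrac{\lambda_s}{\lambda}\,[\mathcal{L},\Lambda]\mathbf{E},
\end{equation*}
where the commutator $[\mathcal{L},\Lambda]$ is computed as in the proof of Lemma \ref{lemmaradiation}, giving $[\mathcal{L},\Lambda]\mathbf{f} = 2\mathcal{L}\mathbf{f}$ up to a localized remainder along $\Lambda\mathbf{Q}$. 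I then expand $\frac{d}{dt}\bigl(\lambda^{-2}\langle \mathcal{M}\mathbf{E}_2,\mathbf{E}_2\rangle\bigr)$ using the self-adjointness \eqref{efadjointness} of $\mathcal{M}$ and the rescaling rule $\frac{d}{dt}=\lambda^{-2}\partial_s$ acting on the $y$-space pairing. The explicit $\lambda$-derivative contributes $\tfrac{2b}{\lambda^4}\langle \mathcal{M}\mathbf{E}_2,\mathbf{E}_2\rangle + O\!\bigl(b^{3/2}\lambda^{-4}\|\mathbf{E}_2\|_{X_Q}^2\bigr)$ by \eqref{cnkonenoeoeoi3poi}, an acceptable error under the bootstrap size $\|\mathbf{E}_2\|_{X_Q}^2 \lesssim b^3/|\log b|^2$.

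\textbf{Dissipative core.} Integration by parts against the explicit forms \eqref{defm} and \eqref{ladjoint} yields the sign-definite identity
\begin{equation*}
\langle \mathcal{M}\mathcal{L}\mathbf{v}, \mathbf{v}\rangle \;=\; -\int Q\,|\nabla \mathcal{M}^{(1)}\mathbf{v}|^2 - \int |\Delta \mathcal{M}^{(2)}\mathbf{v}|^2 \;\leq\; 0.
\end{equation*}
By the adjunction \eqref{biebibvei} and the orthogonality \eqref{orthoe} of $\mathbf{E}$, the profile $\mathbf{E}_2$ inherits the orthogonality hypotheses \eqref{orthowappendix} of Proposition \ref{interpolationhtwo}, so this dissipation is quantitative and controls $(\log M)^2 M^{-2} \lambda^{-4}\|\mathcal{L}\mathbf{E}_2\|_{X_Q}^2$. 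This margin is not strictly needed to close \eqref{eetfonafmetea}, but it allows me to absorb several borderline contributions coming from the commutator and the scaling term $\lambda_s/\lambda\,\langle \mathcal{M}\Lambda\mathbf{E}_2,\mathbf{E}_2\rangle$.

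\textbf{Control of bulk forcing.} Decomposing $\boldsymbol{\mathcal{F}} = \tilde{\boldsymbol{\Psi}}_b + \boldsymbol{\Theta}_b + \mathbf{N} + \widetilde{\mathbf{Mod}} + \mathbf{G}$, I handle the first four terms by Cauchy--Schwarz in the $X_Q$ pairing induced by $\mathcal{M}$, using continuity \eqref{mcontiniuos}. For $\tilde{\boldsymbol{\Psi}}_b$ the refined bounds \eqref{roughboundltaowt}--\eqref{cneneoneonoenoet} suffice. For the linear perturbation $\boldsymbol{\Theta}_b$ and the nonlinearity $\mathbf{N}$ I combine the interpolation \eqref{contorlcoerc} applied to $\mathbf{E}$ with the pointwise profile estimates \eqref{esttonepropt}--\eqref{nablaS2pabt} and the bootstrap bounds \eqref{bootsmallh1}--\eqref{bootsmallh2q}. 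The modulation term $(\lambda_s/\lambda + b)\Lambda\tilde{\mathbf{Q}}_b - b_s\tilde{\mathbf{T}}_1 - 2b b_s\tilde{\mathbf{T}}_2$ is controlled by the rough bound \eqref{cnkonenoeoeoi3poi} on its $\Lambda\tilde{\mathbf{Q}}_b$ part and, for the $b_s\tilde{\mathbf{T}}_i$ components, by reducing $b_s$ to $\hat b_s$ through \eqref{deformation} and invoking the sharp equation \eqref{poitnzeiboud}.

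\textbf{The radiation $\mathbf{G}$ and the main obstacle.} The delicate term is $\mathbf{G} = -c_b b^2 \breve{\mathbf{T}}$, of size $b^2/|\log b|$, which is too large to dissipate directly. Following \cite{raphael2012b}, I exploit the second decomposition \eqref{defehat}, writing $\mathbf{E}_2 = \hat{\mathbf{E}}_2 - \boldsymbol{\Xi}_2$ inside the pairing $\langle \mathcal{M}\mathcal{L}\mathbf{G}, \mathbf{E}_2\rangle$. The $\boldsymbol{\Xi}_2$-contribution is bounded by the sharp degenerate estimates \eqref{degenrateoneone}--\eqref{noenoeno} of Lemma \ref{lemmaradiation}, which deliver the crucial $b^{3/2}/|\log b|$ gain originating from the calibration \eqref{defcb} of $c_b$ and the degenerate flux (iii) of Proposition \ref{localization}. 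The $\hat{\mathbf{E}}_2$-contribution is reorganized as a full time-derivative $\frac{d}{dt}\bigl(C(M)b^{3/2}|\log b|^{-1}\|\mathbf{E}_2\|_{X_Q} + C(M)\lambda^{-2}b^3|\log b|^{-2}\bigr)$, producing exactly the correction displayed inside the braces on the left of \eqref{eetfonafmetea}, with a residual controlled by \eqref{estimatonebis}--\eqref{degenrateonebis}. The hard part is the simultaneous bookkeeping of these cancellations together with the analogous $\widetilde{\mathbf{Mod}}$- and $\tilde{\boldsymbol{\Psi}}_b$-contributions pairing against $\boldsymbol{\Xi}_2$, so that every residual fits the $b^{5/2}/|\log b|$ threshold imposed by the RHS of \eqref{eetfonafmetea}; this is where the whole construction of Section \ref{consconscons}, and in particular the precise choice of the radiation $\boldsymbol{\Sigma}_b$ that pins down $c_b$, is used in full.
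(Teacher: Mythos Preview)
Your plan has a genuine gap in the treatment of the modulation term, and it misidentifies where the second decomposition is actually needed.

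You claim that for the $b_s\tilde{\mathbf T}_i$ pieces of $\widetilde{\mathbf{Mod}}$ you can ``reduce $b_s$ to $\hat b_s$ through \eqref{deformation}''. But \eqref{deformation} is the bound $|b-\hat b|\lesssim b/|\log b|$; it says nothing about $|b_s-\hat b_s|$, and no such bound is available. The rough estimate \eqref{estb} gives only $|b_s|\lesssim b^{3/2}$. Even after exploiting the algebraic cancellation $\langle\mathcal M\Lambda\mathbf Q,\mathbf E_2\rangle=-2\int\e_2=0$, the residual contribution of $b_s\mathcal L\tilde{\mathbf T}_1$ is of order $\tfrac{b^2}{|\log b|}\|\mathbf E_2\|_{X_Q}$ (or $b^3$ if you try to absorb it with the dissipation via Young), which is a full power $b^{1/2}$ too large for the right-hand side of \eqref{eetfonafmetea}. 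This is precisely why the paper does \emph{not} run the energy identity on $\mathbf E_2$ but on $\hat{\mathbf E}_2=\mathcal L\hat{\mathbf E}$ from the start: in the second decomposition the modulation term becomes $\widehat{\mathbf{Mod}}=(b+\l_s/\l)\Lambda\tilde{\mathbf Q}_b-\partial_s\tilde{\mathbf Q}_{\hat b}$, and $\partial_s\tilde{\mathbf Q}_{\hat b}$ carries the sharp factor $\hat b_s=O(b^2/|\log b|)$ from \eqref{poitnzeiboud}. The price is the new forcing $\mathbf H=-\mathcal L\boldsymbol\Xi-\tfrac{\l_s}{\l}\Lambda\boldsymbol\Xi$, which is controlled by Lemma~\ref{lemmaradiation}; this $\mathbf H$ term does not appear anywhere in your outline.

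Your identification of $\mathbf G$ as the main obstacle is also off: the leading part of $\langle\mathcal M\mathcal L\mathbf G,\mathbf E_2\rangle$ is proportional to $\langle\mathcal M\Lambda\mathbf Q,\mathbf E_2\rangle$, which vanishes by $\int\e_2=0$ exactly as in the paper's $\mathbf G$-section; no second decomposition is required there. Finally, your outline omits entirely the paper's \emph{modified energy}. The computation is carried out in original variables on $\langle\mathcal M_\l\hat{\mathbf W}_2,\hat{\mathbf W}_2\rangle$, and the term $-\int\tfrac{\partial_tQ_\l}{2Q_\l^2}\hat w_2^2$ coming from the time-dependence of $Q_\l$ is of critical size with indefinite sign. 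The paper adds explicit boundary-in-time corrections $\int\tfrac{\hat b(\Lambda Q+2Q)_\l}{\l^2Q_\l^2}\hat w_2\hat w-2\int\tfrac{\hat b\,\hat w_1\cdot\nabla\hat z}{\l^2}$, exploits the degeneracy $\tfrac{\Lambda Q}{Q}+2=O((1+r^2)^{-1})$, and finishes by multiplying by $\l^2$ so that the residual positive term $\tfrac{\hat b}{\l^2}\int\hat w_2\mathcal M^{(1)}_\l(\hat w_2,\hat z_2)$ combines with the $\l$-derivative through the sharp bound $|\l_s/\l+\hat b|\lesssim b/|\log b|$. None of this machinery appears in your plan.
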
 
\begin{proof}
To prove this proposition, which is the most technical step of the proof of the Theroem \ref{thmmain}, we shall use the second decomposition of the flow. Indeed, we have seen that the control of $b_s$ \fref{estb} is not enough good for our analysis, due to a too slow decay for the elements of the kernel of $\mathcal L^*$. To circumvent this technical problem, we have introduced the lift parameter $\hb$, whose the control \fref{poitnzeiboud} of the time derivate is better. In the first step, we shall begin to write the equations verify by the error term coming from the second decomposition, and its suitable derivatives. Thus, we shall compute the modified energy identity, whose we control each term in the last step of the proof.
\subsection {Equations verify by $\hat{\bf E}$ and its suitable derivatives:}
We recall the second decomposition of the flow.
\be
\label{defEtilde}
{\bf U} = \begin{array}{| c} u \\v \end{array} = \left( \tilde {\bf Q}_{\hat b}  + \widehat {\bf E}\right)_{\l} = \left(  \ \  \begin{array}{| c} Q + \tilde{\alpha}_{ \hat b} +\hat \e \\ \phi_Q + \tilde{\gamma}_{\hat b} + \hat \eta \end{array} \right)_{\l}.
\ee
Moreover, we have defined
\be
\label{defXi}
{\bf \Xi} = \widehat {\bf E} - {\bf E} = \begin{array} {| c} \zeta \\ \xi \end{array}
\ee
the gap between both decomposition. By definition:
\bee
\pa_s \left( \tilde{\bf Q}_{\hb} + \hat {\bf E}\right) = \pa_s \left( \tilde{\bf Q}_{b} + {\bf E}\right) 
\eee
Hence the equation \fref{equationE} becomes:
\be
\label{equationEtilde}
\pa_s \widehat {\bf E} - \frac{\l_s}{\l} \Lambda \widehat {\bf E} = \mathcal L \widehat {\bf E} + {\bf F}  +  \widehat{ \bf Mod} + {\bf G} + {\bf H} =  \mathcal L \widehat {\bf E} + \widehat{\boldsymbol {\mathcal F}},
\ee
where ${\bf F}$ and ${\bf G}$ are respectively defined by \fref{defF} and \fref{defG},  and the new modulation term is given by
\be
\label{defModtilde}
\widehat{\bf Mod} (s) =  \left( b + \frac{\l_s}{\l}\right) \Lambda \tilde {\bf Q}_{b} - \pa_s \tilde {\bf Q}_{\hat b},
\ee
and
\be
\label{defH}
{\bf H} =  - \mathcal L {\bf \Xi} - \frac{\l_s}{\l} \Lambda {\bf \Xi}.
\ee
The decomposition of the flow in the original variables is given by
\be
\label{defWtilde}
{\bf U} = \left( \tilde {\bf Q}_{\hat b} \right)_{\l} + \widehat {\bf W} = \left(  \ \  \begin{array}{| c} Q + \tilde{\alpha}_{\hat b} \\ \phi_Q + \tilde{\gamma}_{ \hat b} \end{array} \right)_{\l} + \begin{array}{| c}\hat w \\ \hat z \end{array}.
\ee
Hence, $ \widehat {\bf W}$ satisfies the equation:
\be
\label{equationWtilde}
\pa_t \widehat {\bf W} = \mathcal L_{\l} \widehat {\bf W} + \frac{1}{\l^2} \widehat{\boldsymbol {\mathcal F}_{\l}}
 \ee
In the rest of the paper, we use the following notation, in order to ease the clarity of the calculus.
\be
\nabla.{\bf U} = \nabla. \begin{array}{| c} u \\v \end{array} = \begin{array}{| c} \nabla.u \\ \nabla.v \end{array}
\ee
Introduce the differential operator of order one, which appear in the factorization of the operator $\mathcal L$:
\be
\label{defA}
\mathcal A_{\l} {\bf W} = \begin{array}{| c} Q_\l \nabla \mathcal M^{(1)}_\l (w,z) \\ \nabla z - \nabla  \phi_w  \end{array} = \begin{array}{| c} \nabla w + \nabla \phi_{Q_\l} w + Q_\l \nabla z \\ \nabla z - \nabla  \phi_w  \end{array}.
\ee
Indeed, we have this relation between the operators $\mathcal A$ and $\mathcal L$:
\be
\mathcal L_{\l} {\bf W} = \nabla . \left(\mathcal A_{\l} {\bf W} \right)
\ee
In the following, we use the following notations for the suitable derivatives of order one:
\bee
{\bf W}_1 &=& \begin{array} {| c} w_1 \\ z_1 \end{array} =  A_{\l} {\bf W}, \ \ \widehat{\bf W}_1 = \begin{array} {| c} \hat w_1 \\ \hat z_1 \end{array} =  A_{\l} \widehat {\bf W}, \\
{\bf E}_1 &=& \begin{array} {| c} \e_1 \\ \eta_1 \end{array} =  A {\bf E}, \ \ \widehat{\bf E}_1 = \begin{array} {| c} \hat \e_1 \\ \hat \eta_1 \end{array} =  A \widehat {\bf E},
\eee
and these notations for the suitable derivatives of order two:
\bee
{\bf W}_2 &=& \begin{array} {| c} w_2 \\ z_2 \end{array} =  \mathcal L_{\l} {\bf W}, \ \ \widehat{\bf W}_2 = \begin{array} {| c} \hat w_2 \\ \hat z_2 \end{array} =  \mathcal L_{\l} \widehat {\bf W} \\
{\bf E}_2 &=& \begin{array} {| c} \e_2 \\ \eta_2 \end{array} =  \mathcal L {\bf E}, \ \ \widehat{\bf E}_2 = \begin{array} {| c} \hat \e_2 \\ \hat \eta_2 \end{array} =  \mathcal L \widehat {\bf E}
\eee
In the same way, we notice:
\be
{\bf \Xi}_1 = \begin{array} {| c} \zeta_1 \\ \xi_1 \end{array} =  A {\bf \Xi}, \ \ {\bf \Xi}_2 = \begin{array} {| c} \zeta_2 \\ \xi_2 \end{array} =   \mathcal L {\bf \Xi}
\ee
Using these new notations, \fref{equationWtilde} becomes:
\be
\label{equationWtildebis}
\pa_t \widehat {\bf W} = \widehat{\bf W}_2 + \frac{1}{\l^2} \widehat{\boldsymbol {\mathcal F}_{\l}}  = \widehat{\bf W}_2 + \frac{1}{\l^2}\begin{array}{| c} \widehat {\mathcal F}_{1,\l} \\ \widehat {\mathcal F}_{2,\l} \end{array} .
\ee
Now, we introduce a second operator of order one
\be
\label{defV}
V_{\l} \widehat{\bf W} = [ \pa_t, A_{\l} ] \widehat{\bf W} = \begin{array} {| c} - \pa_t \left( \nabla \phi_{Q_\l} \right) \hat w - \pa_t Q_\l \nabla \hat z \\ 0 \end{array} =  \frac{\l_s}{\l} \begin{array} {| c} \left(  \nabla \phi_{\Lambda Q} \right)_{\l} \hat w + \left(\Lambda Q \right)_ \l \nabla \hat z \\ 0 \end{array}.
\ee
Hence $\widehat {\bf W} _1$ and $\widehat {\bf W} _2$ are respectively solutions of:

\bea
\label{equationW1tildebis}
\pa_t \widehat {\bf W}_1 &=& A_\l \widehat{\bf W}_2 + \frac{1}{\l^2} A_\l \widehat{\boldsymbol {\mathcal F}_{\l}} +  V_{\l} \widehat{\bf W}, \\
\label{equationW2tildebis}
\pa_t \widehat {\bf W}_2 &=&  \mathcal L_\l \widehat{\bf W}_2 + \frac{1}{\l^2} \mathcal L_\l \widehat{\boldsymbol {\mathcal F}_{\l}}+ \nabla. \left(V_{\l} \widehat{\bf W} \right).
\eea
\subsection{Modified energy identity}
\bea
\nonumber&&\frac 12 \frac d {dt} \left< \mathcal M_\l \widehat {\bf W}_2, \widehat {\bf W}_2\right> = \left< \pa_t \widehat {\bf W}_2, \mathcal M_\l \widehat {\bf W}_2\right> - \int \frac{\pa_t Q_\l}{2Q_\l^2}\hat w_2^2 \\
\nonumber&=& \left< \mathcal L_\l \widehat{\bf W}_2 + \frac{1}{\l^2} \mathcal L_\l \widehat{\boldsymbol {\mathcal F}_{\l}}+ \nabla. \left(V_{\l} \widehat{\bf W} \right), \mathcal M_\l \widehat {\bf W}_2\right> - \int \frac{\pa_t Q_\l}{2Q_\l^2}\hat w_2^2 \\
\nonumber&=& - \int Q_\l |\nabla \mathcal M^{(1)}_\l (\hat w_2, \hat z_2) |^2 - \int |\Delta \mathcal M^{(2)}_\l (\hat w_2, \hat z_2) |^2 \\
\nonumber&+&  \left<  \widehat {\bf W}_2,  \frac{1}{\l^2} \mathcal M_\l \mathcal L_\l\widehat{\boldsymbol {\mathcal F}_{\l}}\right > - \left< V_{\l} \widehat{\bf W}, \nabla \mathcal M_\l \widehat {\bf W}_2\right> \\
\label{firstenergy}&+& \int \frac{\hat w_2^2}{2\l^2Q_\l^2} \left[ \left( \hat b + \frac{\l_s}{\l}\right) (\Lambda Q)_\l \right] - \int \frac{\hat b  (\Lambda Q)_\l }{2\l^2Q_\l^2}\hat w_2^2.
\eea
For our analysis, the last term has a critical size. Moreover, this term hasn't definite sign. Hence, we must decompose this term in several manageable terms. In this purpose, we compute :
\bea
\label{step1termeembetant}
&& \frac{d}{dt} \left\{ \int \frac{\hat b  (\Lambda Q)_\l }{2\l^2Q_\l^2}\hat w_2 \hat w \right\}  \\ 
\nonumber&=&  \int \frac{d}{dt} \left\{\frac{\hat b  (\Lambda Q)_\l }{2\l^2Q_\l^2}\right\} \hat w_2 \hat w +  \int \frac{\hat b  (\Lambda Q)_\l }{2\l^2Q_\l^2}\hat w_2 \left[ \hat w_2+ \frac{1}{\l^2} \widehat{\mathcal F}_{1,\l} \right]  \\
\nonumber&+&  \int \frac{\hat b  (\Lambda Q)_\l }{2\l^2Q_\l^2} \hat w\left[ \mathcal L^{(1)}_{\l} (\hat w_2, \hat z_2) + \frac{1}{\l^2} \mathcal L^{(1)}_\l \left( \widehat{\mathcal F}_{1,\l}, \widehat{\mathcal F}_{2,\l}\right) + \nabla .V^{(1)}_\l (\hat w,\hat y) \right] .
\eea
But,
\bea
\label{newyork}&&\frac{d}{dt}\left\{\frac{\hb(\Lambda Q)_{\l}}{2\l^2Q_\l^2}\right\} =  \frac{d}{dt}\left\{\frac{\hb}2\left(\frac{\Lambda Q}{Q^2}\right)\left(\frac{r}{\l(t)}\right)\right\}\\
\nonumber& = & \frac{1}{2\l^2}\left[\hb_s\frac{\Lambda Q}{Q^2}-\hb\lsl y\cdot\nabla\left(\frac{\Lambda Q}{Q^2}\right)\right](y)\\
\nonumber& = & \frac{1}{\l^2Q}\left[4\hb\lsl-\hb_s+O\left(\frac{|\hb_s|+\hb|\lsl|}{1+r^2}\right)\right].
\eea
Now, using a intregration by part, we obtain
\bea
\nonumber &&\int \frac{\hat b  (\Lambda Q)_\l }{2\l^2Q_\l^2} \hat w \mathcal L^{(1)}_{\l} (\hat w_2, \hat z_2)= -\hb\int \frac{\hw\mathcal L^{(1)}_{\l} (\hat w_2, \hat z_2)}{\l^2Q_\l}+\int \frac{\hb(2Q+\Lambda Q)_{\l}}{2\l^2Q_\l^2}\hw\mathcal L^{(1)}_{\l} (\hat w_2, \hat z_2)\\
\nonumber & = & \frac{\hb}{\l^2}\int Q_\l \nabla(\mathcal M^{(1)}_\l (\hw_2, \hat z_2))\cdot\nabla\left[\frac{\hw}{Q_\l}+\hat z\right] \\
\label{newyork2}&-& \frac{\hb}{2\l^2}\int Q_\l \nabla (\mathcal M^{(1)}_\l (\hw_2, \hat z_2))\cdot\nabla \left[2\hat z+\frac{(2Q+\Lambda Q)_{\l}}{\l^2Q_\l^2}\hw\right]\\
\nonumber&=& -\frac{\hb}{\l^2} \int \hat w_2 \mathcal M^{(1)}_\l (\hw_2, \hat z_2) -  \frac{\hb}{2\l^2}\int Q_\l \nabla (\mathcal M^{(1)}_\l (\hw_2, \hat z_2))\cdot\nabla \left[2\hat z+\frac{(2Q+\Lambda Q)_{\l}}{\l^2Q_\l^2}\hw\right]
\eea
We have used in the above equality the fundamental degeneracy :
\be
\label{degenaracy}
\frac{\Lambda Q}{Q} + 2 = -\phi_{\Lambda Q} = O \left( \frac{1}{1+r^2}\right).
\ee
Injecting \fref{newyork} and \fref{newyork2} in \fref{step1termeembetant} yield
\bea
\label{step2termeembetant}
&& \frac{d}{dt} \left\{ \int \frac{\hat b  (\Lambda Q)_\l }{2\l^2Q_\l^2}\hat w_2 \hat w \right\}   = \int \frac{\hat b  (\Lambda Q)_\l }{2\l^2Q_\l^2}\hat w_2^2 -\frac{\hb}{\l^2} \int \hat w_2 \mathcal M^{(1)}_\l (\hw_2, \hat z_2)\\ 
\nonumber&+&  \int  \frac{\hat w_2 \hat w}{\l^2Q_\l}\left[4\hb\lsl-\hb_s \right] -  \frac{\hb}{2\l^2}\int Q_\l \nabla (\mathcal M^{(1)}_\l (\hw_2, \hat z_2))\cdot\nabla \left[2\hat z+\frac{(2Q+\Lambda Q)_{\l}}{\l^2Q_\l^2}\hw\right] \\
\nonumber&+&  \int \frac{\hat b  (\Lambda Q)_\l }{2\l^2Q_\l^2} \hat w\left[ \frac{1}{\l^2} \mathcal L^{(1)}_\l \left( \widehat{\mathcal F}_{1,\l}, \widehat{\mathcal F}_{2,\l}\right) + \nabla .V^{(1)}_\l (\hat w,\hat y) \right] +  \int \frac{\hat b  (\Lambda Q)_\l }{2\l^2Q_\l^2}\hat w_2 \left[  \frac{1}{\l^2} \widehat{\mathcal F}_{1,\l} \right]  \\
\nonumber &+& O\left(  \int  \frac{\hat w_2 \hat w}{\l^2Q(1+r^2)}\left[ |\hb_s|+\hb\left|\lsl\right| \right]\right).
\eea
Summing this equality with \fref{firstenergy}, we obtain a first modified energy identity. However, some terms have still critical size for our analysis. To solve this problem, we use the energy identity of $\hat{w}_1$:
\bea
\label{step3termeembetant}
&& -\frac d {dt} \left\{ \int \frac{\hb \hw_1^2}{\l^2Q_\l}\right\} \\
\nonumber&=& - \int \hw_1^2\frac d {dt} \left\{ \frac{\hb}{\l^2Q_\l}\right\} - 2\hb \int \frac{\hw_1}{\l^2Q_\l}\left[ A_\l^{(1)} (\hw_2,\hat z_2) + \frac 1 {\l^2} A_\l^{(1)}(\widehat{\mathcal F}_{1,\l}, \widehat{\mathcal F}_{2,\l}) + V_\l^{(1)}(\hw,\hat z) \right]
\eea
Next, we compute:
\bee
& &- \frac{d}{dt}\left\{\frac{\hb}{\l^2Q_\l}\right\}=-\frac{d}{dt}\left\{\frac{\hb}{Q\left(\frac{r}{\l(t)}\right)}\right\}=\frac{-1}{\l^2}\left[\frac{\hb_s}{Q}+\hb\frac{\lsl y\cdot\nabla Q}{Q^2}\right]\\
& = & \frac{-1}{\l^2Q_\l}\left[4\hb\lsl - \hb_s+O\left(\frac{|\hb_s|+\hb|\lsl|}{1+r^2}\right)\right],\\
\eee
Moreover:
\bee
-2\hb \int \frac{\hw_1}{\l^2Q_\l} A_\l^{(1)} (\hw_2,\hat z_2) = 2 \frac {\hb}{\l^2} \int \nabla.\hw_1 \mathcal M^{(1)}_\l (\hw_2,\hat z_2) = 2\frac{\hb}{\l^2} \int \hat w_2 \mathcal M^{(1)}_\l (\hw_2, \hat z_2)
\eee
Injecting both last terms in \fref{step3termeembetant} yields:
\bea
\label{step4termeembetant}
&& -\frac d {dt} \left\{ \int \frac{\hb \hw_1^2}{\l^2Q_\l}\right\} = 2\frac{\hb}{\l^2} \int \hat w_2 \mathcal M^{(1)}_\l (\hw_2, \hat z_2) + \int \frac{ \hw_1^2}{\l^2Q}\left[4\hb\lsl - \hb_s \right] \\
\nonumber&-& 2\hb \int \frac{\hw_1}{\l^2Q_\l}\left[ \frac 1 {\l^2} A_\l^{(1)}(\widehat{\mathcal F}_{1,\l}, \widehat{\mathcal F}_{2,\l}) + V_\l^{(1)}(\hw,\hat z) \right] +O\left(  \int  \frac{\hat w_1^2}{\l^2Q(1+r^2)}\left[ |\hb_s|+\hb\left|\lsl\right| \right]\right).
\eea
Summing \fref{step2termeembetant} with \fref{step4termeembetant} yields:
\bea
\label{step5termeembetant}
 \nonumber&& \frac{d}{dt} \left\{ \int \frac{\hat b  (\Lambda Q)_\l }{2\l^2Q_\l^2}\hat w_2 \hat w - \int \frac{\hb \hw_1^2}{\l^2Q_\l} \right\}   = \int \frac{\hat b  (\Lambda Q)_\l }{2\l^2Q_\l^2}\hat w_2^2 +\frac{\hb}{\l^2} \int \hat w_2 \mathcal M^{(1)}_\l (\hw_2, \hat z_2)\\ 
\nonumber&+&  \int  \frac{\hat w_2 \hat w + \hw_1^2}{\l^2Q_\l}\left[4\hb\lsl-\hb_s \right] -  \frac{\hb}{2\l^2}\int Q_\l \nabla (\mathcal M^{(1)}_\l (\hw_2, \hat z_2))\cdot\nabla \left[2\hat z+\frac{(2Q+\Lambda Q)_{\l}}{\l^2Q_\l^2}\hw\right] \\
\nonumber&+&  \int \frac{\hat b  (\Lambda Q)_\l }{2\l^2Q_\l^2} \hat w\left[ \frac{1}{\l^2} \mathcal L^{(1)}_\l \left( \widehat{\mathcal F}_{1,\l}, \widehat{\mathcal F}_{2,\l}\right) + \nabla .V^{(1)}_\l (\hat w,\hat y) \right] +  \int \frac{\hat b  (\Lambda Q)_\l }{2\l^2Q_\l^2}\hat w_2 \left[  \frac{1}{\l^2} \widehat{\mathcal F}_{1,\l} \right]  \\
 \nonumber &-&2\hb \int \frac{\hw_1}{\l^2Q_\l}\left[ \frac 1 {\l^2} A_\l^{(1)}(\widehat{\mathcal F}_{1,\l}, \widehat{\mathcal F}_{2,\l}) + V_\l^{(1)}(\hw,\hat z) \right] \\
 &+& O\left(  \int  \frac{|\hat w_2 \hat w| + |\hw_1|^2}{\l^2Q(1+r^2)}\left[ |\hb_s|+\hb\left|\lsl\right| \right]\right).
\eea
By integration by parts, we have:
\bee
\int \frac{\wh_2\wh}{Q_\l}=-\int Q_\l\nabla\mathcal M^{(1)}_\l (\wh,\hat z) \cdot\nabla\left(\frac{\wh}{Q_\l}\right)=-\int\frac{\wh_1^2}{Q_\l}+\int Q_\l\nabla \mathcal M^{(1)}_\l( \wh, \hat z) \cdot\nabla \hat z
\eee
and thus
\be
\label{nioennene}
\int \frac{\wh_2\wh+\wh_1^2}{Q_\l}=\int \wh_1\cdot\nabla \hat z.
\ee
In the same way:
\bee
&&-\int \frac{1}Q_\l\left\{\wh\mathcal L^{(1)}_\l \left( \widehat{\mathcal F}_{1,\l}, \widehat{\mathcal F}_{2,\l}\right)+\wh_2 \widehat{\mathcal F}_{1,\l}\right\}-2\int\frac{1}{Q_\l} \wh_1\cdot \mathcal A^{(1)}_\l \left( \widehat{\mathcal F}_{1,\l}, \widehat{\mathcal F}_{2,\l}\right)\\
& =&  \int Q_\l\nabla \mathcal M^{(1)}_\l \left( \widehat{\mathcal F}_{1,\l}, \widehat{\mathcal F}_{2,\l}\right)\cdot\nabla \left(\mathcal M^{(1)}_\l (\wh, \hat z)- \hat z\right) \\
&+&\int Q_\l\nabla \mathcal M^{(1)}_\l \left( \hw,\hat z\right)\cdot\nabla \left(\mathcal M^{(1)}_\l \left(\widehat{\mathcal F}_{1,\l}, \widehat{\mathcal F}_{2,\l}\right)- \widehat{\mathcal F}_{2,\l} \right)\\
&-&2\int Q_\l\nabla \mathcal M^{(1)}_\l \left( \widehat{\mathcal F}_{1,\l}, \widehat{\mathcal F}_{2,\l}\right)\cdot\nabla\mathcal M^{(1)}_\l (\wh, \hat z)\\
& = & -2\int\wh_1\cdot\nabla  \widehat{\mathcal F}_{2,\l}-2\int Q\nabla \mathcal M^{(1)}_\l \left( \widehat{\mathcal F}_{1,\l}, \widehat{\mathcal F}_{2,\l}\right)\cdot\nabla \hat z.
\eee
Hence:
\bea
\nonumber& & \int \frac{\hat b  (\Lambda Q)_\l }{2\l^2Q_\l^2} \left[ \frac{\hat w}{\l^2} \mathcal L^{(1)}_\l \left( \widehat{\mathcal F}_{1,\l}, \widehat{\mathcal F}_{2,\l}\right) + \frac{\hw_2}{\l^2} \widehat{\mathcal F}_{1,\l} \right] -2\hb \int \frac{\hw_1}{\l^2Q_\l}\left[ \frac 1 {\l^2} A_\l^{(1)}(\widehat{\mathcal F}_{1,\l}, \widehat{\mathcal F}_{2,\l})\right] \\
\nonumber& = & \frac{\hb}{2\l^2}\int\frac{\Lambda Q_\l+2Q_\l}{Q_\l^2}\left[ \frac{\hat w}{\l^2} \mathcal L^{(1)}_\l \left( \widehat{\mathcal F}_{1,\l}, \widehat{\mathcal F}_{2,\l}\right) + \frac{\hw_2}{\l^2} \widehat{\mathcal F}_{1,\l} \right] \\
\label{ppppp}&-& \frac{\hb}{\l^2}\left\{\int\wh_1\cdot\nabla  \widehat{\mathcal F}_{2,\l}+\int Q\nabla \mathcal M^{(1)}_\l \left( \widehat{\mathcal F}_{1,\l}, \widehat{\mathcal F}_{2,\l}\right)\cdot\nabla \hat z\right\}.
\eea
Injecting \fref{nioennene} and \fref{ppppp} in \fref{step5termeembetant} yields:
\bea
\label{step6termeembetant}
 \nonumber&& \frac{d}{dt} \left\{ \int \frac{\hat b  (\Lambda Q +2Q)_\l }{2\l^2Q_\l^2}\hat w_2 \hat w - \int \frac{\hb \hw_1\nabla \hat z}{\l^2} \right\}   = \int \frac{\hat b  (\Lambda Q)_\l }{2\l^2Q_\l^2}\hat w_2^2 +\frac{\hb}{\l^2} \int \hat w_2 \mathcal M^{(1)}_\l (\hw_2, \hat z_2)\\ 
\nonumber&+&  \int  \frac{\hw_1 \nabla \hat z}{\l^2}\left[4\hb\lsl-\hb_s \right] -  \frac{\hb}{2\l^2}\int Q_\l \nabla (\mathcal M^{(1)}_\l (\hw_2, \hat z_2))\cdot\nabla \left[2\hat z+\frac{(2Q+\Lambda Q)_{\l}}{\l^2Q_\l^2}\hw\right] \\
 \nonumber&+&\frac{\hb}{2\l^2}\int\frac{\Lambda Q_\l+2Q_\l}{Q_\l^2}\left[ \frac{\hat w}{\l^2} \mathcal L^{(1)}_\l \left( \widehat{\mathcal F}_{1,\l}, \widehat{\mathcal F}_{2,\l}\right) + \frac{\hw_2}{\l^2} \widehat{\mathcal F}_{1,\l} \right] \\
\nonumber&-& \frac{\hb}{\l^2}\left\{\int\wh_1\cdot\nabla  \widehat{\mathcal F}_{2,\l}+\int Q\nabla \mathcal M^{(1)}_\l \left( \widehat{\mathcal F}_{1,\l}, \widehat{\mathcal F}_{2,\l}\right)\cdot\nabla \hat z\right\} \\
\nonumber&+&  \int \frac{\hat b  (\Lambda Q)_\l }{2\l^2Q_\l^2} \hat w \nabla .V^{(1)}_\l (\hat w,\hat y)  - 2\hb \int \frac{\hw_1}{\l^2Q_\l} V_\l^{(1)}(\hw,\hat z) \\
 &+& O\left(  \int  \frac{|\hat w_2 \hat w| + |\hw_1|^2}{\l^2Q(1+r^2)}\left[ |\hb_s|+\hb\left|\lsl\right| \right]\right).
\eea
To conclude, summing the above equality with \fref{firstenergy} yields the modified energy identity:
\bea
\nonumber&&\frac 12 \frac d {dt}\left\{ \left< \mathcal M_\l \widehat {\bf W}_2, \widehat {\bf W}_2\right> +  \int \frac{\hat b  (\Lambda Q +2Q)_\l }{\l^2Q_\l^2}\hat w_2 \hat w - 2\int \frac{\hb \hw_1\nabla \hat z}{\l^2} \right\} \\
\nonumber&=& - \int Q_\l |\nabla \mathcal M^{(1)}_\l (\hat w_2, \hat z_2) |^2 - \int |\Delta \mathcal M^{(2)}_\l (\hat w_2, \hat z_2) |^2 +\frac{\hb}{\l^2} \int \hat w_2 \mathcal M^{(1)}_\l (\hw_2, \hat z_2)\\
\nonumber&+&  \int  \frac{\hw_1 \nabla \hat z}{\l^2}\left[4\hb\lsl-\hb_s \right] -  \frac{\hb}{2\l^2}\int Q_\l \nabla (\mathcal M^{(1)}_\l (\hw_2, \hat z_2))\cdot\nabla \left[2\hat z+\frac{(2Q+\Lambda Q)_{\l}}{\l^2Q_\l^2}\hw\right] \\
 \nonumber&+&\frac{\hb}{2\l^2}\int\frac{\Lambda Q_\l+2Q_\l}{Q_\l^2}\left[ \frac{\hat w}{\l^2} \mathcal L^{(1)}_\l \left( \widehat{\mathcal F}_{1,\l}, \widehat{\mathcal F}_{2,\l}\right) + \frac{\hw_2}{\l^2} \widehat{\mathcal F}_{1,\l} \right] +  \left<  \widehat {\bf W}_2,  \frac{1}{\l^2} \mathcal M_\l \mathcal L_\l\widehat{\boldsymbol {\mathcal F}_{\l}}\right > \\
\nonumber&-& \frac{\hb}{\l^2}\left\{\int\wh_1\cdot\nabla  \widehat{\mathcal F}_{2,\l}+\int Q\nabla \mathcal M^{(1)}_\l \left( \widehat{\mathcal F}_{1,\l}, \widehat{\mathcal F}_{2,\l}\right)\cdot\nabla \hat z\right\} \\
\nonumber&+&  \int \frac{\hat b  (\Lambda Q)_\l }{2\l^2Q_\l^2} \hat w \nabla .V^{(1)}_\l (\hat w,\hat y)  - 2\hb \int \frac{\hw_1}{\l^2Q_\l} V_\l^{(1)}(\hw,\hat z) - \left< V_{\l} \widehat{\bf W}, \nabla \mathcal M_\l \widehat {\bf W}_2\right>\\
\label{modifiedenergy}&+& \int \frac{\hat w_2^2}{2\l^2Q_\l^2} \left[ \left( \hat b + \frac{\l_s}{\l}\right) (\Lambda Q)_\l \right] +O\left(  \int  \frac{|\hat w_2 \hat w| + |\hw_1|^2}{\l^2Q(1+r^2)}\left[ |\hb_s|+\hb\left|\lsl\right| \right]\right).
\eea
Now, all terms are manageable. We treat each term in the rest of this section. We shall make an intensive use of the bounds of the Lemma \ref{lemmaradiation} and the interpolation bound of the Proposition \ref{interpolation}. 
\subsection{Boundary terms in time}
We must verify that this both terms aren't bigger than the quantity which we would like to control.
\bee
\left| \int \frac{\bh(\Lambda Q+2Q)}{\l^2Q^2}\hat w_2\hat w\right|&\lesssim& \frac{b}{\l^4}\int \frac{|\eh_2\eh|}{(1+r^2)Q}\lesssim \frac{b}{\l^4}\left(\int \frac{\eh_2^2}{Q}\right)^{\frac 12}\left(\int \eh^2\right)^{\frac 12}\\
& \lesssim &\frac{C(M)b}{\l^4}\left[\|\e_2\|_{L^2_Q}^2+\|\zeta_2\|_{L^2_Q}^2+\|\zeta\|_{L^2}^2+\|\e\|_{L^2}^2\right]\\
&\lesssim & \frac{C(M)b^3}{\l^4|\log b|^2}
\eee
The control of the second term is more delicate.
\be
\label{gpaw3}
\left|\frac1{\l^2}\int b\hat w_1\cdot\nabla \hat z \right|  \lesssim  \frac{b}{\l^4}\left( \int \nabla \hat \e \nabla \hat \eta - \int \frac{\nabla Q}{Q} \hat \e \nabla \hat \eta + \int Q |\nabla \hat \eta |^2 \right)\\
\ee
Now,
\bea
\nonumber \int Q |\nabla \hat \eta |^2 &\lesssim& \left( \int | \nabla \eta|^2 \int \frac{|\nabla \eta|^2}{1+\tau^8} \right)^{\frac 12} + \int \frac{\nabla \xi^2}{1+\tau^2} \\
 \label{gpaw} &\lesssim& \|\eta\|_{\dot H^1}\|{\bf E}_2\|_{X_Q} + \int \frac{\nabla \xi^2}{1+\tau^2} \\
 &\lesssim& K^*b^{\frac52}|\log b|^2 + C(M) \frac{b^2}{|\log b|^2}\lesssim C(M) \frac{b^2}{|\log b|^2}
\eea
Hence
\bea
\nonumber&&\left| \int \nabla \hat \e \nabla \hat \eta - \int \frac{\nabla Q}{Q} \hat \e \nabla \hat \eta \right| \lesssim \left(\left\{ \int (1+\tau^2)|\nabla \hat \e|^2 + \int |\hat \e|^2\right\} \int Q |\nabla \hat \eta |^2 \right)^{\frac 12} \\
\nonumber&\lesssim&C(M) \frac{b}{|\log b|} \left[ \int (1+\tau^2)|\nabla \e|^2 + \int |\e|^2 + \int (1+\tau^2)|\nabla \zeta|^2 + \int |\zeta|^2\right]^{\frac12}\\
\label{gpaw2} &\lesssim&C(M) \frac{b}{|\log b|} \left[ \|{\bf E}_2\|_{X_Q}^2 +\frac{b^2}{|\log b|^2}\right]^{\frac12} \lesssim C(M)\frac{b^2}{|\log b|^2}
\eea
Injecting \fref{gpaw}, \fref{gpaw2} in \fref{gpaw3} yields
\be
\left|\frac1{\l^2}\int b\hat w_1\cdot\nabla \hat z \right|  \lesssim C(M)\frac{b^3}{\l^4|\log b|^2}
\ee
\end{proof}
\subsection{Quadratic terms}
\label{quadratic}
We aren't in position to treat the term $\frac{\hb}{\l^2} \int \hat w_2 \mathcal M^{(1)}_\l (\hw_2, \hat z_2)$, which has a wrong sign. Indeed, as $\int \hat w_2 = 0 $, $\mathcal M$ is a positive operator. We shall treat this term in the conclusion. However, both terms $- \int Q_\l |\nabla \mathcal M^{(1)}_\l (\hat w_2, \hat z_2) |^2 - \int |\mathcal M^{(2)}_\l (\hat w_2, \hat z_2) |^2$ are non positive, and they shall be very helpful to estimate some terms in the rest of this proof.
\par
Now, with the modulation equations, we have proved that :
\bee
\left| \frac{\l_s}{\l} \right| \lesssim b, \ \ |\hb_s| \lesssim b^2.
\eee
Thus,
\bee
&&\left|\int\frac{\eh_1\cdot\nabla \hat z}{2\l^6}\left[8\bh\lsl-2\bh_s\right]\right| \lesssim  \frac{b^2}{\l^6}\int |\eh_1\cdot\nabla \hat \eta| \lesssim \frac{C(M)b^4}{\l^6|\log b|^2}.
\eee
We have still proved the last inequality in the last subsection. In the same way, we can control the error term:
\bee
&&\int \frac{|\eh_2\eh|+|\eh_1|^2}{\l^6Q(1+\tau^2)}(|b_s|+b|\lsl+b|+b^2)\\
& \lesssim& \frac{b^2}{\l^6}\int\left[(1+\tau^4)\eh_2^2+(1+\tau^2)|\eh_1|^2+\eh^2\right]\\
& \lesssim & C(M)\frac{b^2}{\l^6}\left[\|{\bf E}_2\|_{X_Q}^2+\frac{b^2}{|\log b|^2}\right].
\eee
Next, we can estimate this term :
\bee
&&\left|\frac{\bh}{2\l^2}\int Q_\l \nabla (\mathcal M^{(1)}_\l (\hw_2, \hat z_2))\cdot\nabla \left[2\hat z+\frac{(2Q+\Lambda Q)_{\l}}{\l^2Q_\l^2}\hw\right]\right|\\
& \leq& \frac{1}{100}\int Q_\l| \nabla (\mathcal M^{(1)}_\l (\hw_2, \hat z_2))|^2+\frac{Cb^2}{\l^6}\int Q\left[|\nabla \hat \eta|^2+\frac{|\nabla \eh|^2}{(1+\tau^4)Q^2}+\frac{ \eh^2}{(1+\tau^6)Q^2}\right]\\
& \leq & \frac{1}{100}\int Q_\l|  \nabla (\mathcal M^{(1)}_\l (\hw_2, \hat z_2))|^2+C(M)\frac{b^2}{\l^6}\left[ \|{\bf E}_2\|_{X_Q}^2+\frac{b^2}{|\log b|^2}\right].
\eee
We are focusing now on all terms depending on the potential $V_\l$. We recall that:
\bee
V_{\l} \widehat{\bf W} = [ \pa_t, A_{\l} ] \widehat{\bf W} =  \frac{\l_s}{\l} \begin{array} {| c} \left(  \nabla \phi_{\Lambda Q} \right)_{\l} \hat w + \left(\Lambda Q \right)_ \l \nabla \hat z \\ 0 \end{array}.
\eee
and thus, we have the following bound
\be
\left|V_{\l} \widehat{\bf W} \right| \lesssim \frac{b}{\l^2}\left[ \frac{\hat \e}{1+\tau^3} + \frac{\nabla \hat \eta}{1+\tau^4}\right]
\ee
Hence,
\bee
 &&\left| \int \frac{\hat b  (\Lambda Q)_\l }{2\l^2Q_\l^2} \hat w \nabla .V^{(1)}_\l (\hat w,\hat y)  \right| \lesssim \frac{b^2}{\l^6}\int (1+\tau^4) \hat \e \left[ \frac{\hat \e}{1+\tau^4} + \frac{\nabla \hat \eta}{1+\tau^5} + \frac{\nabla \hat \e}{1+\tau^3} + \frac{\nabla^2 \hat \eta}{1+\tau^4}  \right] \\
 &\lesssim& \frac{b^2}{\l^6}\left[ \|{\bf E}_2\|_{X_Q}^2 + \int |\zeta|^2 + \int (1+\tau^2)|\nabla \zeta|^2 + \int \frac{|\nabla \xi|^2}{1+\tau^2} +  \int |\Delta \xi|^2\right] \\
 &\lesssim& \frac{b^2}{\l^6}\left[ \|{\bf E}_2\|_{X_Q}^2 + \frac{b^2}{|\log b|^2}\right] 
\eee
Next,
\bee
 \left|\hb \int \frac{\hw_1}{\l^2Q_\l} V_\l^{(1)}(\hw,\hat z)\right| &\lesssim& \frac{b^2}{\l^6}\int (1+\tau^4) \nabla \hat \e \left[ \frac{\hat \e}{1+\tau^3} + \frac{\nabla \hat \eta}{1+\tau^4} \right] \\
 &\lesssim& \frac{b^2}{\l^6}\left[ \|{\bf E}_2\|_{X_Q}^2 + \int |\zeta|^2 + \int (1+\tau^2)|\nabla \zeta|^2 + \int \frac{|\nabla \xi|^2}{1+\tau^2}\right] \\
 &\lesssim& \frac{b^2}{\l^6}\left[ \|{\bf E}_2\|_{X_Q}^2 + \frac{b^2}{|\log b|^2}\right] 
\eee
As $V^{(2)}_\l (\hat w_2,\hat z_2) = 0$, we have for the last term depending on $V_\l$
\bee
&&\left| \left< V_{\l} \widehat{\bf W}, \nabla \mathcal M_\l \widehat {\bf W}_2\right> \right| = \left| \int V^{(2)}_l(\hat w_2,\hat z_2)   \nabla \mathcal M_\l^{(1)}(\hat w_2,\hat z_2) \right|\\
&\leq& \frac{1}{100} \int Q |\nabla \mathcal M_\l^{(1)}(\hat w_2,\hat z_2) |^2 + O\left(\frac{b}{\l^2} \int (1+\tau^4)\left[ \frac{|\hat \e|^2}{1+\tau^6} + \frac{|\nabla \hat \eta|^2}{1+\tau^6}\right] \right)\\
&\leq& \frac{1}{100} \int Q |\nabla \mathcal M_\l^{(1)}(\hat w_2,\hat z_2) |^2 + \frac{b}{\l^2} O\left( \|{\bf E}_2\|_{X_Q}^2 + \int |\zeta|^2+ \int \frac{|\nabla \xi|^2}{1+\tau^2} \right) \\
&\leq& \frac{1}{100} \int Q |\nabla \mathcal M_\l^{(1)}(\hat w_2,\hat z_2) |^2 + C(M) \frac{b^2}{\l^6}\left[ \|{\bf E}_2\|_{X_Q}^2 + \frac{b^2}{|\log b|^2}\right] 
\eee
To conclude this subsection dedicated to the quadratic terms, we must estimate the last term. We must carefully study this term because of the very bad estimation coming from \fref{estlambda} and \fref{deformation}:
\bee
\left| \hb + \lsl \right| \lesssim \frac{b}{|\log b|}.
\eee
We recall the decomposition of the gap $\boldsymbol \Xi$ using in the proof of the lemma \fref{lemmaradiation}:
\bee
{\boldsymbol \Xi}_2 = \mathcal L \boldsymbol \Xi = \mathcal L \boldsymbol \Xi_{sm} + (b-\hb)\Lambda {\bf Q} + \boldsymbol {\mathcal R}
\eee
with the estimation
\bee
\boldsymbol{\mathcal R} = (b-\hat b) \begin{array} {|c} O\left(\frac{{\bf 1}_{r \geq B_1}}{1+r^4}\right) \\
O\left(\frac{{\bf 1}_{r \geq B_1}}{1+r^2}\right) 
\end{array}
\eee
Hence,
\bee
&&\left|\int \frac{\wh_2^2}{\l^2Q_\l^2}\left[ \left(\lsl+\bh\right)(\Lambda Q)_{\l}\right]\right| \lesssim  \frac{b}{|\log b|\l^6}\left| \int \frac{(\e_2+\mathcal L^{(1)} {\boldsymbol \Xi}_{sm}+\boldsymbol{\mathcal R}^{(1)}+(b-\hb)\Lambda Q)^2}{Q^2}\Lambda Q\right|\\
& \lesssim & \frac{b}{|\log b|\l^6}\left[\|{\bf E}_2\|_{X_Q}^2+\|\mathcal L {\boldsymbol \Xi}_{sm}\|_{X_Q}^2+\|\boldsymbol {\mathcal R}\|_{X_Q}^2+\frac{b^2}{|\log b|^2}\left|\int\left(\frac{\Lambda Q}{Q}\right)^2\Lambda Q\right|\right.\\
& + & \left.\frac{b}{|\log b|}\left|\int(\e_2+\mathcal L^{(1)} {\boldsymbol \Xi}_{sm})\frac{(\Lambda Q)^2}{Q^2}\right|+\frac{b^2}{|\log b|^2}\int_{r\geq B_1}\frac{1}{1+r^4}\right]\\
& \lesssim & \frac{b}{\l^6}\left[\frac{b^{\frac 32}}{|\log b|}\|{\bf E}_2\|_{X_Q}+\frac{b^3}{|\log b|^2}+\frac{b^2}{|\log b|^2}\left|\int\left(\frac{\Lambda Q}{Q}\right)^2\Lambda Q\right|+\frac{b}{|\log b|^2}\left|\int(\e_2+\mathcal L^{(1)} {\boldsymbol \Xi}_{sm})\frac{(\Lambda Q)^2}{Q^2}\right|\right].
\eee
Now, using that $\Lambda^2Q = \nabla.(r\Lambda Q)$, we have that 
\bee
\nabla \phi_{\Lambda^2Q} =  r \Lambda Q
\eee
Now, using the radial representation of Poisson field, we have:
\bee
\phi_{\Lambda^2Q} (0) = \int_0^{\infty} \Lambda^2Q \log r r dr = 0.
\eee 
Thus,
\bee
\phi_{\Lambda^2Q}(r) = \int_0^r r \nabla.(rQ)rdr = r^2Q.
\eee
Using the explicit formula of $\Lambda^iQ$, for $0 \leq i \leq 2$, we obtained:
\be
\label{Mlambda2Q}
\mathcal M (\Lambda^2 Q,r^2Q) = \left| \begin{array}{l} \frac{\Lambda^2 Q}{Q} + r^2Q  \\ r^2Q - \phi_{\Lambda ^2 Q}\end{array} \right. = \left| \begin{array}{l} \left(\frac{\Lambda Q}{Q}\right)^2  \\ 0\end{array} \right. 
\ee
We are now in position to compute:
\bee
\int\left(\frac{\Lambda Q}{Q}\right)^2\Lambda Q = \left< \mathcal M (\Lambda^2 Q,r^2Q),\Lambda {\bf Q}\right> = \left<  \left| \begin{array}{l} \Lambda^2 Q  \\ r^2Q\end{array} \right. ,\mathcal M (\Lambda {\bf Q})\right> = -2\int \Lambda^2Q = 0.
\eee
We recall the following degeneracy :
\bee
\left( \frac{\Lambda Q}{Q} \right)^2 = 4 + O \left( \frac{1}{1+\tau^2}\right)
\eee
Hence, with \fref{estimatonebisbis}
\bee
\left|\int \mathcal L^{(1)} {\boldsymbol \Xi}_{sm}\frac{(\Lambda Q)^2}{Q^2}\right| \lesssim \int \frac{\left|\mathcal L^{(1)} {\boldsymbol \Xi}_{sm} \right|}{1+\tau^2} \lesssim \|\mathcal L^{(1)} {\boldsymbol \Xi}_{sm}\|_{L^2} \lesssim \frac{b^2}{|\log b|}.
\eee
Finally, using the cancellation $\int \e_2 = 0$, we obtain
\bee 
&&\int \e_2\frac{(\Lambda Q)^2}{Q^2} = \left< {\bf E}_2,\mathcal M(\Lambda^2{\bf Q}+2\Lambda {\bf Q})\right>=\left< \mathcal M {\bf E}_2, \Lambda^2{\bf Q}+2\Lambda {\bf Q}\right> \\
&=& \int \mathcal M^{(1)} (\e_2,\eta_2) \nabla.(r(\Lambda Q + 2Q)) +  \int \nabla \mathcal M^{(2)} (\e_2,\eta_2) \nabla\left(r^2Q + \frac{8}{1+r^2}\right) \\
&=&- \int \nabla \mathcal M^{(1)} (\e_2,\eta_2) r(\Lambda Q + 2Q)) - \int \Delta \mathcal M^{(2)} (\e_2,\eta_2) \left(r^2Q + \frac{8}{1+r^2}\right)
\eee
Thus,
\bee
\left| \int \e_2\frac{(\Lambda Q)^2}{Q^2}\right| &\lesssim&  \left(\int Q|\nabla( \mathcal M^{(1)} (\e_2,\eta_2))|^2\right)^{\frac 12}\left(\int \frac{r^2(\Lambda Q+2Q)^2}{Q}\right)^{\frac 12} \\
&+&  \left(\int |\Delta( \mathcal M^{(2)} (\e_2,\eta_2))|^2\right)^{\frac 12}\left(\int \left[r^2Q + \frac{8}{1+r^2}\right]^2\right)^{\frac 12}\\
&\lesssim& \left[ \int Q|\nabla( \mathcal M^{(1)} (\hat \e_2,\hat \eta_2))|^2 + \int |\Delta( \mathcal M^{(2)} (\hat \e_2,\hat \eta_2))|^2 + \frac{b^4}{|\log b|^2}\right]^{\frac 12},
\eee
where we have use \fref{degenrateonebis} in the last line. Hence,
\bee
&&\frac{b^2}{\l^6|\log b|^2}\left| \int \e_2\frac{(\Lambda Q)^2}{Q^2}\right| \leq \frac{1}{100\l^6}\left| \int \e_2\frac{(\Lambda Q)^2}{Q^2}\right|^2 + O\left(\frac{b^4}{\l^6|\log b|^4} \right)\\
&\leq& \frac{1}{100\l^6}\left[ \int Q|\nabla( \mathcal M^{(1)} (\hat \e_2,\hat \eta_2))|^2 + \int |\Delta( \mathcal M^{(2)} (\hat \e_2,\hat \eta_2))|^2\right]+ O\left(\frac{b^4}{\l^6|\log b|^4} \right)
\eee
The collection above bounds yields the admissible control:
\bee
&&\left|\int \frac{\wh_2^2}{\l^2Q_\l^2}\left[ \left(\lsl+\bh\right)(\Lambda Q)_{\l}\right]\right| \\
& \lesssim &  \frac{b}{\l^6}\left[b^{\frac 32}\|{\bf E}_2\|_{X_Q}+\frac{b^3}{|\log b|^2}\right]+\frac{1}{100\l^6}\left[ \int Q|\nabla( \mathcal M^{(1)} (\hat \e_2,\hat \eta_2))|^2 + \int |\Delta( \mathcal M^{(2)} (\hat \e_2,\hat \eta_2))|^2\right].
\eee
To conclude the proof of the Proposition \ref{htwoqmonton}, we shall focus on the term depending on $\widehat{\boldsymbol {\mathcal F}_{\l}}$. We recall that:
\be
\label{ftgyhujikolpm}
 \widehat{\boldsymbol{\mathcal F}_\l} =   \left({\widehat{\bf Mod}}\right)_\l + {\bf H}_\l + {\bf G}_\l + \left(\tilde{\bf \Psi}_b\right)_\l +  \left({\bf \Theta}_b(\e,\eta)\right)_\l + \left({\bf N}(\e,\eta)\right)_\l
\ee
As the dependence on $\widehat{\boldsymbol {\mathcal F}_\l}$ in \fref{modifiedenergy} is linear, we shall estimate each term of the decomposition \fref{ftgyhujikolpm} in  separate subsections. 
\subsection{$\tilde{\boldsymbol \Psi}_b$ terms}
First, we use the bounds \fref{roughboundltaowt2} and \fref{cneneoneonoenoet} coming from the Proposition \ref{localization} to estimate:
\bee
&&\left|\left<\widehat{\bf W}_2, \mathcal M_\l \mathcal L_\l \tilde{\boldsymbol \Psi}_b\right> \right| = \left|\left< \mathcal M_\l \widehat{\bf W}_2,\mathcal L_\l \tilde{\boldsymbol \Psi}_b\right> \right| \\
&\lesssim&\left| \int \nabla  \mathcal M_\l^{(1)}(\hat w_2, \hat z_2) Q \nabla \mathcal M_\l^{(1)} \left(\tilde{\Psi}^{(1)}_{b,\l},\tilde{\Psi}^{(2)}_{b,\l}\right)\right| + \left|\int \nabla \mathcal M_\l^{(2)}(\hat w_2, \hat z_2) \nabla \mathcal L_\l^{(2)} \left(\tilde{\Psi}^{(1)}_{b,\l},\tilde{\Psi}^{(2)}_{b,\l}\right)  \right|\\
&\leq&\frac{1}{100}\left[ \int Q|\nabla( \mathcal M_\l^{(1)} (\hat w_2,\hat z_2))|^2 + \int |\Delta( \mathcal M_\l^{(2)} (\hat w_2,\hat z_2))|^2\right] \\
&+& O\left(\frac{1}{\l^6} \left[ \int Q \left|\nabla \mathcal M_\l^{(1)} \left(\tilde{\Psi}^{(1)}_{b,\l},\tilde{\Psi}^{(2)}_{b,\l}\right)\right|^2 + \int \left|\mathcal L_\l^{(2)} \left(\tilde{\Psi}^{(1)}_{b,\l},\tilde{\Psi}^{(2)}_{b,\l}\right)  \right|^2  \right] \right) \\
&\leq&\frac{1}{100}\left[ \int Q|\nabla( \mathcal M_\l^{(1)} (\hat w_2,\hat z_2))|^2 + \int |\Delta( \mathcal M_\l^{(2)} (\hat w_2,\hat z_2))|^2\right]  + O\left(\frac{b^4}{\l^6|\log b|^2} \right).
\eee
Now, with the bound \fref{roughboundltaowt} and the degeneracy \fref{degenaracy}, we obtain
\bee
&&\left| \frac{\hb}{2\l^2}\int\frac{\Lambda Q_\l+2Q_\l}{Q_\l^2}\left[ \frac{\hat w}{\l^2} \mathcal L^{(1)} \left(\tilde{\Psi}^{(1)}_{b,\l},\tilde{\Psi}^{(2)}_{b,\l}\right)\right]  \right| \\
&\lesssim&\frac{b}{\l^6}\int (1+\tau^2) \left\{\hat \e\mathcal L^{(1)} \left(\tilde{\Psi}^{(1)}_{b},\tilde{\Psi}^{(2)}_{b}\right) + \hat \e_2 \tilde{\Psi}^{(1)}_{b} \right\} \\
&\lesssim& \frac{b}{\l^6} \left [ \left\|\mathcal L^{(1)} \left(\tilde{\Psi}^{(1)}_{b},\tilde{\Psi}^{(2)}_{b}\right) \right\|_{L^2_Q}\|\hat \e\|_{L^2} + \| \tilde{\Psi}^{(1)}_{b}\|_{L^2}\|\hat \e_2\|_{L^2_Q} \right ] \\
&\lesssim& \frac{b}{\l^6} \frac{b^2}{|\log b|} \left[ \|{\bf E}_2 \|_{X_Q} + \|\zeta\|_{L^2} + \|\zeta\|_{L^2_Q}\right] \lesssim \frac{b}{\l^6} \frac{b^2}{|\log b|} \left[ \|{\bf E}_2 \|_{X_Q} + \frac{b}{|\log b|}\right]
\eee
With \fref{roughboundltaowt2} we can control the following term
\bee
&&\left|\frac{b}{\l^6} \int \hat \e_1 \nabla\tilde{\Psi}^{(2)}_{b}  \right| \lesssim \frac{b}{\l^6} \left(\int (1+\tau^2)|\hat \e_1|^2 \int \frac{\left| \nabla\tilde{\Psi}^{(2)}_{b}\right|^2}{1+\tau^2} \right)^{\frac12} \\
& \lesssim &  \frac{b}{\l^6}\frac{b^{\frac 52}}{|\log b|}\left[C(M) \|{\bf E}_2\|_{X_Q}+\left(\int(1+\tau^2) |\nabla \zeta|^2+\int |\zeta|^2+\int\frac{ |\nabla \xi|^2}{1+r^2}\right)^{\frac 12}\right]\\
& \lesssim & \frac{b}{\l^6}\left[b^2 \|{\bf E}_2\|_{X_Q}+\frac{b^3}{|\log b|^2}\right].
\eee
We recall the bound \fref{gpaw}
\bee
\int Q |\nabla \hat \eta|^2 \lesssim C(M) \frac{b^2}{|\log b|}
\eee
Using this bound together the bound \fref{cneneoneonoenoet} :
\bee
\left|\frac{b}{\l^6} \int \nabla \hat \eta Q \nabla \mathcal M^{(1)} \left(\tilde{\Psi}^{(1)}_{b},\tilde{\Psi}^{(2)}_{b}\right)  \right| &\lesssim& \frac{b}{\l^6} \left(\int Q |\nabla \hat \eta|^2 \int Q \left|\nabla  \mathcal M^{(1)} \left(\tilde{\Psi}^{(1)}_{b},\tilde{\Psi}^{(2)}_{b}\right) \right|^2  \right)^{\frac12}\\
&\lesssim& C(M) \frac{b}{\l^6}\frac{b^4}{|\log b|^2}
\eee
\subsection{${\bf G}$ terms}
We recall that ${\bf G} = -c_bb^2 \breve{\bf T}$ with
\bee
 \left| \begin{array} {l} \breve T^{(1)}\\ \nabla \breve T^{(2)}\end{array}\right. = \chi_{\frac{B_0}{4}} \left| \begin{array} {l} T_1 \\\nabla S_1\end{array}\right..
\eee
First, using that $\mathcal L {\bf T}_1 = \Lambda {\bf Q}$, the cancellation $\int \hat \e_2=0$ and $\mathcal M \Lambda {\bf Q} = \left| \begin{array} {l} -2 \\ 0\end{array}\right.$, we obtain :
\bee
\left<\widehat {\bf E}_2,\mathcal M\mathcal L\breve{\bf T}\right> = - \left<\widehat {\bf E}_2,\mathcal M\mathcal L[{\bf T}_1 - \breve{\bf T}]\right>
\eee
Moreover, 
\bee
\widehat {\bf E}_2 ={\bf E}_2 + \mathcal L {\boldsymbol \Xi}_{sm} + (b-\hb) \Lambda Q  + \boldsymbol{\mathcal R}. 
\eee
where $ \boldsymbol{\mathcal R}$ satisfies the bound \fref{mathcalR}. With this decomposition, we obtain:
\bee
\left<\widehat {\bf E}_2,\mathcal M\mathcal L\breve{\bf T}\right> =  - \left<{\bf E}_2 + \mathcal L {\boldsymbol \Xi}_{sm} + \boldsymbol{\mathcal R},\mathcal M\mathcal L[{\bf T}_1 - \breve{\bf T}]\right>
\eee
We compute now $\mathcal L[{\bf T}_1 - \breve{\bf T}]$:
\bee
\mathcal L[{\bf T}_1 - \breve{\bf T}] &=& \left| \begin{array}{l} \nabla. \{\nabla(T_1(1-\chi_{\frac{B_0}{4}})) + T_1\nabla S_1 (1-\chi_{\frac{B_0}{4}})^2 \} \\  \nabla. \{(\nabla S_1)(1-\chi_{\frac{B_0}{4}})\} - (1-\chi_{\frac{B_0}{4}})T_1 \end{array}\right.
\eee
Hence :
\bee
\left| \mathcal L^{(1)}[{\bf T}_1 - \breve{\bf T}]\right| \lesssim \frac{1}{1+\tau^4} {\bf 1}_{\tau \geq \frac{B_0}{4}}.
\eee
and
\bee
\nabla \mathcal M^{(2)}(\mathcal L[{\bf T}_1 - \breve{\bf T}]) &=&- (\chi_{\frac{B_0}{4}})'\phi_{\Lambda Q} + ((\chi_{\frac{B_0}{4}})'\nabla S_1)' - \chi_{\frac{B_0}{4}})' T_1 - T_1\nabla S_1\chi_{\frac{B_0}{4}}(1-\chi_{\frac{B_0}{4}}) \\
&-&(1-\chi_{\frac{B_0}{4}})\left\{ \nabla \phi_{\Lambda Q}-\nabla T_1 - T_1\nabla S_1\right\}
\eee
By construction of the profiles $T_1$ and $S_1$, we have
\bee
\nabla \phi_{\Lambda Q}-\nabla T_1 - T_1\nabla S_1 = 0.
\eee
Hence, we obtain the estimation
\bee
\left| \nabla \mathcal M^{(2)}(\mathcal L[{\bf T}_1 - \breve{\bf T}]) \right|\lesssim \frac{\sqrt b}{r} {\bf 1}_{\frac{B_0}{4} \leq \tau \leq \frac{B_0}{2}}.
\eee
Thus
\bee
&&\left<\widehat{\bf W}_2, \mathcal M_\l \mathcal L_\l {\bf G}_\l \right>\lesssim\frac{b^2}{\l^6|\log b|}\left| \left<{\bf E}_2 + \mathcal L {\boldsymbol \Xi}_{sm} + \mathcal R,\mathcal M\mathcal L[{\bf T}_1 - \breve{\bf T}]\right>\right|\\
&\lesssim& \frac{b^2}{\l^6|\log b|}\left( \|\e_2\|_{L^2_Q} +\left\| \mathcal L^{(1)} {\boldsymbol {\Xi}_{sm}} \right\|_{L^2_Q}+\left\| \mathcal R^{(1)} \right\|_{L^2_Q}  \right)\left\|\mathcal L^{(1)}[{\bf T}_1 - \breve{\bf T}]\right\|_{L^2_Q} \\
&+&\frac{b^2}{\l^6|\log b|}\left( \|\nabla \eta_2\|_{L^2}+\left\| \mathcal L^{(2)} {\boldsymbol {\Xi}_{sm}} \right\|_{L^2} +\left\| \mathcal R^{(2)}\right\|_{L^2}  \right) \left\|\nabla \mathcal M^{(2)}(\mathcal L[{\bf T}_1 - \breve{\bf T}]) \right\|_{L^2} \\
&\lesssim&\frac{b^{\frac52}}{\l^6|\log b|} \left( \|{\bf E}_2\|_{X_Q} +\|\mathcal L {\boldsymbol {\Xi}_{sm}} \|_{X_Q}+\|\mathcal R \|_{X_Q} \right) \lesssim\frac{b^{\frac52}}{\l^6|\log b|} \left( \|{\bf E}_2\|_{X_Q} +\frac{b^{\frac32}}{|\log b|}\right) .
\eee
Other terms depending on ${\bf G}$ in \fref{modifiedenergy} can be treated in brute force. Indeed, using \fref{coercbase} and Lemma \ref{lemmaradiation}:
\bee
b\left|\int\frac{\Lambda Q+2Q}{Q^2}(\eh\mathcal L^{(1)}{\bf G}+\eh_2 G^{(1)})\right| & \lesssim & b\frac{b^2}{|\log b|}\int \frac{1}{(1+r^2)Q}\left[\frac{|\eh|}{1+r^4}+\frac{|\eh_2|}{1+r^2}\right]\\
& \lesssim & b\frac{b^2}{|\log b|}\left[\|\e\|_{L^2}+\|\e_2\|_{L^2_Q}+\|\zeta_2\|_{L^2_Q}+\|\zeta\|_{L^2}\right]\\
& \lesssim & b\left[C(M)\frac{b^2}{|\log b|}\|{\bf E}_2\|_{X_Q}+\frac{b^3}{|\log b|^2}\right].
\eee
Similarily, using \fref{normeWQ2} and Lemma \ref{lemmaradiation}:
\bee
b\left|\int \eh_1\cdot\nabla G^{(2)}\right|& \lesssim & b\frac{b^2}{|\log b|}\int \frac{1+|\log r|}{1+r}\left[|\e_1|+|\nabla \zeta|+Q|\nabla \xi|+\frac{|\zeta|}{1+r}\right]\\
& \lesssim & b\frac{b^2}{|\log b|}\left(\int (1+r^2)|\e_1|^2+|\zeta|^2+(1+r^2)|\nabla \zeta|^2+\frac{|\nabla \xi|^2}{1+r^2}\right)^{\frac 12}\\
& \lesssim & \frac{b^3}{|\log b|}\left[C(M)\|{\bf E}_2\|_{X_Q}+\frac{b}{|\log b|}\right],
\eee
and using an integration by parts:
\bee
b\left|\int \nabla \hat \eta \cdot Q\nabla \mathcal M^{(1)}{\bf G}\right|& =&b\left|(Q\Delta \hat \eta+\nabla Q\cdot\nabla \hat \eta,\mathcal M E_2)\right|\\
& \lesssim & b\frac{b^2}{|\log b|}\int \left[\frac{|\Delta \hat \eta|}{1+r^4}+\frac{|\nabla \hat \eta|}{1+r^5}\right](1+r^2)\\
& \lesssim & b\left[\frac{b^2}{|\log b|}C(M)\|{\bf E}_2\|_{X_Q}+\frac{b^3}{|\log b |^2}\right].
\eee 
\subsection{{\bf H} terms :}
We recall that:
\bee
{\bf H} =  - \mathcal L {\bf \Xi} - \frac{\l_s}{\l} \Lambda {\bf \Xi}.
\eee
and the decomposition :
\bee
\mathcal L {\bf \Xi} = \mathcal L \boldsymbol \Xi_{sm} + (b-\hb)\Lambda {\bf Q} + \boldsymbol {\mathcal R}
\eee
where $ \boldsymbol \Xi_{sm} $ is defined in \fref{xism} and we have the estimation
\bee
\boldsymbol{\mathcal R} = (b-\hat b) \begin{array} {|c} O\left(\frac{{\bf 1}_{r \geq B_1}}{1+r^4}\right) \\
O\left(\frac{{\bf 1}_{r \geq B_1}}{1+r^2}\right) 
\end{array}.
\eee
In the following, we notice 
\be
\label{defh1}
{\bf H}_1 = {\bf H} + (b-\hat b)\Lambda {\bf Q} =  -\mathcal L \boldsymbol \Xi_{sm} - \boldsymbol {\mathcal R}- \frac{\l_s}{\l} \Lambda {\bf \Xi}.
\ee
Moreover from \fref{estlambda} and \fref{deformation} :
\bee
\left| \lsl \right| \lesssim b\ \ \mbox{and} \ \ \left| b-\hb \right| \lesssim \frac{b}{|\log b|}.
\eee
Using together \fref{degenrateoneter} and \fref{degenrateonebis} yields
\be
\label{borneb4surh}
\int Q \left| \nabla \mathcal M^{(1)} (H^{(1)},H^{(2)})\right|^2 + \int \left| \Delta \mathcal M^{(2)} (H^{(1)},H^{(2)}) \right|^2 \lesssim \frac{b^4}{|\log b|^2}.
\ee
The bounds \fref{estimatonebis}, \fref{estimatone} and \fref{estimatonebisbis} yields
\be
\label{borne4surh1}
\int \frac{\left| \nabla H^{(2)}_1 \right|^2}{1+\tau^2}  + \int \left| H^{(1)}_1\right|^2 \lesssim  \frac{b^4}{|\log b|^2}.
\ee
We are in position to estimate the ${\bf H}$ terms in \fref{modifiedenergy}:
\bee
\left| \left< \widehat{\bf{E}_2}, \mathcal M \mathcal L {\bf H}\right>\right| &=& \left| \left< \mathcal M \widehat{\bf{E}_2}, \mathcal L {\bf H}\right>\right| \\
&\lesssim& \left(\int Q\left| \nabla \mathcal M^{(1)}(\hat \e_2, \hat \eta_2)\right|^2\int Q \left| \nabla \mathcal M^{(1)} (H^{(1)}_1,H^{(2)}_1)\right|^2\right)^{\frac12} \\
&+& \left(\int \left| \Delta \mathcal M^{(2)}(\hat \e_2, \hat \eta_2)\right|^2  \int \left| \Delta \mathcal M^{(2)} (H^{(1)}_1,H^{(2)}_1) \right|^2\right) \\
&\lesssim& \frac {1}{100}\left[ \int Q\left| \nabla \mathcal M^{(1)}(\hat \e_2, \hat \eta_2)\right|^2 + \int \left| \Delta \mathcal M^{(2)}(\hat \e_2, \hat \eta_2)\right|^2\right] \\
&+&\int Q \left| \nabla \mathcal M^{(1)} (H^{(1)}_1,H^{(2)}_1)\right|^2 + \int \left| \Delta \mathcal M^{(2)} (H^{(1)}_1,H^{(2)}_1) \right|^2 \\
&\lesssim&  \frac {1}{100}\left[ \int Q\left| \nabla \mathcal M^{(1)}(\hat \e_2, \hat \eta_2)\right|^2 + \int \left| \Delta \mathcal M^{(2)}(\hat \e_2, \hat \eta_2)\right|^2\right] + \frac{b^4}{|\log b|^2}
\eee
In the same way, using \fref{coercbase}
\bee
b\left|\int Q \nabla \mathcal M^{(1)} (H^{(1)}_1,H^{(2)}_1) \nabla \hat \eta  \right| &\lesssim& \frac {1}{100} \int Q\left| \nabla \mathcal M^{(1)}(\hat \e_2, \hat \eta_2)\right|^2 + b^2\int Q |\nabla \hat \eta|^2 \\
 &\lesssim& \frac {1}{100} \int Q\left| \nabla \mathcal M^{(1)}(\hat \e_2, \hat \eta_2)\right|^2 +C(M)\frac{b^4}{|\log b|^2}
\eee
Finally, 
\bee
&&\left| b\int \frac{\Lambda Q + 2Q}{Q^2} \hat \e \mathcal L^{(1)}(H^{(1)}_1,H^{(2)}_1) \right| =\left| b\int \nabla \left\{\frac{\Lambda Q + 2Q}{Q^2} \hat \e\right\}Q\nabla \mathcal M^{(1)}(H^{(1)}_1,H^{(2)}_1) \right| \\
&\lesssim&b \left( \int Q \left| \nabla \mathcal M^{(1)} (H^{(1)}_1,H^{(2)}_1)\right|^2\right)^{\frac12}\left(\int |\nabla \eh|^2+\frac{|\eh|^2}{1+r^2}\right)^{\frac12}\\
& \lesssim & \frac{b^3}{|\log b|^2}\left[\frac{b}{|\log b|}+C(M)\|{\bf E}_2\|_{X_Q}\right].
\eee
For the two last terms, we must use the decomposition \fref{defh1}, in order to use the structure of $\Lambda Q$. Without this structure, the terms are critical size for our analysis and thus are unmanageable. We shall highlight an additionnal algebra in order to use the dissipation, to control this pathological term.
\par First, we are focusing on the ${\bf H}_1$ term, using the estimation \fref{borne4surh1}.
\bee
&&\left| b\int \hat \e_1 \nabla H^{(2)}_1 \right| \\
&\lesssim& b^2 \left( \int \frac{\left| \nabla H^{(2)}_1 \right|^2}{1+\tau^2}\right)^{\frac12} \left[ \int (1+\tau^2)|\e_1|^2 + \int (1+\tau^2)|\nabla \zeta|^2 + \int |\zeta|^2 + \int \frac{|\nabla \xi|^2}{1+\tau^2}\right]^{\frac12}\\
& \lesssim & \frac{b^3}{|\log b|^2}\left[\frac{b}{|\log b|}+C(M)\|{\bf E}_2\|_{X_Q}\right].
\eee
Moreover
\bee
&&\left| b\int \frac{\Lambda Q + 2Q}{Q^2} \hat \e_2 H^{(1)}_1 \right| \lesssim b \|\hat \e_2\|_{L^2_Q} \|H^{(1)}_1\|_{L^2} \lesssim\frac{b^3}{|\log b|^2}\left[\frac{b}{|\log b|}+C(M)\|{\bf E}_2\|_{X_Q}\right].
\eee
The last step is to study this both term for $(b-\hb)\Lambda Q$. Its crucial to study them together. Hence,
\bee
b(b-\hat{b})\left[\int \frac{\Lambda Q+2Q}{Q^2}\eh_2\Lambda Q-2\int \eh_1\cdot\nabla \phi_{\Lambda Q}\right]=  b(b-\hat{b})\int \eh_2\left[\frac{\Lambda Q(\Lambda Q+2Q)}{Q^2}+2\phi_{\Lambda Q}\right].
\eee
Here we have an additional algebra. Indeed:
\bee
\frac{\Lambda Q(\Lambda Q+2Q)}{Q^2}+2\phi_{\Lambda Q}
=\left(\frac{\Lambda Q}{Q}\right)^2 + \mathcal M^{(1)} (\Lambda Q, \phi_{\Lambda Q})
=\left(\frac{\Lambda Q}{Q}\right)^2-4.
\eee
As $\int \hat e_2=0$,
\bee
\int \eh_2\left[\frac{\Lambda Q(\Lambda Q+2Q)}{Q^2}+2\phi_{\Lambda Q}\right] = \int \left(\frac{\Lambda Q}{Q}\right)^2\eh_2 \\
 \eee
 We have still estimate this term in the subsection \ref{quadratic}, and we have proved
 \bee
 \left| \int \left(\frac{\Lambda Q}{Q}\right)^2\eh_2 \right| \leq  \frac {1}{100} \int Q\left| \nabla \mathcal M^{(1)}(\hat \e_2, \hat \eta_2)\right|^2 +C(M)\frac{b^4}{|\log b|^2}.
\eee
This concludes the estimations of ${\bf H}$ terms.
\subsection{Small linear $\boldsymbol \T_b$ terms}
We recall that:
\bee
 {\bf \Theta}_b(\e,\eta) &=&  \begin{array}{| c} \nabla. \left( \e \nabla \tilde{\gamma}_b + \tilde{\alpha}_b \nabla \eta \right) \\0 \end{array} =  \begin{array}{| c} \nabla \e \nabla \tilde{\gamma}_b + \e \Delta \tilde{\gamma}_b + \tilde{\alpha}_b \Delta \eta + \nabla \tilde{\alpha}_b \nabla \eta \\0 \end{array}.
 \eee
Hence
\bee
 \nabla \phi_{{\Theta}^{(1)}_b} =  \e \nabla \tilde{\gamma}_b + \tilde{\alpha}_b \nabla \eta.
\eee
As $\T_b^{(2)}=0$, we have :
\bee
\mathcal M {\bf \T}_b = \left| \begin{array}{l} \frac{\T_b^{(1)}}{Q}\\ - \phi_{{\Theta}^{(1)}_b}\end{array}\right.
\eee
In the proof of the lemma \ref{lemmasharpmod}, we have obtain the rough bounds :
\bee
\left|  \nabla \phi_{{\Theta}^{(1)}_b} \right| &\lesssim&  b \left[ \frac{1}{1+r} |\e| + \frac{|\nabla \eta|}{1+r^2} \right], \\
\left| \T^{(1)}_b \right| &\lesssim& b \left[ \frac{1+ |\log r|}{1+r} |\nabla \e| + \frac{1+\log r}{1+r^2} |\e| + \frac{|\Delta \eta|}{1+r^2}+ \frac{|\nabla \eta|}{1+r^3} \right], \\
\left|\nabla \T^{(1)}_b \right| &\lesssim& b \left[ \frac{1+ |\log r|}{1+r} |\nabla^2 \e| + \frac{1+\log r}{1+r^2} |\nabla \e| + \frac{1+\log r}{1+r^3} |\e| + \frac{|\nabla \Delta \eta|}{1+r^2}+ \frac{|\Delta \eta|}{1+r^3}+ \frac{|\nabla \eta|}{1+r^4} \right].
\eee
Hence, we obtain the bounds:
\be
\label{borneb4surtheta}
\int Q \left| \nabla \mathcal M^{(1)} (\T_b^{(1)},\T_b^{(2)})\right|^2 + \int \left| \Delta \mathcal M^{(2)} (\T_b^{(1)},\T_b^{(2)}) \right|^2 \lesssim C(M)b^2 \left\| {\bf E}_2 \right\|^2_{X_Q}
\ee
and
\be
\label{borne4surtheta1}
\int \frac{| \nabla \T_b^{(2)} |^2}{1+\tau^2}  + \int \left| \T_b^{(1)}\right|^2 \lesssim C(M)b^2 \left\| {\bf E}_2 \right\|^2_{X_Q}.
\ee
From the last subsection, these bounds are enough small with respect to $b$ to ensure the control of ${\bf \T}_b$ terms. 
\subsection{Modulation terms}
We recall that :
\bee
\widehat{\bf Mod} =  \left( b + \frac{\l_s}{\l}\right) \Lambda \tilde {\bf Q}_{b} - \pa_s \tilde {\bf Q}_{\hat b}
\eee
From the lemma \ref{lemmeparam} and \ref{lemmasharpmod}, we have the following bound:
\be
\label{superborne}
\left| \lsl + b \right| + |\hat b_s|\lesssim C(M) \frac{b^2}{|\log b|}.
\ee
We split the modulation terms in three parts in order to use the structure of the elements of the kernel of the linearized operator $\mathcal L$:
\bee
\widehat{\bf Mod} =  \widehat{\bf Mod_1}+\widehat{\bf Mod_2}+\widehat{\bf Mod_3}
\eee
with
\bee
\widehat{\bf Mod_1} &=& \left( b + \frac{\l_s}{\l}\right) \Lambda \tilde {\bf Q}_{b}, \\
\widehat{\bf Mod_2} &=& -\hat b_s {\bf T}_1, \\
\widehat{\bf Mod_3} &=& -\hb_s\left(\tilde{\bf T}_1-{\bf T}_1+\hb\frac{\partial \tilde{\bf T}_1}{\partial b}+2\hb\tilde{\bf T}_2+\hb^2\frac{\partial \tilde{\bf T}_2}{\partial b}\right).
\eee
From the Proposition \ref{localization}, and \fref{superborne} we have :
\bee
\left|\widehat{Mod_1}^{(1)} \right| &\lesssim&C(M) \frac{b^2}{|\log b|}\left[\frac{1}{1+\tau^4} + b^2 (1+|\log \tau|) {\bf 1}_{\tau \leq 2B_1}\right], \\
\left| \nabla \widehat{Mod_1}^{(2)} \right| &\lesssim& C(M) \frac{b^2}{|\log b|}\left[\frac{1}{1+\tau^3} +  b^2 \tau(1+|\log \tau|) {\bf 1}_{\tau \leq 2B_1}\right], \\
\left|\widehat{Mod_2}^{(1)} \right| &\lesssim& C(M) \frac{b^2}{|\log b|}\left[\frac{1}{1+\tau^2}\right], \\
\left| \nabla \widehat{Mod_2}^{(2)} \right| &\lesssim& C(M) \frac{b^2}{|\log b|}\left[\frac{\tau}{1+\tau^2}\right], \\
\left|\widehat{Mod_3}^{(1)} \right| &\lesssim& C(M) \frac{b^2}{|\log b|}\frac{\bf{1}_{\tau \geq B_1}}{1+\tau^2}, \\
&+& C(M) \frac{b^3}{|\log b|} \left\{  \tau^2{\bf 1}_{\tau\le1} +\frac{1 + |\log (\tau\sqrt b)|}{|\log b|}{\bf 1}_{1\leq \tau\leq 6B_0}+ \frac{1}{b^2\tau^4|\log b|}{\bf 1}_{\tau\geq 6B_0}\right\}, \\
\left| \nabla \widehat{Mod_3}^{(2)} \right| &\lesssim& C(M) \frac{b^2}{|\log b|}\left[{\bf 1}_{\tau \geq B_1}\frac{\tau}{1+\tau^2} +b (1+|\log \tau|) {\bf 1}_{\tau \leq 2B_1}\right] 
\eee
With the above estimations, it is easy to prove :
\bea
\label{boundmod2}
\left\| \frac{ \nabla \widehat{Mod_2}^{(2)}}{1+\tau^2}\right\|_{L^2}^2+ \left \|\mathcal L \widehat{\bf Mod_2}\right\|^2_{X_Q}  + \left\|\tau^i\pa_\tau^i \widehat{Mod_2}^{(1)}\right\|_{L^2}^2 &\lesssim& C(M)\frac{b^4}{|\log b|^2}, \\
\label{boundmod3}
\left\| \frac{ \nabla \widehat{Mod_3}^{(2)}}{1+\tau^2}\right\|_{L^2}^2+ \left \|\mathcal L \widehat{\bf Mod_3}\right\|^2_{X_Q}  + \left\|\tau^i\pa_\tau^i \widehat{Mod_3}^{(1)}\right\|_{L^2}^2 &\lesssim& C(M)\frac{b^5}{|\log b|^2},\\
\label{boundmod1}
\left\| \frac{ \nabla \widehat{Mod_1}^{(2)}}{1+\tau^2}\right\|_{L^2}^2+ \left\|\tau^i\pa_\tau^i \widehat{Mod_1}^{(1)}\right\|_{L^2}^2 &\lesssim& C(M)\frac{b^4}{|\log b|^2}, \\
\label{boundmod1bis}
 \left \|\mathcal L \widehat{\bf Mod_1}\right\|^2_{X_Q}  &\lesssim& C(M)\frac{b^5}{|\log b|^2}.
\eea
Remark that we have used for the last bound the cancellation $\mathcal L \Lambda {\bf Q}=0$.
 \par
 Now, we can verify that all modulation terms are manageable in the modified energy \fref{modifiedenergy}. From \fref{degenrateoneone},  \fref{boundmod3}, \fref{boundmod1bis}:
 \bee
\left|\left<\widehat{\bf E}_2,\mathcal M\mathcal L \widehat{\bf Mod}\right>\right|& \lesssim &\left|\left<\mathcal M{\boldsymbol \Xi}_2,\mathcal L \widehat{\bf Mod}\right>\right|+\|{\bf E}_2\|_{X_Q}\left[\left \|\mathcal L \widehat{\bf Mod_2}\right\|_{X_Q}+\left \|\mathcal L \widehat{\bf Mod_3}\right\|_{X_Q}\right]\\
& \lesssim &\left(\|{\bf E}_2\|_{X_Q} + \frac{b^{\frac32}}{|\log b|}\right)\left[\left \|\mathcal L \widehat{\bf Mod_2}\right\|_{X_Q}+\left \|\mathcal L \widehat{\bf Mod_3}\right\|_{X_Q}\right]\\
& \lesssim &C(M) b\left[\frac{b^{\frac 32}}{|\log b|}\|{\bf E}_2\|_{X_Q} +\frac{b^3}{|\log b|^2}\right].
\eee
Moreover, from the bounds \fref{estimatonebis}, \fref{estimatone} and the interpolation bound \fref{coercbase} :
\bee
&&b \left| \int \frac{\Lambda Q + 2Q}{Q^2} \left[ \hat \e \mathcal L^{(1)}\left(\widehat{Mod}^{(1)},\widehat{Mod}^{(2)} \right) + \hat \e_2 \widehat{Mod}^{(1)}\right] \right|\\
&\lesssim& b \left[ \|\e\|_{L^2}\left\|\mathcal L \widehat{\bf Mod} \right\|_{X_Q} + \left\{\left\| {\bf E}_2 \right\|_{X_Q} +\left\| {\boldsymbol \Xi}_2 \right\|_{X_Q}  \right\}\left\| \widehat{\bf Mod} \right\|_{L^2} \right]\\
& \lesssim &C(M) b\left[\frac{b^{\frac 32}}{|\log b|}\|{\bf E}_2\|_{X_Q} +\frac{b^3}{|\log b|^2}\right].
\eee
Similarly,
\bee
b\left| \hat \e_1\nabla \widehat{Mod}^{(2)}  \right| &\lesssim& b \left\| \frac{ \nabla \widehat{Mod}^{(2)}}{1+\tau^2}\right\|_{L^2} \left[ \int (1+\tau^2)|\e_1|^2+ \int (1+\tau^2)|\nabla \zeta|^2 + \int |\zeta|^2 + \int \frac{|\nabla \xi|^2}{1+\tau^2}\right] \\
& \lesssim &C(M) \frac{b^{3}}{|\log b|}\left[\|{\bf E}_2\|_{X_Q} +\frac{b}{|\log b|}\right].
\eee
Finally, using the bootstrap bound \fref{bootsmallh1} and \fref{bootsmallh2q}
\bee
&&b\left|\int \nabla\hat \eta \cdot Q\nabla\mathcal M^{(1)}\left(\widehat{Mod}^{(1)},\widehat{Mod}^{(2)} \right) \right| \lesssim b\int|\nabla \hat \eta|\left||\nabla\widehat{Mod}^{(1)}|+\frac{|\widehat{Mod}^{(1)}|}{1+\tau}+\frac{|\nabla\widehat{\Mod}^{(2)}|}{1+\tau^4}\right|\\
& \lesssim & b\left(\int \frac{|\nabla \eta|^2}{1+\tau^2}+\frac{|\nabla \xi|^2}{1+\tau^2}\right)^{\frac 12}\left(\int(1+\tau^2)|\nabla \widehat{Mod}^{(1)}|^2+|\widehat{Mod}^{(1)}|^2+\frac{|\nabla \widehat{\Mod^{(2)}}|^2}{1+\tau^2}\right)^{\frac 12}\\
& \lesssim & C(M) \frac{b^{3}}{|\log b|}\left(\|\eta\|_{\dot H^1}\|\eta_2\|_{\dot H^1} +\frac{b^2}{|\log b|^2}\right)^{\frac 12}\\
& \lesssim & C(M) \frac{b^{3}}{|\log b|}\left(K^*b^{\frac52}|\log b|^2+\frac{b^2}{|\log b|^2}\right)^{\frac 12}\\
& \lesssim &C(M) \frac{b^4}{|\log b|^2}
 \eee
\subsection{Non-linear ${\bf N}$ terms}
We recall that
\bee
 {\bf N}(\e,\eta) &=&  \begin{array}{| c} \nabla. \left( \e  \nabla \eta \right) \\0 \end{array} \ \ \mbox{and} \ \ \nabla \phi_{N^{(1)}}(\e,\eta) = \e  \nabla \eta. 
\eee
To control the non-linear $\bf N$ terms, we shall prove the following bounds :
\be
\label{borneb4surn}
\int Q \left| \nabla \mathcal M^{(1)} (N^{(1)},N^{(2)})\right|^2 + \int \left| \Delta \mathcal M^{(2)} (N^{(1)},N^{(2)}) \right|^2 \lesssim C(M)\frac{b^4}{|\log b|^2}
\ee
and
\be
\label{borneb4surn1}
\int \frac{| \nabla N^{(2)} |^2}{1+\tau^2}  + \int \left| N^{(1)}\right|^2 \lesssim C(M) \frac{b^4}{|\log b|^2}
\ee
We have still proved that it is a sufficient condition to be sure that these terms are manageable for our analysis. To prove \fref{borneb4surn} and \fref{borneb4surn1}, we compute:
\bee
\left\{\begin{array}{l}\nabla \mathcal M^{(1)} (N^{(1)},N^{(2)})\\\nabla \mathcal M^{(2)} (N^{(1)},N^{(2)}) \end{array}\right. =\left\{\begin{array}{l}\nabla \left( \frac{N^{(1)}(\e,\eta)}{Q}\right)\\ -\nabla \phi_{N^{(1)}}(\e,\eta) \end{array}\right.  =\left\{\begin{array}{l}\nabla \left( \frac{\nabla.(\e\nabla \eta)}{Q}\right)\\ -\e \nabla \eta \end{array}\right.  
\eee
Hence
\bee
&&\int Q \left| \nabla \mathcal M^{(1)} (N^{(1)},N^{(2)})\right|^2 + \int \left| \Delta \mathcal M^{(2)} (N^{(1)},N^{(2)}) \right|^2 \\
&=&  \int \left| N^{(1)}\right|^2 + \int \frac{\left[ Q \nabla \phi_{N^{(1)}}(\e,\eta) + \nabla \phi_Q N^{(1)}(\e,\eta) + \nabla\e(\Delta \eta + \nabla^2 \eta) + \e \nabla \Delta \eta \right]^2}{Q}\\
&\lesssim& \int \frac{\e^2|\nabla \eta|^2}{1+r^4} + \int (1+r^2)|N^{(1)}(\e,\eta)|^2 + \int (1+r^4)\left[\e^2| \nabla \Delta \eta|^2+  |\nabla \e|^2\left(|\Delta \eta|^2 + |\nabla^2\eta|^2\right) \right]
\eee
We shall estimate separately each term. First, using \fref{normeinftynablaeta}:
\bee
\left|  \int \frac{\e^2|\nabla \eta|^2}{1+r^4}\right| \lesssim \|\nabla \eta \|^2_{L^{\infty}}\int \e^2&\lesssim& K^* b^2|\log b|^6 \|{\bf E}_2\|^2_{X_Q}\\
&\lesssim& C(M) \frac{b^4}{|\log b|^2}.
\eee
Now, using the definition of ${\bf N}$, and the bounds \fref{normeinftynablaeta} and \fref{normeinftyye}
\bee
\left| \int(1+r^2)|N^{(1)}(\e,\eta)|^2 \right| &\lesssim& \int (1+r^2)|\nabla\e|^2|\nabla \eta|^2 + \int (1+r^2)|\e|^2|\Delta \eta|^2 \\
&\lesssim& \|\nabla \eta \|^2_{L^{\infty}} \int (1+r^2)|\nabla\e|^2 +  \|(1+r) \e \|^2_{L^{\infty}} \int |\Delta\eta|^2 \\
&\lesssim& K^* b^2|\log b|^6 \|{\bf E}_2\|^2_{X_Q}\\
&\lesssim& C(M) \frac{b^4}{|\log b|^2}.
\eee
With the bootstrap bound \fref{bootsmallh3} and \fref{normeinftyye}, we obtain
\bee
\left| \int (1+r^4) \e^2| \nabla \Delta \eta|^2 \right| \lesssim \|(1+r) \e \|^2_{L^{\infty}} \int (1+r^2)| \nabla \Delta \eta|^2 &\lesssim&  K^* \frac{b}{\sqrt{|\log b|}} \|{\bf E}_2\|^2_{X_Q}\\
&\lesssim& C(M) \frac{b^4}{|\log b|^2}.
\eee
Finally, the bound \fref{normeinftydeltaeta} give :
\bee
\left| \int (1+r^4) |\nabla \e|^2\left(|\Delta \eta|^2 + |\nabla^2\eta|^2\right) \right| &\lesssim& \left[\|(1+r) \Delta \eta \|^2_{L^{\infty}} + \|(1+r) \nabla^2 \eta \|^2_{L^{\infty}} \right] \int (1+r^2)| \nabla \e|^2 \\
&\lesssim&  K^* \frac{b}{\sqrt{|\log b|}} \|{\bf E}_2\|^2_{X_Q}\\
&\lesssim& C(M) \frac{b^4}{|\log b|^2}.
\eee
Chosen the function $\delta$ in \fref{positivitybzero} enough small with respect to $K^*$, the above estimate concludes the proof of \fref{borneb4surn} and \fref{borneb4surn1}, and thus the proof of all terms depending on $\boldsymbol {\mathcal F}$.
\subsection{Conclusion}
Injecting all above estimates in \fref{modifiedenergy} yields :
\bee
\nonumber &&\frac 12\frac{d}{dt}\left\{\frac{\left<\mathcal M \widehat{\bf E}_2,\widehat{\bf E}_2\right>}{\l^4}+O\left(\frac{C(M)b^3}{\l^4|\log b|^2}\right)\right\}\\
 &\lesssim&  \frac{bC(M)}{\l^6}\left[\frac{b^{\frac32}}{|\log b|}\|{\bf E}_2\|_{X_Q}+\frac{b^3}{|\log b|^2}\right]+\frac{\hb}{\l^6} \int \hat \e_2 \mathcal M^{(1)}_\l (\hat \e_2, \hat \eta_2).
\eee
We are now in position to treat the last term. In this purpose, we shall multiply the above inequality by $\l^2$, and we use the following bound coming form \fref{estlambda} and \fref{deformation} :
\bee
\left|\frac{\l_s}{\l} + \hat b \right| \lesssim \frac{b}{|\log b|},
\eee
in order to obtain
\bea
\nonumber &&\frac 12\frac{d}{dt}\left\{\frac{\left<\mathcal M \widehat{\bf E}_2,\widehat{\bf E}_2\right>}{\l^2}+O\left(\frac{C(M)b^3}{\l^4|\log b|^2}\right)\right\} \lesssim \frac{bC(M)}{\l^4}\left[\frac{b^{\frac32}}{|\log b|}\|\e_2\|_{L^2_Q}+\frac{b^3}{|\log b|^2}\right] \\
\label{bonbon3}&+&\frac{1}{\l^4}\left|\frac{\l_s}{\l} + \hat b \right|  \int \hat \e_2 \mathcal M^{(1)}_\l (\hat \e_2, \hat \eta_2)+\frac{1}{\l^4}\left|\lsl\right|\frac{C(M)b^3}{|\log b|^2}.
\eea
Using the bounds  \fref{degenrateoneone} and \fref{noenoeno}, we obtain:
\bea
\nonumber\left|\left<\mathcal M \widehat{\bf E}_2,\widehat{\bf E}_2\right>\right| &=& \left|\left<\mathcal M{\bf E}_2,{\bf E}_2\right> +2 \left<\mathcal M \boldsymbol{\Xi}_2,{\bf E}_2\right> +  \left<\mathcal M \boldsymbol{\Xi}_2,\boldsymbol{\Xi}_2\right> \right|\\
\label{bonbon}&\lesssim& \left|\left<\mathcal M{\bf E}_2,{\bf E}_2\right>\right| + \frac{b^{\frac32}}{|\log b|}\|{\bf E}_2\|_{X_Q} + \frac{b^3}{|\log b|^2}.
\eea
In the same way, the bounds  \fref{degenrateoneone2} and \fref{noenoeno2} yield:
\bea
\nonumber \int \hat \e_2 \mathcal M^{(1)}_\l (\hat \e_2, \hat \eta_2) &=& \left|\int \e_2 \mathcal M^{(1)}_\l (\e_2, \eta_2)+2\int \e_2 \mathcal M^{(1)}_\l (\zeta_2, \xi_2)+\int \zeta_2 \mathcal M^{(1)}_\l (\zeta_2, \xi_2) \right| \\
\label{bonbon1}&\lesssim& \left|\int \e_2 \mathcal M^{(1)}_\l (\e_2, \eta_2)\right| + \frac{b^{\frac32}}{|\log b|}\|\e_2\|_{L^2_Q} + \frac{b^3}{|\log b|^2}.
\eea
Using the definition of the operator $\mathcal M$ and the interpolation bound \fref{coercbase}, we have:
\bea \nonumber
 \left|\int \e_2 \mathcal M^{(1)}_\l (\e_2, \eta_2)\right| &=& \left|\int \frac{|\e_2|^2}{Q} + \int \e_2\eta_2\right| \\
 \nonumber &\lesssim&  \int \frac{|\e_2|^2}{Q} + \left( \int \frac{|\e_2|^2}{Q}\right)^{\frac12} \left( \int Q|\Delta \eta|^2 + \int Q |\e|^2\right)^{\frac12} \\
\label{bonbon2} &\lesssim& \|{\bf E}_2\|_{X_Q}^2.
\eea
Injecting the bounds \fref{bonbon}, \fref{bonbon1} and \fref{bonbon2} in \fref{bonbon3} yield \fref{eetfonafmetea}. This conclude the proof of the Proposition \ref{htwoqmonton}
\section{Proof of the Proposition \ref{propboot}}
\label{superbootstrap}
The bound \fref{poitnzeiboud} implies $\hb_s <0$. Using together \fref{deformation} prove the upper bound of \fref{poitnzeiboud}. We prove the non-cancellation of $b$ by contradiction. Suppose that there exist a time $s^*<T$, where $T$ is the maximal time, where the bounds of the bootstrap hold true, such that $b(s^*)=0$. The bound \fref{bootsmallh2q} and the interpolation bound \fref{coercbase} imply that $\e(s^*) = 0$. Hence, by conservation of the mass, $\int u(s^*) = \int Q = \int u_0$. This is a contradiction with the initial small super critical mass \fref{intiialmass}. This concludes the proof of \fref{poitnzeiboud}.
\subsection{$L^1$ bound \fref{bootsmallloneboot}}
\label{proofpropboot}
\begin{lemma}[$L^1$  bound]
\label{htwoqmontonbis}
There holds:
\be
\label{lonebound}
\int|\e|<\frac 12(\delta^*)^{\frac 14}.
\ee
\end{lemma}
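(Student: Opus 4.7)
The plan is to combine mass conservation in the original variables with the pointwise positivity $u>0$ (which persists along the flow by the maximum principle) to control the positive and negative parts of $\e$ separately. The total $L^1$ norm is not directly controlled by the bootstrap bounds in $X_Q$, but the identity $\int|\e|=\int\e+2\int\e^{-}$, where $\e^{-}=\max(-\e,0)\geq 0$, reduces everything to estimating a signed integral (via conservation) and a one-sided integral (via positivity plus decay of $\tilde Q_b$).

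First I would control $\int \e$. Mass conservation $\int u(t,x)dx=\int u_0$ rewrites in the rescaled variables as $\int(\tilde Q_b+\e)(s,y)\,dy=\int u_0$, hence
$$\int\e(s)\,dy=\int u_0-\int\tilde Q_b=\Big(\int u_0-8\pi\Big)-b\int\tilde T_1-b^2\int\tilde T_2.$$
Using the pointwise bounds of Proposition \ref{localization} (namely \fref{esttonepropt} and \fref{esttwot}) one checks $\int|\tilde T_1|\,r\,dr\lesssim |\log b|$ (after integration by parts with the cut-off $\chi_{B_1}$, the boundary value is $m_1(B_1)\sim 4\log B_1$) and $b^2\int|\tilde T_2|\,r\,dr\lesssim b$. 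Combined with the initial super-critical mass assumption \fref{intiialmass}, this yields
$$\Big|\int\e(s)\Big|\lesssim \alpha^*+b|\log b|.$$

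Next I would control $\int\e^{-}$. Since $u=\lambda^{-2}(\tilde Q_b+\e)(s,r/\lambda)\geq 0$, one has $\e(s,y)\geq -\tilde Q_b(y)$ everywhere, hence $\e^{-}\leq \tilde Q_b$ pointwise. Split the integral at some radius $A\geq 1$: on $\{r\leq A\}$ apply Cauchy-Schwarz to obtain $\int_{r\leq A}\e^{-}\lesssim A\|\e\|_{L^2}$; on $\{r\geq A\}$ use $\e^{-}\leq\tilde Q_b$ and the decay estimates on $Q,\tilde T_1,\tilde T_2$ from Proposition \ref{localization} to estimate $\int_{r\geq A}\tilde Q_b\lesssim A^{-2}+b|\log b|$. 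Optimizing $A=\|\e\|_{L^2}^{-1/3}$ yields
$$\int\e^{-}\lesssim \|\e\|_{L^2}^{2/3}+b|\log b|.$$
By the coercivity bound \fref{contorlcoerc} of Proposition \ref{interpolationhtwo} applied under the orthogonality conditions \fref{orthoe}, together with the bootstrap bound \fref{bootsmallh2q}, we have $\|\e\|_{L^2}^2\lesssim\|{\bf E}_2\|_{X_Q}^2\lesssim K^*b^3/|\log b|^2$, so $\|\e\|_{L^2}^{2/3}\lesssim C(M,K^*)\,b/|\log b|^{2/3}$.

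Combining the two previous estimates gives
$$\int|\e|\leq \Big|\int\e\Big|+2\int\e^{-}\leq C\alpha^*+C(M,K^*)\bigl(b|\log b|+b/|\log b|^{2/3}\bigr),$$
which is strictly less than $\tfrac12(\delta^*)^{1/4}$ provided $\alpha^*$ is chosen small enough (so that the first term is $\leq\tfrac14(\delta^*)^{1/4}$) and $b_0=b_0(\alpha^*)\leq\delta(\alpha^*)$ is chosen small enough (so that the second term is $\leq\tfrac14(\delta^*)^{1/4}$), which is consistent with the smallness \fref{positivitybzero} and the bootstrap bound $b\leq 10 b_0$. The only mildly delicate point is the verification that $\int\tilde Q_b$ differs from $8\pi$ only by a quantity $O(b|\log b|)$, which follows from the integration-by-parts $\int\tilde T_i=-\int\chi'_{B_1}m_i$ combined with the asymptotic behaviour of $m_1,m_2$ derived in Proposition \ref{construction}; the rest of the argument is a clean combination of positivity, mass conservation and Cauchy-Schwarz, with no further modulation or energy work required.
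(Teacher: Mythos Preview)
Your proof is correct and closes the bootstrap bound \fref{bootsmallloneboot}, but it proceeds along a different route than the paper. The paper shifts to the variable $\tilde\e=\e+\tilde Q_b-Q$ (so that $u=\l^{-2}(Q+\tilde\e)$) and splits $\tilde\e=\tilde\e_{<}+\tilde\e_{>}$ according to whether $|\tilde\e|<Q$ or $\tilde\e>Q$; it then uses the $L^\infty$ smallness $\|\tilde\e\|_{L^\infty}\lesssim\delta(\alpha^*)$ (from \fref{normeinftyye}) to confine the set $\{|\tilde\e|>\eta^*Q\}$ to large radii where $\int Q$ is small, and combines this with $\int\tilde\e<\alpha^*$ from mass conservation. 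Your argument instead keeps $\e$, splits spatially at a radius $A$, and uses the $L^2$ bound $\|\e\|_{L^2}^2\lesssim C(M)\|{\bf E}_2\|_{X_Q}^2$ together with Cauchy--Schwarz on $\{r\le A\}$ and the pointwise bound $\e^-\le\tilde Q_b^+$ on $\{r\ge A\}$.

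Both approaches rest on the same three ingredients (positivity of $u$, mass conservation, and decay of the profile), but trade different interpolation inputs: the paper spends the $L^\infty$ estimate from \fref{normeinftyye}, whereas you only spend the $L^2$ estimate from \fref{coercbase}. Your route is slightly more elementary and avoids the level-set decomposition; the paper's route is more qualitative (it does not require optimizing a cut-off radius). One cosmetic point: your pointwise inequality $\e^-\le\tilde Q_b$ should strictly be written $\e^-\le\tilde Q_b^+\le|\tilde Q_b|$, since you have not verified $\tilde Q_b\ge0$ globally; this does not affect the subsequent integral estimate $\int_{r\ge A}|\tilde Q_b|\lesssim A^{-2}+b|\log b|$.
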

\begin{proof}
We introduce the decomposition $$u=\frac{1}{\l^2}(Q+\et)\left(t,\frac{x}{\l(t)}\right)\ \ \mbox{i.e.}\ \ \et=\e+\qbt-Q$$ and we split the perturbation $\et$ in two parts:
\be
\label{decompfoejejo}
\et=\et_<+\et_>, \ \ \et_{<}=\et{\bf 1}_{|\et|<Q},\ \  \et_>=\et{\bf 1}_{\et>Q}.
\ee 
Using the interpolation bound \fref{normeinftyye}, the bootstrap bound \fref{positiv} and the bounds \fref{esttonepropt}, \fref{esttwot} coming from the construction of the approximate profile, we obtain: 
\be
\label{etlinftiysmall}
\|\et\|_{L^{\infty}}\lesssim \|\e\|_{L^{\infty}}+|b|\lesssim \delta(\alpha^*).
\ee 
Choose a small constant $\eta^*< \delta(\alpha^*)$. Let r such that $|\et_<(r)|>\eta^*Q(r)$. Then $$\delta(\alpha^*)\gtrsim\|\et\|_{L^{\infty}}>|\et_<(r)|>\eta^* Q(r).$$ Thus there exist $r(\alpha^*)\to +\infty$ as $\alpha^*\to 0$, such that $r(\alpha^*)<r$. 
\\
Hence
 \bee
 \int |\et_<| & \leq & \int  |\et_<|{\bf 1}_{\eta^*Q<|\et|< Q}+\int |\et_<|{\bf 1}_{|\et_<|\leq \eta^*Q}\lesssim \int_{r\geq r(\alpha^*)}Q+\eta^*\int Q\\
& \lesssim& \delta(\alpha^*)+\eta^*\lesssim \delta(\alpha^*).
\eee
 According to the conservation of the mass, the bound \fref{intiialmass} implies $$\int \et<\alpha^*.$$
Now, using that by definition $\et_>>0$: 
 \be
 \label{cneiocneneo}
 \int |\et| \lesssim \delta(\alpha^*).
 \ee
 This bound together with the smallness of $b$ \fref{positiv} and the decay of the profiles $\tilde T_1$ and $\tilde T_2$ \fref{esttonepropt}, \fref{esttwot} imply:
 $$\int |\e|\lesssim \int |\et|+\sqrt{b}\lesssim \delta(\alpha^*)+\sqrt{\delta^*}<\frac 12(\delta^*)^{\frac 14}
$$ 
and concludes the proof of Lemma \ref{htwoqmontonbis}.
\end{proof}
\subsection{$\dot H^1$ bound \fref{bootsmallh1boot}} 
We prove this bound with the following monotonicity formula :
\be
\label{monoh1}
\frac d {dt} \left\{ \frac{1}{\l^2}\int |\nabla \eta|^2  + O\left( \frac{b^2|\log b|^2}{\l^2}\right)\right\} \leq \sqrt{K^*} \frac{b^3 |\log b|^6}{\l^4}
\ee
Assume \fref{monoh1}. Notice $\mathcal E_1 = \int |\nabla \eta|^2$. We recall the bounds coming from the modulation equations \fref{estlambda} and \fref{poitnzeiboud}:
\be
\label{supersuper}
|\hat b_s| \lesssim \frac{b^2}{|\log b|}, \ \ |\l\l_t + b| \lesssim C(M) \frac{b^2}{|\log b|}.
\ee
Integrating \fref{monoh1} in time between 0 ant $t^*$ yields:
\bea
\label{poivre}
&&\mathcal E_1(t) \leq \frac{\l^2(t)}{\l^2(0)} \left[ \mathcal E_1(0) + O \left( b(0)^2|\log b(0)|^2\right) \right] \\
\nonumber&+&\sqrt{K^*} \l^2(t) \int_0^{t^*} \frac{b^3 |\log b|^6}{\l^4}dt + O \left( b(t)^2|\log b(t)|^2\right).
\eea
With the bounds \fref{supersuper}, we estimate the last term :
\bee
\int_0^{t^*} \frac{b^3 |\log b|^6}{\l^4} &=&\int_0^{t^*} -\l_t \frac{\hat b^2 |\log \hat b|^6}{\l^3}dt + O\left( \int_0^{t^*}   \frac{b^4 |\log b|^2}{\l^4} \right) \\
&=& \left[ \frac{\hat b^2 |\log \hb|^6}{2\l^2}\right]_0^{t^*} - \int_0^t \frac{1}{2\l^4}\frac{d}{ds}\left[ \hb^2|\log \hb|^6\right]dt+ O\left( \int_0^{t^*}   \frac{b^4 |\log b|^2}{\l^4} \right) \\
&=& \left[ \frac{\hat b^2 |\log \hb|^6}{2\l^2}\right]_0^{t^*} + O\left( \int_0^{t^*} \frac{b^3 |\log b|^4}{\l^4} \right) \\ 
\eee
Thus
\be
\l^2(t) \int_0^{t^*} \frac{b^3 |\log b|^6}{\l^4} \lesssim \hat b(t)^2 |\log \hb(t)|^6 + \left(\frac{\l(t)}{\l(0)}\right)^2\hat b(0)^2 |\log \hb(0)|^6
\ee
Injecting this bound in \fref{poivre}, using the smallness of $\mathcal E_1(0)$, we obtain :
\bee
\mathcal E_1(t) \leq \sqrt{K^*} \left\{ \hat b(t)^2 |\log \hb(t)|^6 + \left(\frac{\l(t)}{\l(0)}\right)^2\hat b(0)^2 |\log \hb(0)|^6 \right\}.
\eee
Using the bounds \fref{supersuper}, we prove without difficulty that :
\be
\frac{d}{ds}\left[ \frac{\hb^2|\log \hb|^6}{\l^2}\right] >0.
\ee
This fact coupled with the measure \fref{deformation} of the gap betwenn $b$ and $\hb$ conclude the proof of \fref{bootsmallh1boot}.
\\ {\it Proof of \fref{monoh1} :} 
We use the second decomposition and as the norm $\dot H^1$ of the flux is invariant by scaling, we have the relation :
\bee
\int |\nabla \hat \eta|^2 = \int |\nabla  \hat z|^2.
\eee 
Moreover, $\hat z$ verifies the equation :
\bee
\pa_t \hat z = \Delta \hat z - \hat w + \frac 1 {\l^2} \left[ \Psi_b^{(2)} + \widehat \Mod^{(2)} \right]
\eee
Hence
\bea
\nonumber \frac d{dt} \frac12 \int |\nabla \hat z|^2 &=& - \int \pa_t \hat z \Delta \hat z \\
\nonumber &=& - \int |\Delta \hat z|^2 + \int \Delta \hat z \hat w - \frac 1 {\l^2} \int \Delta\hat z \left[  \tilde \Psi_b^{(2)} + \widehat \Mod^{(2)} \right] \\
\label{mlkjh0} &=& - \int |\Delta \hat z|^2 + \int \Delta\hat  z \hat w + \frac 1 {\l^2} \int \nabla\hat \eta \nabla \left[ \tilde \Psi_b^{(2)} + \widehat \Mod^{(2)} \right]. 
\eea
Now, using the bootstrap bound \fref{bootsmallh2q} and the interpolation bound \fref{coercbase} : 
\be
\label{mlkjh1}
\int \Delta\hat z\hat w \leq \frac 18 \int |\Delta\hat z|^2 + 4 \int\hat w^2 \leq  \frac 18 \int |\Delta\hat z|^2 + \frac{K^*}{\l^2} \frac{b^3}{|\log b|^2}.
\ee
Using Cauchy-Schwartz together the bounds \fref{bootsmallh1} and \fref{roughboundltaowt3} yields
\be
\label{mlkjh2}
\left| \int \nabla\hat \eta \nabla  \tilde \Psi_b^{(2)} \right| \lesssim \left( \int | \nabla\hat \eta|^2  \int | \nabla  \tilde \Psi_b^{(2)}|^2\right)^{\frac12} \lesssim \sqrt{K^*} b^3 |\log b|^{6}
\ee
The bootstrap bound on the modulation parameters \fref{cnkonenoeoeoi3poi} and the decay of $\nabla \tilde S_1$ \fref{nablaS1taillet} and $\nabla \tilde S_2$ \fref{nablaS2part} yields :
\bee
\int | \nabla\widehat \Mod^{(2)}|^2 &\lesssim& \left| b + \frac{\l_s}{\l}\right|^2 \int |\nabla \phi_{\Lambda Q} + b \nabla \tilde S_1 + b^2 \nabla \tilde S_2 |^2 + |\hat b_s|^2 \int |\nabla \tilde S_1 + 2b \nabla \tilde S_2 |^2 \\
&\lesssim& \frac{b^4}{|\log b|}.
\eee
Hence,
\be
\label{mlkjh3}
\left| \int \nabla \hat \eta\nabla\widehat \Mod^{(2)} \right| \lesssim \left( \int | \nabla \hat \eta|^2  \int | \nabla\widehat \Mod^{(2)}|^2\right)^{\frac12} \lesssim\sqrt{K^*} b^3 |\log b|^5
\ee
Injecting \fref{mlkjh1}, \fref{mlkjh2} and \fref{mlkjh3} in \fref{mlkjh0} 
\bee
\frac d {dt} \left\{\int |\nabla \hat \eta|^2 \right\} \leq \sqrt{K^*} \frac{b^3 |\log b|^6}{\l^2},
\eee
Now the bound \fref{vivementlafin} yields 
\bee
\frac d {dt} \left\{\int |\nabla \eta|^2 + O\left( b^2|\log|^2 \right) \right\} \leq \sqrt{K^*} \frac{b^3 |\log b|^6}{\l^2},
\eee
Dividing this inequality by $\l^2$, and using the bound \fref{supersuper}, we obtain \fref{monoh1}.
\subsection{$\dot H^2$ bound}
To prove this bound \fref{bootsmallH2}, we can use the second decomposition. Hence, we have the equation:
\bee
\pa_t  \hat z = \Delta \hat  z - \hat  w + \frac{1}{\l^2} \left[ \tilde \Psi_b^{(2)} +  \widehat{Mod}^{(2)} \right]
\eee
and
\bee
\pa_t \nabla  \hat z = \nabla \Delta  \hat z - \nabla  \hat w + \frac{1}{\l^3} \left[ \nabla \tilde \Psi_b^{(2)} + \nabla  \widehat{Mod}^{(2)} \right]
\eee
Next, we compute
\bee
\frac 12 \frac d {dt} \int |\Delta  \hat z|^2 &=& \int \nabla. \left\{ \nabla \pa_t \hat  z\right\} \Delta \hat  z \\
&=& - \int \nabla \pa_t \hat  z \nabla \Delta \hat  z \\
&=& - \int \left\{ \nabla \Delta \hat  z - \nabla \hat  w + \frac{1}{\l^3} \left[ \nabla \tilde \Psi_b^{(2)} + \nabla  \widehat{Mod}^{(2)} \right]\right\} \nabla \Delta \hat  z \\
&=& - \int \left| \nabla \Delta  \hat z \right|^2 + \int  \nabla \hat  w \nabla \Delta  \hat z + \frac{1}{\l^3} \int \left[ \nabla \tilde \Psi_b^{(2)} + \nabla  \widehat{Mod}^{(2)} \right] \nabla \Delta  \hat \eta
 \eee
Now, using the bootstrap bound \fref{bootsmallh2q} and the interpolation bound \fref{coercbase}, we obtain:
\bee
\left|\int  \nabla  \hat \e \nabla \Delta  \hat \eta \right| \leq \frac{1}{10} \int \left| \nabla \Delta  \hat \eta \right|^2 + \int |\nabla \hat \e|^2 + \int |\nabla \zeta|^2 \lesssim K^*\frac{b^3}{|\log b|^2} + \frac{1}{10} \int \left| \nabla \Delta  \hat \eta \right|^2.
\eee
Now, from the bound of the Proposition \ref{localization},
\bee
\int \left|\Delta  \tilde \Psi_b^{(2)}\right|^2 \lesssim b^5 |\log b|^2
\eee 
and from the bound \fref{estlambda} and \fref{poitnzeiboud}
\bee
\int |\Delta  \widehat{Mod}^{(2)} |^2 &\lesssim& \left| b + \frac{\l_s}{\l}\right|^2 \int |\Delta \phi_{\Lambda Q} + b \Delta \tilde S_1 + b^2 \Delta \tilde S_2 |^2 + |\hat b_s|^2 \int |\Delta \tilde S_1 + 2b \Delta \tilde S_2 |^2 \\
&\lesssim& \frac{b^4}{|\log b|^2}.
\eee
Hence
\bee
\left|  \frac{1}{\l^4} \int \left[ \nabla \tilde \Psi_b^{(2)} + \nabla  \widehat{Mod}^{(2)} \right] \nabla \Delta  \hat \eta \right| &=& \left|  \frac{1}{\l^4} \int \left[ \Delta \tilde \Psi_b^{(2)} + \Delta  \widehat{Mod}^{(2)} \right] \Delta  \hat \eta \right| \\
&\lesssim&\frac{1}{\l^4} \frac{b^2}{|\log b|} \|\Delta \hat \eta\|_{L^2} \lesssim \frac{\sqrt{K^*}}{\l^4} \frac {b^3} {|\log b|}.
\eee
The above estimations together \fref{estimatonebis}  and \fref{estimatone} give the monotonicity formula:
\be
\frac{d}{dt}\left[\frac{1}{\l^2}\int |\Delta \eta|^2 + O\left(\frac{b^2}{\l^2|\log b|^2}\right) \right] \lesssim \frac{\sqrt{K^*}}{\l^4} \frac {b^3} {|\log b|^2}.
\ee
Using the same proof as the $\dot H^1$ level, {\it ie} dividing by $\l^2$ the monotonicity formula and integrating in time, we prove the bound \fref{bootsmallH2}. The proof is left to the reader.
\subsection{$\dot H^3$ bound:}
We use exactly the same approach than the last subsection.
Now, we focus on this equation :
\bee
\pa_t \Delta \hat z = \Delta^2\hat z - \Delta \hat w + \frac{1}{\l^4}\left[\Delta \tilde \Psi_b^{(2)} + \Delta \widehat Mod^{(2)} \right].
\eee
Next, we have
\bee
&&\frac12 \frac d{dt} \int (1+r^2)|\nabla \Delta \hat z|^2 = \int (1+r^2)\nabla \pa_t \Delta \hat z \nabla \Delta \hat z \\
&=& -\int \pa_t \Delta \hat z \nabla.\left( (1+r^2) \nabla \Delta \hat z \right) \\
&=& -\int \left\{  \Delta^2\hat z - \Delta \hat w + \frac{1}{\l^4}\left[\Delta \tilde \Psi_b^{(2)} + \Delta \widehat Mod^{(2)} \right]\right\} \left\{ 2r \Delta \hat z + (1+r^2) \nabla \Delta \hat z \right\} \\
&=&- \int (1+r^2)|\Delta^2 \hat z|^2 + \int (1+r^2)\Delta^2 \hat z\left\{ - \Delta \hat w + \frac{1}{\l^4}\left[\Delta \tilde \Psi_b^{(2)} + \Delta \widehat Mod^{(2)} \right] \right\} \\
&-& \int 2r \Delta \hat z \left\{  \Delta^2\hat z - \Delta \hat w + \frac{1}{\l^4}\left[\Delta \tilde \Psi_b^{(2)} + \Delta \widehat Mod^{(2)} \right]\right\} 
\eee
Using the bound of the last subsection and the bootstrap bound \fref{bootsmallh2q} :
\bee
\int  (1+r^2) \left| \Delta \hat \e \right|^2 + \int (1+r^2)\left|\Delta  \tilde \Psi_b^{(2)}\right|^2 + \int  (1+r^2) |\Delta  \widehat{Mod}^{(2)} |^2 \lesssim K^* \frac{b^3}{|\log b|^2}.
\eee
In the above estimation, it is very important to see that $\int |\nabla \tilde S_1|^2 \sim \log b$, but $\int (1+r^2) |\Delta \tilde S_1|^2 \lesssim1$.
\par
Next, we have, using the bootstrap bound \fref{bootsmallh3}:
\bee
\left| \int 2r \Delta \hat z \Delta^2\hat z \right| \leq \frac{1}{10} \int (1+r^2)|\Delta^2 \hat z|^2 + O \left(\int |\Delta \hat \eta|^2 \right) \leq \frac{1}{10} \int (1+r^2)|\Delta^2 \hat z|^2 + O \left(K^*\frac{b^2}{|\log b|} \right).
\eee
Thus, using the bounds \fref{estimatonebis}  and \fref{estimatone}, we obtain:
\be
\frac{d}{dt}\left[\frac{1}{\l^2}\int(1+r^2) |\nabla \Delta \eta|^2 + O\left(\frac{b^2}{\l^4|\log b|^2}\right) \right] \lesssim \frac{\sqrt{K^*}}{\l^4} \frac {b^2} {\sqrt{|\log b|}}.
\ee
Here, the bound is enough large, and we can integrate in time, without dividing by a power of $\l$, in order to prove \fref{bootsmallH3}. The proof is left to the reader.
\subsection{$X_Q$ bound \fref{bootsmallh2qboot} }
The proof of this bound is identical as \cite{raphael2012b}. We use the same strategy that for the $\dot H^1$ bound. We sketch the argument for the sake of completeness.
In the last section, we have proved the following monotonicity formula, where we have use the bootstrap bound \fref{bootsmallh2q}:
\be
\label{nknvkonbrono}
\frac{d}{dt}\left\{\frac{(\mathcal M \e_2,\e_2)}{\l^2}+O\left(C(M)\sqrt{K^*}\frac{b^3}{\l^2|\log b|^2}\right)\right\}\lesssim  \sqrt{K^*}\frac{b^4}{\l^4|\log b|^2}.
\ee
First, we integrate in time the last term between 0 and $t^*$.
\bee
\int_0^{t^*}\frac{b^4}{\l^4|\log b|^2}dt& = & \int_0^{t^*} -\l_t \frac{\hb^3}{\l^3|\log \hb|^2}dt+O\left(\int_0^{t^*}\frac{b^5}{\l^4|\log b|^5}dt\right)\\
& = & \left[\frac{\hb^3}{2\l^2|\log \hb|^2}\right]_0^{t^*}-\int_0^{t^*}\frac{1}{2\l^4}\frac{d}{ds}\left[\frac{\hb^3}{|\log \hb|^2}\right]dt+O\left(\int_0^{t^*}\frac{b^5}{\l^4|\log b|^5}dt\right)\\
& = &  \left[\frac{\hb^3}{2\l^2|\log \hb|^2}\right]_0^{t^*}+O\left(\int_0^{t^*}\frac{b^4}{\l^4|\log b|^3}dt\right)
\eee
Thus, we obtain: 
\bee
\l^2(t)\int_0^{t^*}\frac{b^4}{\l^4|\log b|^2}dt\lesssim \frac{b^3(t)}{|\log b(t)|^2}+\left(\frac{\l(t)}{\l(0)}\right)^2\frac{b^3(0)}{|\log b(0)|^2}
\eee
Using the interpolation bound \fref{coerclwotht} and integrating \fref{nknvkonbrono} in time yield
\bea
\label{cneonneeonoe}
\delta(M)\|\e_2(t)\|^2_{L^2_Q}& \lesssim & (\mathcal M\e_2(t),\e_2(t))\\
\nonumber & \lesssim & C(M)\left\{\frac{\l^2(t)}{\l^2(0)}\left[\|\e_2(0)\|_{L^2_Q}^2+\sqrt{K^*}\frac{b^3(0)}{|\log b(0)|^2}\right]+\sqrt{K^*}\frac{b^3(t)}{|\log b(t)|^2}\right\}
\eea
for some small enough universal constants $\delta(M),C(M)>0$ independent of $K^*$. Moreover, \fref{supersuper} implies 
\be
\label{neiovneneonevoneo}
\frac{d}{ds}\left\{\frac{\hb^3}{\l^2|\log \hb|^2}\right\}>0
\ee
and thus \fref{cneonneeonoe} and the smallness of $\|{\bf E}_2(0)\|_{X_Q}$ \fref{initialsmalneesb}  at the initial time yield: 
\bee
\delta(M)\|\e_2(t)\|^2_{L^2_Q}& \lesssim & C(M)\sqrt{K^*}\left[\l^2(t)\frac{b^3(0)}{\l^2(0)|\log b(0)|^2}+\frac{b^3(t)}{|\log b(t)|^2}\right]\\
& \lesssim &  C(M)\sqrt{K^*}\left[\l^2(t)\frac{\hb^3(0)}{\l^2(0)|\log\hb(0)|^2}+\frac{b^3(t)}{|\log b(t)|^2}\right]\\
& \lesssim & C(M)\sqrt{K^*}\frac{\hb^3(t)}{|\log \hb(t)|^2}\lesssim  C(M)\sqrt{K^*}\frac{b^3(t)}{|\log b(t)|^2}
\eee
 and \fref{bootsmallh2qboot} follows for $K^*=K^*(M)$ large enough. This concludes the proof of Proposition \ref{propboot}.
 \section{Proof of the Theorem \ref{thmmain}}
 \label{lastsection}
  We are now in position to conclude the proof of Theorem \ref{thmmain}. The proof follows similar lines as in \cite{raphael2012b},we sketch the argument for the sake of completeness.
 \begin{proof}
  {\bf Step 1} Proof of the blow-up at a finite time $T_0$.
 \\
 \\ Let $u_0\in \mathcal O$ and $u\in \mathcal C([0,T),\mathcal E)$ be the corresponding solution to \fref{kps} with lifetime $0<T\leq +\infty$, then the estimates of Proposition \ref{propboot} hold on $[0,T)$. Observe from \fref{neiovneneonevoneo} the bound $$\l^2\lesssim b^3$$ from which using \fref{supersuper} 
 \be
 \label{poissonrougee}
 -\l\l_t\gtrsim b\gtrsim C(u_0)\l^{\frac 23} 
 \ee 
 and thus
  \bee
   -(\l^{\frac 43})_t\gtrsim C(u_0)>0 
   \eee
    implies that $\l(t)$ touches zero in some finite time $0<T_0<+\infty$.\par
Moreover, as $b$ is a non increasing function, the bounds of Proposition \ref{propboot}, the Hardy bound \fref{harfylog}, the construction of the approximate solution, whose the estimates are available in the Proposition \ref{localization}, and the unique decomposition \fref{decompe} ensure that $$\|\e(t)\|_{H^2}+\|\Delta \eta(t)\|_{L^2}\ll1\ \ \mbox{for}\ \  0\leq t<T_0.$$
Furthermore,
  $$\lim_{t\uparrow T_0}\|u(t)\|_{H^2} + \|\Delta \eta(t)\|_{L^2}=+\infty$$ and thus using a standard argument from Cauchy theory, the solution blows up at $T=T_0<+\infty$.  Moreover, the bound \fref{supersuper} yields: $$|\lambda\lambda_t|=\left|\lsl\right|\lesssim 1 \ \ \mbox{and thus}\ \ \lambda(t)\lesssim \sqrt{T-t},$$ and thus from \fref{defst}: 
\be
\label{glovaltime}
s(t)\to+\infty\ \ \mbox{as}\ \ t\to T_0.
\ee
 
 {\bf Step 2} Computation of the rate of the concentration. \\\\
 In the Lemma \ref{lemmeparam} and \ref{lemmasharpmod}, we have obtained equation of modulation parameters $(b,\hat b,\l)$, depending on a suitable norm for the error term. In the bootstrap, we have obtained a enough good bound for this error \fref{bootsmallh2qboot} in order to be in position now to reintegrate this equation as $s \to \infty$, {\it ie} in the vinicity of the blow-up time.
 \be
 \label{eqsdbivbbie}
 \left|\hat{b}_s+\frac{2\bh^2}{|\log \bh|}\right|\lesssim\frac{\bh^2}{|\log \bh|^2}.
 \ee
 This equation is the same as in \cite{raphael2012}. Thus, we can use the same strategy to obtain the blow-up speed. First we multiply \fref{eqsdbivbbie} by $\frac{|\log \hb|}{\hb^2}$ and obtain: $$\frac{\bh_s\log \bh}{\bh^2}=-2+O\left(\frac{1}{|\log \bh|}\right).$$ We use $$\left(\frac{\log t}{t}+\frac{1}{t}\right)'=-\frac{\log t}{t^2}$$ to conclude after integration: $$-\frac{\log \bh+1}{\bh}=2s+O\left(\int_0^s\frac{d\sigma}{|\log \bh|}\right).$$
 We obtain the equations
 \bee
 \frac{- \log \bh}{ \bh} = \frac{s}{2} \left(1 + O\left(\frac{1}{\log s}\right) \right) \ \ \mbox{and} \ \ \bh = - \frac{-2 \log \bh}{s}\left(1 + O\left(\frac{1}{\log s}\right) \right),
 \eee
  which implies $$\hb(s)=\frac{\log s}{2s}\left(1+o(1)\right), \ \ \log \bh=\log\log s-\log s +O(1).$$ Finally, combining both above estimates, we obtain the development as $s \to \infty$:
 $$\hb(s)=\frac{1}{2s}\left(\log s-\log \log s\right)+O\left(\frac{1}{s}\right).$$
 Using the bound of the gap betwenn $b$ and $\hb$ \fref{deformation}, together the bound on the law $\l$ \fref{estlambda} yield 
 \be
 \label{cbeiheovejiugveuig}
 -\lsl=\hb+O\left(\frac{b}{|\log b|}\right)=\frac{1}{2s}\left(\log s-\log \log s\right)+O\left(\frac{1}{s}\right).
 \ee
Integrating in time this estimation, we obtain
$$-\log \l=\frac14\left[(\log s)^2-2\log s \log \log s\right]+O(\log s)=\frac{(\log s)^2}4\left[1-\frac{2\log\log s}{\log s}+O\left(\frac1{\log s}\right)\right].$$
 Hence, 
 $$\sqrt{|\log \l|}=\frac{\log s}2\left[1-\frac{\log\log s}{\log s}+O\left(\frac1{\log s}\right)\right]$$ and thus:
$$e^{2\sqrt{|\log \l|}+O(1)}=\frac{s}{\log s}, \ \ s=\sqrt{|\log \l|}e^{2\sqrt{|\log \l|}+O(1)}.$$ We use these relations to rewrite the modulation equation \fref{cbeiheovejiugveuig}: $$-s\lsl=-\sqrt{|\log \l|}e^{2\sqrt{|\log \l|}+O(1)}\l\l_t=\sqrt{|\log \l|}+O(1)$$ and thus 
\be
\label{poissonrouge}
-\l\l_te^{2\sqrt{|\log \l|}}=e^{O(1)}.
\ee
 The time integration with boundary condition $\l(T_0)=0$ yields $$\lambda(t)=\sqrt{T_0-t}e^{-\sqrt{\frac{|\log (T_0-t)|}{2}}+O(1)}\ \ \mbox{as}\ \ t\to T_0,$$ this is \fref{rate}.\\ \\
 {\bf Step 3} Strong convergence \fref{boievbebeo} \\ 
 \\
Injecting the bound \fref{poissonrouge} in \fref{poissonrougee} implies
$$b(t)\to 0\ \ \mbox{as}\ \ t\to T_0.$$ 
From the estimation of the Proposition \ref{localization}, the above convergence implies $$\|{\bf Q}- \tilde {\bf Q}_b\|_{W_Q} = \|\tilde{\boldsymbol \Upsilon}_b\|_{W_Q}\to 0\ \ \mbox{as}\ \ t\to T_0.$$ 
and the strong convergence \fref{boievbebeo} now follows from the Proposition \ref{propboot}.\\
This concludes the proof of Theorem \ref{thmmain}.
 \end{proof}




\begin{appendix}


\section{Hardy bounds}
In this section, we prove logarithmic Hardy inequalities for radial functions $u \in H^3_{rad}(\mathbb R^2)$. They are, with the explicit knowledge of the repulsive structure of the linearized operator $\mathcal L$, the keystone of the proof of the Proposition \ref{interpolationhtwo}, describing the coercivity of this operator under additional orthogonality conditions. This is standard weighted Hardy inequalities, however theirs proofs are displayed for the reader's convenience. 
\begin{lemma}[Weighted Hardy inequality]
\label{weightedhardy}
There holds the Hardy bounds: 
\be
\label{hardyboundbis}
\forall \alpha>-2, \ \  \int r^{\alpha+2}|\pa_rv|^2\geq \frac{(2+\alpha)^2}{4} \int r^{\alpha}v^2,
\ee
$\forall R>2$, $\forall v\in\dot{H}^1_{rad}(\RR^2)$ and $\gamma> 0$, there holds the following controls:
\bea
\label{harfylog}
 \int_{r\le R} \frac{|v|^2}{r^2(1+|\log r|)^2}rdr&\lesssim&\int_{1\leq r\leq2} |v|^2 +  \int_{r\le R} |\nabla v|^2, \\
\label{hardylog'}
 \int_{1 \leq r\le R} \frac{|v|^2}{r^{\gamma+2}(1+|\log r|)^2}rdr&\lesssim&\int_{1\leq r\leq2} |v|^2 +  \int_{1\leq r\le R} \frac{|\nabla v|^2}{r^\gamma(1+|\log r|)^2}, \\
\label{hardylevel1}
\int\frac{v^2}{r^2(1+r^4)(1+|\log r|)^2} &\lesssim& \int \frac{|\nabla v|^2}{r^{4}(1+|\log r|)^2}-\int\frac{|v|^2}{1+r^8}
\eea
$\forall v\in H^2_{rad}(\RR^2)$ and $\gamma\in [0,2[$, there holds the bounds:
\be
\label{hardylevel2}
\int \frac{|\nabla v|^2}{r^{4}(1+|\log r|)^2} + \int \frac{|\nabla^2 v|^2}{r^{2}(1+|\log r|)^2} \lesssim \int \frac{|\Delta v|^2}{r^{2}(1+|\log r|)^2}
\ee
$\forall v\in H^3_{rad}(\RR^2)$, there holds the Hardy bounds:
\be
\label{hardylevel3}
 \int \frac{|\Delta v|^2}{r^2(1+|\log r|)^2} - \int \frac{|\Delta v|^2}{1+r^4}\lesssim  \int |\nabla(\Delta v)|^2,
\ee
\end{lemma}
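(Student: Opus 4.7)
The plan is to establish each bound by a Pohozaev-type integration by parts combined with Cauchy--Schwarz, using weights tailored to the singularity/logarithm of each estimate.

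First I would prove the classical weighted inequality \fref{hardyboundbis}. Writing the $L^2$ integral as the derivative of a power,
\begin{equation*}
\int_0^\infty v^2\, r^{\alpha+1}\,dr = \frac{1}{\alpha+2}\int_0^\infty v^2\,\frac{d}{dr}(r^{\alpha+2})\,dr = -\frac{2}{\alpha+2}\int_0^\infty v\,(\pa_r v)\, r^{\alpha+2}\,dr,
\end{equation*}
Cauchy--Schwarz immediately yields the sharp constant $(\alpha+2)^2/4$. The absence of a boundary term at $r=0$ and $r=\infty$ is guaranteed by $\alpha>-2$ and the assumed decay of $v$.

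For the logarithmic bounds \fref{harfylog} and \fref{hardylog'}, I would split the integral into the regions $\{r\leq 2\}$ and $\{2\leq r\leq R\}$. On $\{r\leq 2\}$, the weight $1/(r^2(1+|\log r|)^2)$ behaves well enough that the classical 1D Poincar\'e--Hardy inequality, applied after subtracting the average of $v$ on $\{1\leq r\leq 2\}$, provides the bound in terms of $\int_{1\leq r\leq 2}|v|^2+\int_{r\leq 2}|\nabla v|^2$. On $\{r\geq 2\}$, the key algebraic identity is
\begin{equation*}
\frac{d}{dr}\Bigl(\frac{-1}{1+\log r}\Bigr) = \frac{1}{r(1+\log r)^2},
\end{equation*}
so that after multiplying by $v^2$ and integrating by parts one gets
\begin{equation*}
\int_2^R \frac{v^2}{r(1+\log r)^2}\, dr = \Bigl[\frac{-v^2}{1+\log r}\Bigr]_2^R + 2\int_2^R \frac{v(\pa_r v)}{1+\log r}\,dr,
\end{equation*}
and Cauchy--Schwarz on the last term, absorbing half of the left hand side, yields \fref{harfylog}. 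The variant \fref{hardylog'} is obtained by the same computation with the additional weight $r^{-\gamma}$ and the corresponding primitive. Bound \fref{hardylevel1} then follows by applying \fref{hardylog'} on $\{r\geq 1\}$ to $v$ (with $\gamma=4$), while on $\{r\leq 1\}$ the weight is non-singular and \fref{hardyboundbis} with $\alpha=0$ gives control of $v$ by $\nabla v$; the localization generates a boundary contribution near $r\sim 1$ which is precisely absorbed by the $-\int |v|^2/(1+r^8)$ term.

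For the second-derivative bound \fref{hardylevel2}, I would expand $|\Delta v|^2 = |\pa_r^2 v|^2 + 2 \pa_r^2 v \cdot \frac{\pa_r v}{r} + \frac{|\pa_r v|^2}{r^2}$ in radial coordinates, multiply by $w(r)=1/(r^2(1+|\log r|)^2)$, and integrate over $\mathbb{R}^2$. A Pohozaev integration by parts on the cross term rewrites it as $\int \frac{|\pa_r v|^2}{r^3}(1+|\log r|)^{-2}\, rdr$ up to a sign, so that all three terms on the right are non-negative; this gives simultaneous control of $|\pa_r^2 v|^2$ and $|\pa_r v|^2/r^2$ by $|\Delta v|^2$ in the weighted norm. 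Finally, \fref{hardylevel3} is proved by applying \fref{harfylog} to $v_1:=\Delta v$ on $\{r\geq 2\}$ to bound $\int |\Delta v|^2/(r^2(1+\log r)^2)$ by $\int |\nabla\Delta v|^2$ plus $\int_{1\leq r\leq 2}|\Delta v|^2$; the latter localized term is absorbed into $\int |\Delta v|^2/(1+r^4)$, and on $\{r\leq 1\}$ the weight is bounded so the correction term on the left completely cancels the unweighted integral of $|\Delta v|^2$.

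The main obstacle I anticipate is the \emph{borderline vanishing of the logarithm at $r=1$}: every weight of the form $1/(r^\alpha(1+|\log r|)^2)$ is only Hardy-regular away from $r=1$, so each proof requires a careful partition of space and boundary-term bookkeeping. This is exactly why the apparently ``extra'' boundary terms $\int_{1\leq r\leq 2}|v|^2$ in \fref{harfylog}, $-\int |v|^2/(1+r^8)$ in \fref{hardylevel1}, and $-\int |\Delta v|^2/(1+r^4)$ in \fref{hardylevel3} are present: they compensate the boundary contributions from the integration by parts exactly at the locus where the logarithmic weight degenerates. Keeping track of all the signs so that every boundary term falls on the correct side of the inequality is the delicate bookkeeping step.
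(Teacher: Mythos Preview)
Your proposal follows essentially the same strategy as the paper: integration by parts against suitably chosen primitives, Cauchy--Schwarz, and bookkeeping of the boundary contributions near $r=1$. For \fref{hardyboundbis}, \fref{hardylog'}, \fref{hardylevel1}, \fref{hardylevel2} and \fref{hardylevel3} your outline matches the paper's arguments almost verbatim.

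The one place where your description is looser than the paper is the treatment of \fref{harfylog} on $\{r\leq 2\}$. You write that the weight $1/(r^2(1+|\log r|)^2)$ ``behaves well enough'' there for a classical Poincar\'e--Hardy inequality to apply; but near $r=0$ this weight is still critical, of order $1/(r^2|\log r|^2)$, and an ordinary Poincar\'e bound does not control it. The paper avoids this by not splitting at all: it integrates the single vector field $f(r)=-\frac{1}{r(1+|\log r|)}$ over $[\varepsilon,R]$. Since $\nabla\!\cdot f=\pm\frac{1}{r^2(1+|\log r|)^2}$ with a sign flip exactly at $r=1$, one integration by parts simultaneously handles the $r\to 0$ and $r\to\infty$ singularities, and the only surviving boundary term is $|v(1)|^2$, which is absorbed into $\int_{1\le r\le 2}|v|^2+\int|\nabla v|^2$. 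Your approach can be repaired by using the same primitive on $\{r\leq 2\}$ as you do on $\{r\geq 2\}$, but the paper's unified computation is cleaner and makes the role of the $\int_{1\le r\le 2}|v|^2$ term transparent.
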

\begin{proof}
\fref{hardyboundbis} is a simple consequence of the following integration by parts, for $v \in \mathcal C_c^{\infty} (\mathbb R^2)$ :
$$\frac{\alpha+2}{2}\int r^{\alpha} v^2=-\int r^{\alpha+1}v\pa_rv\leq \left(\int r^{\alpha}v^2\right)^{\frac 12}\left(\int r^{\alpha+2}(\pa_rv)^2\right)^{\frac 12}$$
Now, for $v \in \mathcal C_c^{\infty} (\mathbb R^2)$, let's prove \fref{harfylog}. For this purpose, let the radial function $f(r)=-\frac{1}{r(1+|\log (r)|)}$ so that 
\bee
\nabla \cdot f= \left| \begin{array}{l}\frac{1}{r^2(1+|\log r|)^2} \ \ \mbox {for} \ \ r \geq1 \\ \frac{-1}{r^2(1+|\log r|)^2} \ \ \mbox {for} \ \ r \leq1 \end{array}\right.,
\eee and integrate by parts to get, with $\e>0$: 
\bea
\nonumber & & \int_{\e\le r\le R} \frac{|v|^2}{r^2(1+|\log r|)^2}rdr  =-  \int_{\e\leq r\leq 1} |v|^2\nabla \cdot f rdr + \int_{1\leq r\leq R} |v|^2\nabla \cdot f rdr\\
\nonumber & = &- \left[\frac{|v|^2}{1+|\log (r)|}\right]_{1}^R + \left[\frac{|v|^2}{1+|\log (r)|}\right]_{\e}^1 +2\int_{r\le R} v\partial_r v \frac{1}{r(1+|\log r|)}rdr\\
\label{stepwzotfp} & \lesssim & |v(1)|^2+\left(\int_{r\le R} \frac{|v|^2}{r^2(1+|\log r|)^2}rdr\right)^{\frac{1}{2}}\left(\int_{r\le R} |\nabla v|^2rdr\right)^{\frac{1}{2}}.
\eea
On other hand,
\bee
|v(1)|^2 \leq \|v\|^2_{L^\infty_{1\leq r \leq 2}} \lesssim \int_{1\leq r\leq2} |v|^2+\int_{1\leq r\leq2} |\nabla v|^2
\eee
 Injecting this into \fref{stepwzotfp} and letting $\e\to 0$ yields \fref{harfylog}. 
To prove \fref{hardylog'}, let $f(r)=-\frac{{\bf e}_r}{(1+|\log (r)|)r^{\gamma+1}}$ so that 
 $$
 \nabla \cdot f=\frac{2}{(1+|\log r|)^5 r^{\gamma+2}}+\frac{\gamma }{(1+|\log r|)^2 r^{\gamma+2}},
 $$ and integrate by parts to get: 
\bea
\label{stepwzofp}
\nonumber & & \gamma \int_{1\le r\le R} \frac{|v|^2}{r^{\gamma+2}(1+|\log r|)^2}  \le  \int_{1\leq r\le R} |v|^2\nabla \cdot f \\
\nonumber & = &- \left[\frac{|v|^2}{r^\gamma(1+|\log r|)^2}\right]_{1}^R +2\int_{1\leq r\leq R} v\partial_r v \frac{1}{r^{\gamma+1}
(1+|\log r|)^2}\\
& \lesssim & |v(1)|^2+\left(\int_{1\leq r\leq R} \frac{|v|^2}{r^{\gamma+2}(1+|\log r|)^2}\right)^{\frac{1}{2}}\left(\int_{1 \leq r \leq R} \frac{|\nabla v|^2}{r^{\gamma}(1+|\log r|)^2}\right)^{\frac{1}{2}}\notag.
\eea
We concludes the proof of  \fref{hardylog'} using the same way as the last. The bound \fref{hardylevel1} is a simple consequence of the two last bounds.To prove \fref{hardylevel2}, we compute :
\bee
 \int \frac{|\Delta v|^2}{y^{2}(1+|\log y|)^2} = \int \frac{|\nabla v|^2}{y^{4}(1+|\log y|)^2} + \int \frac{|\nabla^2 v|^2}{y^{2}(1+|\log y|)^2} + 2 \int \frac{\pa_r v \pa_{rr} v }{y^{3}(1+|\log y|)^2}.
\eee
Now,
\bee
&&2 \int_\e^R \frac{\pa_r v \pa_{rr} v }{r^{3}(1+|\log r|)^2} =  \int_\e^R \frac{\pa_r (( \pa_{r} v) ^2) }{r^{3}(1+|\log r|)^2} \\
&=& \left[ \frac{( \pa_{r} v) ^2 }{r^{2}(1+|\log r|)^2}\right]_\e^R - \int_\e^R  |\pa_r v |^2\pa_r \left(\frac{1}{r^{2}(1+|\log r|)^2}\right)dr
\eee
 For $u \in H^2_{rad} (\mathbb R^2)$, all integrals in the above equality are absolutely convergent. Moreover there exists a sequence $R_n \underset{n \rightarrow + \infty}{\rightarrow} +\infty$ such that
 \bee
 \left[ \frac{( \pa_{r} v) ^2 }{r^{2}(1+|\log r|)^2}\right]_{\e}^{R_n} \underset{n \rightarrow + \infty}{\rightarrow} 0 - \frac{|\pa_{r} v(\e)|^2 }{\e^{2}(1+|\log \e|)^2} <0.
 \eee
 Remarking that $\pa_r \left(\frac{1}{r^{2}(1+|\log r|)^2} \right) <0$, this implies that, $\forall \e >0$ 
 \bee
 2 \int_\e^R \frac{\pa_r v \pa_{rr} v }{r^{3}(1+|\log r|)^2} < 0.
 \eee
 This concludes the proof of \fref{hardylevel2}.
The bound \fref{hardylevel3} comes directly from the bound \fref{harfylog}. 
\end{proof}
\section{On the Poisson field}
In this section, we shall write two technical Lemma on the Poisson field. The proof of the first lemma is available in \cite{raphael2012b}. 
\begin{lemma}[Interpolation estimates]
\label{lemmainterpolation}
Let $u\in L^2_Q$, with $\int u=0$ then
\be
\label{improvedlinfty}
\|\nabla \phi_u\|_{L^{2}}\lesssim \|u\|_{L^2_Q}.
\ee
\end{lemma}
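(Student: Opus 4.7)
The plan is to exploit the radial representation of the Poisson field. Since $u$ is radial, one has
$$\nabla \phi_u(r) = \frac{m_u(r)}{r}\, e_r, \qquad m_u(r) = \int_0^r u(\tau)\,\tau\, d\tau,$$
so the bound reduces to showing
$$\int_0^{\infty} \frac{m_u(r)^2}{r}\, dr \lesssim \int_0^{\infty} \frac{u(\tau)^2}{Q(\tau)}\,\tau\, d\tau = \|u\|_{L^2_Q}^2.$$
I would split the outer integral at $r=1$ and treat the two regions with different expressions for $m_u$.

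For the near field $r \leq 1$, I would keep $m_u(r)=\int_0^r u\,\tau\,d\tau$ and apply Cauchy--Schwarz against the weight $Q$:
$$|m_u(r)|^2 \leq \Big(\int_0^r \frac{u^2}{Q}\,\tau\, d\tau\Big)\Big(\int_0^r Q(\tau)\,\tau\, d\tau\Big) \leq \|u\|_{L^2_Q}^2 \cdot \frac{4r^2}{1+r^2},$$
using the explicit primitive $\int_0^r \frac{8\tau}{(1+\tau^2)^2}\,d\tau = \frac{4r^2}{1+r^2}$. This gives $\frac{m_u^2}{r} \lesssim r\,\|u\|_{L^2_Q}^2$ on $[0,1]$, which is integrable.

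For the far field $r \geq 1$, the same Cauchy--Schwarz estimate is too crude: combined with $\frac{1}{r}$ it produces a logarithmic divergence at infinity. This is where the vanishing-mean hypothesis $\int u = 0$ is crucial. Rewriting
$$m_u(r) = -\int_r^{\infty} u(\tau)\,\tau\,d\tau$$
and applying Cauchy--Schwarz on $[r,\infty)$ yields
$$|m_u(r)|^2 \leq \|u\|_{L^2_Q}^2 \int_r^{\infty} Q(\tau)\,\tau\, d\tau = \frac{4\,\|u\|_{L^2_Q}^2}{1+r^2},$$
so $\int_1^{\infty} \frac{m_u^2}{r}\,dr \lesssim \|u\|_{L^2_Q}^2 \int_1^{\infty} \frac{dr}{r(1+r^2)} < \infty$. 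Adding the two contributions gives the claimed bound.

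The proof is essentially a careful book-keeping of a weighted Hardy-type inequality, so there is no serious obstacle; the only conceptual point to highlight is that the vanishing mean condition is precisely what converts the logarithmically borderline tail of $Q$ into the summable weight $(1+r^2)^{-1}$ via the representation $m_u(r) = -\int_r^{\infty} u\,\tau\,d\tau$. Without $\int u = 0$ the statement would fail, as one sees by testing on an approximation of the ground state $Q$ itself.
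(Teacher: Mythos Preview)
Your argument is correct: the radial representation $\nabla\phi_u=\frac{m_u(r)}{r}e_r$, the split at $r=1$, and the use of $\int u=0$ to rewrite $m_u(r)=-\int_r^\infty u\tau\,d\tau$ on the far field are exactly what is needed, and the Cauchy--Schwarz bounds against the explicit primitive of $Q\tau$ close cleanly. Note that the present paper does not actually give its own proof of this lemma; it simply refers the reader to \cite{raphael2012b}, so there is no in-paper argument to compare against. Your proof is the standard one and is almost certainly what is carried out in that reference.
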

\begin{lemma}
\label{phideltav}
Let $v \in H^1_{rad}$ smooth, such that $\int (1+|\log r|^2) |\nabla v|^2 < \infty$. Then $\phi_{\Delta v}=v$.
\end{lemma}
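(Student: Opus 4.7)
The plan is to reduce the identity $\phi_{\Delta v}=v$ to the vanishing of an additive constant using radial symmetry, and then to pin down this constant via an integration by parts combined with a dyadic pigeonhole argument at infinity.

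First, I would exploit radiality to show that $\nabla \phi_{\Delta v} = \nabla v$. Writing $\Delta v = \frac{1}{r}(rv')'$ yields the explicit partial mass
\[
m_{\Delta v}(r) = \int_0^r \Delta v(\tau)\tau d\tau = r v'(r),
\]
and the standard radial representation $\nabla \phi_u(r) = \frac{m_u(r)}{r}\hat e_r$ of the Poisson field (used throughout the paper) then gives $\nabla \phi_{\Delta v} = \nabla v$. Hence $\phi_{\Delta v} - v$ is a constant, and it only remains to show this constant vanishes.

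Next, I would use the polar-coordinate form of the convolution, based on the angular identity $\frac{1}{2\pi}\int_0^{2\pi} \log|x-y|\, d\theta_y = \log \max(|x|,|y|)$. For any $R > r > 0$, splitting at $\rho=r$, substituting $\rho \Delta v(\rho) = (\rho v'(\rho))'$, integrating by parts on $[r,R]$, and observing that the boundary terms at $\rho=r$ cancel yields
\[
\int_0^R \log \max(r,\rho)\, \Delta v(\rho)\rho\, d\rho = v(r) - v(R) + R v'(R) \log R.
\]

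Finally, I would pass to the limit $R \to \infty$ along a well-chosen sequence. The Strauss radial embedding applied to $v \in H^1_{\rm rad}(\mathbb{R}^2)$ yields $v(R) \to 0$. The weighted hypothesis $\int_0^\infty (1+|\log r|^2)|v'(r)|^2 r\, dr < \infty$, combined with a pigeonhole argument on the dyadic annuli $I_n=[2^n, 2^{n+1}]$, produces a sequence $R_n \in I_n$ with $R_n |v'(R_n)|^2(1+|\log R_n|^2)\lesssim 2^{-n} \int_{I_n}|v'|^2(1+|\log r|^2)r\, dr \to 0$, hence $R_n v'(R_n) \log R_n \to 0$. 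Taking $R = R_n$ on both sides of the displayed identity and passing to the limit then gives $\phi_{\Delta v}(r) = v(r)$.

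The main obstacle is that the defining convolution integral need not converge absolutely under the given hypothesis on $v$ alone; the pigeonhole step extracts conditional convergence along the sequence $R_n$, and this is the only place where the log-weighted bound on $\nabla v$ is used in an essential way.
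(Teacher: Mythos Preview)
Your proof is correct and follows essentially the same strategy as the paper: reduce to showing an additive constant vanishes, then fix that constant by integrating by parts and extracting a good sequence $R_n\to\infty$ along which the boundary terms $v(R_n)$ and $R_n\log R_n\, v'(R_n)$ vanish. The only cosmetic difference is that the paper evaluates the convolution at the origin (so the kernel is simply $\log\rho$) rather than at a general $r$ via the angular-average identity, and extracts a single sequence handling both boundary terms at once rather than treating $v(R)\to 0$ separately via radial $H^1$ decay.
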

\begin{proof}
Let $v$ satisfying the conditions of the lemma. Let $U=\phi_{\Delta v}$. In the framework of the radial functions, there exist a constant $\alpha$ such that $U = v + \alpha$. Using the convolution representation, we have:
\bee
\phi_{\Delta v} (0) = \int_0^\infty \log (r) \Delta v(r) r dr.
\eee
Remark that this integral is absolutely convergent. Now, let $0<\e<R<+\infty$. 
\bee
\int_{\e}^R \log (r) \Delta v(r) r dr &=& \left[ r\log(r)v'(r)\right]_\e^R - \int _\e^R v'(r)dr \\
&=& R\log R v'(R) - v(R) -\e \log \e  v'(\e)+ v(\e).
\eee
As v is smooth, we have
\be
\label{ones}
\lim \limits_{\e \rightarrow 0} \e \log \e  v'(\e) = 0.
\ee
From the hypothesis of the Lemma:
 \bee
  \int |v|^2 + \int (1+|\log r|^2) |\nabla v|^2 < \infty
 \eee
Hence, there exists a sequence $R_n \rightarrow + \infty$ such that 
\be
\label{seconds}
\lim \limits_{n \rightarrow +\infty} R_n\log R_n  v' (R_n)- v(R_n) = 0.
\ee
This yields together \fref{ones} :
$\phi_{\Delta v} (0) = v(0)$, and $\alpha=0$. This concludes the proof of the lemma \fref{phideltav}.
\end{proof}
\section{Interpolation bounds}
In this section, using the bootstrap bounds of the subsection \ref{apzoeirutymm}, we obtain interpolation bounds, which are the keystone of the proof of the monotonicity formulas.
 \begin{proposition}[Interpolation bounds]
 \label{interpolation}
(i) $W_Q$ bound :
 \bea
 \label{coercbase} && \int (1+r^4) |\Delta \e|^2 + \int (1+r^2) |\nabla \e|^2 + \int \e^2 + \int \frac{|\nabla \phi_\e|^2}{r^2(1+|\log r|)^2} \\
\nonumber&+& \int |\nabla \Delta \eta|^2 + \int \frac{|\Delta \eta|^2}{r^2(1+|\log r|)^2} + \int \frac{|\nabla \eta|^2}{r^4(1+|\log r|)^2} \lesssim C(M) \|{\bf E}_2\|_{W_Q}^2
 \eea
 \be
 \label{normeWQ2}
  \int (1+r^2) |\nabla \e_1|^2 \lesssim C(M) \|{\bf E}_2\|_{W_Q}^2 
 \ee
 (ii) $L^2$ bound :{\footnote {We recall that $K^*$ is the constant of the bootstrap.}}
 \bea
\label{wnueg} \int (1+r^2) |\nabla \Delta \eta |^2 &\lesssim&K^* \frac{b}{\sqrt{|\log b|}}\\
\label{wnuegg} \int |\Delta \eta |^2 &\lesssim& K^*\frac{b^2}{|\log b|}\\
\label{wnueggg}  \int |\nabla \eta |^2 &\lesssim& K^*b^2|\log b|^6
 \eea
 (iii) $L^{\infty}$ bound: 
 \bea
\label{normeinftyye} \|(1+r) \e\|_{L^{\infty}}^2 &\lesssim& C(M) \|{\bf E}_2\|^2_{W_Q} \\
\label{normeinftynablaeta} \|\nabla \eta\|_{L^{\infty}}^2 &\lesssim&K^* b^2|\log b|^6 \\
\label{normeinftydeltaeta} \|(1+r)\Delta \eta\|_{L^{\infty}}^2 +\|(1+r)\nabla^2 \eta\|_{L^{\infty}}^2 &\lesssim&K^*  \frac{b}{\sqrt{|\log b|}}
 \eea
 \end{proposition}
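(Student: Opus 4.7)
My plan is to treat the three groups of bounds (i), (ii), (iii) in turn. The key observation is that the decomposition \fref{decompe} guarantees the orthogonality conditions \fref{orthoe}, so the coercivity Proposition~\ref{interpolationhtwo} is directly applicable to $\boldsymbol\e(t)$, and the bootstrap bounds of Section~\ref{apzoeirutymm} are available on $[0,T^\ast)$.

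\smallskip
\textbf{Group (i): $W_Q$ bounds.} The inequality \fref{coercbase} is essentially a restatement of \fref{contorlcoerc} applied to $\boldsymbol\e$: under the orthogonality conditions \fref{orthoe}, Proposition~\ref{interpolationhtwo} yields each of the weighted $L^2$ quantities on the left-hand side controlled by $\|\mathcal L(\e,\eta)\|_{X_Q}^2=\|{\bf E}_2\|_{X_Q}^2$ (which in turn is at most $\|{\bf E}_2\|_{W_Q}^2$). The derivative bound \fref{normeWQ2} will be obtained by expanding
\[
\e_1=\mathcal A^{(1)}(\e,\eta)=\nabla \e+\nabla\phi_Q\,\e+Q\nabla\eta,
\]
differentiating, and estimating each of the resulting pieces $\nabla^2\e$, $(\nabla^2\phi_Q)\e$, $\nabla\phi_Q\nabla\e$, $\nabla Q\nabla\eta$, $Q\nabla^2\eta$ separately. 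For the second-derivative term I will use the radial identity $|\nabla^2\e|^2=|\partial_r^2\e|^2+|\partial_r\e|^2/r^2\lesssim |\Delta \e|^2+|\partial_r\e|^2/r^2$, after which the weights $(1+r^2)$ against $|\Delta\e|^2$ and $|\nabla\e|^2$ are already absorbed in \fref{coercbase}. The remaining terms involving $Q$ and $\phi_Q$ are harmless since $Q\lesssim 1/(1+r^4)$ and $|\nabla\phi_Q|\lesssim 1/(1+r)$, so the weights are more than enough.

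\smallskip
\textbf{Group (ii): $L^2$ bounds.} The estimates \fref{wnueg}, \fref{wnuegg}, \fref{wnueggg} are literally the bootstrap hypotheses \fref{bootsmallh3}, \fref{bootsmallh2}, \fref{bootsmallh1}, which are in force throughout $[0,T^\ast)$. Nothing new has to be proved here, but it is useful to record them separately since we use them elsewhere in combination with the $W_Q$ bound.

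\smallskip
\textbf{Group (iii): $L^\infty$ bounds.} These are the only genuinely interpolation-type statements, and they will be the main technical point of the proof. I will use the standard $2$D radial Sobolev-type trick: for a smooth radial function $v$ decaying at infinity,
\[
r\,|v(r)|^2\;\leq\;2\int_r^\infty |v\,\partial_r v|\,s\,ds\;\lesssim\;\|v\|_{L^2}\,\|\nabla v\|_{L^2},
\]
and more generally, by applying this to successive derivatives and combining with a near-origin Sobolev embedding, one obtains $\|(1+r)v\|_{L^\infty}^2\lesssim \|v\|_{L^2}\|v\|_{H^2_{loc}}$ type controls in the weighted setting. Applied to $\e$, all the required weighted $L^2$ norms are bounded by \fref{coercbase}, giving \fref{normeinftyye}. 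For $\nabla\eta$ I will combine the radial embedding with the bootstrap bounds \fref{bootsmallh1}--\fref{bootsmallh2} (since $\|\nabla\eta\|_{L^2}\lesssim \sqrt{K^\ast}\,b|\log b|^3$ dominates, the product structure gives the claimed $b^2|\log b|^6$ with room to spare). For $\|(1+r)\Delta\eta\|_{L^\infty}$ and $\|(1+r)\nabla^2\eta\|_{L^\infty}$ I will apply the same trick to $\Delta\eta$, using the radial decomposition $\nabla^2\eta$ and the bootstrap bounds \fref{bootsmallh2}--\fref{bootsmallh3}, i.e.\ $\|\Delta\eta\|_{L^2}^2\lesssim K^\ast b^2/|\log b|$ and $\|(1+r)\nabla\Delta\eta\|_{L^2}^2\lesssim K^\ast b/\sqrt{|\log b|}$; the product matches the claimed bound $K^\ast b/\sqrt{|\log b|}$.

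\smallskip
\textbf{Main obstacle.} The routine part is (ii), and (i) reduces cleanly to Proposition~\ref{interpolationhtwo} plus weighted Hardy. The genuine care will be in (iii): one has to pick the right intermediate weighted interpolation inequality so that the product of $L^2$ norms coming from the bootstrap bounds gives exactly the advertised power of $b$ and $|\log b|$. In particular, for $\|\nabla\eta\|_{L^\infty}$, naive use of $H^2\hookrightarrow L^\infty$ in $\mathbb R^2$ would lose a log; the radial reduction to a $1$D-type Sobolev estimate avoids this and is what allows the sharp $b^2|\log b|^6$ upper bound to survive with constant independent of $K^\ast$ on the $|\log b|$ exponent.
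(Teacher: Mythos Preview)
Your proposal is correct and follows essentially the same approach as the paper: Proposition~\ref{interpolationhtwo} for \fref{coercbase}, expansion of $\e_1=\nabla\e+\nabla\phi_Q\,\e+Q\nabla\eta$ for \fref{normeWQ2}, the bootstrap hypotheses verbatim for (ii), and a radial one-dimensional Sobolev argument for (iii). The paper's version of the latter sets $f=(1+r)\e$, picks $a\in(1,2)$ by averaging so that $f(a)^2\le\int_1^2|f|^2$, and uses $f(y)^2=f(a)^2+\int_a^y\pa_r(f^2)\,dr$ together with Cauchy--Schwarz to get $\|(1+r)\e\|_{L^\infty}^2\lesssim\int\e^2+\int(1+r^2)|\nabla\e|^2$, which is exactly the mechanism you describe; the same trick is then reused with $f=\nabla\eta$ and $f=(1+r)\Delta\eta$ for \fref{normeinftynablaeta} and \fref{normeinftydeltaeta}.
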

 \begin{proof}
 The proof of \fref{coercbase} is a simple consequence of the Proposition \ref{interpolationhtwo} and the bootstrap bound \fref{bootsmallh2q}. The bound \fref{normeWQ2} comes from the following inequality and the bound \fref{coercbase} :
 \bee
  \int (1+r^2) |\nabla \e_1|^2 \lesssim  \int (1+r^2) |\nabla \e|^2 + \int \frac{\e^2}{1+r^2} + \int \frac{|\nabla \eta|^2}{1+r^6}.
 \eee
 The bound \fref{wnueg},  \fref{wnuegg} and  \fref{wnueggg} are exactly bounds of the bootstrap. 
 To prove \fref{normeinftyye}, we have near the origin the estimate, using Sobolev and the bound \fref{coercbase} :
 \bee
 \|\e\|_{L^{\infty}_{0\leq r \leq 1}}^2 \lesssim  \|\e\|_{L^{2}_{0\leq r \leq 1}}^2 +  \| \nabla \e\|_{L^{2}_{0\leq r \leq 1}}^2 \lesssim C(M) \|{\bf E}_2\|^2_{W_Q} 
 \eee
 Now, let $f(r) = (1+r)\e(r)$ and $a \in ]1,2[$ such that:
 \bee
 f(a)^2 \leq \int_1^2 |f|^2
 \eee
Let $y \leq 1$. Then
\bee
f(y)^2 = f(a)^2 + \int_a^y \pa_r(f^2)dr.
\eee
Using Cauchy-Schwarz,
\bee
\left| \int_a^y \pa_r(f^2)dr\right|^2 \lesssim \int_1^y \frac{f^2}{r^2}\int_1^y |\pa_r f|^2.
\eee
Using the definition of f, we obtain
\bee
 \|(1+r) \e\|_{L^{\infty}}^2 \lesssim \int |\e|^2 + \int (1+r^2) |\nabla \e|^2 \lesssim C(M) \|{\bf E}_2\|^2_{W_Q} 
\eee
This concludes the proof of \fref{normeinftyye}. Finally, to prove \fref{normeinftynablaeta} and \fref{normeinftydeltaeta}, we use the same strategy with the bound \fref{wnueg}, \fref{wnuegg} and \fref{wnueggg}. This concludes the proof of the Proposition \ref{interpolation}.
  \end{proof}
 \end{appendix}

\end{document}